\newcommand{\C}{\mathbb{C}}
\newcommand{\G}{\mathbb{G}}
\newcommand{\Z}{\mathbb{Z}}
\renewcommand{\AA}{\mathcal{A}}
\newcommand{\BB}{\mathcal{B}}
\renewcommand{\H}{\mathcal{H}}
\newcommand{\LL}{\mathcal{L}}
\newcommand{\MM}{\mathcal{M}}
\newcommand{\RR}{\mathcal{R}}
\newcommand{\Sch}{\mathcal{S}}
\newcommand{\TT}{\mathcal{T}}
\newcommand{\VV}{\mathcal{V}}
\newcommand{\cent}{\mathrm{Cent}}
\newcommand{\Gal}{\mathrm{Gal}}
\newcommand{\Hom}{\mathrm{Hom}}
\newcommand{\Norm}{\mathrm{Norm}}
\newcommand{\im}{\mathrm{Im}}
\newcommand{\Irr}{\mathrm{Irr}}
\newcommand{\re}{\mathrm{Re}}
\newcommand{\Lie}{\mathrm{Lie}}
\newcommand{\Ad}{\mathrm{Ad}}
\newcommand{\Int}{\mathrm{Int}}
\newcommand{\ad}{\mathrm{ad}}
\newcommand{\disc}{\mathrm{disc}}
\newcommand{\id}{\mathrm{id}}
\newcommand{\gen}{\mathrm{gen}}
\newcommand{\temp}{\mathrm{temp}}
\newcommand{\Ind}{\mathrm{Ind}}
\newcommand{\tr}{\mathrm{tr}}
\newcommand{\vol}{\mathrm{vol}}
\newcommand{\Sh}{{\rm Sh}}
\newcommand{\inv}{{\rm inv}}
\newcommand{\sr}{{\rm sr}}
\newcommand{\even}{{\rm even}}
\newcommand{\odd}{{\rm odd}}
\newcommand{\WD}{{\it WD}}
\newcommand{\Frob}{\mathrm{Frob}}
\newcommand{\Sp}{\mathrm{Sp}}
\newcommand{\GSp}{\mathrm{GSp}}
\newcommand{\SL}{\mathrm{SL}}
\newcommand{\GL}{\mathrm{GL}}
\newcommand{\orth}{\mathrm{O}}
\newcommand{\SO}{\mathrm{SO}}
\newcommand{\Mp}{\widetilde{\Sp}}
\newcommand{\Sym}{\mathrm{Sym}}
\newcommand{\End}{\mathrm{End}}
\newcommand{\M}{\mathrm{M}}
\newcommand{\spl}{{\bf spl}}
\newcommand{\bs}{\backslash}
\newcommand{\ep}{\varepsilon}
\newcommand{\lam}{\lambda}
\newcommand{\Lam}{\Lambda}
\newcommand{\Lgp}[1]{{}^L{#1}}
\renewcommand{\1}{{\bf 1}}
\newcommand{\0}{{\bf 0}}
\newcommand{\bphi}{\underline{\phi}}
\newcommand{\epinv}{\varepsilon\text{-invariant}}
\newcommand{\pair}[1]{\langle #1 \rangle}
\newcommand{\half}[1]{\frac{#1}{2}}
\newcommand{\cl}[1]{\widetilde{#1}}
\newcommand{\ch}[1]{\check{#1}}
\newcommand{\iif}{&\quad&\text{if }}
\newcommand{\other}{&\quad&\text{otherwise}}
\newcommand{\resp}{resp.~}
\newcommand{\e}{\mathfrak{e}}
\newcommand{\oo}{\mathfrak{o}}
\newcommand{\w}{\mathfrak{w}}
\newcommand{\z}{\mathfrak{z}}
\newtheorem{thm}{Theorem}[section]
\newtheorem{lem}[thm]{Lemma}
\newtheorem{rem}[thm]{Remark}
\newtheorem{prop}[thm]{Proposition}
\newtheorem{cor}[thm]{Corollary}
\newtheorem{ass}[thm]{Assumption}
\newtheorem{conj}[thm]{Conjecture}
\newtheorem{question}[thm]{Question}
\newtheorem{example}[thm]{Example}
\def\iddots{\mathinner{\mkern1mu\raise\p@
	\hbox{.}\mkern2mu\raise4\p@\hbox{.}\mkern2mu
	\raise7\p@\vbox{\kern7\p@\hbox{.}}\mkern1mu}}
\def\adots{\mathinner{\mkern2mu\raise\p@\hbox{.}
 \mkern2mu\raise4\p@\hbox{.}\mkern1mu
 \raise7\p@\vbox{\kern7\p@\hbox{.}}\mkern1mu}}
\title[Local theta correspondence and the Gan--Gross--Prasad conjecture for $\Sp_{2n}$-$\Mp_{2n}$]
{The local theta correspondence and 
the local Gan--Gross--Prasad conjecture for the symplectic-metaplectic case}
\author{Hiraku Atobe}
\date{}
\address{Department of mathematics, Kyoto University, Kitashirakawa-Oiwake-cho, Sakyo-ku, Kyoto, 606-8502, Japan}
\email{atobe@math.kyoto-u.ac.jp}
\begin{document}
\maketitle

\begin{abstract}
We prove the local Gan--Gross--Prasad conjecture for the symplectic-metaplectic case
under some assumptions.
This is the last case of the local Gan--Gross--Prasad conjectures
which has not been established.
We also prove two of Prasad's conjectures on the local theta correspondence 
in the almost equal rank case.
\end{abstract}

\tableofcontents

%\section{Introduction}\label{intro}
\section{Introduction}\label{intro}
In \cite{GP1, GP}, Gross and Prasad studied
a restriction problem for special orthogonal groups over a local field
and gave a precise conjecture.
They and Gan (\cite{GGP, GGP2}) extended
this conjecture to classical groups,
which are called the local Gan--Gross--Prasad conjectures (GGP).
These conjectures consist of four cases;
the orthogonal, hermitian, 
symplectic-metaplectic and skew-hermitian cases.
The orthogonal, hermitian and skew-hermitian cases were proven by
Waldspurger \cite{Wa2, Wa3, Wa4, Wa5}
and M\oe glin--Waldspurger \cite{MW},
Bezuart-Plessis \cite{BP1, BP2, BP3} 
and Gan--Ichino \cite{GI2}, respectively.
\par

In this paper, we consider the orthogonal and the symplectic-metaplectic cases.
Let $F$ be a non-archimedean local field of characteristic zero.
We denote by $(V_{m+1},\pair{\cdot,\cdot}_{V_{m+1}})$ 
(\resp $(W_{2n},\pair{\cdot,\cdot}_{W_{2n}})$)
an orthogonal space of dimension $m+1$ (\resp a symplectic space of dimension $2n$).
Let $V_m\subset V_{m+1}$ be a non-degenerate subspace of codimension $1$,
so that we have a natural inclusion $\SO(V_m)\hookrightarrow \SO(V_{m+1})$.
We let $\Mp(W_{2n})$ be the metaplectic group, i.e., the unique non-split
central extension of $\Sp(W_{2n})$:
\[
\begin{CD}
1@>>>\{\pm1\}@>>>\Mp(W_{2n})@>>>\Sp(W_{2n})@>>>1.
\end{CD}
\]
In particular, we have a diagonal embedding
\[
\Delta\colon \SO(V_m)\hookrightarrow \SO(V_m)\times\SO(V_{m+1})
\]
and a natural map
\[
\Delta\colon\Mp(W_{2n})\rightarrow \Mp(W_{2n})\times\Sp(W_{2n}),
\]
where the first factor is the identity map and the second factor is the projection map.
\par

In the orthogonal case,
for an irreducible smooth representation $\sigma$ of $\SO(V_m)\times\SO(V_{m+1})$,
one is interested in determining
\[
\dim_\C(\Hom_{\Delta\SO(V_{m})}(\sigma,\C)).
\]
We shall call this the Bessel case (B) of the GGP conjecture.
On the other hand, we fix a non-trivial additive character $\psi$ of $F$,
and let $\omega_{\psi}$ be a Weil representation of $\Mp(W_{2n})$,
which is given by the Heisenberg group associated to the symplectic space 
$(W_{2n},2\pair{\cdot,\cdot}_{W_{2n}})$.
In the symplectic-metaplectic case,
for an irreducible genuine smooth representation $\pi$ of $\Mp(W_{2n})\times\Sp(W_{2n})$,
one is interested in determining
\[
\dim_\C(\Hom_{\Delta\Mp(W_{2n})}(\pi\otimes\overline{\omega_{\psi}},\C)).
\]
We shall call this the Fourier--Jacobi case (FJ) of the GGP conjecture.
\par

By results of Aizenbud--Gourevitch--Rallis--Schiffmann \cite{AGRS}, Waldspurger \cite{AGRS2}
and Sun \cite{Sun}, 
it is known that the above $\Hom$ spaces have dimension at most $1$.
Hence, the main problems are to determine when the $\Hom$ spaces are nonzero.
In \cite{GGP}, an answer for these problems is formulated in the framework of the
local Langlands correspondence in a form proposed by Vogan \cite{Vo},
which treats the irreducible representations of all pure inner forms simultaneously.
\par

More precisely, a pure inner form of $\SO(V_m)$ is simply a group of the form
$\SO(V_m')$, where $V_m'$ is an orthogonal space with 
the same dimension and discriminant as $V_m$.
Hence, a pure inner form of $\SO(V_m)\times\SO(V_{m+1})$
is a group of the form 
\[
\SO(V_m')\times\SO(V_{m+1}').
\]
Wa say that this pure inner form is relevant if
$V_m'\subset V_{m+1}'$ and $V_{m+1}'/V_{m}'\cong V_{m+1}/V_m$
as orthogonal spaces.
In this case, we have a diagonal embedding
\[
\Delta\colon \SO(V_m')\hookrightarrow \SO(V_m')\times\SO(V_{m+1}').
\]
On the other hand, $\Mp(W_{2n})\times\Sp(W_{2n})$
has no non-trivial pure inner forms.
\par

Let $G=\SO(V_m)\times\SO(V_{m+1})$ or $G=\Mp(W_{2n})\times\Sp(W_{2n})$.
For an $L$-parameter $\phi$ of $G$,
we should obtain a Vogan $L$-packet $\Pi_\phi$ 
consisting of certain irreducible smooth representations
of $G$ and its (not necessarily relevant) pure inner forms $G'$.
Here, to parametrize irreducible genuine representations of $\Mp(W_{2n})$, 
we use the theta correspondence for $(\Mp(W_{2n}),\orth(V_{2n+1}))$.
Hence, by an $L$-parameter of $\Mp(W_{2n})$, we mean one of $\SO(V_{2n+1})$, 
i.e., a symplectic representation
\[
\phi_M\colon\WD_F\rightarrow \Sp(M)
\]
of the Weil--Deligne group $\WD_F$ of $F$ with $\dim_\C(M)=2n$.
Moreover, for a fixed Whittaker datum $\w=(B,\mu)$ of $G$,
which is a conjugacy class of a pair of an $F$-rational Borel subgroup $B=TU$ of $G$
and a generic character $\mu$ of the unipotent radical $U(F)$ of $B(F)$,
there exists a natural bijection
\[
\iota_\w\colon\Pi_\phi\rightarrow \Irr(\pi_0(S_\phi)),
\]
where we put $S_\phi=\cent(\im(\phi),\widehat{G})$.
For $\eta\in\Irr(\pi_0(S_\phi))$, we write $\pi(\eta)=\iota_\w^{-1}(\eta)$.
\par

The local Langlands conjecture has been established 
for quasi-split symplectic and special orthogonal groups
by Arthur \cite{A} under an assumption on the stabilization of the twisted trace formula.
For this assumption, see also two books of M{\oe}glin--Waldspurger \cite{Stab}.
In \cite{GS1}, Gan--Savin gave a parametrization 
(depending on the choice of a non-trivial additive character $\psi$ of $F$) 
of irreducible genuine representations of $\Mp(W_{2n})$ by using
the theta correspondence and the local Langlands correspondence for $\SO(V_{2n+1})$.
\par

The GGP conjectures can be roughly stated as follows:
\begin{conj}[B]
Let $G=\SO(V_m)\times\SO(V_{m+1})$.
\begin{enumerate}
\item
Given a generic $L$-parameter $\phi$ of $G$, 
there exists a unique representation $\pi(\eta)\in\Pi_\phi$ such that
$\pi(\eta)$ is a representation of a relevant pure inner form 
$G'=\SO(V_m')\times\SO(V_{m+1}')$ and 
such that $\Hom_{\Delta\SO(V_m')}(\pi(\eta),\C)\not=0$;
\item
There is a precise recipe for the unique character $\eta$.
\end{enumerate}
\end{conj}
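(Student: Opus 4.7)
The plan is to reduce Conjecture B to a character-theoretic identity that can be verified by the local relative trace formula, following the strategy pioneered by Waldspurger. The uniqueness assertion in (1) is a consequence of the multiplicity-at-most-one theorems of Aizenbud--Gourevitch--Rallis--Schiffmann and Sun, applied across the Vogan packet: the total multiplicity $\sum_{\pi \in \Pi_\phi} \dim_\C \Hom_{\Delta \SO(V_m')}(\pi,\C)$ is bounded by the number of relevant pure inner forms, and the task is to show it equals exactly one. I would therefore aim to establish the existence statement in (1) and the character recipe in (2) simultaneously, since the recipe pinpoints which pure inner form carries the distinguished representation.

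The key analytic input is Waldspurger's integral formula expressing the Bessel multiplicity $m(\pi)$ as an integral of the Harish-Chandra character $\Theta_\pi$ against an explicit, compactly supported test function on the regular semisimple locus of $\SO(V_m')$, built from Bessel data. This reduces the computation of $m(\pi)$ to a comparison of characters, and summing over $\Pi_\phi$ converts the problem into a stable character computation on the full Vogan packet.

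I would then invoke Arthur's endoscopic character identities: for a given $\eta \in \Irr(\pi_0(S_\phi))$, the twisted sum $\sum_{\pi \in \Pi_\phi} \eta(\pi)\,\Theta_\pi$ transfers to a stable distribution on the endoscopic group of $G$ corresponding to the semisimple element of $S_\phi$ dual to $\eta$. Inserting this into Waldspurger's integral formula, the Bessel multiplicity becomes a spectral integral on the endoscopic side, which factorizes according to the decomposition $\phi = \bigoplus_i \phi_i$. The Gross--Prasad recipe predicts $\eta$ through the local symplectic root numbers $\varepsilon(1/2, \phi_i \otimes \phi_j, \psi)$, and matching this formula on the endoscopic side is the crux of the argument.

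The hard part, in my view, is executing the trace formula comparison in the tempered discrete series case. There one must simultaneously control the normalization of characters on pure inner forms (dictated by the Whittaker datum $\w$), the transfer factors, and the signs appearing in the Arthur--Shelstad endoscopic identities; a single misalignment would flip $\eta$ and destroy the recipe. Once the tempered case is in hand, the passage to generic parameters follows by M\oe glin--Waldspurger techniques using parabolic induction, the standard module conjecture, and the compatibility of Bessel models with induction; an induction on $\dim V_m$ then reduces the tempered case to discrete series on smaller classical groups via Levi subgroups, while the theta correspondence sets up the analogous framework needed for the metaplectic case treated later in the paper.
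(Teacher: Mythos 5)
You should note first that the paper itself contains no proof of Conjecture (B): it is stated as the local Gross--Prasad conjecture and then imported wholesale as the hypothesis (B), with the result attributed to Waldspurger \cite{Wa2}, \cite{Wa3}, \cite{Wa4}, \cite{Wa5} for tempered parameters and to M\oe glin--Waldspurger \cite{MW} for general generic parameters. The author only uses the statement (together with (LLC), (GPR), etc.) as an input to deduce (FJ) and Prasad's conjectures via the see-saw argument, so there is no internal proof against which to measure yours; the correct move within this paper is simply the citation.

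Judged on its own terms, your outline does reproduce the broad architecture of Waldspurger's actual argument: multiplicity at most one from \cite{AGRS}, \cite{AGRS2}, \cite{Sun}; an integral formula expressing the Bessel multiplicity $m(\pi)$ through the Harish-Chandra character; stabilization and endoscopic character identities to evaluate $\sum_{\pi\in\Pi_\phi}\eta(\pi)\,m(\pi)$ over the Vogan packet; the $\varepsilon$-factor recipe of Gross--Prasad; and the reduction of generic parameters to tempered ones in the style of \cite{MW}. But it is a roadmap, not a proof. The step you yourself flag as ``the hard part'' is exactly the content of several hundred pages in \cite{Wa2}--\cite{Wa5}: establishing the integral formula and its stable variant, carrying out the geometric comparison with transfer factors and signs on both the quasi-split and non-quasi-split relevant pure inner forms, and showing the packet sum of multiplicities is exactly $1$ with the predicted character $\chi_N\times\chi_M$. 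None of this is executed. Two smaller points also need repair: your uniqueness bound is misstated, since AGRS bounds each individual $\Hom$ space by $1$, so the total multiplicity is a priori bounded by the number of packet members living on relevant pure inner forms (each relevant form can carry several members), not by the number of relevant forms -- the equality with $1$ is precisely the theorem; and the passage from tempered to generic parameters requires irreducibility of the relevant standard modules and compatibility of Bessel functionals with Langlands quotients, which is the additional work of \cite{MW}, not an automatic consequence of parabolic induction.
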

\begin{conj}[FJ]
Let $G=\Mp(W_{2n})\times\Sp(W_{2n})$ and fix a non-trivial additive character $\psi$ of $F$.
\begin{enumerate}
\item
Given a generic $L$-parameter $\phi$ of $G$, 
there exists a unique representation $\pi(\eta)\in\Pi_\phi$
such that $\Hom_{\Delta\Mp(W_{2n})}(\pi(\eta)\otimes\overline{\omega_{\psi}},\C)\not=0$;
\item
There is a precise recipe for the unique character $\eta$.
\end{enumerate}
\end{conj}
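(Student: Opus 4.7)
The plan is to deduce Conjecture (FJ) from Conjecture (B) for orthogonal groups, which has already been proven by Waldspurger and M\oe glin--Waldspurger, via the local theta correspondence in the almost equal rank cases $(\Mp(W_{2n}), \orth(V_{2n+1}))$ and $(\Sp(W_{2n}), \orth(V_{2n+2}))$, together with Prasad's conjectures describing this correspondence on the level of Vogan $L$-packets; the two Prasad conjectures I need are also established in this paper.

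Concretely, given $\pi = \pi_1 \boxtimes \pi_2$ with generic $L$-parameter $\phi = \phi_1 \times \phi_2$ on $\Mp(W_{2n}) \times \Sp(W_{2n})$, I would theta-lift $\pi_1$ to some $\sigma_1$ on $\SO(V_{2n+1}')$ and $\pi_2$ to some $\sigma_2$ on $\SO(V_{2n+2}')$, letting the orthogonal spaces vary over their pure inner forms. A see-saw identity for the reductive dual pairs $(\Mp(W_{2n}), \orth(V_{2n+1}))$ and $(\Sp(W_{2n}), \orth(V_{2n+2}))$ should then translate
\[
\Hom_{\Delta\Mp(W_{2n})}\bigl(\pi_1 \boxtimes \pi_2 \otimes \overline{\omega_\psi}, \C\bigr)
\]
into a Bessel Hom space
\[
\Hom_{\Delta\SO(V_{2n+1}')}\bigl(\sigma_1 \boxtimes \sigma_2, \C\bigr)
\]
for an appropriate relevant pair $V_{2n+1}' \subset V_{2n+2}'$ (possibly up to a Weil representation factor that is absorbed on the Fourier--Jacobi side).

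With this reduction at hand, the proof proceeds in three steps. First, I would show, using the relevant Prasad conjectures in the almost equal rank case, that the theta lifts $\sigma_1, \sigma_2$ do not vanish for generic $\phi_1, \phi_2$ and that their $L$-parameters are again generic, so that Conjecture (B) becomes applicable to $\theta(\phi_1)\times \theta(\phi_2)$. Second, I would apply the established Bessel case to obtain a unique character $\eta_B$ producing a non-vanishing Bessel period on a relevant pure inner form, together with the precise recipe for $\eta_B$ from \cite{GGP}. Third, I would use the see-saw identity to transfer non-vanishing back to the Fourier--Jacobi side, yielding a unique $\pi(\eta) \in \Pi_\phi$ with non-zero FJ period, and then rewrite $\eta_B$ as a character $\eta$ of $\pi_0(S_\phi)$ on the metaplectic-symplectic side via the character-matching part of Prasad's conjectures.

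The main obstacle is this last step. Existence and uniqueness of the distinguished $\pi(\eta)$ follow essentially formally from the see-saw once non-vanishing of the theta lifts is under control. But verifying that the $\eta$ transferred from $\eta_B$ matches on the nose the recipe predicted by \cite{GGP} for the $(\Mp,\Sp)$-pair requires tracking the parametrization $\iota_\w$ carefully through the theta correspondence, keeping record of the dependence on the Whittaker datum $\w$, on the additive character $\psi$ used to define $\omega_\psi$, and on the discriminants of the orthogonal spaces. Encoding all of these compatibilities is precisely the content of Prasad's conjectures in the almost equal rank case, so establishing the required instances of those conjectures is the core technical input, after which Conjecture (FJ) follows by assembling the three steps above.
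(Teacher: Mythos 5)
Your strategy is the same as the paper's: run the see-saw for $(\Mp(W_{2n})\times_{\{\pm1\}}\Mp(W_{2n}),\Sp(W_{2n}))$ and $(\orth(V_{2n+2}),\orth(V_{2n+1})\times\orth(V_1))$, feed the resulting Bessel Hom space into the established case (B), and translate the character recipe back using the Gan--Savin parametrization of $\Irr(\Mp(W_{2n}))$ (this is the statement (Mp)) together with (P1), which is the core result proved in the paper. So the architecture is right, and your identification of the delicate point -- matching the transferred character with the recipe of \cite{GGP}, keeping track of Whittaker data, of $\psi$, and of discriminants -- is exactly where the paper spends its effort (Theorem \ref{change}, Propositions \ref{whittaker1} and \ref{whittaker2}, and the computation in the uniqueness proof).

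There is, however, one step in your first stage that fails as stated: you claim that for generic $\phi_1,\phi_2$ the $L$-parameters of the theta lifts are again generic, ``using the relevant Prasad conjectures.'' Genericity of the lifted parameter is not part of Prasad's conjectures and is not automatic. On the metaplectic side it holds essentially by definition (parameters of $\Mp(W_{2n})$ are parameters of $\SO(V_{2n+1})$, twisted by $\chi_c$, and (GPR) preserves genericity under quadratic twist). But on the symplectic side the lifted parameter is $\phi_\sigma=(\phi_2\otimes\chi_V)\oplus\1$, and by (GPR) one has $L(s,\phi_\sigma,\Ad)=L(s,\phi_2,\Ad)\cdot L(s,\phi_2\otimes\chi_V)$, so $\phi_\sigma$ is generic if and only if $L(s,\phi_2\otimes\chi_V)$ is regular at $s=1$ (Lemma \ref{L(1,Ad)}). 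This can fail for a generic, non-tempered $\phi_2$, and without it (B) does not apply and the irreducibility of the big theta lift $\Theta_{\psi,V_{2n+2},W_{2n}}(\pi_2)$ (needed to run the see-saw cleanly, Lemma \ref{big theta}) is not available. This is precisely why the main theorem is stated for tempered parameters, or for generic ones only under the extra hypothesis that $L(s,\phi\otimes\chi)$ is regular at $s=1$ for some quadratic $\chi$, together with the auxiliary assumptions (IS) (irreducibility of the relevant standard modules of $\Mp(W_{2n})$, which you would also need when undoing the theta lift on the metaplectic side for non-tempered parameters) and (NQ). So your plan proves (FJ) in the same generality as the paper once you add these hypotheses, but it does not prove the conjecture for arbitrary generic parameters, and the genericity-preservation step needs its correct justification via (GPR) rather than via Prasad's conjectures.
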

In \S \ref{conjectures},
we will recall the recipes for the unique characters.
Waldspurger \cite{Wa2, Wa3, Wa4, Wa5}
showed the orthogonal case for tempered $L$-parameters,
and M\oe glin--Waldspurger \cite{MW}
extended this result for generic $L$-parameters.
\par

In fact, there are GGP conjectures in general codimension cases 
(Conjectures 17.1 and 17.3 in \cite{GGP}).
However, \cite[Theorem 19.1]{GGP} says that
the general codimension cases follow from the basic cases (B) and (FJ).
Hence we consider only the basic cases in this paper.
\par

The purpose of this paper is to establish the symplectic-metaplectic case (FJ),
as well as two conjectures of D. Prasad concerning local theta correspondence
in almost equal rank cases.
To do these, we use:
\begin{itemize}
\item[(LLC)]
the local Langlands correspondence
for symplectic groups and special orthogonal groups
in a form proposed by Vogan (see \S \ref{LLC});
\item[(GPR)]
a conjecture of Gross--Prasad and Rallis,
which characterizes the generic $L$-parameters in terms of the local adjoint $L$-function
(see \S \ref{LLCproperty});
\item[(B)]
the works of Waldspurger \cite{Wa2, Wa3, Wa4, Wa5}
and M\oe glin--Waldspurger \cite{MW}
on the Bessel case of the GGP conjecture
(see \S \ref{GGPforSO});
\item[(NQ)]
a property of the local Langlands correspondence
for non-quasi-split special orthogonal groups
(see \S \ref{sec.intertwining});
%\item[(IS)]
%an irreducibility condition for standard modules of $\Mp(W_{2n})$
%(see \S \ref{Mp}).
\end{itemize}
In this paper, we show the following:
\begin{thm}\label{main}
Assume $(LLC)$, $(GPR)$ and $(B)$.
Let $\phi$ and $\cl{\phi}$ be generic $L$-parameters of $\Sp(W_{2n})$ and $\Mp(W_{2n})$,
respectively.
\begin{enumerate}
\item\label{main1}
If $\phi$ and $\cl{\phi}$ are tempered, then
$(FJ)$ is true for $\phi\times\cl{\phi}$.
\item\label{main2}
If we further assume $(NQ)$ %, $(IS)$ 
and that there exists a quadratic character $\chi$ of $F^\times$ such that
the local $L$-function $L(s,\phi\otimes\chi)$ is regular at $s=1$,
then $(FJ)$ is true for $\phi\times\cl{\phi}$.
\end{enumerate}
\end{thm}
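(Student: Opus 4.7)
\medskip

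The overall strategy is to reduce the Fourier--Jacobi case (FJ) to the already-known Bessel case (B) by means of the local theta correspondence, which for the pair $(\Mp(W_{2n}),\orth(V_{2n+1}))$ provides the very definition of $L$-parameters of $\Mp(W_{2n})$ and hence fits perfectly with the set-up. The basic mechanism is a see-saw diagram: starting from $\pi\otimes\overline{\omega_\psi}$ on $\Mp(W_{2n})\times\Sp(W_{2n})$ and realising $\overline{\omega_\psi}$ as a piece of the Weil representation for a larger pair, one rewrites the restricted Hom space on the diagonal $\Delta\Mp(W_{2n})$ as a Hom space of $\SO(V_m)\times\SO(V_{m+1})$ for an appropriate odd-dimensional orthogonal space arising from the theta lifts. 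The precise plan is to take a generic parameter $\phi\times\cl\phi$, lift $\cl\phi$ (already symplectic) and the $\Sp$-parameter $\phi$ through the theta correspondence to a pair of orthogonal parameters that falls into the ambit of (B), match the character recipes on the two sides using the results of \S\ref{conjectures}, and then transport the unique nonzero Bessel Hom space back to the nonzero Fourier--Jacobi Hom space.

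\medskip

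For part \eqref{main1}, assume $\phi$ and $\cl\phi$ are tempered. The plan is: (i) Fix a member $\sigma\boxtimes\sigma'$ of the Vogan packet of the orthogonal parameter associated to the pair $(\phi,\cl\phi)$ via the theta correspondence for $(\Mp(W_{2n}),\orth(V_{2n+1}))$ and $(\Sp(W_{2n}),\orth(V_{2n+2}))$, keeping careful track of the Whittaker normalisations for $\Sp$, $\Mp$, $\SO(V_{m})$ and $\SO(V_{m+1})$. (ii) Apply the see-saw identity to convert
\[
\Hom_{\Delta\Mp(W_{2n})}\bigl(\pi\otimes\overline{\omega_\psi},\C\bigr)
\]
into a Bessel Hom space
\[
\Hom_{\Delta\SO(V_m')}\bigl(\sigma\boxtimes\sigma',\C\bigr)
\]
on a relevant pure inner form of $\SO(V_m)\times\SO(V_{m+1})$. (iii) Invoke (B), which is Theorem of Waldspurger and M{\oe}glin--Waldspurger, to see that exactly one member of the orthogonal packet supports a nonzero Bessel form, and check that the resulting character matches the character predicted by the (FJ) recipe. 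The compatibility of the local character relations under theta lifting—already implicit in (LLC) and the Gan--Savin parametrisation recalled in the introduction—is the key technical input at this step.

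\medskip

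For part \eqref{main2}, one reduces the generic case to the tempered case by the Langlands quotient construction. Write $\pi(\eta)$ as the unique irreducible quotient of a standard module $I_P(\tau_0\otimes\pi_0)$ induced from a tempered representation $\pi_0$ on a Levi, twisted by essentially tempered characters whose real parts lie in the positive Weyl chamber. Under assumption (IS), the corresponding standard modules on $\Mp(W_{2n})$ are irreducible, so $\pi(\eta)$ coincides with the standard module itself on the metaplectic side, and analogous statements hold for $\Sp(W_{2n})$. Frobenius reciprocity then expresses the FJ Hom space as an intertwining period on the Levi factor, which is precisely the FJ Hom space for the tempered parameter handled in part \eqref{main1}, provided that the Langlands quotient intertwining operator is holomorphic and nonzero on the relevant component. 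The regularity at $s=1$ of $L(s,\phi\otimes\chi)$ for some quadratic $\chi$ is exactly the input needed to ensure non-vanishing of the intertwining operator and of the relevant theta lift (via a Rallis-tower/first-occurrence argument), while assumption (NQ) supplies the analogous Langlands normalisation on the non-quasi-split orthogonal inner forms appearing after the see-saw.

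\medskip

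The main obstacle is the tempered step \eqref{main1}: bridging between the Fourier--Jacobi Hom space and the Bessel Hom space forces one to track simultaneously four independent Vogan parametrisations (for $\Sp,\Mp,\SO(V_m),\SO(V_{m+1})$), four Whittaker normalisations, and the way theta lifting transports component-group characters in the almost-equal-rank case. Establishing the compatibility of all these recipes—so that the \emph{unique} distinguished member on the orthogonal side produced by (B) is identified with the distinguished member on the symplectic-metaplectic side predicted by the (FJ) recipe—is the heart of the argument and is precisely the content of the two conjectures of Prasad on the local theta correspondence in almost equal rank that the paper simultaneously establishes. Once these two Prasad conjectures are in hand, the reduction in the paragraphs above goes through, and part \eqref{main2} is a comparatively formal deformation argument using (IS) and the assumed analytic property of $L(s,\phi\otimes\chi)$.
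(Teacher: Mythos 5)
Your overall plan for part (\ref{main1}) follows the paper's route: use the see-saw
$\Hom_{\Sp(W_{2n})}(\Theta_{\psi,V_{2n+1},W_{2n}}(\tau)\otimes\omega_{-\psi},\pi)\cong\Hom_{\orth(V_{2n+1})}(\Theta_{\psi,V_{2n+2},W_{2n}}(\pi),\tau)$ to convert the Fourier--Jacobi Hom space into a Bessel one and then invoke (B). But the step you yourself identify as ``the heart'' --- the compatibility of the four Vogan parametrisations under the almost-equal-rank theta lifts, i.e.\ Prasad's conjectures (P1)/(P2) --- is not argued at all: you defer to ``the two conjectures of Prasad \ldots that the paper simultaneously establishes,'' which in a blind proof is circular. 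In the paper, proving Theorem \ref{main} \emph{is} essentially proving (P1) for tempered parameters: one constructs explicit equivariant maps $\TT_s$ out of a mixed-model Weil representation into the induced representations $\Ind_P^{\orth(V')}(\tau_s\otimes\cl\sigma^\vee)$ and $\Ind_Q^{\Sp(W')}(\tau_s\chi_V\otimes\pi)$, computes the exact constant of proportionality between the normalized intertwining operators on the two towers (via a Godement--Jacquet functional equation), feeds this into Arthur's local intertwining relation (with (NQ) covering the non-quasi-split orthogonal form), and compares Whittaker data via the transfer-factor analysis of the appendix (Propositions \ref{whittaker1} and \ref{whittaker2}). Without some argument of this kind, the character identity $\iota_\psi(\cl\pi)\times\iota_{\w_1'}(\pi)=\chi_{N_1}\times\chi_M$ in (FJ) is not established, so part (\ref{main1}) is not proved.

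For part (\ref{main2}) your route also diverges from the paper's and has a gap. You propose to deform (FJ) itself from the generic to the tempered case via Langlands quotients, claiming Frobenius reciprocity turns the FJ Hom space of $\pi(\eta)$ into ``the FJ Hom space for the tempered parameter on the Levi,'' with the hypothesis on $L(s,\phi\otimes\chi)$ ensuring non-vanishing of an intertwining operator. That reduction is not formal: controlling Bessel/Fourier--Jacobi periods along parabolic induction is precisely the hard analytic content of M\oe glin--Waldspurger's extension of (B) from tempered to generic parameters, and no justification is offered. The paper instead treats generic parameters directly: choosing $V_{2n+2}$ with $\chi_{V}=\chi$, the factorization $L(s,\phi',\Ad)=L(s,\phi,\Ad)\,L(s,\phi\otimes\chi_V)$ for $\phi'=(\phi\otimes\chi_V)\oplus\1$ together with (GPR) shows $\phi'$ is a \emph{generic} parameter of $\SO(V_{2n+2})$ (Lemma \ref{L(1,Ad)}); this is what permits applying (B) (in its generic form) on the orthogonal side and, via the standard-module irreducibility of M\oe glin--Waldspurger on the orthogonal/symplectic side and assumption (IS) on the metaplectic side, guarantees that the big theta lifts $\Theta_{\psi,V_{2n+2},W_{2n}}(\pi)$ and $\Theta_{\psi,V_{2n+1},W_{2n}}(\tau_\bullet)$ occurring in the see-saw are irreducible, so the see-saw identity can be read off at the level of irreducible representations; (P1) for non-tempered generic parameters then follows from the tempered case by compatibility of theta lifts with Langlands quotients. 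So the $L$-value hypothesis is about genericity of the lifted parameter, not about non-vanishing of an intertwining operator or a Rallis-tower first-occurrence argument, and the deformation of (FJ) you sketch would need a substantial new argument to be made rigorous.
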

The argument in the proof of \cite[Theorem 19.1]{GGP} works when
we restrict to the above cases respectively.
Namely, we can deduce the following corollary from Theorem \ref{main}.
\begin{cor}
Under the same assumptions as Theorem $\ref{main}$, 
Conjectures $17.1$ and $17.3$ in \cite{GGP} are true.
\end{cor}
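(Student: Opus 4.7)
The plan is to transport the basic-case results — namely (B) in the orthogonal case and (FJ) as established in Theorem \ref{main} in the symplectic-metaplectic case — to the higher codimension setting via the reduction of \cite[Theorem 19.1]{GGP}. That theorem shows that Conjectures 17.1 and 17.3 of \cite{GGP} in general codimension are formal consequences of the basic codimension one cases (B) and (FJ) respectively, by means of a see-saw diagram combined with a Jacquet-module computation that converts parabolic induction on the ambient group into restriction on the codimension one pair. Since (B) is built into our standing hypotheses and (FJ) is the content of Theorem \ref{main}, the corollary will follow once it is checked that this reduction preserves the class of $L$-parameters to which Theorem \ref{main} applies.

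Concretely, I would proceed in three steps. First, recall from \cite[\S 19]{GGP} the see-saw identity that converts the Hom-space attached to a higher codimension Bessel or Fourier--Jacobi datum into an equivalent Hom-space for a codimension one pair, after inducing a suitable character from a parabolic subgroup; track the $L$-parameters on both sides and observe that the parameter of the basic subproblem is obtained from the original parameter by stripping off the factors corresponding to the inducing data. Second, apply (B) or Theorem \ref{main} on that basic subproblem to produce the unique distinguished member of the Vogan packet and to identify its associated character $\eta$. Third, pull back this information through the see-saw to read off existence and uniqueness for the original higher codimension problem, verifying along the way that the recipe for $\eta$ matches the one predicted in \cite{GGP}.

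The hard part will be checking that the hypotheses on $\phi$ and $\cl{\phi}$ are inherited by the basic subproblem. Temperedness (for part (\ref{main1}) of Theorem \ref{main}) is immediate, since the reduced parameter is obtained by discarding tempered inducing data. For part (\ref{main2}), one must verify that the reduced parameter still admits a quadratic character $\chi$ with $L(s,\phi\otimes\chi)$ regular at $s=1$; this should follow because the $L$-factors contributed by the inducing data are of discrete series type and do not create an obstructing pole at $s=1$ after a suitable twist, but the bookkeeping here is the most delicate point. The auxiliary assumptions (NQ) and (IS) transfer without change since they are formulated uniformly across $L$-parameters of the same type. Once these inheritance properties are in hand, the conclusion follows verbatim from the argument in \cite[Theorem 19.1]{GGP} restricted to the relevant class of parameters.
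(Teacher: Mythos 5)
Your proposal takes essentially the same route as the paper: the paper deduces the corollary simply by observing that the reduction argument of \cite[Theorem 19.1]{GGP}, which brings Conjectures 17.1 and 17.3 down to the basic cases (B) and (FJ), goes through when restricted to the parameter classes covered by (B) and Theorem \ref{main}, which is exactly your plan. The paper gives no further detail, so your discussion of how the hypotheses (temperedness, or the regularity of $L(s,\phi\otimes\chi)$ at $s=1$) are inherited under the reduction is, if anything, more explicit than the paper's own treatment.
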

\par

We have some remarks on these assumptions.
The recent result of Arthur \cite{A}, 
(LLC) for quasi-split symplectic and special orthogonal groups
is almost completely known.
For any quasi-split connected reductive groups, 
(GPR) has been established in \cite[Appendix B]{GI2}.
The property (NQ) is an intertwining relation for non-quasi-split special orthogonal groups.
For quasi-split orthogonal groups and symplectic groups, 
the intertwining relation easily follows from results of Arthur \cite{A}.
%The properties analogous to (IS) have been established 
%by M\oe glin--Waldspurger \cite{MW} for $\Sp(W_{2n})$ and $\SO(V_{m})$, and
%by Heiermann \cite{H} for any quasi-split connected reductive groups.
\par

We describe the main idea of the proof of Theorem \ref{main}.
The method is analogous to the work of Gan--Ichino \cite{GI2}.
As in that paper, by the local theta correspondence,
the Bessel and the Fourier--Jacobi cases of GGP conjectures are
related.
More precisely, there exists a see-saw diagram
\[
\xymatrix{
   \Mp(W_{2n})\times_{\{\pm1\}}\Mp(W_{2n})   \ar@{-}[d]    &  &  \orth(V_{2n+2}) \ar@{-}[d]     \\
     \Sp(W_{2n}) \ar@{-}[urr]      &  &  \orth(V_{2n+1})\times\orth(V_1)\ar@{-}[ull]   \\
}
\]
with $\disc(V_1)=-1$, and the associated see-saw identity holds:
\[
\Hom_{\Sp(W_{2n})}(\Theta_{\psi,V_{2n+1},W_{2n}}(\tau)\otimes\omega_{-\psi},\pi)
\cong
\Hom_{\orth(V_{2n+1})}(\Theta_{\psi,V_{2n+2},W_{2n}}(\pi),\tau)
\]
for irreducible smooth representations $\pi$ of $\Sp(W_{2n})$ and $\tau$ of $\orth(V_{2n+1})$.
The left-hand side of the see-saw identity is related to the Fourier--Jacobi case (FJ),
whereas
the right-hand side is related to the Bessel case (B).
Therefore, if we knew the local theta correspondence for $(\Sp(W_{2n}),\orth(V_{2n+2}))$ and
$(\orth(V_{2n+1}),\Mp(W_{2n}))$ explicitly, 
then the see-saw identity would give the precise relation of (FJ) and (B).
\par

More precisely, one considers the following statements:
\begin{enumerate}
\item[($\Theta$)]
If $\phi_{\pi}$ (\resp $\phi_\tau$) is a generic $L$-parameter of $\Sp(W_{2n})$ 
(\resp $\SO(V_{2n+1})$)
and $\pi\in\Pi_{\phi_\pi}$ (\resp $\tau\in\Pi_{\tau}$),
then the big theta lift $\Theta_{\psi,V_{2n+2},W_{2n}}(\pi)$
(\resp $\Theta_{\psi,V_{2n+1},W_{2n}}(\tau)$)  
is irreducible (if it is nonzero).
\item[(Mp)]
If $\tau|\SO(V_{2n+1})$ has parameter $(\phi_\tau,\eta_\tau)$ and 
$\theta_{\psi,V_{2n+1},W_{2n}}(\tau)$ has parameter $(\cl{\phi},\cl{\eta})$,
then $(\cl{\phi},\cl{\eta})$ can be described in terms of $(\phi_\tau,\eta_\tau)$ explicitly.
\item[(P1)]
Likewise, if $\pi$ has parameter $(\phi_\pi,\eta_\pi)$ and 
$\sigma\coloneqq\theta_{\psi,V_{2n+2},W_{2n}}(\pi)|\SO(V_{2n+2})$ 
has parameter $(\phi_\sigma,\eta_\sigma)$,
then $(\phi_\sigma,\eta_\sigma)$ can be described in terms of $(\phi_\pi,\eta_\pi)$ explicitly.
\item[(G)]
If $\phi_\pi$ and $\cl{\phi}$ are generic parameters, then 
so are $\phi_\sigma$ and $\phi_\tau$.
\end{enumerate}
Note that these statements are not true in general.
See Propositions \ref{G} and \ref{TMP} below.
\par

The statement (G) is needed to use the Bessel case (B).
For $\cl\phi$, by definition of generic parameters for $\Mp(W_{2n})$, 
the statement (G) is true.
(See \S \ref{Mp}.)
However, (G) is not always true for $\phi_\pi$.
The last assumption in Theorem \ref{main} $(2)$ implies (G) for $\phi_\pi$.
More precisely, by using (GPR) for $\Sp(W_{2n})$ and $\SO(V_{2n+2})$, 
we have the following:
\begin{prop}[Lemma \ref{L(1,Ad)}]\label{G}
Let $\phi_\pi$ be a generic $L$-parameter of $\Sp(W_{2n})$
and $\chi$ be the discriminant character of $V_{2n+2}$.
Then the $L$-parameter $\phi_\sigma$ of $\SO(V_{2n+2})$ given by $\phi_\pi$
is generic if and only if
the local $L$-function $L(s,\phi_\pi\otimes\chi)$ is regular at $s=1$.
\end{prop}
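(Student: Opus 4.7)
The plan is to translate the genericity condition into a statement about the adjoint $L$-function via (GPR), and then carry out a short exterior-square calculation.

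By (GPR), $\phi_\pi$ (\resp $\phi_\sigma$) is generic if and only if $L(s, \phi_\pi, \mathrm{Ad})$ (\resp $L(s, \phi_\sigma, \mathrm{Ad})$) is regular at $s = 1$. I would then invoke the explicit formula underlying (P1): viewing $\phi_\pi \colon \WD_F \to \SO_{2n+1}(\C)$ as a $(2n+1)$-dimensional orthogonal representation with trivial determinant, the parameter of $\SO(V_{2n+2})$ attached to $\phi_\pi$ is
\[
\phi_\sigma \;=\; \phi_\pi \oplus \chi,
\]
a $(2n+2)$-dimensional orthogonal representation with $\det \phi_\sigma = \chi$, matching the discriminant character of $V_{2n+2}$.

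Since the adjoint representation of $\SO_N(\C)$ on $\mathfrak{so}_N(\C)$ coincides with the exterior square of the standard representation, we have $L(s, \phi_\pi, \mathrm{Ad}) = L(s, \Lambda^2 \phi_\pi)$ and $L(s, \phi_\sigma, \mathrm{Ad}) = L(s, \Lambda^2 \phi_\sigma)$. Combining this with the decomposition $\Lambda^2(\phi_\pi \oplus \chi) = \Lambda^2 \phi_\pi \oplus (\phi_\pi \otimes \chi)$ and multiplicativity of local $L$-factors yields the key identity
\[
L(s, \phi_\sigma, \mathrm{Ad}) \;=\; L(s, \phi_\pi, \mathrm{Ad}) \cdot L(s, \phi_\pi \otimes \chi).
\]
Since the first factor is regular at $s = 1$ by the genericity of $\phi_\pi$ and (GPR), the left-hand side is regular at $s=1$ precisely when $L(s, \phi_\pi \otimes \chi)$ is, and the conclusion follows by one more application of (GPR), this time to $\phi_\sigma$.

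The only non-routine ingredient is the identification $\phi_\sigma = \phi_\pi \oplus \chi$, which must be extracted cleanly from the see-saw picture behind (P1) and reconciled with Arthur's parameterization of $\Sp(W_{2n})$; this is the main obstacle. Once it is in place, the remaining argument reduces to the two-line $\Lambda^2$ computation above.
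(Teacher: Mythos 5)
Your proof is essentially the paper's: the whole content of Lemma \ref{L(1,Ad)} is the decomposition $\Ad\circ\phi_\sigma=(\Ad\circ\phi_\pi)\oplus(\phi_\pi\otimes\chi)$, the resulting identity $L(s,\phi_\sigma,\Ad)=L(s,\phi_\pi,\Ad)\cdot L(s,\phi_\pi\otimes\chi)$, and two applications of (GPR), exactly as you propose. One correction, though: the parameter of $\SO(V_{2n+2})$ attached to $\phi_\pi$ by the theta correspondence is $\phi_\sigma=(\phi_\pi\otimes\chi)\oplus\1$, not $\phi_\pi\oplus\chi$ (Proposition \ref{param}, i.e.\ \cite[Theorem C.5]{GI1}); your candidate is its twist by $\chi$. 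This slip is harmless for the present statement, since $\Lambda^2(\rho\otimes\chi)\cong\Lambda^2(\rho)$ for quadratic $\chi$, so both candidates have the same adjoint $L$-function and your key identity, hence the conclusion, is unchanged; but the recipe should be quoted correctly. Relatedly, this identification is not an "obstacle to be extracted from the see-saw picture": it is precisely the weak form of (P1), already available from \cite[Theorem C.5]{GI1} (see Proposition \ref{TMP}), so nothing beyond the $\Lambda^2$ computation and (GPR) is actually needed.
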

In particular, if $\phi_\pi$ is tempered, then $\phi_\sigma$ is generic.
Indeed, $\phi_\sigma$ is also tempered.
\par

In \cite{P1}, D. Prasad has formulated a precise conjecture regarding (P1).
He also has formulated one regarding (P2) as follows:
\begin{enumerate}
\item[(P2)]
Let $\cl{\sigma}$ be an irreducible representation of $\orth(V_{2n})$.
If an irreducible constituent of $\sigma\coloneqq\cl{\sigma}|\SO(V_{2n})$ 
has parameter $(\phi_{\sigma},\eta_\sigma)$ and 
$\theta_{\psi,V_{2n+1},W_{2n}}(\cl{\sigma})$ has parameter $(\phi_\pi,\eta_\pi)$,
then $(\phi_\pi,\eta_\pi)$ can be described in terms of $(\phi_\sigma,\eta_\sigma)$ explicitly.
\end{enumerate}
We shall also denote by (weak P1) (\resp (weak P2)) the part of Conjecture (P1) (\resp (P2))
concerning only the correspondence of $L$-parameters.
\par

\begin{prop}\label{TMP}
We have the following:
\begin{enumerate}
\item$(\text{\cite[Theorem $8.1$ $(\mathrm{ii})$]{GS1} and 
\cite[Proposition $C.4$ $(\mathrm{i})$]{GI1}})$
The statement $(\Theta)$ holds for tempered parameters.
\item$(\text{\cite[Proposition $C.4$ $(\mathrm{ii})$]{GI1} $+$
\cite[p. 40 Th\'eor\`eme $(\mathrm{i})$]{MW}})$
If $\phi_\sigma$ is generic, then $(\Theta)$ holds for $\phi_\pi$.
\item$(\text{\cite[Theorem $8.1$ $(\mathrm{iii})$]{GS1} and Lemma $\ref{IS-irr}$})$
%The assumption $(IS)$ implies 
If $\phi_\tau$ is generic, then $(\Theta)$ for $\phi_\tau$.
\item$(\text{\cite[Corollary $1.2$]{GS1} and \cite[Theorem $C.5$]{GI1}})$
The statements $(Mp)$, $(weak\ P1)$ and $(weak\ P2)$ hold.
\end{enumerate}
\end{prop}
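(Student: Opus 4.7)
The plan is to assemble Proposition \ref{TMP} directly from the cited literature, since each of the four clauses is the statement (or an immediate consequence) of an existing theorem. The only real work is to verify that the hypotheses and normalisations in those references match those in force here: in particular, the choice of Weil representation attached to the symplectic form $2\pair{\cdot,\cdot}_{W_{2n}}$, the Gan--Savin parametrisation of $\Irr(\Mp(W_{2n}))$ via theta correspondence with $\SO(V_{2n+1})$, and the identification of discriminant characters on the orthogonal side. Once these conventions are matched, the four clauses can be read off one by one.

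For clause (1), irreducibility of $\Theta_{\psi,V_{2n+1},W_{2n}}(\tau)$ for tempered $\tau$ is exactly \cite[Theorem 8.1 (ii)]{GS1}, which rests on Howe duality together with the behaviour of tempered representations under Jacquet modules; the mirror statement for $\Theta_{\psi,V_{2n+2},W_{2n}}(\pi)$ is \cite[Proposition C.4 (i)]{GI1}, proved via the same circle of ideas adapted to $(\Sp,\orth)$. For clause (2), the key input is \cite[Proposition C.4 (ii)]{GI1}: it reduces irreducibility of the non-tempered big theta lift to a statement about the Langlands data of $\pi$, which in turn is controlled by the Moeglin--Waldspurger dichotomy for generic parameters \cite[p.40 Théorème (i)]{MW}; the hypothesis that $\phi_\sigma$ is generic is precisely what allows one to invoke that dichotomy. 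For clause (3), the irreducibility of standard modules assumed in (IS) feeds directly into \cite[Theorem 8.1 (iii)]{GS1}, which is stated conditionally on exactly this type of assumption.

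Clause (4) is the most substantive compilation. The $L$-parameter matching (Mp) is by construction the content of \cite[Corollary 1.2]{GS1}: the Gan--Savin parametrisation of $\Irr_\gen(\Mp(W_{2n}))$ is defined through $\theta_{\psi,V_{2n+1},W_{2n}}$, so compatibility of parameters is automatic once one checks that the character $\psi$ and the parity conventions for $\phi_M$ are as specified. The (weak P1) and (weak P2) statements, which only concern the underlying $L$-parameter and not the character on the component group, are then \cite[Theorem C.5]{GI1}; that result is proved by analysing the behaviour of theta lifts under parabolic induction, using the Kudla filtration and the conservation relation to pin down the first occurrence index, and then matching parameter summands accordingly.

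The only step that presents any real difficulty is the verification step in clause (2): one must be careful that the form of the Moeglin--Waldspurger result used in \cite{GI1} is robust enough to cover the generic (not merely tempered) parameters considered here. I would organise the argument so that clause (2) is invoked only after (G) has been checked for the relevant parameter, ensuring that $\phi_\sigma$ is genuinely generic and not merely of Arthur type with trivial $\SL_2$; otherwise the remaining bookkeeping is routine.
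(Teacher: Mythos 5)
Your proposal is correct and follows essentially the same route as the paper: Proposition \ref{TMP} is assembled by citation, with clause (2) obtained from \cite[Proposition C.4 (ii)]{GI1} together with the Moeglin--Waldspurger irreducibility of standard modules attached to generic packets (as in Lemma \ref{big theta}), and clause (3) from \cite[Theorem 8.1]{GS1} combined with (IS) as in \S\ref{Mp}. The only caveat is terminological: the input from \cite{MW} is the standard-module irreducibility theorem for generic parameters, not a ``dichotomy,'' but this does not affect the argument.
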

\par

Using the see-saw identity above and Proposition \ref{TMP},
we reduce Theorem \ref{main} and (P2) to Conjecture (P1) and (B).
In view of 
results of Waldspurger \cite{Wa2, Wa3, Wa4, Wa5}
and M\oe glin--Waldspurger \cite{MW},
it is enough to show the statement (P1).
Finally, we show that:
\begin{thm}
Conjecture $(P1)$ holds. 
\end{thm}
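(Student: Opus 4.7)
The plan is to combine the see-saw identity
\[
\Hom_{\Sp(W_{2n})}(\Theta_{\psi,V_{2n+1},W_{2n}}(\tau)\otimes\omega_{-\psi},\pi) \cong \Hom_{\orth(V_{2n+1})}(\Theta_{\psi,V_{2n+2},W_{2n}}(\pi),\tau)
\]
with the Bessel case (B), the Gan--Savin parametrization (Mp), the already-known correspondence of $L$-parameters (weak P1), and the irreducibility and non-vanishing statements collected in Proposition \ref{TMP}. Since (weak P1) already gives the $L$-parameter $\phi_\sigma$ of $\sigma \coloneqq \theta_{\psi,V_{2n+2},W_{2n}}(\pi)|\SO(V_{2n+2})$, the only task is to pin down the component-group character $\eta_\sigma$ and verify that it agrees with the character predicted by Prasad.

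First I would reduce to the case where $\pi$ is tempered. By the Langlands classification, a generic $\pi$ is the Langlands quotient of a standard module $\Ind_P^{\Sp(W_{2n})}(\tau_\lambda \boxtimes \pi_0)$ with $\pi_0$ tempered. M\oe glin's theorem on the compatibility of the local theta correspondence with parabolic induction (Proposition \ref{TMP}(2)) realizes $\theta(\pi)$ as the analogous Langlands quotient on $\orth(V_{2n+2})$. Both the Vogan parametrization and Prasad's character formula are designed to respect parabolic induction, so (P1) for a general generic $\pi$ follows from (P1) for tempered $\pi$.

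For tempered $\pi$, I would fix $\pi$ with parameter $(\phi_\pi,\eta_\pi)$ and let $\tau$ vary over tempered $L$-packets on $\orth(V_{2n+1})$. Both sides of the see-saw identity are of dimension at most $1$ by the multiplicity-one results of Aizenbud--Gourevitch--Rallis--Schiffmann, Waldspurger, and Sun. By the Bessel case (B), the right-hand side equals $1$ precisely when $\eta_\tau$ is Waldspurger's distinguished character on $\pi_0(S_{\phi_\tau})$ determined by $(\phi_\sigma, \eta_\sigma, \phi_\tau)$. On the left, (Mp) identifies the parameter of $\Theta_{\psi, V_{2n+1}, W_{2n}}(\tau)$ explicitly as a representation of $\Mp(W_{2n})$. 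Comparing the two sides as $\tau$ varies yields, at each $a \in \pi_0(S_{\phi_\sigma})$, an identity of local $\varepsilon$-factors which should match Prasad's formula for $\eta_\sigma(a)$.

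The main obstacle is that the left-hand side of the see-saw is itself a Fourier--Jacobi Hom, so a naive evaluation would invoke the conjecture (FJ) that is ultimately to be deduced from (P1). To avoid this circularity, I would anchor the computation in an explicit base case---for instance a discrete series $\pi$ attached to a parameter with very few Jordan blocks, where both $\Theta_{\psi,V_{2n+2},W_{2n}}(\pi)$ and $\Theta_{\psi,V_{2n+1},W_{2n}}(\tau)$ can be described directly from the Schr\"odinger model of the Weil representation---and verify Prasad's formula there by an independent Howe-duality calculation. The identity for arbitrary tempered $\pi$ is then propagated from the anchor by intertwining-operator and Jacquet-module arguments (following the template of Gan--Ichino), using (NQ) to control the non-quasi-split forms of $\orth(V_{2n+2})$. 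The delicate point is to show that Prasad's $\varepsilon$-factor prescription is preserved under these propagation steps; this ultimately reduces to standard multiplicativity and functional equations for the local root numbers in play.
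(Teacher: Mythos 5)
Your overall logical architecture is inverted relative to the paper, and this creates a genuine gap. In the paper, the see-saw identity together with the Bessel case (B) is used to \emph{deduce} (FJ) and (P2) \emph{from} (P1); the proof of (P1) itself makes no use of (B), (Mp) or the see-saw. Your plan is to run the see-saw the other way: fix $\pi$, vary $\tau$, evaluate the right-hand side by (B), and read off $\eta_\sigma$. But the left-hand side $\Hom_{\Sp(W_{2n})}(\Theta_{\psi,V_{2n+1},W_{2n}}(\tau)\otimes\omega_{-\psi},\pi)$ is exactly a Fourier--Jacobi period for $\Mp(W_{2n})\times\Sp(W_{2n})$, and its non-vanishing criterion is the statement (FJ) that is only available \emph{after} (P1) is known. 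You correctly flag this circularity, but the proposed repair --- verify an anchor case explicitly and then ``propagate'' by intertwining-operator and Jacquet-module arguments --- leaves essentially all of the real work unspecified. The propagation step is not a routine multiplicativity check of $\varepsilon$-factors: it is the entire technical content of the paper's proof, namely the construction of an explicit $\Sp(W')\times\orth(V')$-equivariant map $\TT_s$ out of mixed models of the Weil representation into the induced representations $\Ind_P^{\orth(V')}(\tau_s\otimes\cl\sigma^\vee)$ and $\Ind_Q^{\Sp(W')}(\tau_s\chi_V\otimes\pi)$, the computation (via Godement--Jacquet zeta integrals and a careful choice of Haar measures, Weyl-element representatives and normalizing factors) of the exact constant of proportionality between the two unnormalized intertwining operators intertwined by $\TT_s$, and the bookkeeping of Whittaker data (Kaletha's results and their extension to non-trivial pure inner twists in the appendix, plus (NQ) for the non-quasi-split orthogonal groups).

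Moreover, once that intertwining-operator computation is done, no anchor is needed at all: Arthur's local intertwining relation identifies the eigenvalue of the normalized self-intertwining operator on each irreducible constituent with the value of the Vogan character at the corresponding element of the component group, on \emph{both} the symplectic and the orthogonal side, so the identity $R(w',\tau\chi_V\otimes\pi)\TT_0=\omega_\tau(c)\,\TT_0(\cdot,R(w,\tau\otimes\cl\sigma^\vee)\cdot)$ determines $\eta_\sigma(a)$ in terms of $\eta_\pi(a)$ absolutely, for every semisimple $a\in S_{\phi_\pi}$, with the residual discrepancies $\omega_\tau(c)$, $\omega_\tau(c_0)$ absorbed by the change-of-Whittaker-datum characters $\eta_{c}$. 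So the see-saw/(B)/(Mp) portion of your plan is both circular as stated and superfluous once the propagation machinery is supplied, while the machinery itself --- which is where the theorem actually lives --- is missing from your proposal. Your reduction of the non-tempered case to the tempered case via compatibility of theta lifts with Langlands quotients is fine and matches the paper.
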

\par
In \cite{GI2}, Gan--Ichino proved Prasad's conjecture for unitary dual pairs, 
and conclude the GGP conjecture for the skew-hermitian case.
The proof of Prasad's conjecture for unitary dual pairs (for tempered case) 
is an induction on the dimension of hermitian spaces, 
which consists of two steps; 
a global argument with using Arthur's multiplicity formula, and
a local argument with using intertwining relations.
The use of Arthur's multiplicity formula is the main novelty of that paper.
Of course, one can prove (P1) by a similar way, but the global argument is so difficult
for several reasons.
In this paper, 
we prove (P1) by a new method not using global arguments but
using only a local argument.
It makes the proof of (P1) much easier.
\par

The proof of (P1) for tempered parameters in this paper is as follows:
For given tempered $\pi$, $\sigma = \theta_{\psi,V_{2n+2},W_{2n}}(\pi) \not=0$ 
and a semi-simple element $a \in S_{\phi_\pi}$, 
consider a suitable irreducible tempered representation $\tau_a$ of $\GL_k(F)$ and
the induced representations 
\[
\pi_a = \Ind_Q^{\Sp(W_{2n+2k})}(\tau_a\chi_V \otimes \pi)
\quad\text{and}\quad 
\sigma_a = \Ind_P^{\orth(V_{2n+2+2k})}(\tau_a \otimes \sigma).
\]
One can show that:
\begin{itemize}
\item
$\pi_a$ and $\sigma_a$ are irreducible; 
\item
$\sigma_a = \theta_{\psi,V_{2n+2+2k},W_{2n+2k}}(\pi_a)$;
\item
$\pi_a$ has parameter $(\phi_\pi',\eta_{\pi}')$ with 
$\pi_0(S_{\phi_\pi'}) \supset \pi_0(S_{\phi_{\pi}})$ and $\eta_{\pi}'|\pi_0(S_{\phi_{\pi}}) = \eta_\pi$;
\item
$\sigma_a|\SO(V_{2n+2+2k})$ has parameter $(\phi_\sigma',\eta_{\sigma}')$ with 
$\pi_0(S_{\phi_\sigma'}) \supset \pi_0(S_{\phi_{\sigma}})$ and 
$\eta_{\sigma}'|\pi_0(S_{\phi_{\sigma}}) = \eta_\sigma$.
\end{itemize}
Finally, we obtain a relation between $\eta'_\sigma$ and $\eta'_\pi$
by using the intertwining relations of $\pi_a$ and $\sigma_a$.
Since $a \in S_{\phi_\pi}$ is arbitrary, we can describe $\eta_\sigma$ in terms of $\eta_\pi$ explicitly.
Conjecture (P1) for general parameters follows from that for tempered parameters
by \cite[Proposition C.4]{GI1}.
\par

In the proof of Prasad's conjecture, we need to know $\iota_{\w_0}\circ\iota_\w^{-1}$
for two Whittaker data $\w$ and $\w_0$ of $\Sp(W_{2n})$ or $\SO(V_{2n+2})$.
By Kaletha \cite{Ka1},
it is known for the local Langlands correspondence 
for quasi-split symplectic and special orthogonal groups.
However, in the local Langlands correspondence proposed by Vogan, certain issues arise.
\par

We explain these issues in more general setting.
Let $G$ be a quasi-split connected reductive group over 
a non-archimedean local field $F$ of characteristic zero.
A pure inner twist of $G$ is a pair $(G', \psi, z)$, where
$G'$ is an algebraic group over $F$, $z\in Z^1(F,G)$ is a $1$-cocycle, and
\[
\psi\colon G\rightarrow G'
\]
is an isomorphism over $\overline{F}$ such that
$\psi^{-1}\circ\sigma(\psi)=\Int(z_\sigma)$ for any $\sigma\in \Gal(\overline{F}/F)$.
We call such a $G'$ a pure inner form of $G$.
In contrast to inner forms, $G'$ may be also quasi-split over $F$ 
even if the pure inner twist is non-trivial.
For example, if $V_{2n}$ is an orthogonal space with non-trivial discriminant,
then $\SO(V_{2n})$ has two pure inner forms both of which are quasi-split.
Note that in this situation, $G$ and $G'$ are isomorphic over $F$.
Hence we may regard an $L$-parameter $\phi$ of $G$ as an $L$-parameter $\phi'$ of $G'$,
and so that we should obtain two Vogan $L$-packets 
$\Pi_\phi$ of $G$ and $\Pi_{\phi'}$ of $G'$, respectively.
These are subsets of the same set $\sqcup_{G_1}\Irr(G_1(F))$, 
where $G_1$ runs over pure inner forms of $G$ (hence of $G'$).
Do these two $L$-packets coincide?
If so, for Whittaker data $\w$ and $\w'$ for $G$ and $G'$, respectively,
what is the map
\[
\Irr(\pi_0(S_\phi))\xrightarrow{\iota_{\w}^{-1}}
\Pi_\phi=\Pi_{\phi'}
\xrightarrow{\iota_{\w'}}\Irr(\pi_0(S_\phi))?
\] 
Note that Kaletha's result is the case when $G'=G$,
but the problems we need to know are the case
when $G'$ is not necessarily the trivial pure inner form of $G$.
In Appendix \ref{app}, we will give an answer in a special case.
\par

In Appendix \ref{appB}, we show two results on representations of metaplectic groups.
The one is an analogue of a conjecture of Gross--Prasad and Rallis (Theorem \ref{GPRforMp}).
The other is a sufficient condition for the irreducibility of standard modules (Theorem \ref{IS}).

\subsection*{Acknowledgments}
The author is grateful to Professor Atsushi Ichino for his helpful comments and suggestions. 
Thanks are also due to the referees for helpful comments.
This work was supported by the Foundation for Research Fellowships of Japan Society for the Promotion of Science for Young Scientists (DC1) Grant 26-1322. 
\par

\subsection*{Notations}
Let $F$ be a non-archimedean local field with characteristic zero, 
$\oo_F$ be the ring of integers of $F$, 
$\varpi$ be a uniformizer, 
$q$ be the number of elements in the residue class field $\oo_F/\varpi\oo_F$ 
and $|\cdot|_F$ be the normalized absolute value on $F$ so that $|\varpi|_F=q^{-1}$.
We denote by $\Gamma$, $W_F$ and $\WD_F=W_F\times\SL_2(\C)$ 
the absolute Galois, Weil and Weil--Deligne groups of $F$, respectively.
Fix a non-trivial additive character $\psi$ of $F$. 
For $c\in F^\times$, 
we define an additive character $\psi_c$ or $c\psi$ of $F$ by
\[
\psi_c(x)=c\psi(x)=\psi(cx).
\]
Moreover we set $\chi_c=(\cdot,c)$ to be the quadratic character of $F^\times$ 
associated to $c\in F^\times/F^{\times2}$.
Here, $(\cdot,\cdot)$ is the quadratic Hilbert symbol of $F$.
For a totally disconnected locally compact group $G$, 
we denote the set of equivalence classes of irreducible smooth
representations of $G$ by $\Irr(G)$.
If $G$ is the group of $F$-points of a linear algebraic group over $F$,
we denote by $\Irr_\temp(G)$ the subset of $\Irr(G)$
of classes of irreducible tempered representations.
As a representation of $G$ (or its double cover),
we use the symbol $\pi$, $\cl{\pi}$, $\sigma$, $\cl{\sigma}$ or $\tau$.
The contragredient representation of $\pi$ is denoted by $\pi^\vee$.
One should not confuse $\cl{\pi}$ with $\pi^\vee$.
For a topological group $H$,
we define the component group of $H$ by $\pi_0(H)=H/H^\circ$, 
where $H^\circ$ is the identity component of $H$.
The Pontryagin dual (i.e., the character group)
of a finite abelian group $A$ is denoted by $A^D$ or $\widehat{A}$.
\par

%\section{Preliminaries}\label{section spaces}
\section{Preliminaries}\label{section spaces}
In this section, we recall some notions.

%\subsection{Whittaker data}
\subsection{Whittaker data}
Let $G$ be a quasi-split connected reductive group over $F$ and
$B=TU$ be an $F$-rational Borel subgroup of $G$,
where $T$ is a maximal $F$-torus and $U$ is the unipotent radical of $B$.
We denote the center of $G$ by $Z$.
We call a character $\mu$ of $U(F)$ generic if
its stabilizer in $T(F)$ is equal to $Z(F)$. 
A Whittaker datum of $G$ is a conjugacy class of a pair $\w=(B,\mu)$,
where
$B=TU$ is an $F$-rational Borel subgroup of $G$ and
$\mu$ is a generic character of $U(F)$.
We say that $\pi\in\Irr(G(F))$ is $\w$-generic
if 
\[
\Hom_{U(F)}(\pi,\mu)\not=0.
\]
If $\pi$ is $\w$-generic for some Whittaker datum $\w$ of $G$,
then we say that $\pi$ is generic.

%\subsection{Orthogonal spaces}\label{Quadratic}
\subsection{Orthogonal spaces}\label{Quadratic}
Let $V=V_m$ be  an orthogonal space of dimension $m$ over $F$, i.e., 
a vector space equipped with a non-degenerate symmetric bilinear form
\[
\pair{\cdot,\cdot}_V\colon V\times V\rightarrow F.
\]
We take a basis $\{e_1,\dots,e_m\}$ of $V$,
and define the discriminant of an orthogonal space $V$ by
\[
\disc(V)=\disc(V,\pair{\cdot,\cdot}_V)
=2^{-m}
(-1)^{\frac{m(m-1)}{2}}\det((\pair{e_i,e_j}_V)_{i,j})\bmod F^{\times2}\in F^\times/F^{\times2}.
\]
Note that this differs from $\disc(V,q)$ as in \cite[p. 41]{GGP},
which is not used in this paper.
Let $\chi_V=(\cdot,\disc(V))$ 
be the character of $F^\times$ associated with $F(\sqrt{\disc(V)})/F$.
We call $\chi_V$ the discriminant character of $V$.
\par

We denote the anisotropic kernel of $V$ by $V_{{\rm an}}$. 
The special orthogonal group $\SO(V)$ 
is quasi-split if and only if $\dim(V_{{\rm an}})\leq2$.
In this case, we choose a subset $\{v_i,v_i^*|i=1,\dots,n\}$ of $V$ such that
\[
\pair{v_i,v_j}_V=\pair{v_i^*,v_j^*}_V=0,\quad
\pair{v_i,v_j^*}_V=\delta_{i,j},
\]
where $n$ is the integer such that
$m=2n+1$ or $m=2n+2$.
We set
\[
X_k=Fv_1+\dots+Fv_k
\quad\text{and}\quad
X_k^*=Fv_1^*+\dots+Fv_k^*
\]
for $1\leq k\leq n$.
Let $V_{m-2n}$ be the orthogonal complement of $X_n\oplus X_n^*$ in $V$,
so that $V_{m-2n}$ is an orthogonal space of dimensional $m-2n$.
We denote by $B=TU$ the $F$-rational Borel subgroup of $\SO(V)$ stabilizing the complete flag
\[
0\subset \pair{v_1}\subset \pair{v_1,v_2}\subset\dots\subset\pair{v_1,\dots,v_n}=X_n,
\]
where $T$ is the $F$-rational torus stabilizing the lines $Fv_i$ for $i=1,\dots,n$.
\par

If $m=2n+1$, then there is a unique $T$-orbit of generic characters of $U$.
Let $V_1=Fe$.
We choose a generic character $\mu$ of $U$ such that
\[
\mu(u)=\psi(\pair{uv_2,v_1^*}_V+\dots+\pair{uv_n,v_{n-1}^*}_V+\pair{ue,v_n^*}_V)
\]
for $u\in U$.
We define a Whittaker datum $\w$ of $\SO(V)$ by $\w=(B,\mu)$.
\par

Next, we suppose that $m=2n+2$ and $\SO(V)$ is quasi-split.
Then there exist $c,d\in F^\times$ such that
\[
V_2\cong F[X]/(X^2-d)
\]
as vector spaces, and 
the pairing of $V_2$ is given by
\[
(\alpha,\beta)\mapsto \pair{\alpha,\beta}_{V_2}\coloneqq c\cdot\tr(\alpha\overline{\beta}),
\]
where $\beta\mapsto\overline{\beta}$ is the involution on $F[X]/(X^2-d)$
induced by $a+bX\mapsto a-bX$.
In this case,
we say that $V$ is type $(d,c)$. 
Note that $\disc(V)=d\bmod F^{\times2}$.
If $V$ is type $(d,c)$, then we can take $e,e'\in V_2$ such that 
\[
\pair{e,e}_V=2c,\quad 
\pair{e',e'}_V=-2cd
\quad\text{and}\quad
\pair{e,e'}_V=0.
\] 
Then we define a generic character $\mu_{c}$ of $U$ by
\[
\mu_{c}(u)=\psi(\pair{uv_2,v_1^*}_V+\dots+\pair{uv_n,v_{n-1}^*}_V+\pair{ue,v_n^*}_V).
\]
By \cite[\S12]{GGP}, the map $c'\mapsto \mu_{c'}$ gives a bijection (not depending on $\psi$)
\[
cN_{E/F}(E^\times)/F^{\times2}\rightarrow \text{\{$T$-orbits of generic characters of $U$\}},
\]
where $E=F(\sqrt{d})$.
Note that $V$ is both type $(d,c)$ and $(d,c')$ if and only if $c'\in c N_{E/F}(E^\times)$.
We define a Whittaker datum of $\SO(V)$ by $\w_c=(B,\mu_c)$.
Note that $\w_c$ does not depend on the choice of $\psi$.

%\subsection{Symplectic spaces}\label{Symplectic}
\subsection{Symplectic spaces}\label{Symplectic}
Let $W=W_{2n}$ be a symplectic space of dimension $2n$ over $F$, i.e., 
a vector space equipped with a non-degenerate symplectic form
\[
\pair{\cdot,\cdot}_W\colon W\times W\rightarrow F.
\]
The symplectic group $\Sp(W)$ is a split algebraic group over $F$.
We choose a basis $\{w_i,w_i^*|i=1,\dots,n\}$ of $W$ such that
\[
\pair{w_i,w_j}_W=\pair{w_i^*,w_j^*}_W=0,\quad
\pair{w_i,w_j^*}_W=\delta_{i,j}.
\]
We set
\[
Y_k=Fw_1+\dots+Fw_k
\quad\text{and}\quad
Y_k^*=Fw_1^*+\dots+Fw_k^*.
\]
\par

Let $B'=T'U'$ be the $F$-rational Borel subgroup of $\Sp(W)$ stabilizing the complete flag
\[
0\subset \pair{w_1}\subset \pair{w_1,w_2}\subset\dots\subset\pair{w_1,\dots,w_n}=Y_n,
\]
where $T'$ is the $F$-split torus stabilizing the lines $Fw_i$ for $i=1,\dots,n$.
For $c\in F^\times$, we define a generic character $\mu'_c$ of $U'$ by
\[
\mu'_c(u')=\psi(\pair{u'w_2,w_1^*}_W+\dots+\pair{u'w_n,w_{n-1}^*}_W+c\pair{u'w_n^*,w_n^*}_W).
\]
By \cite[\S12]{GGP}, the map $c\mapsto \mu'_c$ gives a bijection (depending on $\psi$)
\[
F^\times/F^{\times2}\rightarrow \text{\{$T'$-orbits of generic characters of $U'$\}}.
\]
We define a Whittaker datum of $\Sp(W)$ by $\w'_c=(B',\mu'_c)$.
We emphasize that $\w_c'$ depends on $\psi$.

%\subsection{Parabolic subgroups}\label{parabolic}
\subsection{Parabolic subgroups}\label{parabolic}
Fix a positive integer $k$. 
Let $V'=V_{2m'}$ be an orthogonal space with $\dim(V')=2m'=2m+2k$ and type $(d,c)$.
We set $X=X_k$ and $X^*=X_k^*$.
Let $V$ be the orthogonal complement of $X\oplus X^*$ in $V'$, so that
$V$ is an orthogonal space of dimensional $2m$ over $F$.
Let $P=P_k=M_PU_P$ be the maximal parabolic subgroup of $\orth(V')$ 
stabilizing $X$, where $M_P$ is the Levi component of $P$ stabilizing $X^*$.
We have
\begin{align*}
M_P&=\{m_P(a)\cdot h\ |\ a\in\GL(X), h\in\orth(V)\}, \\
U_P&=\{u_P(b)\cdot u_P(c)\ |\ b\in\Hom(V,X),c\in\Sym(X^*,X)\},
\end{align*}
where 
\[
m_P(a)=
\begin{pmatrix}
a&&\\
&1_{V}&\\
&&(a^*)^{-1}
\end{pmatrix},\ 
u_P(b)=
\begin{pmatrix}
1_X&b&-\frac{1}{2}bb^*\\
&1_{V}&-b^*\\
&&1_{X^*}
\end{pmatrix},\ 
u_P(c)=
\begin{pmatrix}
1_X&&c\\
&1_{V}&\\
&&1_{X^*}
\end{pmatrix}
\]
and 
\[
\Sym(X^*,X)=\{c\in\Hom(X^*,X)\ |\ c^*=-c\}.
\]
Here, the elements $a^*\in\GL(X^*)$, $b^*\in\Hom(X^*,V)$, and $c^*\in\Hom(X^*,X)$
are defined by requiring that
\[
\pair{ax,x'}_{V'}=\pair{x,a^*x'}_{V'},\ 
\pair{bv,x'}_{V'}=\pair{v,b^*x'}_{V'},\ 
\pair{cx',x''}_{V'}=\pair{x',c^*x''}_{V'}
\]
for $x\in X$, $x',x''\in X^*$ and $v\in V$.
Let $P^\circ=P\cap\SO(V')$ and $M_P^\circ=M_P\cap\SO(V')$,
so that $M_P^\circ\cong\GL(X)\times\SO(V)$.
We put
\[
\rho_P=\frac{2m+k-1}{2},\quad
w_P=
\begin{pmatrix}
&&-I_X\\
&1_{V}&\\
-I_X^{-1}&&
\end{pmatrix},
\]
where $I_X\in\Hom(X^*,X)$ is defined by $I_Xv_i^*=v_i$ for $1\leq i \leq k$.
Then the modulus character $\delta_P$ of $P$ is given by
\[
\delta_P(m_P(a)hu_P)=|\det(a)|_F^{2\rho_P}
\]
for $a\in\GL(X)$, $h\in\orth(V)$ and $u_P\in U_P$.
\par

Similarly, for a fixed positive integer $k$,
we let $W'=W_{2n'}$ be a symplectic space with $\dim(W')=2n'=2n+2k$
and we set $Y=Y_k$ and $Y^*=Y_k^*$.
We define $W$ with $\dim(W)=2n$, $Q=M_QU_Q\subset \Sp(W')$, 
$m_Q(a)$, $u_Q(b)$, $u_Q(c)$, 
$\Sym(Y^*,Y)=\{c\in\Hom(Y^*,Y)\ |\ c^*=-c\}$ and $I_Y$ as above.
We put
\[
\rho_Q=\frac{2n+k+1}{2},\quad
w_Q=
\begin{pmatrix}
&&-I_Y\\
&1_{W}&\\
I_Y^{-1}&&
\end{pmatrix}.
\]
Then the modulus character $\delta_Q$ of $Q$ is given by
\[
\delta_Q(m_Q(a')gu_Q)=|\det(a)|_F^{2\rho_Q}
\]
for $a'\in\GL(Y')$, $g\in \Sp(W)$ and $u_Q\in U_Q$.

%\subsection{Representations of $\SO(V)$ and $\orth(V)$}\label{SOvsO}
\subsection{Representations of $\SO(V)$ and $\orth(V)$}\label{SOvsO}
Let $V=V_m$ be an orthogonal space over $F$ of dimension $m$.
In this subsection, we recall some results about representations of $\SO(V)$ and $\orth(V)$.
Note that any irreducible representation of $\orth(V)$ is self-dual
by a result in \cite[Chapter 4.~$\mathrm{II}$.~1]{MVW}. 
We fix $\ep\in\orth(V)\setminus\SO(V)$.
\par

First we assume that $m=2n+1$ is odd.
Then we can take $\ep$ in the center of $\orth(V)$ and we have
\[
\orth(V)=\SO(V)\times\pair{\ep}.
\]
Hence, if $\cl{\sigma}\in\Irr(\orth(V))$, then 
$\cl{\sigma}|\SO(V)$ is also irreducible.
Moreover for $\sigma\in\Irr(\SO(V))$, 
there are exactly two extensions of $\sigma$ to $\orth(V)$.
In particular, $\sigma$ is self-dual.
\par

Next we assume that $m=2n$ is even.
For $\sigma\in\Irr(\SO(V))$, we denote by $\sigma^\ep$ the representation given by conjugating
$\sigma$ by $\ep$.
The next proposition follows from the Clifford theory
(e.g., see \cite[Lemma 4.1]{BJ}).
\begin{prop}\label{Clifford}
\begin{enumerate}
\item\label{epinv}
For $\sigma\in\Irr(\SO(V))$, the following are equivalent:
\begin{itemize}
\item
$\sigma^\ep\cong\sigma$;
\item
there exists $\cl{\sigma}\in\Irr(\orth(V))$ such that $\cl{\sigma}|\SO(V)\cong \sigma$;
\item
the induction $\Ind_{\SO(V)}^{\orth(V)}(\sigma)$ is reducible;
\item
$\Ind_{\SO(V)}^{\orth(V)}(\sigma)\cong \cl\sigma\oplus(\cl\sigma\otimes\det)$
for any $\cl\sigma\in\Irr(\orth(V))$ with $\cl{\sigma}|\SO(V)\cong \sigma$.
\end{itemize}
\item
For $\cl{\sigma}\in\Irr(\orth(V))$, the following are equivalent:
\begin{itemize}
\item
$\cl{\sigma}\otimes\det\cong\cl{\sigma}$;
\item
there exists $\sigma\in\Irr(\SO(V))$ such that $\Ind_{\SO(V)}^{\orth(V)}(\sigma)\cong \cl\sigma$;
\item
the restriction $\cl{\sigma}|\SO(V)$ is reducible;
\item
$\cl\sigma|\SO(V)\cong \sigma\oplus\sigma^\ep$
for any $\sigma\in\Irr(\SO(V))$ with $\Ind_{\SO(V)}^{\orth(V)}(\sigma)\cong\cl\sigma$.
\end{itemize}
\end{enumerate}
\end{prop}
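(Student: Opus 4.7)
The plan is to apply standard Clifford theory for the normal subgroup $\SO(V)$ of index $2$ in $\orth(V)$. The quotient $\orth(V)/\SO(V)$ is generated by the image of $\ep$, and the nontrivial character of this quotient pulls back to $\det$. The main tool is Mackey's decomposition, which here reads
\[
\Ind_{\SO(V)}^{\orth(V)}(\sigma)\big|_{\SO(V)} \cong \sigma \oplus \sigma^\ep,
\]
to be used in combination with Frobenius reciprocity.

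For part (1), I would prove the four conditions equivalent in a cycle. Starting from $\sigma^\ep \cong \sigma$, I would construct an extension $\cl\sigma$ to $\orth(V)$ by choosing any intertwiner $T\colon \sigma \to \sigma^\ep$, observing that $T^2$ is a $\sigma$-self-intertwiner hence a scalar by Schur's lemma, rescaling $T$ so that $T^2 = \id$, and then setting $\cl\sigma(\ep) := T$. Next, given any such extension $\cl\sigma$, Frobenius reciprocity produces embeddings of both $\cl\sigma$ and $\cl\sigma \otimes \det$ into $\Ind_{\SO(V)}^{\orth(V)}(\sigma)$; these two subrepresentations are distinct (they differ on $\ep$) and their dimensions add up to $2\dim\sigma$, giving the decomposition of the fourth bullet and in particular reducibility. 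Finally, if the induction is reducible, its two constituents each restrict by Mackey's formula to one of $\sigma$ or $\sigma^\ep$, and $\ep$-equivariance of a constituent forces $\sigma^\ep \cong \sigma$, closing the cycle.

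For part (2), I would exploit the identity $(\cl\sigma \otimes \det)|_{\SO(V)} = \cl\sigma|_{\SO(V)}$, so $\cl\sigma$ and $\cl\sigma \otimes \det$ always share the same restriction to $\SO(V)$. If $\cl\sigma|_{\SO(V)}$ is irreducible, equal to $\sigma$, then by part (1) $\sigma \cong \sigma^\ep$ and $\cl\sigma$, $\cl\sigma \otimes \det$ are the two distinct extensions of $\sigma$, so $\cl\sigma \otimes \det \not\cong \cl\sigma$ and $\cl\sigma$ cannot be an induced representation. Conversely, if $\cl\sigma|_{\SO(V)}$ is reducible, then by part (1) it decomposes as $\sigma \oplus \sigma^\ep$ with $\sigma \not\cong \sigma^\ep$; Frobenius reciprocity then identifies $\cl\sigma$ with $\Ind_{\SO(V)}^{\orth(V)}(\sigma)$, and the projection formula together with $\det|_{\SO(V)} = 1$ yields $\cl\sigma \otimes \det \cong \cl\sigma$.

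The only step that is not purely formal is the construction of the extension in part (1), where one has to extract a square root of a scalar in order to normalize the intertwiner $T$ so that $T^2 = \id$; this uses that the coefficient field is $\C$, and the two possible choices of square root are precisely what produce the two extensions $\cl\sigma$ and $\cl\sigma \otimes \det$. Everything else reduces to bookkeeping with Mackey's formula, Frobenius reciprocity, and Schur's lemma.
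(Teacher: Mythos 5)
Your proof is correct and follows precisely the Clifford-theoretic route that the paper itself invokes without writing out (it simply cites Clifford theory and \cite[Lemma 4.1]{BJ}): Mackey's formula $\Ind_{\SO(V)}^{\orth(V)}(\sigma)|_{\SO(V)}\cong\sigma\oplus\sigma^\ep$, Frobenius reciprocity in both directions (valid since $\SO(V)$ is open of finite index), and Schur's lemma to build the two extensions. Two small adjustments: since the representations here are generally infinite-dimensional, replace the dimension count in part (1) by the observation that the restriction has length two, so the two non-isomorphic subrepresentations $\cl\sigma$ and $\cl\sigma\otimes\det$ (distinct by Schur's lemma applied at $\ep$) already exhaust the induced module; and either choose $\ep$ with $\ep^2=1$ or normalize the intertwiner $T$ so that $T^2=\sigma(\ep^2)$ rather than $T^2=\id$.
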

If $\sigma\in\Irr(\SO(V))$ satisfies the conditions of Proposition \ref{Clifford} (\ref{epinv}),
we say that $\sigma$ is $\epinv$.
We summarize these results in Table \ref{tab1} below.
\begin{table}[htb]
\begin{center}
\caption{}\label{tab1}
\begin{tabular}{|c|c|c|} \hline
   $\sigma$ &$\epinv$&not $\epinv$\\ \hline\hline
     $\sigma^\ep$  &$\sigma\cong\sigma^\ep$  &$\sigma\not\cong\sigma^\ep$\\ \hline
     $\Ind_{\SO(V)}^{\orth(V)}(\sigma)$ &  $\cl\sigma\oplus(\cl\sigma\otimes\det)$   & irreducible  \\ \hline
     $\cl{\sigma}|\SO(V)=\sigma$ &exists& not exist\\\hline
    \end{tabular}
\end{center}
\end{table}
\par

Since any irreducible representation of $\orth(V)$ is self-dual,
for $\sigma\in\Irr(\SO(V))$, we have
\[
\sigma^\vee\cong \sigma 
\quad\text{or}\quad
\sigma^\vee\cong \sigma^\ep.
\]
More precisely, we have the following proposition.
\begin{prop}\label{contragredient}
Let $\sigma\in\Irr(\SO(V))$. 
Then we have
\[
\sigma^\vee\cong
\left\{
\begin{aligned}
&\sigma		\iif \dim(V)\equiv 0 \bmod 4,\\
&\sigma^\ep	\iif \dim(V)\equiv 2 \bmod 4.
\end{aligned}
\right.
\]
\end{prop}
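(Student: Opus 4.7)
The content is nontrivial only when $\sigma$ is not $\ep$-invariant: if $\sigma^\ep\cong\sigma$, Proposition \ref{Clifford} yields an extension $\cl\sigma\in\Irr(\orth(V))$ of $\sigma$, and self-duality of $\cl\sigma$ (from \cite[Ch.~4.II.1]{MVW}) restricts to give $\sigma^\vee\cong\sigma\cong\sigma^\ep$, so both clauses in the proposition coincide. In the remaining case $\sigma^\ep\not\cong\sigma$, the induction $\cl\sigma=\Ind_{\SO(V)}^{\orth(V)}(\sigma)$ is irreducible and self-dual, which forces $\sigma^\vee\in\{\sigma,\sigma^\ep\}$; the problem is to decide which alternative occurs as a function of $\dim V\bmod 4$.

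My plan is to realize the contragredient on $\orth(V)$ by inner conjugation and then check whether the conjugating element lies in $\SO(V)$. Choose a polarization $V=X\oplus V_{\rm an}\oplus X^*$ with $\dim X=\dim X^*=k$ as in \S\ref{Quadratic}, and let $g_0\in\orth(V)$ be the element swapping $v_i\leftrightarrow v_i^*$ and fixing $V_{\rm an}$ pointwise. A direct computation gives $\det(g_0)=(-1)^k$, and since $\dim V_{\rm an}$ is even one has $(-1)^k=(-1)^n$ where $\dim V=2n$. The M\oe glin--Vign\'eras--Waldspurger construction shows that $\pi^{g_0}\cong\pi^\vee$ for every $\pi\in\Irr(\orth(V))$, and restricting to $\SO(V)$ yields $\sigma^{g_0}\cong\sigma^\vee$ for every $\sigma\in\Irr(\SO(V))$.

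With this in hand the dichotomy follows by parity. If $\dim V=2n\equiv 0\pmod 4$, then $n$ is even and $g_0\in\SO(V)$, so conjugation by $g_0$ is inner and $\sigma^\vee\cong\sigma^{g_0}\cong\sigma$. If $\dim V=2n\equiv 2\pmod 4$, then $n$ is odd and $g_0\in\orth(V)\setminus\SO(V)$; writing $g_0=\ep h$ with $h\in\SO(V)$, conjugation by $g_0$ on $\SO(V)$ differs from conjugation by $\ep$ by an inner automorphism, so $\sigma^\vee\cong\sigma^{g_0}\cong\sigma^\ep$.

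The principal obstacle I expect is verifying that this explicit $g_0$ really implements the MVW involution on $\orth(V)$, especially when $V_{\rm an}\ne 0$. The standard argument proceeds via characters: one must check that $g_0 g g_0^{-1}$ is $\orth(V)$-conjugate to $g^{-1}$ for every regular semisimple $g$, which reduces to an explicit computation on a maximal torus (and, in the non-quasi-split case, to a reduction involving the anisotropic kernel). Once this is in place, the rest of the argument is a formal consequence of restriction and Clifford theory, as summarized in Proposition \ref{Clifford}.
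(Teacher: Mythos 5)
The statement you are proving is quoted in the paper from \cite[Proposition 5.3]{GP}, so a self-contained argument would be a genuinely different route; your strategy (implement $\sigma\mapsto\sigma^\vee$ by conjugation by an explicit element of $\orth(V)$ and read off its determinant) is indeed the standard way to prove it. However, as written the argument has a real gap. From the MVW self-duality of $\Irr(\orth(V))$ you cannot deduce $\sigma^{g_0}\cong\sigma^\vee$ for $\sigma\in\Irr(\SO(V))$: for $\pi\in\Irr(\orth(V))$ the relation $\pi^{g_0}\cong\pi^\vee$ is trivially true (conjugation by $g_0\in\orth(V)$ is inner and $\pi^\vee\cong\pi$), and restricting it to $\SO(V)$ only returns $\sigma^\vee\in\{\sigma,\sigma^\ep\}$, which you already knew. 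Likewise the criterion you propose to check, that $g_0gg_0^{-1}$ be $\orth(V)$-conjugate to $g^{-1}$, is vacuous: every element of $\orth(V)$ is a product of two involutions, hence is $\orth(V)$-conjugate to its inverse, so this can never distinguish $\sigma$ from $\sigma^\ep$. What the character argument actually requires is that $g_0gg_0^{-1}$ be $\SO(V)$-conjugate to $g^{-1}$ for a dense set of regular semisimple $g\in\SO(V)$; equivalently, that for such $g$ the elements $w\in\orth(V)$ with $wgw^{-1}=g^{-1}$ (a single coset of the centralizer, which is a torus contained in $\SO(V)$, so $\det w$ is well defined) have determinant equal to $\det(g_0)$.

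Moreover, with your specific $g_0$ (swap $v_i\leftrightarrow v_i^*$, identity on $V_{\rm an}$) this finer statement fails exactly when $\dim V_{\rm an}=2$, i.e.\ for the quasi-split non-split forms: a generic regular semisimple $g$ acts on $V_{\rm an}\cong E=F(\sqrt{d})$ through $\mu\in E^1$ with $\mu\neq\mu^{-1}$, and any $w$ inverting $g$ must act on $V_{\rm an}$ by the Galois reflection, so $\det w=(-1)^{k+1}=(-1)^n$, whereas $\det(g_0)=(-1)^k$; accordingly $\sigma^{g_0}\cong(\sigma^\vee)^\ep$ there, not $\sigma^\vee$. Your parity claim ``$(-1)^k=(-1)^n$ since $\dim V_{\rm an}$ is even'' is also false in this case, since $n=k+\dim(V_{\rm an})/2$, so it needs $\dim V_{\rm an}\equiv0\bmod 4$. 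The two slips cancel, which is why your final dichotomy is the true one, but the proof is broken precisely in a case the paper needs ($V$ of type $(d,c)$ with $d\notin F^{\times2}$). The repair is to prove directly that for generic regular semisimple $g$ every inverting $w$ has $\det w=(-1)^n$ (a swap of determinant $-1$ on each of the $k$ hyperbolic planes, the Galois reflection of determinant $-1$ on a $2$-dimensional anisotropic kernel, and an element of determinant $+1$ on a $4$-dimensional anisotropic kernel), and then run the character comparison using $\SO(V)$-conjugacy; this yields $\sigma^\vee\cong\sigma$ for $n$ even and $\sigma^\vee\cong\sigma^\ep$ for $n$ odd, i.e.\ the statement of \cite[Proposition 5.3]{GP}.
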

\begin{proof}
See \cite[Proposition 5.3]{GP}.
\end{proof}
\par

The group $\orth(V)$ acts on $\Irr(\SO(V))$ by conjugation.
\begin{lem}\label{well-defined}
Let $n>0$ and $V=V_{2n}$ be an orthogonal space.
\begin{enumerate}
\item\label{well1}
For $(\sigma,\VV_{\sigma})\in\Irr(\SO(V))$ and $\ep\in\orth(V)\setminus\SO(V)$, we have
\[
\Ind_{\SO(V)}^{\orth(V)}(\sigma)\cong\Ind_{\SO(V)}^{\orth(V)}(\sigma^\ep).
\]
\item\label{well2}
We put $V'=X_1+V+X_1^*$ with $\dim(V')=2n+2$.
Let $P=P_1=M_PU_P\subset\orth(V')$ be a parabolic subgroup
with $M_P\cong \GL_1(F)\times \orth(V)$.
Then for $\sigma\in\Irr_\temp(\SO(V))$, 
$\ep\in\orth(V)\setminus\SO(V)$ 
and a unitary character $\chi$ of $F^\times=\GL_1(F)$, we have
\[
\Ind_{P^\circ}^{\SO(V')}(\chi \otimes \sigma)\cong
\Ind_{P^\circ}^{\SO(V')}(\chi^{-1} \otimes \sigma^\ep).
\]
\item\label{well3}
For $k > 0$, we put $V'=X_k+V+X_k^*$.
Let $P=P_k=M_PU_P\subset\orth(V')$ be a parabolic subgroup
with $M_P\cong \GL(X_k)\times\orth(V)$.
Fix $\tau\in\Irr(\GL(X_k))$ and $\cl{\sigma}\in\Irr(\orth(V))$.
Assume that $\sigma\coloneqq\cl{\sigma}|\SO(V)$ is irreducible.
Then there is a canonical isomorphism
\[
\Ind_{P}^{\orth(V')}(\tau\otimes\cl{\sigma})|\SO(V')
\cong
\Ind_{P^\circ}^{\SO(V')}(\tau\otimes \sigma).
\]
of representations of $\SO(V')$.
\end{enumerate}
\end{lem}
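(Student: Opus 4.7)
The plan is to handle the three statements separately, as each uses a different elementary principle.

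For (1), the key fact is that $\SO(V)$ is a normal subgroup of $\orth(V)$ of index two and $\ep$ is a coset representative. The map $f\mapsto f^\ep$ defined by $f^\ep(g)=f(g\ep)$ sets up an $\orth(V)$-equivariant bijection between the underlying spaces of $\Ind_{\SO(V)}^{\orth(V)}(\sigma)$ and $\Ind_{\SO(V)}^{\orth(V)}(\sigma^\ep)$; verification of equivariance is mechanical.

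For (3), I would apply the Mackey restriction formula. The element $m_P(1)\cdot\ep\in P$ has determinant $-1$ in $\orth(V')$, so $\orth(V')=P\cdot\SO(V')$ and $P\cap\SO(V')=P^\circ$. Thus the double coset space $P\bs\orth(V')/\SO(V')$ reduces to a single point and Mackey gives
\[
\Ind_P^{\orth(V')}(\tau\otimes\cl{\sigma})|_{\SO(V')}
\cong
\Ind_{P^\circ}^{\SO(V')}((\tau\otimes\cl{\sigma})|_{P^\circ}).
\]
The hypothesis $\cl{\sigma}|_{\SO(V)}=\sigma$ means the inducing datum on the right is precisely $\tau\otimes\sigma$, and the canonical intertwiner is restriction of functions.

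Part (2) is the substantive step. I will proceed in two stages: a Weyl-element conjugation followed by an intertwining-operator identification. First I would produce $w\in\Norm_{\SO(V')}(M_{P^\circ})$ acting on $M_{P^\circ}\cong \GL_1(F)\times\SO(V)$ by $(a,h)\mapsto(a^{-1},h^\ep)$. Such a $w$ can be constructed as $w_0\cdot\ep$, where $w_0\in\orth(V')$ is the reflection interchanging $v_1$ and $v_1^*$ and fixing $V$, and $\ep$ is viewed in $\orth(V')$ as acting trivially on $X_1\oplus X_1^*$; both factors have determinant $-1$ in $\orth(V')$, so their product lies in $\SO(V')$. One then checks directly that $w$ normalizes $M_{P^\circ}$, realizes the stated action on the Levi, and conjugates $P^\circ$ into its opposite parabolic $\overline{P^\circ}$. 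Standard Weyl conjugation therefore yields
\[
\Ind_{P^\circ}^{\SO(V')}(\chi\otimes\sigma)
\cong
\Ind_{\overline{P^\circ}}^{\SO(V')}(\chi^{-1}\otimes\sigma^\ep).
\]
The remaining step, which is the main obstacle and the reason for the hypotheses, is identifying this last space with $\Ind_{P^\circ}^{\SO(V')}(\chi^{-1}\otimes\sigma^\ep)$. Under temperedness of $\sigma$ and unitarity of $\chi$, both representations are unitary tempered and therefore semisimple, while the Knapp--Stein intertwining operator between inductions from opposite parabolics is holomorphic and bijective on the unitary axis. Composing the two isomorphisms completes the proof.
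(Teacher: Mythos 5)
Your parts (1) and (3) are essentially the paper's own arguments: for (3) the paper likewise uses that $\orth(V')=P\cdot\SO(V')$ and that restriction of functions is the intertwiner (your Mackey formulation with a single double coset is the same computation), and for (1) the paper writes down the analogous translation map. One correction in (1): with the usual model of $\Ind_{\SO(V)}^{\orth(V)}(\sigma)$ as functions satisfying $f(hg)=\sigma(h)f(g)$ and $\orth(V)$ acting by right translation, your map $f\mapsto f(\cdot\,\ep)$ is not equivariant and does not change the transformation type on the left; the correct intertwiner is left translation, $f\mapsto[x\mapsto f(\ep^{-1}x)]$, which is what the paper uses. This is a cosmetic fix.

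For (2) you take a genuinely different route. The paper argues by duality: the induced representation is semisimple, one has $\Ind_{P^\circ}^{\SO(V')}(\chi\otimes\sigma)^\vee\cong\Ind_{P^\circ}^{\SO(V')}(\chi^{-1}\otimes\sigma^\vee)$, and Proposition \ref{contragredient} (contragredient versus $\ep$-conjugate, according to $\dim(V)\bmod 4$, applied both to $\SO(V)$ and to the constituents over $\SO(V')$) then yields the claim, with no Weyl elements or intertwining operators. You instead conjugate by $w=w_0\ep\in\SO(V')$, which indeed normalizes the Levi, acts by $(a,h)\mapsto(a^{-1},h^\ep)$, and carries $P^\circ$ to $\overline{P^\circ}$, giving $\Ind_{P^\circ}^{\SO(V')}(\chi\otimes\sigma)\cong\Ind_{\overline{P^\circ}}^{\SO(V')}(\chi^{-1}\otimes\sigma^\ep)$; this is correct and has the advantage of bypassing the Gross--Prasad contragredient result entirely. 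The one step you should repair is the comparison of inductions from $P^\circ$ and $\overline{P^\circ}$: the unnormalized standard (Knapp--Stein) intertwining operator need not be holomorphic, let alone bijective, at the unitary point --- its normalizing factor can have a pole at $s=0$, for instance when the parameter of $\chi$ occurs in that of $\sigma$ --- so the assertion as phrased is false for the unnormalized operator. Either invoke the normalized intertwining operators, which are holomorphic and unitary for tempered data, or argue more cheaply that both inductions are unitarizable, hence semisimple, and have equal distribution characters (the character of normalized parabolic induction depends only on the Levi datum, not on the choice of parabolic), hence are isomorphic. With that adjustment your argument for (2) is complete and legitimately different from the paper's.
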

\begin{proof}
(\ref{well1})
The induced representation $\Ind_{\SO(V)}^{\orth(V)}(\sigma)$ can be realized on
\[
X_{\sigma}\coloneqq\{f\colon \orth(V)\rightarrow \VV_\sigma \text{ smooth}\ |\ 
f(hg)=\sigma(h)f(g)\text{ for any }h\in\SO(V),g\in\orth(V)\}
\]
by $(g\cdot f)(x)=f(xg)$.
Then the map
\[
f\mapsto [x\mapsto f(\ep^{-1}x)]
\]
gives an isomorphism $X_\sigma\rightarrow X_{\sigma^\ep}$.
\par

(\ref{well2})
Note that $\Ind_{P^\circ}^{\SO(V')}(\chi \otimes \sigma)$ is 
a direct sum of irreducible representations of $\SO(V')$.
In particular, we have
\[
\Ind_{P^\circ}^{\SO(V')}(\chi \otimes \sigma)^\vee \cong 
\left\{
\begin{aligned}
&\Ind_{P^\circ}^{\SO(V')}(\chi \otimes \sigma)^\epsilon 
\cong \Ind_{P^\circ}^{\SO(V')}(\chi \otimes \sigma^\epsilon)
\iif \dim(V) \equiv 0 \bmod 4,\\
&\Ind_{P^\circ}^{\SO(V')}(\chi \otimes \sigma) \iif \dim(V) \equiv 2 \bmod 4
\end{aligned}
\right.
\]
by Proposition \ref{contragredient}.
Again, by the same proposition, we have
\[
\Ind_{P^\circ}^{\SO(V')}(\chi^{-1} \otimes \sigma^\vee) \cong
\left\{
\begin{aligned}
&\Ind_{P^\circ}^{\SO(V')}(\chi^{-1} \otimes \sigma) \iif \dim(V) \equiv 0 \bmod 4,\\
&\Ind_{P^\circ}^{\SO(V')}(\chi^{-1} \otimes \sigma^\epsilon) \iif \dim(V) \equiv 2 \bmod 4.
\end{aligned}
\right.
\]
Hence the assertion follows that
\[
\Ind_{P^\circ}^{\SO(V')}(\chi \otimes \sigma)^\vee
\cong
\Ind_{P^\circ}^{\SO(V')}(\chi^{-1} \otimes \sigma^\vee).
\]
\par

(\ref{well3})
The representation $\Ind_{P}^{\orth(V')}(\tau\otimes\cl{\sigma})$
can be realized on a space of smooth functions $f$ on $\orth(V')$.
The map $f\mapsto f|\SO(V')$ gives a desired isomorphism.
\end{proof}
\par

We define equivalent relations $\sim_{\det}$ on $\Irr(\orth(V))$ and
$\sim_\ep$ on $\Irr(\SO(V))$ by
\[
\cl\sigma\sim_{\det}\cl\sigma\otimes\det
\quad\text{and}\quad
\sigma\sim_{\ep}\sigma^\ep
\]
for $\cl\sigma\in\Irr(\orth(V))$ and $\sigma\in\Irr(\SO(V))$.
Note that $\cl\sigma|\SO(V)\cong(\cl\sigma\otimes\det)|\SO(V)$ and
$\Ind_{\SO(V)}^{\orth(V)}(\sigma)\cong\Ind_{\SO(V)}^{\orth(V)}(\sigma^\ep)$.
Hence, by Proposition \ref{Clifford}, the restriction and the induction give a canonical bijection
\[
\Irr(\orth(V))/\sim_{\det}\longleftrightarrow \Irr(\SO(V))/{\sim_\ep}.
\]
For $\sigma\in\Irr(\SO(V))$, we denote the equivalence class of $\sigma$ by
\[
[\sigma]=\{\sigma,\sigma^\ep\}\in\Irr(\SO(V))/{\sim_\ep}.
\]

%\subsection{Theta lifts}
\subsection{Theta lifts}
We introduce the local theta correspondence induced by a Weil representation 
$\omega_{\psi,V,W}$ of $\Sp(W)\times\orth(V)$ 
when $\dim(W)=2n$ and $\dim(V)=2m$, 
and recall some basic general results.
\par

We fix a non-trivial additive character $\psi$ of $F$.
Let $W=W_{2n}$ and $V=V_{2m}$. 
We denote a Weil representation of $\Sp(W)\times\orth(V)$ by $\omega=\omega_{\psi,V,W}$.
For $\pi\in\Irr(\Sp(W))$,
the maximal $\pi$-isotypic quotient of $\omega$
is of the form
\[
\pi\boxtimes \Theta(\pi),
\]
where $\Theta(\pi)=\Theta_{\psi,V,W}(\pi)$ is a smooth finite length representation of $\orth(V)$.
Hence, there exists an exact sequence of $\Sp(W)\times\orth(V)$-modules:
\[
\begin{CD}
1@>>>\Sch[\pi]@>>>\omega @>>>\pi\boxtimes \Theta(\pi)@>>>1,
\end{CD}
\]
where the kernel $\Sch[\pi]$ is given by
\[
\Sch[\pi]=\bigcap_{f\in\Hom_{\Sp(W)}(\omega,\pi)}\ker(f).
\]
\par

Similarly, for $\cl{\sigma}\in\Irr(\orth(V))$, 
we obtain a smooth finite length representation 
$\Theta(\cl{\sigma})=\Theta_{\psi,V,W}(\cl{\sigma})$ of $\Sp(W)$.
The maximal semi-simple quotient of $\Theta(\pi)$ (\resp $\Theta(\cl{\sigma})$)
is denoted by $\theta(\pi)=\theta_{\psi,V,W}(\pi)$ 
(\resp $\theta(\cl{\sigma})=\theta_{\psi,V,W}(\cl{\sigma})$).
The Howe duality conjecture, 
which was proven by Waldspurger \cite{Wa1} when the residue characteristic is not $2$
and by Gan--Takeda \cite{GT1, GT2} in general,
says that $\theta(\pi)$ and $\theta(\cl{\sigma})$
are irreducible (if they are nonzero).
Moreover, we have the following:
\begin{prop}[{\cite[Proposition C.4 $(\mathrm{i})$]{GI1}}]\label{Theta}
We set $V=V_{2m}$ and $W=W_{2n}$. 
Let $\pi\in\Irr_\temp(\Sp(W))$ $($\resp $\cl{\sigma}\in\Irr_\temp(\orth(V)))$ be a tempered representation.
\begin{enumerate}
\item
Suppose that $m=n+1$.
Then $\Theta(\pi)$ is either zero or an irreducible tempered representation of $\orth(V)$
so that $\Theta(\pi)=\theta(\pi)$.
\item
Suppose that $m=n$.
Then $\Theta(\cl{\sigma})$ is either zero or an irreducible tempered representation of $\Sp(W)$
so that $\Theta(\cl{\sigma})=\theta(\cl{\sigma})$.
\end{enumerate}
\end{prop}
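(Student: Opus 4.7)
Since parts (1) and (2) are symmetric under interchanging $V$ and $W$ in this (almost) equal rank setting, I focus on (1); part (2) is strictly analogous. By the Howe duality theorem of Waldspurger and Gan--Takeda, the maximal semisimple quotient $\theta(\pi)$ of $\Theta(\pi)$ is either zero or irreducible. The proposition therefore reduces to two claims: (a) every irreducible subquotient of $\Theta(\pi)$ is tempered; and (b) $\Theta(\pi)$ is semisimple as an $\orth(V)$-module. Together these force $\theta(\pi)=\Theta(\pi)$, so $\Theta(\pi)$ is irreducible tempered whenever nonzero.

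For (a), my plan is to exploit the Kudla filtration of the normalized Jacquet module $R_{P_k}(\omega_{\psi,V,W})$ with respect to a maximal parabolic $P_k=M_kU_k\subset\orth(V)$ with Levi $M_k\cong\GL_k(F)\times\orth(V_{2m-2k})$. This filtration exhibits $R_{P_k}(\omega_{\psi,V,W})$ as successive extensions of representations induced from Weil representations of smaller dual pairs, twisted by characters $\chi_V|\cdot|^{s}$ whose exponents $s$ are explicitly determined by the layer. Suppose for contradiction that $\sigma$ is a non-tempered irreducible subquotient of $\Theta(\pi)$; Casselman's criterion produces an exponent of some $R_{P_k}(\sigma)$ lying strictly outside the tempered cone. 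Via Frobenius reciprocity applied to $\omega_{\psi,V,W}$ and its Kudla-filtered Jacquet modules, this bad exponent on the $\orth(V)$-side would propagate to a corresponding bad exponent on a Jacquet module of $\pi$, contradicting temperedness of $\pi$. The hypothesis $m=n+1$ is exactly what makes the exponent shift between the two sides land on the boundary of the tempered cone, so no non-tempered subquotient can arise.

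For (b), I would invoke the fact that any extension between two irreducible tempered representations of a reductive $p$-adic group is unitarizable and hence completely reducible, so $\mathrm{Ext}^1$ between tempered irreducibles vanishes. Combined with (a), this makes $\Theta(\pi)$ semisimple, and the desired conclusion follows. Part (2) is proved by the same scheme, using instead the Kudla filtration of $R_{Q_k}(\omega_{\psi,V,W})$ with respect to a Siegel-type parabolic $Q_k\subset\Sp(W)$ and interchanging the roles of $\orth(V)$ and $\Sp(W)$.

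The main obstacle is the exponent bookkeeping in step (a): one must track each layer of the Kudla filtration carefully, verify that each $\GL_k$-exponent arising from $R_{P_k}(\sigma)$ matches one arising on the $\Sp(W)$-side under Frobenius reciprocity, and apply Casselman's criterion consistently on both sides. The computation is straightforward but delicate, and hinges on the equality $m=n+1$ (respectively $m=n$ in part (2)) to ensure the exponent shifts align precisely at the boundary rather than producing spurious non-tempered contributions.
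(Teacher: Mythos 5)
The paper itself gives no proof of this proposition: it is quoted from \cite[Proposition C.4 (i)]{GI1}, so your proposal has to be measured against the argument there. Your step (a) --- temperedness of every irreducible subquotient of $\Theta(\pi)$ via Kudla's filtration of the Jacquet modules of the Weil representation together with Casselman's criterion --- is indeed the standard first half of such an argument, and your sketch is plausible (with the usual care that exponents of a mere subquotient of $\Theta(\pi)$ are accessed through exactness of the Jacquet functor applied to the surjection $\omega\twoheadrightarrow\pi\boxtimes\Theta(\pi)$, not through Frobenius reciprocity applied to the subquotient directly).

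The genuine gap is step (b). It is false that $\mathrm{Ext}^1$ between irreducible tempered representations of a reductive $p$-adic group vanishes, and false that a smooth extension of unitary representations is unitarizable or completely reducible. Already for $\GL_1(F)=F^\times$ the homomorphism $x\mapsto\left(\begin{smallmatrix}1&\mathrm{val}(x)\\0&1\end{smallmatrix}\right)$ is a non-split smooth self-extension of the unitary (hence tempered) trivial character; similarly, irreducible tempered but non-square-integrable representations of $\Sp(W)$ and $\orth(V)$ themselves admit non-split self-extensions, obtained by deforming the unitary inducing character of a $\GL_1$-factor of a Levi over $\C[\epsilon]/(\epsilon^2)$ in an unramified direction (this is also visible in Opdam--Solleveld's computation of extensions between tempered representations). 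Consequently ``all irreducible subquotients of $\Theta(\pi)$ are tempered'' does not imply that $\Theta(\pi)$ is semisimple, and your deduction $\Theta(\pi)=\theta(\pi)$ collapses: what must be excluded is precisely an extra copy of $\theta(\pi)$ occurring inside $\Theta(\pi)$ as a non-split (self-)extension, and that is the substantive content of the proposition rather than something that follows from a general principle. The cited proof does not (and could not) argue via semisimplicity; after the temperedness step it rules out extra subquotients by a finer analysis in the almost equal rank setting --- reduction to square-integrable data, the multiplicity-one part of Howe duality, and Jacquet-module/see-saw computations controlling the occurrences of $\theta(\pi)$ in $\Theta(\pi)$. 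Your proposal needs a replacement for step (b) of this kind; as written it does not establish irreducibility of the big theta lift.
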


%\section{Local Langlands correspondence}\label{LLC}
\section{Local Langlands correspondence}\label{LLC}
Through this paper, 
we assume the local Langlands correspondence for symplectic and special orthogonal groups. 
See \cite{A}.
In this section, we summarize some of its properties which are used in this paper.
See also Appendix \ref{expectation} and \cite{GGP}.

%\subsection{Representations of $\WD_F$}
\subsection{Representations of $\WD_F$}
Let $M$ be a finite dimensional vector space over $\C$.
We say that a homomorphism $\phi\colon\WD_F\rightarrow \GL(M)$ is 
a representation of $\WD_F$
if
\begin{itemize}
\item
$\phi(\Frob)$ is semi-simple, where $\Frob\in W_F$ is a geometric Frobenius;
\item
the restriction of $\phi$ to $W_F$ is smooth;
\item
the restriction of $\phi$ to $\SL_2(\C)$ is algebraic.
\end{itemize}
We call $\phi$ tempered if the image of $W_F$ is bounded.
We say that $\phi$ is orthogonal or self-dual with sign $+1$ 
(\resp symplectic or self-dual with sign $-1$) 
if there exists a non-degenerate bilinear form 
$B\colon M\times M\rightarrow \C$ such that
\[
\left\{
\begin{aligned}
&B(\phi(w)x,\phi(w)y)=B(x,y),\\
&B(y,x)=\delta \cdot B(x,y)
\end{aligned}
\right.
\]
with $\delta = +1$ (\resp $\delta = -1$)
for $x,y\in M$ and $w\in\WD_F$.
In this case, $\phi$ is self-dual, i.e.,
$\phi$ is equivalent to its contragredient $\phi^\vee$.
\par

For any positive integer $n$, there exists a unique irreducible algebraic
representation $\nu_n$ of $\SL_2(\C)$ with dimension $n$.
It is easy to see that
\[
\nu_n\text{ is }
\left\{
\begin{aligned}
&\text{orthogonal}	\iif \text{$n$ is odd},\\
&\text{symplectic}	\iif \text{$n$ is even}.
\end{aligned}
\right.
\]
\par

For a representation $M$ of $\WD_F$, 
we define the $L$-factor $L(s,M)$ and the $\ep$-factor $\ep(s,M,\psi)$
as in \cite{T}.
It is well-known that
if $M$ is a symplectic representation of $\WD_F$,
then $\ep(M)=\ep\left(1/2,M,\psi\right)$ 
does not depend on the choice of $\psi$, and $\ep(M)\in\{\pm1\}$
(see, e.g., \cite[Proposition 5.1]{GGP}).
\par

%\subsection{Properties}\label{LLCproperty}
\subsection{Properties}\label{LLCproperty}

Let $G$ be a quasi-split connected reductive algebraic group over $F$.
We denote the Langlands dual group and the $L$-group of $G$ by
$\widehat{G}$ and $\Lgp{G}=\widehat{G}\rtimes W_F$, respectively.
We call an $L$-homomorphism
\[
\bphi\colon\WD_F\rightarrow \Lgp{G}
\]
an $L$-parameter of $G$.
We say that two $L$-parameters are equivalent if
they are conjugate by an element in $\widehat{G}$.
We call an $L$-parameter $\bphi$ tempered
if $\bphi(W_F)$ projects onto a relatively compact subset of $\widehat{G}$.
We denote the set of equivalence classes of $L$-parameters of $G$ by $\Phi(G)$.
Let $\Phi_\temp(G)$ be the subset of $\Phi(G)$ of classes of tempered $L$-parameters.
For $\bphi\in\Phi(G)$, we put
\[
S_{\bphi}=\cent(\im(\bphi),\widehat{G}).
\]
\par

A pure inner twist of $G$ is a tuple $(G',f,z)$, 
where
$G'$ is an algebraic group over $F$, 
$f\colon G\rightarrow G'$ is an isomorphism over $\overline{F}$
and $z\in Z^1(F,G)$ is a $1$-cocycle such that $f^{-1}\circ \gamma(f)=\Int(z_\gamma)$
for any $\gamma\in\Gamma$.
Here, $z_\gamma \in G$ is the value at $\gamma \in \Gamma$ of 
$z \colon \Gamma \rightarrow G(\overline{F})$
and $\Int(z_\gamma)g = z_\gamma g z_\gamma^{-1}$.
Then we call $G'$ a pure inner form of $G$.
There exists a canonical bijection
\[
H^1(F,G)\longleftrightarrow\{\text{The isomorphism classes of pure inner twists of $G$}\}.
\]
More precisely, see Appendix \ref{A.2}.
Moreover, since $F$ is $p$-adic, 
there exists a canonical bijection
(Kottwitz's isomorphism)
\[
H^1(F,G)\rightarrow \pi_0(Z(\widehat{G})^{\Gamma})^D.
\]
See \cite[Proposition 6.4]{Kot3}.
Note that $\pi_0(Z(\widehat{G})^{\Gamma})$ is a central subgroup of $\pi_0(S_{\bphi})$.
\par

We are now ready to describe the desiderata for the Langlands correspondence.
\begin{enumerate}
\item\label{des1}
There exists a canonical surjection
\[
\bigsqcup_{G'}\Irr(G'(F))\rightarrow \Phi(G),
\]
where $G'$ runs over all pure inner forms of $G$.
For $\bphi\in\Phi(G)$, we denote by $\Pi_{\bphi}$ 
the inverse image of $\bphi$ under this map,
and call $\Pi_{\bphi}$ the $L$-packet of $\bphi$.
The $L$-packet $\Pi_{\bphi}$ is a finite set whose order is equal to $\#\Irr(\pi_0(S_{\bphi}))$.
If $\bphi\in\Phi_\temp(G)$, then $\Pi_{\bphi}\subset\sqcup_{G'}\Irr_\temp(G')$.
\item\label{des2}
For a Whittaker datum $\w$ of $G$,
there exists a bijection
\[
\iota_\w\colon \Pi_{\bphi}\rightarrow \Irr(\pi_0(S_{\bphi}))
\]
which satisfies some character identities. 
More precisely, see Appendix \ref{expectation}.
\item\label{des4}
The local adjoint $L$-function $L(s,\bphi,\Ad)$ is regular at the point $s=1$
if and only if
$\iota_\w^{-1}(\1) \in \Pi_{\bphi}$ is $\w$-generic for any Whittaker datum $\w$.
In this case, we say that the $L$-parameter $\bphi$ is generic.
\item\label{des5}
By the restriction to $\pi_0(Z(\widehat{G})^{\Gamma}) \subset \pi_0(S_{\bphi})$, 
an irreducible representation $\eta$ of $\pi_0(S_{\bphi})$ gives a character of 
$\pi_0(Z(\widehat{G})^{\Gamma})$,
which corresponds to a class of $H^1(F,G)$, i.e., an isomorphism class of pure inner twists $(G',f,z)$.
Then $\iota_\w^{-1}(\eta)\in\Irr(G'(F))$.
\end{enumerate}
Once we fix a Whittaker datum $\w$ of $G$,
we denote by $\pi(\eta)$ the representation in $\Pi_{\bphi}$
corresponding to $\eta\in\Irr(\pi_0(S_{\bphi}))$ via $\iota_\w$.
\par

The property (\ref{des4}) is a conjecture of Gross--Prasad and Rallis (GPR), which was proven
in general in \cite[Appendix B]{GI2}.
Note that if $\bphi\in\Phi_\temp(G)$, then the $L$-function $L(s,\bphi,\Ad)$ is regular 
in the half-plane $\re(s)>0$, so that $\bphi$ is generic by (\ref{des4}).
\par

In the rest of this section, we explain the desiderata of $L$-packets more precisely
for symplectic and special orthogonal groups,
which will be assumed in the rest of this paper. 

%\subsection{Type $B_n$}
\subsection{Type $B_n$}
Let $n\geq1$ and $G=\SO(V_{2n+1})$ be a quasi-split special orthogonal group.
Then $\widehat{G}=\Sp_{2n}(\C)$ and $\Lgp{G}=\Sp_{2n}(\C)\times W_F$.
An $L$-parameter $\bphi$ of $\SO(V_{2n+1})$ gives a self-dual representation 
$\phi\colon\WD_F\rightarrow \GL(M)$
with sign $-1$, $\dim_\C(M)=2n$ and $\det(M)=\1$.
The map $\bphi\mapsto \phi$ gives a bijection
\[
\Phi(\SO(V_{2n+1}))\rightarrow
\cl\Phi(\SO(V_{2n+1}))\coloneqq
\{\phi\colon\WD_F\rightarrow \Sp(M)\ |\ \dim_\C(M)=2n\}/\cong.
\]
We identify $\bphi$ with $\phi$ via this bijection.
Let $\cl\Phi_\temp(\SO(V_{2n+1}))$ be the subset of $\cl\Phi(\SO(V_{2n+1}))$ of 
tempered representations.
\par

Let $\phi\in\cl\Phi(\SO(V_{2n+1}))$.
We denote the centralizer of $\im(\phi)$ in $\Sp(M)$ by $C_\phi$
and its component group by $A_\phi=C_\phi/C_\phi^\circ$.
Then $\pi_0(S_{\bphi})\cong A_\phi$ if $\bphi\mapsto \phi$.
If 
\[
M\cong \bigoplus_{i\in I^+}(V_i\otimes M_i)
\oplus\bigoplus_{i\in I^-}(W_i\otimes N_i)
\oplus\bigoplus_{j\in J}(U_j\otimes (P_j+P_j^\vee)),
\]
where
\begin{itemize}
\item $M_i$ (\resp $N_i$) is an irreducible self-dual representation of $\WD_F$ 
with sign $-1$ (\resp $+1$),
\item $P_j$ is an irreducible representation of $\WD_F$ which is not self-dual,
\end{itemize}
such that $M_i$, $N_i$ and $P_j$ are pairwise inequivalent, 
and $V_i$, $W_i$ and $U_j$ are multiplicity spaces,
then the bilinear form on $M$ gives a non-degenerate bilinear form on $V_i$ (\resp $W_i$) of sign $+1$ (\resp $-1$).
Moreover, we have
\[
C_\phi\cong\prod_{i\in I^+}\orth(V_i)\times\prod_{i\in I^-}\Sp(W_i)\times\prod_{j\in J} \GL(U_j).
\]
For a semi-simple element $a\in C_\phi$, we denote by $M^a$
the $(-1)$-eigenspace of $a$ on $M$.
Then $M^a$ gives a symplectic representation of $\WD_F$.
If $a_i\in\orth(V_i)\setminus\SO(V_i)$, then $M^{a_i}=V_i^{a_i}\otimes M_i$.
We have
\[
A_\phi=\bigoplus_{i\in I^+}(\Z/2\Z)a_i\cong(\Z/2\Z)^m,
\]
where $m=\#I^+$.
\par

It is known that $\#H^1(F,\SO(V_{2n+1}))=2$ (which also follows from Kottwitz's isomorphism).
Hence there are exactly two pure inner forms of $G=\SO(V_{2n+1})$.
They are $\SO(V_{2n+1})$ and $\SO(V_{2n+1}')$, where
$\dim(V_{2n+1})=\dim(V_{2n+1}')=2n+1$ and $\disc(V_{2n+1})=\disc(V_{2n+1}')$.
We see that
$\SO(V_{2n+1})$ is split, but 
$\SO(V_{2n+1}')$ is not quasi-split.
\par

By \cite[Theorem 1.5.1 (b)]{A},
for $\phi\in\cl\Phi_\temp(\SO(V_{2n+1}))$, 
we obtain an $L$-packet $\Pi_\phi^A$, which is a finite subset of $\Irr_\temp(\SO(V_{2n+1}))$,
and a canonical bijection $\iota_\w^A\colon \Pi_\phi^A\rightarrow \pi_0(C_\phi/\{\pm1\})\widehat{\ }$.
We assume that there is a Vogan $L$-packet 
$\Pi_\phi\subset \Irr_\temp(\SO(V_{2n+1}))\sqcup\Irr_\temp(\SO(V_{2n+1}'))$ such that
$\Pi_\phi\cap \Irr_\temp(\SO(V_{2n+1}))=\Pi_\phi^A$
together with an extension $\iota_\w\colon \Pi_\phi\rightarrow \widehat{A_\phi}$ of $\iota_\w^A$.
We extend $L$-packets for general $\phi\in\cl\Phi(\SO(V_{2n+1}))$ as follows.
In general, we can write
\[
\phi=\phi_1\oplus\dots\oplus\phi_r\oplus\phi_0
\oplus\phi_r^\vee\oplus\dots\oplus\phi_1^\vee,
\]
where
\begin{itemize}
\item
$\phi_i$ is a $k_i$-dimensional representation of $\WD_F$ for $i=1,\dots,r$,
which is of the form
$\phi_i=\phi_i'\otimes|\cdot|_F^{s_i}$
for some tempered representation $\phi_i'$ of $\WD_F$ and some real number $s_i$ such that
\[
s_1>\dots>s_r>0,\quad n=n_0+(k_1+\dots+k_r);
\]
\item
$\phi_0\in\cl\Phi_\temp(\SO(V_{2n_0+1}))$ if $n_0\geq1$, and $\phi_0=0$ if $n_0=0$.
\end{itemize}
We write
\[
V_{2n+1}^\bullet=X_1\oplus\dots\oplus X_r\oplus V_{2n_0+1}^\bullet
\oplus X_r^*\oplus\dots\oplus X_1^*,
\]
where
$X_i$ and $X_i^*$ are $k_i$-dimensional totally isotropic subspaces of $V_{2n+1}^\bullet$
which are mutually orthogonal, and
such that $X_i\oplus X_i^*$ is non-degenerate
and orthogonal to $V_{2n_0+1}^\bullet$.
Let $P^\circ$ be the parabolic subgroup of $\SO(V_{2n+1}^\bullet)$ stabilizing the flag
\[
X_1\subset X_1\oplus X_2\subset\dots\subset X_1\oplus\dots\oplus X_r
\]
and $M_P^\circ$ be the Levi component of $P^\circ$ stabilizing the flag
\[
X_1^*\subset X_1^*\oplus X_2^*\subset\dots\subset X_1^*\oplus\dots\oplus X_r^*,
\]
so that
\[
M_P^\circ\cong \GL(X_1)\times\dots\times \GL(X_r)\times\SO(V_{2n_0+1}^\bullet).
\]
Then the $L$-packet $\Pi_\phi$ consists of
the unique irreducible quotients $\pi$ of the standard modules
\[
\Ind_{P^\circ}^{\SO(V_{2n+1}^\bullet)}(\tau_1\otimes\dots\otimes\tau_r\otimes\pi_0),
\]
where $\tau_i$ is the irreducible essentially tempered representation of $\GL(X_i)$
corresponding to $\phi_i$ for $i=1,\dots,r$, and
$\pi_0$ runs over elements in $\Pi_{\phi_0}$
(or $\pi_0=\1$ if $n_0=0$ so that $\SO(V_{2n_0+1})=\{1\}$).
In particular, if $n_0=0$, then $\Pi_\phi$ is a singleton.
If $n_0\geq1$, then the natural map $A_{\phi_0}\rightarrow A_\phi$ is an isomorphism,
and we define $\iota_\w(\pi)\in\widehat{A_\phi}$ by
\[
\iota_\w(\pi_0)
=\iota_\w(\pi)|A_{\phi_0}.
\]
\par

We say that $\phi\in\cl\Phi(\SO(V_{2n+1}))$ is generic if $\Pi_\phi$ has a generic representation.
Let $\cl\Phi_\gen(\SO(V_{2n+1}))$ be the subset of $\cl\Phi(\SO(V_{2n+1}))$ consisting of
generic representations.
Then we have a sequence
\[
\cl\Phi_\temp(\SO(V_{2n+1}))\subset \cl\Phi_\gen(\SO(V_{2n+1}))\subset \cl\Phi(\SO(V_{2n+1})).
\]

%\subsection{Type $C_n$}
\subsection{Type $C_n$}\label{Cn}
Let $n\geq1$ and $G=\Sp(W_{2n})$ be a (split) symplectic group.
Then $\widehat{G}=\SO_{2n+1}(\C)$ and $\Lgp{G}=\SO_{2n+1}(\C)\times W_F$.
An $L$-parameter $\bphi$ of $G$ gives a self-dual representation 
$\phi\colon\WD_F\rightarrow \GL(N)$
with sign $+1$, $\dim_\C(N)=2n+1$ and $\det(N)=\1$.
The map $\bphi\mapsto \phi$ gives a bijection
\[
\Phi(\Sp(W_{2n}))\rightarrow
\cl\Phi(\Sp(W_{2n}))\coloneqq
\{\phi\colon\WD_F\rightarrow \SO(N)\ |\ \dim_\C(N)=2n+1\}/\cong.
\]
We identify $\bphi$ with $\phi$ via this bijection.
Let $\cl\Phi_\temp(\Sp(W_{2n}))$ be the subset of $\cl\Phi(\Sp(W_{2n}))$ 
of tempered representations.
\par

Let $\phi\in\cl\Phi(\Sp(W_{2n}))$.
We denote the centralizer of $\im(\phi)$ in $\orth(N)$ by $C_\phi$
and its component group by $A_\phi=C_\phi/C_\phi^\circ$.
The centralizer of $\im(\phi)$ in $\SO(N)$ is denoted by $C_\phi^+$.
We put $A_\phi^+=\im(C_\phi^+\rightarrow A_\phi)$.
Then $\pi_0(S_{\bphi})\cong A_\phi^+$ if $\bphi\mapsto \phi$.
If 
\[
N\cong \bigoplus_{i\in I^+}(V_i\otimes N_i)
\oplus\bigoplus_{i\in I^-}(W_i\otimes M_i)
\oplus\bigoplus_{j\in J}(U_j\otimes (P_j+P_j^\vee))
\]
where 
\begin{itemize}
\item $M_i$ (\resp $N_i$) is an irreducible self-dual representation of $\WD_F$ 
with sign $-1$ (\resp $+1$),
\item $P_j$ is an irreducible representation of $\WD_F$ which is not self-dual,
\end{itemize}
such that $M_i$, $N_i$ and $P_j$ are pairwise inequivalent, 
and $V_i$, $W_i$ and $U_j$ are multiplicity spaces,
then the bilinear form on $N$ gives a non-degenerate bilinear form on $V_i$ (\resp $W_i$) of sign $+1$ (\resp $-1$).
Moreover, we have
\[
C_\phi\cong\prod_{i\in I^+}\orth(V_i)\times\prod_{i\in I^-}\Sp(W_i)\times\prod_{j\in J} \GL(U_j).
\]
For a semi-simple element $a\in C_\phi$, we denote by $N^a$
the $(-1)$-eigenspace of $a$ on $N$.
Then $N^a$ gives an orthogonal representation of $\WD_F$.
If $a_i\in\orth(V_i)\setminus\SO(V_i)$, then $N^{a_i}=V_i^{a_i}\otimes N_i$.
We have
\[
A_\phi=\bigoplus_{i\in I^+}(\Z/2\Z)a_i\cong(\Z/2\Z)^m,
\]
where $m=\#I^+$.
We see that
$A_\phi^+$ is the kernel of the character
\[
A_\phi\ni a_i\mapsto (-1)^{\dim_\C(N_i)}\in\{\pm1\}.
\]
Hence we have $A_\phi^+\cong(\Z/2\Z)^{m-1}$
since some $N_i$ has odd dimension.
\par

It is known that $\#H^1(F,\Sp(W_{2n}))=1$ (which also follows from Kottwitz's isomorphism).
Hence there are no non-trivial pure inner forms of $G=\Sp(W_{2n})$.
\par

By \cite[Theorem 1.5.1 (b)]{A},
for $\phi\in\cl\Phi_\temp(\Sp(W_{2n}))$, 
we obtain an $L$-packet $\Pi_\phi$, which is a finite subset of $\Irr_\temp(\Sp(W_{2n}))$,
and a canonical bijection 
$\iota_{\w_c'}\colon \Pi_\phi\rightarrow \widehat{A_\phi^+}$
for each Whittaker datum $\w_c'$ of $\Sp(W_{2n})$.
We extend $L$-packets for general $\phi\in\cl\Phi(\Sp(W_{2n}))$ as follows.
In general, we can write
\[
\phi=\phi_1\oplus\dots\oplus\phi_r\oplus\phi_0
\oplus\phi_r^\vee\oplus\dots\oplus\phi_1^\vee,
\]
where
\begin{itemize}
\item
$\phi_i$ is a $k_i$-dimensional representation of $\WD_F$ for $i=1,\dots,r$,
which is of the form
$\phi_i=\phi_i'\otimes|\cdot|_F^{s_i}$
for some tempered representation $\phi_i'$ of $\WD_F$ and some real number $s_i$ such that
\[
s_1>\dots>s_r>0,\quad n=n_0+(k_1+\dots+k_r);
\]
\item
$\phi_0\in\cl\Phi_\temp(\Sp(W_{2n_0}))$ if $n_0\geq1$, and $\phi_0=\1$ if $n_0=0$.
\end{itemize}
We write
\[
W_{2n}=Y_1\oplus\dots\oplus Y_r\oplus W_{2n_0}\oplus Y_r^*\oplus\dots\oplus Y_1^*,
\]
where
$Y_i$ and $Y_i^*$ are $k_i$-dimensional totally isotropic subspaces of $W_{2n}$
which are mutually orthogonal, and
such that $Y_i\oplus Y_i^*$ is non-degenerate and orthogonal to $W_{2n_0}$.
Let $Q$ be the parabolic subgroup of $\Sp(W_{2n})$ stabilizing the flag
\[
Y_1\subset Y_1\oplus Y_2\subset\dots\subset Y_1\oplus\dots\oplus Y_r
\]
and $M_{Q}$ be the Levi component of $Q$ stabilizing the flag
\[
Y_1^*\subset Y_1^*\oplus Y_2^*\subset\dots\subset Y_1^*\oplus\dots\oplus Y_r^*,
\]
so that
\[
M_{Q}\cong \GL(Y_1)\times\dots\times \GL(Y_r)\times\Sp(W_{2n_0}).
\]
Then the $L$-packet $\Pi_\phi$ consists of
the unique irreducible quotients $\pi$ of the standard modules
\[
\Ind_{Q}^{\Sp(W_{2n})}(\tau_1\otimes\dots\otimes\tau_r\otimes\pi_0),
\]
where $\tau_i$ is the irreducible essentially tempered representation of $\GL(Y_i)$
corresponding to $\phi_i$ for $i=1,\dots,r$, and
$\pi_0$ runs over elements in $\Pi_{\phi_0}$ if $n_0>0$.
If $n_0=0$, then we ignore $\pi_0$.
In particular, if $n_0=0$, then $\Pi_\phi$ is a singleton.
If $n_0\geq1$, then the natural map $A_{\phi_0}^+ \rightarrow A_\phi^+$ is an isomorphism,
and we define $\iota_{\w'_c}(\pi)\in\widehat{A_\phi^+}$ by
\[
\iota_{\w'_c}(\pi_0)
=\iota_{\w'_c}(\pi)|A_{\phi_0}^+.
\]
\par

Recall that the set of Whittaker data of $\Sp(W_{2n})$ is
parametrized by $F^\times/F^{\times2}$ 
(once we fix a non-trivial additive character $\psi$ of $F$).
For $c\in F^\times$ and a semi-simple element $a$ in $C_\phi^+$, we put
$\eta_{c}(a)\coloneqq\det(N^a)(c)$.
By \cite[\S 4]{GGP}, this gives a character $\eta_{c}$ of $A_\phi^+$.
The following proposition is a special case of \cite[Theorem 3.3]{Ka1}.
\begin{prop}\label{whittaker1}
For $c_1,c_2\in F^\times$, the diagram
\[
\begin{CD}
\Pi_\phi @>\iota_{\w_{c_1}'}>> \widehat{A_\phi^+}\\
@|@VV\cdot\otimes\eta_{c_2/c_1} V\\
\Pi_\phi @>\iota_{\w_{c_2}'}>> \widehat{A_\phi^+}
\end{CD}
\]
is commutative.
\end{prop}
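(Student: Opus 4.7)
The plan is to derive this proposition as a direct specialization of Kaletha's theorem \cite[Theorem 3.3]{Ka1}, which describes how the internal parametrization of a tempered $L$-packet depends on the choice of Whittaker datum for an arbitrary quasi-split connected reductive group. By multiplicativity of the dependence on Whittaker datum, it suffices to treat the case $c_1 = 1$ and $c_2 = c$: once we establish $\iota_{\w_c'}(\pi) = \iota_{\w_1'}(\pi) \otimes \eta_c$ for every tempered $\phi$ and $\pi \in \Pi_\phi$, the general case follows by composition. A further reduction via the compatibility with parabolic induction from \S \ref{Cn} allows us to restrict attention to $\phi \in \cl\Phi_\temp(\Sp(W_{2n}))$, since $\eta_c$ is insensitive to the summands $\phi_i \oplus \phi_i^\vee$ (they contribute trivially to $A_\phi^+$).

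Kaletha's theorem asserts, for a quasi-split $G$, that two Whittaker data $\w, \w'$ differing by a class $z \in H^1(F, Z_{\mathrm{sc}})$ yield parametrizations related by twisting with a character $\eta_z$ on $\pi_0(S_\phi)$, obtained by transporting $z$ through Kottwitz's isomorphism and pairing with $S_\phi$. For $G = \Sp(W_{2n})$, which is simply connected with center $\mu_2$, the set of Whittaker data (after fixing $\psi$) is a torsor under $F^\times/F^{\times 2} = H^1(F, \mu_2)$, and the ratio of $\w_{c_2}'$ to $\w_{c_1}'$ corresponds manifestly to $c_2/c_1$ under the bijection of \S \ref{Symplectic}.

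The remaining and principal task is to identify the abstract character produced by Kaletha's formula with the concrete character $\eta_c \colon a \mapsto \det(N^a)(c)$ on $A_\phi^+$. For a semisimple $a \in C_\phi^+ \subset \SO(N)$, the subspace $N^a \subset N$ inherits a non-degenerate orthogonal structure from $N$, so $\det(N^a)$ is a quadratic character of $\WD_F$ and corresponds via local class field theory to a class in $F^\times/F^{\times 2}$; the Hilbert pairing of this class with $c$ is precisely what Kaletha's formula produces at $a$. The main obstacle is this final bookkeeping step: carefully matching Kaletha's normalization conventions for the pinning of $\Sp(W_{2n})$, the attached generic character, and Kottwitz's isomorphism, to the conventions of \cite[\S 12]{GGP} that define $\w_c'$ and those of \cite[\S 4]{GGP} that define $\eta_c$. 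Once these identifications are lined up, the proposition is immediate.
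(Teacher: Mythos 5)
Your route is the same as the paper's: the proposition is introduced there precisely as a special case of Kaletha's Theorem 3.3, and your preliminary reductions (to $c_1=1$, and to tempered parameters via the way $\iota_{\w'_c}$ is extended through Langlands quotients in \S\ref{Cn}, where $A_{\phi_0}^+\cong A_\phi^+$ and $\eta_{c}$ is unchanged) are consistent with that setup. The only caveat is that the step you defer as ``bookkeeping'' --- identifying Kaletha's abstract character with $a\mapsto\det(N^a)(c_2/c_1)$ --- is exactly where the paper does its actual work: in Appendix \ref{A.5} (Proposition \ref{SO&Sp}) the change of Whittaker datum is realized by an element $g_{c_0}\in G_\ad(F)$ coming from a central cocycle $z_{c_0}$, Proposition \ref{Whittaker} converts this into twisting by the character $\chi_{z_{c_0}}$, and $\chi_{z_{c_0}}=\eta_{c_0}$ is then computed via the Tate--Nakayama pairing on the norm-one torus $E_{d_1}^1$ (Lemma \ref{coh}), giving $(c_0,d_1)=\det(N^a)(c_0)$; so your proposal is a faithful outline of the paper's argument, with its one nontrivial computation acknowledged but not carried out.
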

\par

Proposition \ref{whittaker1} determines the $L$-parameter and the character of 
the contragredient representation $\pi^\vee$ of $\pi$.
\begin{cor}\label{Sp-contragredient}
Let $\pi \in \Irr(\Sp(W_{2n}))$ and $\phi \in \cl\Phi(\Sp(W_{2n}))$ such that $\pi \in \Pi_{\phi}$.
Then $\pi^\vee \in \Pi_\phi$ and
\[
\iota_{\w'_c}(\pi^\vee)=\iota_{\w'_c}(\pi)\otimes\eta_{-1}. 
\]
\end{cor}
\begin{proof}
We decompose $W_{2n}=Y_n\oplus Y_n^*$ as in \S \ref{parabolic}.
We define $\delta\in\GL(W_{2n})$ by $\delta|Y_n=\1_{Y_n}$ and $\delta|Y_n^*=-\1_{Y_n^*}$.
Then $\delta$ be an element in $\GSp(W_{2n})$ with similitude factor $-1$, and satisfies
\[
\pair{\delta w,\delta w'}_{W_{2n}}=\pair{w',w}_{W_{2n}}
\]
for $w,w'\in W_{2n}$.
By \cite[Chapter 4.~$\mathrm{II}$.~1]{MVW}
we have $\pi^\vee\cong\pi^\delta$ for $\pi\in \Irr(\Sp(W_{2n}))$.
Hence $\pi^\vee \in \Pi_\phi$.
The map $\Sp(W_{2n}) \ni x \mapsto \delta^{-1}x\delta \in \Sp(W_{2n})$ 
transfers $\w'_c$ to $\w'_{-c}$.
See also Proposition \ref{Whittaker} below.
By Proposition \ref{whittaker1}, we have
\[
\iota_{\w'_c}(\pi^\vee)=\iota_{\w'_c}(\pi^\delta)
=\iota_{\w'_{-c}}(\pi)=\iota_{\w'_c}(\pi)\otimes\eta_{-1}, 
\]
as desired.
\end{proof}
\par

We say that $\phi\in\cl\Phi(\Sp(W_{2n}))$ is discrete
if $I^-=J=\emptyset$ and $\dim(V_i)=1$ for each $i\in I^+$.
In this case, $C_\phi^+$ is a finite group and
$\Pi_\phi$ consists of irreducible (unitary) discrete series representations 
(i.e., square-integrable representations) of $\Sp(W_{2n})$.
If $\phi\in\cl\Phi(\Sp(W_{2n}))$ is tempered but not discrete, then 
we can write
\[
\phi=\phi_1+\phi_0+\phi_1^\vee,
\]
where
\begin{itemize}
\item
$\phi_1$ is a $k$-dimensional irreducible representation of $\WD_F$ for some positive integer $k$,
\item
$\phi_0\in\cl\Phi_\temp(\Sp(W_{2n_0}))$ if $n_0=n-k\geq1$, and $\phi_0=\1$ if $n_0=n-k=0$.
\end{itemize}
In this case, there is a natural embedding $A_{\phi_0}^+\hookrightarrow A_\phi^+$,
where we put $A_{\phi_0}^+=\1$ if $n_0=0$.
We can decompose
\[
W_{2n}=Y_k+W_{2n_0}+Y_k^*
\]
and define a parabolic subgroup $Q=Q_k=M_QU_Q$ of $\Sp(W_{2n})$ as in \S \ref{parabolic}.
Hence,
\[
M_Q\cong \GL(Y_k)\times \Sp(W_{2n_0}).
\]
Let $\tau$ be the irreducible (unitary) discrete series representation of $\GL(Y_k)$
associated to $\phi_1$.
For $\pi_0\in\Pi_{\phi_0}$, the induced representation 
$\Ind_{Q}^{\Sp(W_{2n})}(\tau\otimes\pi_0)$ 
decomposes to a direct sum of irreducible representations, and
\[
\{\pi\in\Irr(\Sp(W_{2n}))\ |\ 
\pi\subset \Ind_{Q}^{\Sp(W_{2n})}(\tau\otimes\pi_0)\}
=
\{\pi\in\Pi_\phi\ |\ \iota_{\w'_c}(\pi)|A_{\phi_0}^+=\iota_{\w'_c}(\pi_0)\}
\]
for any $c\in F^\times$.
Here, if $n_0=0$, then we ignore $\pi_0$ and interpret the right hand side to be $\Pi_\phi$.
By using a normalized intertwining operator, we can determine $\iota_{\w_c'}(\pi)$ completely.
More precisely, see \S \ref{sec.intertwining} below.
\par

Let $\cl\Phi_\disc(\Sp(W_{2n}))$ (\resp $\cl\Phi_\gen(\Sp(W_{2n}))$) be the subset of 
$\cl\Phi(\Sp(W_{2n}))$ consisting of discrete representations (\resp generic representations).
Then we have a sequence
\[
\cl\Phi_\disc(\Sp(W_{2n}))\subset \cl\Phi_\temp(\Sp(W_{2n}))\subset 
\cl\Phi_\gen(\Sp(W_{2n}))\subset \cl\Phi(\Sp(W_{2n})).
\]

%\subsection{Type $D_n$}
\subsection{Type $D_n$}\label{Dn}
Let $n\geq1$ and $G=\SO(V_{2n})$ be a quasi-split special orthogonal group.
We put $E=F(\sqrt{\disc(V_{2n})})$.
Then $\widehat{G}=\SO_{2n}(\C)$ and $\Lgp{G}=\SO_{2n}(\C)\rtimes W_F$.
The action of $W_F$ on $\SO_{2n}(\C)$ factors through $W_F\rightarrow \Gal(E/F)$.
For $\gamma\in W_F$ and $\bphi\in\Phi(\SO(V_{2n}))$, 
we define ${}^\gamma\bphi\in\Phi(\SO(V_{2n}))$ 
by ${}^\gamma\bphi(x)=\gamma\cdot\bphi(x)\cdot\gamma^{-1}$.
An $L$-parameter $\bphi$ of $G$ gives a self-dual representation 
$\phi\colon\WD_F\rightarrow \GL(N)$
with sign $+1$, $\dim_\C(N)=2n$ and $\det(N)=\chi_{V_{2n}}$.
The map $\bphi\mapsto \phi$ gives a surjective map
\[
\Phi(\SO(V_{2n}))\rightarrow
\cl\Phi(\SO(V_{2n}))\coloneqq
\{\phi\colon\WD_F\rightarrow \orth(N)\ |\ \dim_\C(N)=2n, \det(N)=\chi_{V_{2n}}\}/\cong.
\]
Note that ${}^\gamma\bphi$ and $\bphi$ give the same self-dual representation.
By \cite[Theorem 8.1]{GGP}, for $\phi\in\cl\Phi(\SO(V_{2n}))$, 
the inverse image of $\phi$ under this map has one or two elements.
We say that $\phi\in\cl\Phi(\SO(V_{2n}))$ is $\epinv$ if
the inverse image of $\phi$ under this map is a singleton.
Let $\cl\Phi_\temp(\SO(V_{2n}))$ be the subset of $\cl\Phi(\SO(V_{2n}))$ of 
tempered representations.
\par

We denote the centralizer of $\im(\phi)$ in $\orth(N)$ by $C_\phi$
and its component group by $A_\phi=C_\phi/C_\phi^\circ$.
The centralizer of $\im(\phi)$ in $\SO(N)$ is denoted by $C_\phi^+$.
We put $A_\phi^+=\im(C_\phi^+\rightarrow A_\phi)$.
Then $\pi_0(S_{\bphi})\cong A_\phi^+$ if $\bphi\mapsto \phi$.
If 
\[
N\cong \bigoplus_{i\in I^+}(V_i\otimes N_i)
\oplus\bigoplus_{i\in I^-}(W_i\otimes M_i)
\oplus\bigoplus_{j\in J}(U_j\otimes (P_j+P_j^\vee))
\]
where 
\begin{itemize}
\item $M_i$ (\resp $N_i$) is an irreducible self-dual representation of $\WD_F$ 
with sign $-1$ (\resp $+1$),
\item $P_j$ is an irreducible representation of $\WD_F$ which is not self-dual,
\end{itemize}
such that $M_i$, $N_i$ and $P_j$ are pairwise inequivalent, and $V_i$, $W_i$ and $U_j$ are multiplicity spaces,
then the bilinear form on $N$ gives a non-degenerate bilinear form on $V_i$ (\resp $W_i$) of sign $+1$ (\resp $-1$).
Note that $\phi$ is $\epinv$ if and only if $\dim_\C(N_i)$ is odd for some $i\in I^+$
(see \cite[Theorem 8.1]{GGP}).
Moreover, we have
\[
C_\phi\cong\prod_{i\in I^+}\orth(V_i)\times\prod_{i\in I^-}\Sp(W_i)\times\prod_{j\in J} \GL(U_j).
\]
For a semi-simple element $a\in C_\phi$, we denote by $N^a$
the $(-1)$-eigenspace of $a$ on $N$.
Then $N^a$ gives an orthogonal representation of $\WD_F$.
If $a_i\in\orth(V_i)\setminus\SO(V_i)$, then $N^{a_i}=V_i^{a_i}\otimes N_i$.
We have
\[
A_\phi=\bigoplus_{i\in I^+}(\Z/2\Z)a_i\cong(\Z/2\Z)^m,
\]
where $m=\#I^+$.
We see that
$A_\phi^+$ is the kernel of the character
\[
A_\phi\ni a_i\mapsto (-1)^{\dim_\C(N_i)}.
\]
Hence we have 
\[
A_\phi^+\cong
\left\{
\begin{aligned}
&(\Z/2\Z)^{m-1}       \iif \text{$\phi$ is $\epinv$},\\
&(\Z/2\Z)^{m} \other.
\end{aligned}
\right.
\]
\par

It is known that 
\[
\#H^1(F,\SO(V_{2n}))=
\left\{
\begin{aligned}
&1\iif \dim(V_{2n})=2n=2\text{ and }\disc(V_{2n})=1,\\
&2\other
\end{aligned}
\right.
\] 
(which also follows from Kottwitz's isomorphism).
Hence if $n=1$ and $\disc(V_{2n})=1$, then there are no non-trivial pure inner forms of 
$G=\SO(V_{2n})$.
Otherwise, there are exactly two pure inner forms of $G=\SO(V_{2n})$.
They are $\SO(V_{2n})$ and $\SO(V_{2n}')$, where
$\dim(V_{2n})=\dim(V_{2n}')=2n$ and $\disc(V_{2n})=\disc(V_{2n}')$.
We see that
\begin{align*}
&\SO(V_{2n})\text{ is }
\left\{
\begin{aligned}
&\text{split} \iif \disc(V_{2n})=1,\\
&\text{quasi-split but not split} \iif \disc(V_{2n})\not=1,
\end{aligned}
\right.\\
&\SO(V_{2n}')\text{ is }
\left\{
\begin{aligned}
&\text{not quasi-split} \iif \disc(V_{2n}')=1,\\
&\text{quasi-split but not split} \iif \disc(V_{2n}')\not=1.
\end{aligned}
\right.\\
\end{align*}
\par

By \cite[Theorem 1.5.1 (b)]{A},
for $\phi\in\cl\Phi_\temp(\SO(V_{2n}))$, 
we obtain an $L$-packet $\Pi_\phi^A$, which is a finite subset of 
$\Irr_\temp(\SO(V_{2n}))/\sim_\ep$,
and a canonical bijection 
$\iota_{\w_c}^A\colon \Pi_\phi^A\rightarrow \pi_0(C_\phi^+/\{\pm1\})\widehat{\ }$
for each Whittaker datum $\w_c$ of $\SO(V_{2n})$.
We assume that there is a Vogan $L$-packet 
$\Pi_\phi\subset \sqcup_{V_{2n}^\bullet}\Irr_\temp(\SO(V_{2n}^\bullet))/\sim_\ep$ such that
$\Pi_\phi\cap \Irr_\temp(\SO(V_{2n}))/\sim_\ep=\Pi_\phi^A$
together with an extension 
$\iota_{\w_c}\colon \Pi_\phi\rightarrow \widehat{A_\phi^+}$ of $\iota_{\w_c}^A$.
We extend $L$-packets for general $\phi\in\cl\Phi(\SO(V_{2n}))$ as follows.
In general, we can write
\[
\phi=\phi_1\oplus\dots\oplus\phi_r\oplus\phi_0
\oplus\phi_r^\vee\oplus\dots\oplus\phi_1^\vee,
\]
where
\begin{itemize}
\item
$\phi_i$ is a $k_i$-dimensional representation of $\WD_F$ for $i=1,\dots,r$,
which is of the form
$\phi_i=\phi_i'\otimes|\cdot|_F^{s_i}$
for some tempered representation $\phi_i'$ of $\WD_F$ and some real number $s_i$ such that
\[
s_1>\dots>s_r>0, \quad n=n_0+(k_1+\cdots+k_r);
\]
\item
$\phi_0\in\cl\Phi_\temp(\SO(V_{2n_0}))$ if $n_0\geq1$, and $\phi_0=0$ if $n_0=0$.
\end{itemize}
First, we assume that $n_0>0$.
We write
\[
V_{2n}^\bullet=X_1\oplus\dots\oplus X_r\oplus V_{2n_0}^\bullet\oplus X_r^*\oplus\dots\oplus X_1^*,
\]
where
$X_i$ and $X_i^*$ are $k_i$-dimensional totally isotropic subspaces of $V_{2n}^\bullet$
which are mutually orthogonal, and
such that $X_i\oplus X_i^*$ is non-degenerate 
and orthogonal to $V_{2n_0}^\bullet$.
Let $P^\circ$ be the parabolic subgroup of $\SO(V_{2n}^\bullet)$ stabilizing the flag
\[
X_1\subset X_1\oplus X_2\subset\dots\subset X_1\oplus\dots\oplus X_r
\]
and $M_P^\circ$ be the Levi component of $P^\circ$ stabilizing the flag
\[
X_1^*\subset X_1^*\oplus X_2^*\subset\dots\subset X_1^*\oplus\dots\oplus X_r^*,
\]
so that
\[
M_P^\circ\cong \GL(X_1)\times\dots\times \GL(X_r)\times\SO(V_{2n_0}^\bullet).
\]
Then the $L$-packet $\Pi_\phi$ consists of
the $\orth(V_{2n}^\bullet)$-orbits of
the unique irreducible quotients $\sigma$ of the standard modules
\[
\Ind_{P^\circ}^{\SO(V_{2n}^\bullet)}(\tau_1\otimes\dots\otimes\tau_r\otimes\sigma_0),
\]
where $\tau_i$ is the irreducible essentially tempered representation of $\GL(X_i)$
corresponding to $\phi_i$ for $i=1,\dots,r$, and
$[\sigma_0]$ runs over elements in $\Pi_{\phi_0}$.
Since $V_{2n_0}\not=0$ and
\[
\Ind_{P^\circ}^{\SO(V_{2n}^\bullet)}(\tau_1\otimes\dots\otimes\tau_r\otimes\sigma_0)^\ep
\cong
\Ind_{P^\circ}^{\SO(V_{2n}^\bullet)}(\tau_1\otimes\dots\otimes\tau_r\otimes\sigma_0^\ep)
\]
for $\ep\in\orth(V_{2n_0})$,
the $L$-packet $\Pi_\phi$ is well-defined.
The natural map $A_{\phi_0}^+\rightarrow A_\phi^+$ is an isomorphism,
and we define $\iota_{\w_c}(\pi)\in\widehat{A_\phi^+}$ by
\[
\iota_{\w_c}(\pi_0)
=\iota_{\w_c}(\pi)|A_{\phi_0}^+.
\]
If $n_0=0$, then $\det(\phi)=\1$, so that $\phi\in\cl\Phi(\SO(V_{2n}))$ where $\SO(V_{2n})$ is split.
In this case, 
the $L$-packet $\Pi_{\phi}$ is a singleton and the unique element of $\Pi_\phi$ is
the $\orth(V_{2n})$-orbit consisting of the unique irreducible quotients $\sigma$ and $\sigma^\ep$ 
of the standard modules
\[
\Ind_{P^\circ}^{\SO(V_{2n})}(\tau_1\otimes\dots\otimes\tau_r)
\quad\text{and}\quad
\Ind_{P^\circ}^{\SO(V_{2n})}(\tau_1\otimes\dots\otimes\tau_r)^\ep.
\]
\par

Recall that the Vogan $L$-packet $\Pi_\phi$ is a subset of 
$\sqcup_{V_{2n}^\bullet}\Irr(\SO(V_{2n}^\bullet))/\sim_\ep$.
The following proposition is proven in \S \ref{Prasad}.
\begin{prop}\label{order}
For $\phi\in\cl\Phi(\SO(V_{2n}))$, we have the following:
\begin{itemize}
\item
$\phi$ is $\epinv$ if and only if $\Pi_\phi$ consists of orbits of order one;
\item
$\phi$ is not $\epinv$ if and only if $\Pi_\phi$ consists of orbits of order two.
\end{itemize}
\end{prop}
In particular, $\phi$ is $\epinv$ if and only if 
$\Pi_{\phi}$ consists of orbits of an $\ep$-invariant element.
This is the reason why we say that $\phi$ is $\epinv$.
We summarize these results in Table \ref{tab2} below.
Here, $[\sigma]$ is an element in $\Pi_\phi$ with $\sigma\in\Irr(\SO(V_{2n}^\bullet))$, and
$\cl\sigma$ is an irreducible constituent of 
$\Ind_{\SO(V_{2n}^\bullet)}^{\orth(V_{2n}^\bullet)}(\sigma)$.
\begin{table}[htb]
\begin{center}
\caption{}\label{tab2}
\begin{tabular}{|c|c|c|} \hline
   $\phi$ &$\epinv$&not $\epinv$\\ \hline\hline
     $\dim_\C(N_i)\colon$odd  &contained& not contained\\ \hline
     $A_\phi$   &   $A_\phi^+\not=A_\phi$     & $A_\phi^+=A_\phi$\\\hline
     order of $[\sigma]$  &   one     & two\\\hline
     $\sigma^\ep$  &$\sigma\cong\sigma^\ep$  &$\sigma\not\cong\sigma^\ep$\\ \hline
     $\Ind_{\SO(V_{2n}^\bullet)}^{\orth(V_{2n}^\bullet)}(\sigma)$ 
     & $\cl\sigma\oplus(\cl\sigma\otimes\det)$  & irreducible  \\ \hline
    $\cl{\sigma}\otimes\det$ & $\cl{\sigma}\not\cong\cl{\sigma}\otimes\det$ & $\cl{\sigma}\cong\cl{\sigma}\otimes\det$\\\hline
     $\cl{\sigma}|\SO(V_{2n}^\bullet)$ &irreducible& $\sigma\oplus\sigma^\ep$\\\hline
    \end{tabular}
\end{center}
\end{table}
\par

If $\disc(V_{2n})=1$, then
the set of Whittaker data of $\SO(V_{2n})$ is parametrized by $F^\times/F^{\times2}$.
If $\disc(V_{2n})\not=1$, then
both $\SO(V_{2n})$ and $\SO(V_{2n}')$ are quasi-split, and the set
\[
\bigsqcup_{V_{2n}^\bullet}\{\text{Whittaker data of $\SO(V_{2n}^\bullet)$}\}
\]
is parametrized by $F^\times/F^{\times2}$.
Note that $\cl\Phi(\SO(V_{2n}))=\cl\Phi(\SO(V_{2n}'))$.
An element in this set 
gives a (Vogan) $L$-packet of $\SO(V_{2n})$ and 
one of $\SO(V_{2n}')$.
They are both subsets of $\sqcup_{V_{2n}^\bullet}\Irr(\SO(V_{2n}^\bullet))/\sim_{\ep}$.
By Appendix \ref{A.4}, 
these $L$-packets coincide.
Hence, in each cases, 
there exists a bijection
\[
\iota_{\w_c}\colon \Pi_\phi\rightarrow \widehat{A_\phi^+}
\]
for any $\phi\in \cl\Phi(\SO(V_{2n}))$ and $c\in F^\times/F^{\times2}$.
For $c\in F^\times$ and a semi-simple element $a$ in $C_\phi^+$, we put
$\eta_{c}(a)\coloneqq\det(N^a)(c)$.
By \cite[\S 4]{GGP}, this gives a character $\eta_{c}$ of $A_\phi^+$.
Then we have the following proposition.
\begin{prop}\label{whittaker2}
For $c_1,c_2\in F^\times$, the diagram
\[
\begin{CD}
\Pi_\phi @>\iota_{\w_{c_1}}>> \widehat{A_\phi^+}\\
@|@VV\cdot\otimes\eta_{c_2/c_1} V\\
\Pi_\phi @>\iota_{\w_{c_2}}>> \widehat{A_\phi^+}
\end{CD}
\]
is commutative.
\end{prop}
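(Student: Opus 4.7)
The plan is to follow the strategy of Proposition \ref{whittaker1}: reduce to the tempered case by Langlands classification, then invoke the special orthogonal analogue of \cite[Theorem 3.3]{Ka1}, supplemented by the identification of Vogan $L$-packets across pure inner forms provided by Appendix \ref{A.4}.

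First I would reduce to the tempered case. For a general $\phi\in\cl\Phi(\SO(V_{2n}))$ with Langlands decomposition $\phi=\phi_1\oplus\cdots\oplus\phi_r\oplus\phi_0\oplus\phi_r^\vee\oplus\cdots\oplus\phi_1^\vee$, every $[\sigma]\in\Pi_\phi$ is realized as the orbit of the unique irreducible quotient of a standard module $\Ind_{P^\circ}^{\SO(V_{2n}^\bullet)}(\tau_1\otimes\cdots\otimes\tau_r\otimes\sigma_0)$ with $[\sigma_0]\in\Pi_{\phi_0}$, and the parametrization satisfies $\iota_{\w_c}([\sigma])|A_{\phi_0}^+=\iota_{\w_c}([\sigma_0])$ via the canonical isomorphism $A_{\phi_0}^+\cong A_\phi^+$. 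The standard module assignment $[\sigma]\mapsto[\sigma_0]$ is independent of $c$, and the character $\eta_{c_2/c_1}$ of $A_\phi^+$ restricts to the corresponding character on $A_{\phi_0}^+$ (both being given by $a\mapsto\det(N^a)(c_2/c_1)$ on the relevant symplectic representations). Hence the identity for $\phi$ follows from that for $\phi_0$.

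In the tempered case, the statement on $\SO(V_{2n})$ itself is the $D_n$-analogue of Kaletha's theorem \cite[Theorem 3.3]{Ka1}, which expresses the change in $\iota_{\w_c}^A$ under change of Whittaker datum as the twist by $\eta_{c_2/c_1}$. To transport the identity to the non-quasi-split pure inner form one invokes Appendix \ref{A.4}. When $\disc(V_{2n})\neq 1$ both $\SO(V_{2n})$ and $\SO(V_{2n}')$ are quasi-split, Kaletha's theorem applies on each, and Appendix \ref{A.4} identifies the resulting Vogan $L$-packets on the common set $\bigcup_\bullet\Irr_\temp(\SO(V_{2n}^\bullet))/{\sim_\ep}$ with compatible parametrizations, so the twist formula extends uniformly. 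When $\disc(V_{2n})=1$ one has $\chi_{V_{2n}}=\1$, so the central character of $\eta_{c_2/c_1}$ on $\pi_0(Z(\widehat{G})^\Gamma)=\pair{-I}$ is trivial, the twist preserves the pure inner form decomposition of $\Pi_\phi$, and the identity on $\Irr_\temp(\SO(V_{2n}'))/{\sim_\ep}$ is forced by the Vogan extension of $\iota_{\w_c}^A$ through the endoscopic character identities.

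The main obstacle is the bookkeeping in the third step: matching the central character of $\eta_{c_2/c_1}$ on $\pi_0(Z(\widehat{G})^\Gamma)$ against the Kottwitz pairing that distinguishes the two pure inner forms, and checking that this matches the gluing statement of Appendix \ref{A.4}. Once this compatibility is in place, the desired identity is a formal consequence of Kaletha's theorem applied on each quasi-split pure inner form together with the standard module construction of the non-tempered $L$-packets.
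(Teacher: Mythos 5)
Your reduction to the tempered case is fine and consistent with how the paper sets up $\iota_{\w_c}$ for non-tempered parameters, but the tempered case itself contains a genuine gap: the step you call "bookkeeping" is in fact the entire content of the paper's proof. When $\disc(V_{2n})\neq 1$ and $c_2/c_1\notin N_{E/F}(E^\times)$, the data $\w_{c_1}$ and $\w_{c_2}$ are Whittaker data of two \emph{different} (both quasi-split) pure inner twists, and Kaletha's theorem \cite[Theorem 3.3]{Ka1} only compares two Whittaker data of one and the same quasi-split group; applying it "on each" group separately produces no relation whatsoever between $\iota_{\w_{c_1}}$ and $\iota_{\w_{c_2}}$. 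Likewise, Appendix \ref{A.4} (the corollary to Proposition \ref{Delta}) only identifies the two Vogan packets as sets, via $J_z(\Pi_\phi)=\Pi_\phi$; it does not say the parametrizations are "compatible", and asserting that compatibility is precisely asserting the proposition you are trying to prove. The same issue appears in your $\disc(V_{2n})=1$ case for the members of the packet living on the non-quasi-split form: saying the identity is "forced by the endoscopic character identities" presupposes that you know how the Whittaker-normalized transfer factor of the pure inner twist changes when $\w$ changes, which is again the unproven point.

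What actually closes this gap in the paper is the transfer-factor analysis of Appendix \ref{app}: one compares $\Delta[\e,\z,\psi_1,z_1,\w]$ and $\Delta[\e,\z,\psi_1,z_1z^{-1},\w]$ for a central cocycle $z\in Z^1(F,Z)$ (Proposition \ref{Delta}), shows via the Tate--Nakayama pairing that the discrepancy is a character $\chi_z$ of $\pi_0(S_\phi)$ satisfying $\iota_\w\circ J_z=\iota_\w\otimes\chi_z$ on the whole Vogan packet (Theorem \ref{base}), handles the case $[z]=1$ by conjugating the Whittaker datum by an explicit element of $G_{\ad}(F)$ (Proposition \ref{Whittaker}, where Kaletha's theorem genuinely enters), and finally computes $\chi_{z_{c_0}}=\eta_{c_0}$ explicitly for $\SO(V_{2n})$ and $\Sp(W_{2n})$ using the cohomology of the norm-one tori $E_d^1$ (Lemma \ref{coh} and Proposition \ref{SO&Sp}). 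None of this is formal: the identification of the comparison character with $a\mapsto\det(N^a)(c_2/c_1)$ comes out of an explicit endoscopic and Galois-cohomological computation, not out of the gluing of Appendix \ref{A.4}. Your proposal would need to supply Theorem \ref{base} and the computation $\chi_{z_{c_2/c_1}}=\eta_{c_2/c_1}$ (or an equivalent comparison of the two Whittaker normalizations across the pure inner twist) before the "formal consequence" in your last sentence is available.
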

The case when both $\w_{c_1}$ and $\w_{c_2}$ are Whittaker data of $\SO(V_{2n})$
 is Kaletha's result (\cite[Theorem 3.3]{Ka1}).
The proof of this proposition is given in Appendix \ref{A.5}.
\par

We say that $\phi\in\cl\Phi(\SO(V_{2n}))$ is discrete
if $I^-=J=\emptyset$ and $\dim(V_i)=1$ for each $i\in I^+$.
In this case, $C_\phi^+$ is a finite group and
$\Pi_\phi$ consists of orbits of irreducible (unitary) discrete series representations 
(i.e., square-integrable representations) of $\SO(V_{2n})$.
If $\phi\in\cl\Phi(\SO(V_{2n}))$ is tempered but not discrete, then 
we can write
\[
\phi=\phi_1+\phi_0+\phi_1^\vee,
\]
where
\begin{itemize}
\item
$\phi_1$ is a $k$-dimensional irreducible representation of $\WD_F$ for some positive integer $k$,
\item
$\phi_0\in\cl\Phi_\temp(\SO(V_{2n_0}))$ if $n_0=n-k\geq1$, and $\phi_0=0$ if $n_0=n-k=0$.
\end{itemize}
First, we assume that $n_0>0$.
In this case, there is a natural embedding $A_{\phi_0}^+\hookrightarrow A_\phi^+$.
Let $[\sigma_0]\in\Pi_{\phi_0}$ with $\sigma_0\in\Irr(\SO(V_{2n_0}^\bullet))$. 
We can decompose
\[
V_{2n}^\bullet=X_k+V_{2n_0}^\bullet+X_k^*
\]
and define a parabolic subgroup $P^\circ=P_k^\circ=M_P^\circ U_P$ of $\SO(V_{2n}^\bullet)$ 
as in \S \ref{parabolic}.
Hence,
\[
M_P^\circ\cong \GL(X_k)\times \SO(V_{2n_0}^\bullet).
\]
Let $\tau$ be the irreducible (unitary) discrete series representation of $\GL(X_k)$
associated to $\phi_1$.
Then the induced representation 
$\Ind_{P^\circ}^{\SO(V_{2n}^\bullet)}(\tau\otimes\sigma_0)$ 
decomposes to a direct sum of irreducible representations, and
\[
\{[\sigma]\ |\ \sigma\in\Irr(\SO(V_{2n}^\bullet)),
\sigma\subset \Ind_{P^\circ}^{\SO(V_{2n}^\bullet)}(\tau\otimes\sigma_0)\}
=
\{[\sigma]\in\Pi_\phi\ |\ \iota_{\w_c}([\sigma])|A_{\phi_0}^+=\iota_{\w_c}([\sigma_0])\}.
\]
Note that the left hand side is independent of
the choice of a representative $\sigma_0$ of $[\sigma_0]$.
By using a normalized intertwining operator, we can determine $\iota_{\w_c}([\sigma])$ completely.
More precisely, see \S \ref{sec.intertwining} below.
If $n_0=0$, then $\det(\phi)=\1$, so that $\phi\in\cl\Phi(\SO(V_{2n}))$ where $\SO(V_{2n})$ is split.
Moreover 
the induced representation $\Ind_{P^\circ}^{\SO(V_{2n})}(\tau)$
decomposes to a direct sum of irreducible representations, and
\[
\Pi_\phi=\{[\sigma]\ |\ \sigma\in\Irr(\SO(V_{2n})),
\sigma\subset \Ind_{P^\circ}^{\SO(V_{2n})}(\tau)\}.
\]
\par

Let $\cl\Phi_\disc(\SO(V_{2n}))$ (\resp $\cl\Phi_\gen(\SO(V_{2n}))$) be the subset of 
$\cl\Phi(\SO(V_{2n}))$ consisting of discrete representations (\resp generic representations).
Then we have a sequence
\[
\cl\Phi_\disc(\SO(V_{2n}))\subset \cl\Phi_\temp(\SO(V_{2n}))\subset 
\cl\Phi_\gen(\SO(V_{2n}))\subset \cl\Phi(\SO(V_{2n})).
\]

%\subsection{Metaplectic groups}\label{Mp}
\subsection{Metaplectic groups}\label{Mp}
Let $\Mp(W_{2n})$ be the metaplectic group, i.e.,
the non-trivial central extension of $\Sp(W_{2n})$:
\[
\begin{CD}
1@>>>\{\pm1\}@>>>\Mp(W_{2n})@>>>\Sp(W_{2n})@>>>1.
\end{CD}
\]
We denote the set of equivalence classes of irreducible genuine representations of 
$\Mp(W_{2n})$ by $\Irr(\Mp(W_{2n}))$.
\par
We recall some results of Gan--Savin (Theorem 1.1 and Corollary 1.2 in \cite{GS1}).
\begin{thm}\label{disc1}
Corresponding to the choice of a non-trivial additive character $\psi$ of $F$, 
there is a natural bijection given by the theta correspondence
\begin{align*}
\Irr(\Mp(W_{2n}))
\rightarrow
\bigsqcup_{V^{(1)}_{2n+1}}\Irr(\SO(V^{(1)}_{2n+1})),
\end{align*}
where the union is taken over all the isomorphism classes of orthogonal spaces $V_{2n+1}^{(1)}$
over $F$ with $\dim(V_{2n+1}^{(1)})=2n+1$ and $\disc(V_{2n+1}^{(1)})=1$.
\par
More precisely, for $\cl{\pi}\in\Irr(\Mp(W_{2n}))$, there is a unique $V_{2n+1}^{(1)}$ 
as above such that 
$\theta_{\psi,V_{2n+1}^{(1)},W_{2n}}(\cl{\pi})$ is nonzero, in which case the image of $\cl{\pi}$ is
$\theta(\cl{\pi})=\theta_{\psi,V_{2n+1}^{(1)},W_{2n}}(\cl{\pi})|{\SO(V_{2n+1}^{(1)})}\in\Irr(\SO(V_{2n+1}^{(1)}))$.
\end{thm}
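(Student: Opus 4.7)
The strategy is to construct the bijection via the local theta correspondence for the dual pairs $(\Mp(W_{2n}), \orth(V))$ with $V$ an orthogonal space of dimension $2n+1$ and trivial discriminant. Up to isomorphism there are exactly two such spaces, distinguished by the Hasse invariant: write $V_{2n+1}^{(1),+}$ for the one on which $\SO$ is split and $V_{2n+1}^{(1),-}$ for the one on which $\SO$ is non-quasi-split. View these as the $(2n{+}1)$-st members of two Witt towers $\mathcal{V}^{\pm}$ of odd-dimensional orthogonal spaces with discriminant one.

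First, well-definedness of the map is immediate from the Howe duality conjecture of Waldspurger \cite{Wa1} and Gan--Takeda \cite{GT1}, \cite{GT2}: for any $\cl{\pi} \in \Irr(\Mp(W_{2n}))$ and any orthogonal space $V$, if $\Theta_{\psi,V,W_{2n}}(\cl\pi) \neq 0$ then its maximal semisimple quotient is irreducible. Because $V_{2n+1}^{(1),\pm}$ has odd dimension, $\orth(V_{2n+1}^{(1),\pm}) = \SO(V_{2n+1}^{(1),\pm}) \times \pair{-\id}$, so any irreducible representation of $\orth(V_{2n+1}^{(1),\pm})$ restricts irreducibly to $\SO(V_{2n+1}^{(1),\pm})$. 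Thus whenever the big theta lift is nonzero, the restriction yields an element of $\Irr(\SO(V_{2n+1}^{(1)}))$.

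Second, the core of the proof is the dichotomy. For each $\cl\pi \in \Irr(\Mp(W_{2n}))$, let $m_\pm(\cl\pi)$ denote the first occurrence index in $\mathcal{V}^\pm$. By the Rallis tower property, $\Theta_{\psi,V_m^\pm,W_{2n}}(\cl\pi)$ is nonzero for all $m$ in the tower with $m \geq m_\pm(\cl\pi)$. The key analytic input, which is the Kudla--Rallis conservation relation in the form relevant for this dual pair, asserts
\[
m_+(\cl\pi) + m_-(\cl\pi) = 4n + 2,
\]
together with the strict inequality $m_+(\cl\pi) \neq m_-(\cl\pi)$. Since both first occurrences are odd, this forces the smaller one to be $\leq 2n-1$ and the larger one to be $\geq 2n+3$, so that at the matched dimension $2n+1$ precisely one of the two lifts $\Theta_{\psi,V_{2n+1}^{(1),\pm},W_{2n}}(\cl\pi)$ is nonzero. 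This identifies a unique $V_{2n+1}^{(1)}$ attached to $\cl\pi$.

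Finally, bijectivity is established by symmetric arguments. Injectivity follows from Howe duality in the reverse direction: the target $\sigma \in \Irr(\SO(V_{2n+1}^{(1)}))$, together with the genuine central character constraint on the metaplectic side, determines a unique extension $\cl\sigma$ of $\sigma$ to $\orth(V_{2n+1}^{(1)})$ whose theta lift to $\Mp(W_{2n})$ recovers $\cl\pi$. Surjectivity follows from the analogous first-occurrence analysis from the orthogonal side: given $\sigma$, extend to $\cl\sigma$ on $\orth(V_{2n+1}^{(1)})$, invoke the conservation relation for the two metaplectic-side towers of symplectic spaces to show that the first occurrence of $\cl\sigma$ in the metaplectic tower is at some $\Mp(W_{2m})$ with $m \leq n$, and then use the tower property together with Howe duality to recover $\sigma$ as the restriction of $\theta_{\psi,V_{2n+1}^{(1)},W_{2n}}(\cl\pi)$ for a unique $\cl\pi$. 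I expect the conservation relation and the strict dichotomy $m_+(\cl\pi) \neq m_-(\cl\pi)$ of the second step to be the main obstacle; once these are in hand, the remaining verifications reduce to routine applications of Howe duality and the Rallis tower property.
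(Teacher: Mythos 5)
First, a remark on the comparison itself: the paper does not prove this statement at all; it is quoted from Gan--Savin (\cite{GS1}, Theorem 1.1 and Corollary 1.2), so the only meaningful benchmark is the argument of that reference. Your overall architecture --- Howe duality (Waldspurger, Gan--Takeda) for irreducibility of the small theta lift, the Rallis tower property, and a conservation-relation dichotomy at dimension $2n+1$, run symmetrically from both sides --- is exactly the route of Gan--Savin, so the strategy is the right one.

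However, two of your key inputs are stated incorrectly. The conservation relation for $\cl{\pi}\in\Irr(\Mp(W_{2n}))$ and the two Witt towers of odd-dimensional quadratic spaces of a fixed discriminant is $m_+(\cl{\pi})+m_-(\cl{\pi})=2\dim W_{2n}+4=4n+4=(2n+1)+(2n+3)$, not $4n+2$. With the correct constant, the inequality $m_+(\cl{\pi})\neq m_-(\cl{\pi})$ is automatic (both first occurrences are odd, so neither can equal $2n+2$), and one gets precisely the dichotomy: exactly one of $m_\pm(\cl{\pi})$ is $\leq 2n+1$ and the other is $\geq 2n+3$. Your constant $4n+2$ sits exactly at the borderline where $m_+=m_-=2n+1$ would be allowed, which is why you had to impose the strict inequality by hand; moreover your conclusion that the smaller index is $\leq 2n-1$ is false in general --- for a typical (e.g.\ supercuspidal) $\cl{\pi}$ the first occurrences are exactly $2n+1$ and $2n+3$. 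Second, in your injectivity/surjectivity step there are no ``two metaplectic-side towers of symplectic spaces'': symplectic spaces form a single Witt tower. The doubling of towers on the orthogonal side is played by the pair $\cl{\sigma}$, $\cl{\sigma}\otimes\det$: for $\sigma\in\Irr(\SO(V_{2n+1}))$ with extensions $\cl{\sigma}^\pm$ to $\orth(V_{2n+1})$, conservation reads $m(\cl{\sigma}^+)+m(\cl{\sigma}^-)=2(2n+1)=4n+2$ with both first occurrences even, whence exactly one extension has first occurrence $\leq 2n$ and hence nonzero theta lift to $\Mp(W_{2n})$. It is this fact, not a ``genuine central character constraint'' (which does not distinguish the two extensions), that singles out the unique extension and, combined with Howe duality and the tower property, yields surjectivity and injectivity. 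With these corrections your sketch does reconstruct the cited proof.
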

\begin{cor}
Suppose that the local Langlands conjecture holds for $\SO(V_{2n+1})$ with 
$\dim(V_{2n+1})=2n+1$ and $\disc(V_{2n+1})=1$.
Then one has a surjection $($depending on $\psi)$ 
\begin{align*}
\LL_\psi\colon
\Irr(\Mp(W_{2n}))\rightarrow\cl\Phi(\SO(V_{2n+1})).
\end{align*}
For $\phi\in\cl\Phi(\SO(V_{2n+1}))$, 
we denote by $\Pi_\phi$ the inverse image of $\phi$ under this map,
and call $\Pi_\phi$ the $L$-packet of $\phi$.
Moreover, the composition of $\iota_\w$ and theta lifts gives a bijection $($depending on $\psi)$ of 
\[
\iota_\psi\colon\Pi_\phi\rightarrow \widehat{A_\phi}.
\]
\end{cor}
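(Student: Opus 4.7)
The plan is to deduce the corollary essentially formally from Theorem \ref{disc1} together with the assumed Vogan-style local Langlands correspondence for odd special orthogonal groups. Concretely, I would compose the theta bijection of Theorem \ref{disc1} with the Langlands parametrization for $\SO(V_{2n+1})$, and then check that the resulting fibers deserve to be called $L$-packets and that the natural $\iota_\w$ transports to a bijection $\iota_\psi\colon \Pi_\phi\rightarrow \widehat{A_\phi}$.

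First I would observe that the orthogonal spaces $V_{2n+1}^{(1)}$ of dimension $2n+1$ with trivial discriminant are precisely the two pure inner forms of the split group $\SO(V_{2n+1})$: the split form and the unique non-quasi-split form (recall $\#H^1(F,\SO(V_{2n+1}))=2$). Hence Theorem \ref{disc1} already produces a bijection
\[
\Theta_\psi\colon \Irr(\Mp(W_{2n}))\longrightarrow \coprod_{V_{2n+1}^{(1)}}\Irr(\SO(V_{2n+1}^{(1)})),
\]
where the right-hand side is exactly the union of irreducibles over all pure inner forms of $\SO(V_{2n+1})$. By the assumed Vogan form of the local Langlands correspondence for $\SO(V_{2n+1})$ recalled in the Type $B_n$ subsection, this union admits a surjection onto $\cl\Phi(\SO(V_{2n+1}))$ whose fibers are the Vogan $L$-packets $\Pi_\phi^{\SO}$, each of cardinality $\#\widehat{A_\phi}$.

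Next I would define $\LL_\psi$ as the composition of $\Theta_\psi$ with the Langlands surjection, and define $\Pi_\phi\subset\Irr(\Mp(W_{2n}))$ to be the preimage of $\phi$. Since $\Theta_\psi$ is a bijection, $\Pi_\phi$ is in canonical bijection with $\Pi_\phi^{\SO}$. Because $\SO(V_{2n+1})$ has a unique $T$-orbit of generic characters of the unipotent radical (so a canonical Whittaker datum $\w$), the bijection $\iota_\w\colon \Pi_\phi^{\SO}\rightarrow \widehat{A_\phi}$ is canonical, and we may simply set
\[
\iota_\psi\coloneqq \iota_\w\circ \Theta_\psi|_{\Pi_\phi}.
\]
This yields the desired bijection $\Pi_\phi\rightarrow \widehat{A_\phi}$, depending on $\psi$ only through the choice of $\Theta_\psi$.

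There is essentially no obstacle here: once Theorem \ref{disc1} is granted, the corollary is a transport-of-structure, and the only thing that needs remarking is the identification of the target of Theorem \ref{disc1} with the Vogan packet of $\SO(V_{2n+1})$ (done above) and the canonicity of the Whittaker datum in the odd orthogonal case. The dependence on $\psi$ enters solely through $\Theta_\psi$, since the Weil representation $\omega_{\psi,V_{2n+1}^{(1)},W_{2n}}$ and hence the theta lift depend on $\psi$, while $\iota_\w$ does not. I would conclude by remarking that this construction makes no use of the assumption (LLC) for $\Mp(W_{2n})$ itself; rather, it serves as the \emph{definition} of the local Langlands correspondence for metaplectic groups used throughout the paper.
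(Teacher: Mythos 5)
Your proposal is correct and is essentially the paper's (i.e.\ Gan--Savin's) argument: the corollary is obtained precisely by composing the bijection of Theorem \ref{disc1} with the assumed Vogan-form Langlands surjection for $\SO(V_{2n+1})$, noting that the spaces $V_{2n+1}^{(1)}$ exhaust the pure inner forms, that the Whittaker datum is unique in the odd orthogonal case, and setting $\iota_\psi=\iota_\w\circ\theta_\psi$ on each fiber. Nothing further is needed.
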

\par

We put $\Phi(\Mp(W_{2n}))\coloneqq\Phi(\SO(V_{2n+1}))$
and we call $\bphi\in \Phi(\Mp(W_{2n}))$ an $L$-parameter of $\Mp(W_{2n})$.
This is also an $L$-parameter of $\SO(V_{2n+1})$.
However we use
\[
\cl\Phi(\Mp(W_{2n})) \coloneqq \cl\Phi(\SO(V_{2n+1}))
=\{\WD_F\rightarrow \Sp(M)\ |\ \dim_\C(M)=2n\}/\cong
\] 
in this paper.
Note that there is a bijection $\Phi(\Mp(W_{2n}))\rightarrow\cl\Phi(\Mp(W_{2n}))$.
We regard $\LL_\psi$ as a map
\[
\LL_\psi\colon
\Irr(\Mp(W_{2n}))\rightarrow\cl\Phi(\Mp(W_{2n})).
\]
By \cite[Theorem 12.1]{GS1}, we have the following theorem.
\begin{thm}\label{change}
For $\cl{\pi}\in\Irr(\Mp(W_{2n}))$ and $c\in F^\times$, we put
($\LL_\psi(\cl{\pi}),\iota_\psi(\cl{\pi}))=(\phi,\eta)$ and 
$(\LL_{\psi_c}(\cl{\pi}),\iota_{\psi_c}(\cl{\pi}))=(\phi_c,\eta_c)$,
where $\phi\colon\WD_F\rightarrow \Sp(M)$ is a symplectic representation of $\WD_F$.
Then the following hold.
\begin{enumerate}
\item
$\phi_c=\phi\otimes\chi_c$, where $\chi_c$ is the quadratic character associated to $c\in F^\times/F^{\times2}$.
In particular, we have a canonical identification $A_\phi=A_{\phi_c}$.
\item
The characters $\eta$ and $\eta_c$ are related by
\[
\eta_c(a)/\eta(a)=
\varepsilon(M^a)\varepsilon(M^a\otimes\chi_c)\cdot \chi_c(-1)^{\dim_\C(M^a)/2}\in\{\pm1\}
\]
for $a\in A_\phi=A_{\phi_c}$.
\end{enumerate}
\end{thm}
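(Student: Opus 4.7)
The plan is to deduce Theorem \ref{change} from a careful analysis of how the Weil representation and the Gan--Savin parametrization change when $\psi$ is replaced by $\psi_c$, splitting into the $L$-parameter statement (1) and the fine character statement (2).

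For part (1), I would begin with the fundamental identity relating the Weil representations for the dual pair $(\Mp(W_{2n}),\orth(V_{2n+1}))$: replacing $\psi$ by $\psi_c$ corresponds to scaling the bilinear form on $V_{2n+1}$ by $c$. Since $\dim V_{2n+1}=2n+1$ is odd, the rescaled space $V_{2n+1}^{(c)}$ has discriminant $c^{2n+1}\equiv c$ modulo squares. Thus if $\theta_{\psi,V_{2n+1}^{(1)},W_{2n}}(\cl\pi)$ is the unique non-zero theta lift and lives on some $\SO(V_{2n+1}^{(1)})$ with discriminant $1$, then the $\psi_c$-theta lift naturally lives on $\SO(V_{2n+1}^{(c)})$ with discriminant $c$. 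Normalizing back to a space of discriminant $1$ (as required by Gan--Savin's parametrization) requires passing through the isomorphism of $F$-groups that rescales the form, and tracking how this rescaling acts on $L$-parameters. The combinatorics of Langlands parameters for odd special orthogonal groups, together with the fact that the Weil constant $\gamma_\psi$ transforms as $\gamma_{\psi_c}/\gamma_\psi\sim\chi_c$ up to a power of $|c|_F^{1/2}$, precisely produce the twist $\phi_c=\phi\otimes\chi_c$.

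For part (2), one must compare the characters $\eta=\iota_\psi(\cl\pi)$ and $\eta_c=\iota_{\psi_c}(\cl\pi)$ on the common component group $A_\phi=A_{\phi_c}$. I would exploit the explicit description of $\iota_\psi$ via the theta correspondence together with the local character identities satisfied by $\iota_\w$. Concretely, for a semisimple $a\in A_\phi$, $\eta(a)$ and $\eta_c(a)$ are detected by suitable endoscopic characters attached to $M^a$ and $M^{a}\otimes\chi_c$, respectively. The ratio $\eta_c(a)/\eta(a)$ is therefore governed by the change of local constants under $\psi\mapsto\psi_c$. Invoking the standard identity
\[
\varepsilon(s,M^a,\psi_c)=\det(M^a)(c)\,|c|_F^{\dim(M^a)(s-1/2)}\varepsilon(s,M^a,\psi),
\]
together with $\det(M^a)=\1$ (since $M^a$ is symplectic) and the formula $\varepsilon(s,M^a\otimes\chi_c,\psi)$ versus $\varepsilon(s,M^a,\psi)$, one reduces the quotient $\eta_c(a)/\eta(a)$ to the expression $\varepsilon(M^a)\varepsilon(M^a\otimes\chi_c)\chi_c(-1)^{\dim M^a/2}$. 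By multiplicativity of both sides in the eigenspace decomposition of $a$, it suffices to check this on each irreducible constituent of $M^a$, which is a local computation that I would carry out by reducing (via a global lifting if necessary) to the doubling integral or to a see-saw involving the theta correspondence for $(\Mp(W_{2n}),\orth(V_{2n+1}))$ and a smaller dual pair.

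The hard part will be rigorously justifying the passage from the abstract character ratio to the precise epsilon factor expression. While the transformation law of $\varepsilon(s,\phi,\psi)$ under $\psi\mapsto\psi_c$ is classical, showing that this is exactly what controls $\eta_c/\eta$ requires either (a) a direct computation in the Schr\"odinger model of $\omega_\psi$ identifying the action of $a\in A_\phi$ explicitly, which is delicate because $a$ lives on the dual side; or (b) a global argument that exhibits $\cl\pi$ as a local component of an automorphic representation and uses Arthur's multiplicity formula on the orthogonal side to transfer the question to a comparison of global epsilon factors, which then localize to the desired identity. The second route is closer in spirit to \cite[Theorem 12.1]{GS1} and is what I would pursue, with the main local input being the standard theta-correspondence computation of root numbers.
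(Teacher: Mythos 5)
The paper does not actually prove this statement: Theorem \ref{change} is imported verbatim from Gan--Savin, being exactly \cite[Theorem 12.1]{GS1}, and the intended justification here is that citation (the only local verification in this paper is the rescaling identity $\omega_{\psi,V_{2n+1}^{(c)},W_{2n}}=\omega_{\psi_c,V_{2n+1}^{(1)},W_{2n}}$, used afterwards in Corollary \ref{Mp-param}). Your proposal instead attempts to reprove Gan--Savin's theorem, and as written it has genuine gaps. For part (1), your rescaling observation is correct, but it only tells you that the $\psi_c$-lift to a discriminant-one space is the same as the $\psi$-lift to a discriminant-$c$ space; to conclude $\phi_c=\phi\otimes\chi_c$ you must know how the $L$-parameter of $\theta_{\psi,V^{(c)},W}(\cl{\pi})$ compares with that of $\theta_{\psi,V^{(1)},W}(\cl{\pi})$, i.e.\ the effect on parameters of the composite correspondence between odd orthogonal groups of different discriminants through $\Mp(W_{2n})$. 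That is a substantive theorem about the theta correspondence (part of \cite{GS1}, cf.\ also \cite[Theorem C.5]{GI1}), and your appeal to ``combinatorics of Langlands parameters'' plus the transformation of $\gamma_\psi$ asserts it rather than proves it.

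For part (2), the identification of $\eta_c(a)/\eta(a)$ with $\varepsilon(M^a)\varepsilon(M^a\otimes\chi_c)\chi_c(-1)^{\dim_\C(M^a)/2}$ \emph{is} the theorem, and your outline explicitly leaves it as ``the hard part,'' naming two possible routes (a Schr\"odinger-model computation, or a global argument via Arthur's multiplicity formula) without carrying out either. Moreover, the mechanism you point to is not the right one: since $M^a$ is symplectic, $\det(M^a)=\1$, so the change-of-$\psi$ formula you quote gives $\varepsilon(1/2,M^a,\psi_c)=\varepsilon(1/2,M^a,\psi)$, i.e.\ the $\psi$-dependence of $\varepsilon$-factors alone produces no sign at all. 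The factor $\varepsilon(M^a)\varepsilon(M^a\otimes\chi_c)$ comes from the epsilon-dichotomy underlying the $(\Mp(W_{2n}),\orth(V_{2n+1}))$ correspondence --- that is, from comparing which orthogonal space in a given discriminant class receives the nonzero lift of $\cl{\pi}$ for $\psi$ versus $\psi_c$ --- and this input never enters your argument. So the claimed reduction of $\eta_c/\eta$ to the stated formula is unjustified; either supply that dichotomy argument (essentially redoing \cite{GS1}) or, as the paper does, simply cite \cite[Theorem 12.1]{GS1}.
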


By the above theorem, we have a parametrization of $\Irr(\Mp(W_{2n}))$
using $\Irr(\SO(V_{2n+1}))$ with $\disc(V_{2n+1})=c$ as follows:
\begin{cor}\label{Mp-param}
Suppose that the local Langlands conjecture holds for $\SO(V_{2n+1})$ with 
$\dim(V_{2n+1})=2n+1$ and $\disc(V_{2n+1})=1$.
We fix $c\in F^\times$.
\begin{enumerate}
\item
Corresponding to the choice of a non-trivial additive character $\psi$ of $F$, 
there is a natural bijection
\begin{align*}
\theta_\psi\colon\Irr(\Mp(W_{2n}))
\rightarrow
\bigsqcup_{V^{(c)}_{2n+1}}\Irr(\SO(V_{2n+1}^{(c)}))
\end{align*}
where the union is taken over all the isomorphism classes of orthogonal spaces $V_{2n+1}^{(c)}$
over $F$ with $\dim(V_{2n+1}^{(c)})=2n+1$ and $\disc(V_{2n+1}^{(c)})=c$.
\par
More precisely, for $\cl{\pi}\in\Irr(\Mp(W_{2n}))$, there is a unique $V_{2n+1}^{(c)}$ as above such that 
$\theta_{\psi,V_{2n+1}^{(c)},W_{2n}}(\cl{\pi})$ is nonzero, in which case the image of $\cl{\pi}$ is
$\theta_{\psi}(\cl{\pi})=\theta_{\psi,V_{2n+1}^{(c)},W_{2n}}(\cl{\pi})|{\SO(V_{2n+1}^{(c)})}\in\Irr(\SO(V_{2n+1}^{(c)}))$.
\item
The diagram
\[
\begin{CD}
\Irr(\Mp(W_{2n})) @>\theta_\psi>> \sqcup_{V^{(c)}_{2n+1}}\Irr(\SO(V_{2n+1}^{(c)}))\\
@V\LL_\psi VV @VVV\\
\cl\Phi(\Mp(W_{2n}))@>{\cdot\otimes\chi_c}>> \cl\Phi(\Mp(W_{2n}))=\cl\Phi(\SO(V_{2n+1}))
\end{CD}
\]
is commutative, where the bottom arrow is given by $\phi\mapsto \phi_c=\phi\otimes\chi_c$.
Moreover, for $\phi\in\cl\Phi(\Mp(W_{2n}))$ with $\Pi_\phi\subset\Irr(\Mp(W_{2n}))$
and $\Pi_{\phi_c}\subset \sqcup_{V^{(c)}_{2n+1}}\Irr(\SO(V_{2n+1}^{(c)}))$, 
the diagram
\[
\begin{CD}
\Pi_\phi @>\theta_\psi >> \Pi_{\phi_c}\\
@V\iota_\psi VV @VV\iota_{\w}V\\
\widehat{A_\phi} @>>> \widehat{A_{\phi}}=\widehat{A_{\phi_c}}
\end{CD}
\]
is commutative, where the bottom arrow is given by
$\eta\mapsto\eta_c$ as in the above theorem.
\end{enumerate}
\end{cor}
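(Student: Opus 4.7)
The plan is to derive this corollary from Theorem \ref{disc1} and Theorem \ref{change} by a simple rescaling argument for the Weil representation. The key observation is that for $c \in F^\times$, one has a $\Mp(W_{2n}) \times \orth(V_{2n+1})$-equivariant isomorphism
\[
\omega_{\psi_c, V_{2n+1}, W_{2n}} \cong \omega_{\psi, V_{2n+1}^{[c]}, W_{2n}},
\]
where $V_{2n+1}^{[c]}$ denotes the same underlying vector space with bilinear form scaled by $c$. Indeed, in any standard model of the Weil representation the action of the Heisenberg group is expressed through $\psi$ composed with the tensor product of the bilinear forms on $V$ and $W$, so rescaling the form on $V$ by $c$ has the same effect as replacing $\psi$ by $\psi_c$; and $\orth(V)=\orth(V^{[c]})$ as subgroups of $\GL(V)$, so the identification is equivariant. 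Since $\dim V_{2n+1}$ is odd, $\disc(V_{2n+1}^{[c]}) \equiv c \cdot \disc(V_{2n+1}) \pmod{F^{\times 2}}$, and rescaling by $c$ induces a bijection between isomorphism classes of dimension $2n+1$ spaces of discriminant $1$ and those of discriminant $c$.

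For part (1), I apply Theorem \ref{disc1} with $\psi$ replaced by $\psi_c$, obtaining a bijection sending $\cl\pi$ to the (unique nonzero) restriction $\theta_{\psi_c, V_{2n+1}^{(1)}, W_{2n}}(\cl\pi)|\SO(V_{2n+1}^{(1)})$. Translating through the rescaling identification above, this is the same data as $\theta_{\psi, V_{2n+1}^{(c)}, W_{2n}}(\cl\pi)|\SO(V_{2n+1}^{(c)})$, where $V_{2n+1}^{(c)} \coloneqq V_{2n+1}^{(1),[c]}$ now runs over isomorphism classes of disc $c$ spaces. This yields the desired bijection $\theta_\psi$.

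For part (2), the two diagrams follow immediately by combining the construction above with Theorem \ref{change}. The commutativity of the first diagram is exactly the content of Theorem \ref{change} (1): by construction $\theta_\psi(\cl\pi)$ equals the theta lift computed with $\psi_c$ on a disc $1$ space, so its $L$-parameter is $\LL_{\psi_c}(\cl\pi) = \LL_\psi(\cl\pi)\otimes\chi_c$. The commutativity of the second diagram, with bottom arrow the character twist $\eta \mapsto \eta_c$ defined by the formula in Theorem \ref{change} (2), follows likewise from the identification $\iota_\w(\theta_\psi(\cl\pi)) = \iota_{\psi_c}(\cl\pi)$.

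The only real point to verify is the rescaling identification $\omega_{\psi_c, V, W} \cong \omega_{\psi, V^{[c]}, W}$ as a $\Mp(W)\times\orth(V)$-equivariant isomorphism, so that the corresponding theta lifts match on the nose rather than merely up to a twist. This is essentially a bookkeeping check within whichever model of the Weil representation one fixes, and given it the rest of the argument is formal.
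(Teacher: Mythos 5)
Your proposal is correct and follows essentially the same route as the paper: the identification $\omega_{\psi,V_{2n+1}^{(c)},W_{2n}}=\omega_{\psi_c,V_{2n+1}^{(1)},W_{2n}}$ obtained by rescaling the bilinear form (checked from the explicit formulas for the Weil representation), followed by Theorem \ref{disc1} applied to $\psi_c$ for part (1) and Theorem \ref{change} for the two diagrams in part (2). Nothing essential is missing.
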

\begin{proof}
Let $(V_{2n+1}^{(c)},\pair{\cdot,\cdot}_c)$ be an orthogonal space with $\disc(V_{2n+1}^{(c)})=c$.
We consider a new orthogonal space $(V_{2n+1}^{(1)},\pair{\cdot,\cdot}_1)$ defined by
$V_{2n+1}^{(c)}=V_{2n+1}^{(1)}$ as vector spaces, and 
$\pair{\cdot,\cdot}_1=c^{-1}\pair{\cdot,\cdot}_c$.
Then we have $\disc(V_{2n+1}^{(1)})=c^{-1}\cdot\disc(V_{2n+1}^{(c)})=1$ in $F^\times/F^{\times2}$.
Moreover by explicit formulas of the Weil representations, we have
\[
\omega_{\psi,V_{2n+1}^{(c)},W_{2n}}=\omega_{\psi_c,V_{2n+1}^{(1)},W_{2n}}.
\]
So the first assertion follows from Theorem \ref{disc1}.
In addition, the bottom arrows in the diagrams of the second assertion
are $\LL_{\psi_c}\circ\LL_{\psi}^{-1}$ and $\iota_{\psi_c}\circ\iota_\psi^{-1}$, respectively.
Hence the second assertion follows from Theorem \ref{change}.
\end{proof}

To parametrize $\Irr(\Mp(W_{2n}))$, we use $\LL_{\psi}$ and $\iota_\psi$.
We say that $\phi\in\cl\Phi(\Mp(W_{2n}))$ is generic (\resp tempered)
if so is $\phi$ as an element in $\cl\Phi(\SO(V_{2n+1}))$.
We put $\cl\Phi_\gen(\Mp(W_{2n}))=\cl\Phi_\gen(\SO(V_{2n+1}))$ and
$\cl\Phi_\temp(\Mp(W_{2n}))=\cl\Phi_\temp(\SO(V_{2n+1}))$.
\par

As in \S \ref{section spaces}, we can define a parabolic subgroup $\cl{Q}$ of $\Mp(W_{2n})$
with the Levi subgroup isomorphic to
$\cl{\GL}_{n_1}(F)\times_{\{\pm1\}}\cdots\times_{\{\pm1\}}\cl{\GL}_{n_r}(F)
\times_{\{\pm1\}}\Mp(W_{2n_0})$
with $n_1+\dots+n_r+n_0=n$,
and a Borel subgroup $\cl{B}'=\cl{T}'U'$ of $\Mp(W_{2n})$.
Here, $\cl\GL_n(F)$ is a double cover of $\GL_n(F)$, 
$B'=T'U'$ is the Borel subgroup of $\Sp(W_{2n})$ defined in \S \ref{section spaces}
and $\cl{B'}$ is the inverse image of $B'$ in $\Mp(W_{2n})$.
More precisely, see \cite[\S 2]{GS1}.
Note that the covering splits uniquely over the unipotent radical $U'$ of $B'$.
Let $\mu_{c}'$ be the generic character of $U'$ defined in \S \ref{section spaces}.
We say that $\cl{\pi}\in\Irr(\Mp(W_{2n}))$ is $\w'_c$-generic if
$\Hom_{U'}(\cl{\pi},\mu_{c}')\not=0$.
The following proposition is a part of Theorem 1.3 (and Theorem 8.1) in \cite{GS1}.
\begin{prop}\label{O->Mp}
Let $V=V_{2n+1}$ be an orthogonal space over $F$ with $\dim(V)=2n+1$ and $\disc(V)=1$,
and $W=W_{2n}$ be a symplectic space over $F$ with $\dim(W)=2n$.
For $\sigma\in\Irr(\SO(V))$, by Theorem $\ref{disc1}$, 
we can take a unique extension $\cl{\sigma}\in\Irr(\orth(V))$ of $\sigma$ such that
$\Theta_{\psi,V,W}(\cl{\sigma})$ is nonzero, so that 
$\theta_{\psi,V,W}(\cl{\sigma})\in\Irr(\Mp(W))$. \begin{enumerate}
\item
The representation $\sigma$ is tempered if and only if 
$\Theta_{\psi,V,W}(\cl{\sigma})$ is irreducible and tempered.
In particular, in this case
we have $\Theta_{\psi,V,W}(\cl{\sigma})=\theta_{\psi,V,W}(\cl{\sigma})$.
\item
If $\cl{\sigma}$ is the unique irreducible quotient of 
\[
\Ind_{P}^{\orth(V)}
(\tau_1|\det|_F^{s_1}\otimes\cdots\otimes\tau_r|\det|_F^{s_r}\otimes\cl{\sigma}_0),
\]
where $P$ is a parabolic subgroup of $\orth(V)$ with the Levi subgroup isomorphic to
$\GL_{n_1}(F)\times\cdots\times\GL_{n_r}(F)\times\orth(V_0)$,
$\tau_i\in\Irr_\temp(\GL_{n_i}(F))$,
$\cl{\sigma}_0\in\Irr_\temp(\orth(V_0))$,
and $s_1>\cdots>s_r>0$.
Then we have
\[
\Ind_{\cl{Q}}^{\Mp(W)}
(\tau_1|\det|_F^{s_1}\otimes\cdots\otimes\tau_r|\det|_F^{s_r}\otimes
\Theta_{\psi,V_0,W_0}(\cl{\sigma}_0))
\twoheadrightarrow \Theta_{\psi,V,W}(\cl{\sigma}),
\]
where $\cl{Q}$ is the parabolic subgroup of $\Mp(W)$ with the Levi subgroup isomorphic to
$\cl{\GL}_{n_1}(F)\times_{\{\pm1\}}\cdots\times_{\{\pm1\}}\cl{\GL}_{n_r}(F)\times_{\{\pm1\}}\Mp(W_0)$ 
and $\dim(V_0)=\dim(W_0)+1$.
In particular, $\theta_{\psi,V,W}(\cl{\sigma})$ is 
the unique irreducible quotient of the standard module
$\Ind_{\cl{Q}}^{\Mp(W)}
(\tau_1|\det|_F^{s_1}\otimes\cdots\otimes\tau_r|\det|_F^{s_r}\otimes
\Theta_{\psi,V_0,W_0}(\cl{\sigma}_0))$.
\item
If $\sigma$ is $\w$-generic, then $\theta_{\psi,V,W}(\cl{\sigma})$ is $\w_1'$-generic.
\end{enumerate}
\end{prop}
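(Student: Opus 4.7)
The three assertions of Proposition \ref{O->Mp} correspond to three fundamental compatibilities of the theta correspondence in the almost equal rank case $\dim(V)=\dim(W)+1$. My plan follows the strategies of Gan--Savin, but I outline what I would do independently.

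For part (1), the plan is to apply the standard Jacquet module/Casselman criterion argument. I would first invoke Proposition \ref{Theta} (i.e., \cite[Proposition C.4]{GI1}) to get that $\Theta_{\psi,V,W}(\cl\sigma)$, if nonzero, is irreducible and tempered when $\cl\sigma$ is tempered. For the reverse direction, one uses Kudla's filtration to compute the Jacquet module $R_{\cl Q_k}(\Theta(\cl\sigma))$ in terms of theta lifts of Jacquet modules $R_{P_k}(\cl\sigma)$. The exponents of $R_{\cl Q_k}(\Theta(\cl\sigma))$ are then read off from those of $R_{P_k}(\cl\sigma)$, and the Casselman temperedness criterion transfers directly. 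Nonvanishing together with the Howe duality (Waldspurger, Gan--Takeda) then gives irreducibility.

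For part (2), I would use the induction principle of Kudla--Rallis. The Weil representation $\omega_{\psi,V,W}$ admits a \emph{mixed model} relative to the parabolic pair $(P, \cl Q)$: upon restriction to the Levi factor $\GL(X)\times \orth(V_0) \times \cl{\GL}(Y)\times_{\{\pm1\}} \Mp(W_0)$, it decomposes as a tensor product of a $\GL$-representation (essentially the Weil representation for a general linear dual pair) with $\omega_{\psi, V_0, W_0}$. Taking the $\cl\sigma$-coinvariants of the induced representation and applying Frobenius reciprocity together with second adjunction gives the asserted surjection. Since the right-hand side is irreducible (it equals $\Theta(\cl\sigma)$ by part (1) applied inductively, after observing that the parameters $s_i$ are positive and the unique-irreducible-quotient statement in \S\ref{Cn}/Proposition \ref{O->Mp}(2) matches), $\theta_{\psi,V,W}(\cl\sigma)$ is the Langlands quotient.

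For part (3), the plan is to construct an explicit Whittaker functional on $\theta_{\psi,V,W}(\cl\sigma)$ from one on $\sigma$. Given a $(U_V,\mu)$-Whittaker functional $\ell$ on $\cl\sigma$, one composes it with the partial integration against $\mu'_1$ over the abelianized unipotent subgroup $U'$ via the Weil representation realized in a Schr\"odinger model on the Lagrangian $X_n \otimes V$. The key computation is to verify, using the explicit formulas for $\omega_{\psi,V,W}$ on $U \cdot U'$, that the resulting functional on $\Theta(\cl\sigma)$ is $(U',\mu'_1)$-equivariant and nonzero. The main obstacle here is pinning down the correct Whittaker datum: one has to match the character $\mu$ on $U_V$ (which depends on the choice of hyperbolic basis of $V$ and on the anisotropic vector $e\in V_1$) with $\mu'_1$ on $U'$, and the normalization of the Weil representation by $2\pair{\cdot,\cdot}_W$ intervenes. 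This matching — that one lands in $\w'_1$ rather than some other $\w'_c$ — is the delicate part, and it is precisely where the choice $\disc(V_{2n+1})=1$ and the distinguished point $c=1$ enter.
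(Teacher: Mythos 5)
You should first be aware that the paper itself contains no proof of this proposition: it is imported wholesale from Gan--Savin, being part of Theorems 1.3 and 8.1 of \cite{GS1} (with the tempered statement also contained in \cite[Appendix C]{GI1}). So your text is not an alternative to an argument in the paper but an attempt to re-derive the cited theorem, and the machinery you invoke (Kudla's filtration and the induction principle in a mixed model, Howe duality, an explicit Whittaker functional on a Schr\"odinger model) is exactly the machinery of the cited sources, not a genuinely different route.

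Judged as a proof sketch, it has concrete gaps. (a) Proposition \ref{Theta} as stated in this paper concerns the pairs $(\Sp(W_{2n}),\orth(V_{2m}))$ with $m=n,n+1$; for the pair $(\orth(V_{2n+1}),\Mp(W_{2n}))$ appearing here you need the metaplectic analogue, which is in \cite[Theorem 8.1]{GS1} and \cite[Appendix C]{GI1} but is not what the proposition you cite asserts. (b) In part (2) your justification ``the right-hand side is irreducible (it equals $\Theta_{\psi,V,W}(\cl{\sigma})$ by part (1) applied inductively)'' is incorrect: for non-tempered $\cl{\sigma}$ the big lift $\Theta_{\psi,V,W}(\cl{\sigma})$ need not be irreducible, and part (1) says nothing about it. What the argument actually requires is that the induced representation on the left is a standard module (because $\Theta_{\psi,V_0,W_0}(\cl{\sigma}_0)$ is irreducible and tempered by (1) and $s_1>\dots>s_r>0$), hence has a unique irreducible quotient; since $\Theta_{\psi,V,W}(\cl{\sigma})$ is a nonzero quotient of it and $\theta_{\psi,V,W}(\cl{\sigma})$ is its maximal semisimple quotient, $\theta_{\psi,V,W}(\cl{\sigma})$ must be that Langlands quotient. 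Relatedly, for the reverse implication in (1) the cleanest route is through (2): if $\sigma$ were non-tempered, $\theta_{\psi,V,W}(\cl{\sigma})$ would be a Langlands quotient with strictly positive exponents, hence non-tempered, contradicting irreducibility plus temperedness of the big lift; your claim that the Casselman criterion ``transfers directly'' through Kudla's filtration glosses over the extra terms of that filtration. (c) In part (3) you explicitly defer the ``delicate part'', namely verifying equivariance and nonvanishing of the constructed functional and, above all, that one lands in the Whittaker datum $\w_1'$ (with the symplectic form doubled and $\disc(V)=1$) rather than some other $\w_c'$; but this normalization is the entire content of the assertion and is what makes the parametrization of $\Irr(\Mp(W_{2n}))$ in \S\ref{Mp} consistent, so as written (3) is a plan rather than a proof.
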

\par

Let $\phi\in\cl\Phi_\gen(\Mp(W_{2n}))$ and $c\in F^\times$.
Then $\phi\otimes\chi_c\in\cl\Phi_\gen(\SO(V_{2n+1}))$ by (GPR) for $\SO(V_{2n+1})$.
For the generic representation $\sigma\in \Pi_{\phi\otimes\chi_c}$, 
by Proposition \ref{O->Mp}, 
the theta lift $\theta_{\psi_{c},V_{2n+1},W_{2n}}(\cl{\sigma})$
is $\w_c'$-generic (if it is nonzero), where 
$\cl{\sigma}$ is an extension of $\sigma$ to $\orth(V_{2n+1})$.
By Theorem \ref{change}, we have $\theta_{\psi_{c},V_{2n+1},W_{2n}}(\cl{\sigma})\in\Pi_{\phi}$.
Therefore, by Proposition \ref{O->Mp}, 
for $\phi\in\cl\Phi(\Mp(W_{2n}))$, 
we conclude that
\begin{itemize}
\item
if $\phi$ is generic, 
then $\Pi_\phi$ has a $\w_c'$-generic representation of $\Mp(W_{2n})$ for each $c\in F^\times$;
\item
if $\phi$ is tempered, then
$\Pi_\phi$ consists of tempered representations of $\Mp(W_{2n})$.
\end{itemize}
Note that even if $\phi$ is not generic,
$\Pi_\phi$ may have a generic representation of $\Mp(W_{2n})$.
\begin{example}
We set $n=1$.
The Weil representation $\omega_\psi$ of $\Mp(W_2)=\cl\SL_2(F)$ decomposes 
into a sum of two irreducible representations:
\[
\omega_\psi=\omega_\psi^e\oplus\omega_\psi^o, 
\] 
where $\omega_\psi^e$ (\resp $\omega_\psi^o$) is called even (\resp odd) Weil representations.
More precisely, see \cite{GS2}.
We consider the even Weil representation $\omega_\psi^e$.
Note that $\theta_{\psi,V_3,W_2}(\omega_\psi^e)|\SO(V_3)$
is the trivial representation $\1_{\SO(V_3)}$ of the split group $\SO(V_3)$.
Hence the element in $\cl\Phi(\Mp(W_{2}))$ associated to $\omega_\psi^e$ is
$\phi^e=|\cdot|_F^{1/2}\oplus|\cdot|_F^{-1/2}$.
Since $\Ad \circ \phi^e = |\cdot|_F^1 \oplus \1 \oplus |\cdot|_F^{-1}$, 
we see that $\phi^e$ is not generic.
However, by calculating the (twisted) Jacquet modules of $\omega_\psi^e$,
we see that $\omega_\psi^e$ is $\w_1'$-generic 
but not $\w_c'$-generic for any $c\in F^\times\setminus F^{\times2}$.
We conclude that
\begin{itemize}
\item
$\Pi_{\phi^e}\subset \Irr(\SO(V_3))$ has no generic representations of $\SO(V_3)$;
\item
$\Pi_{\phi^e}\subset \Irr(\Mp(W_2))$ has a $\w_1'$-generic representation of $\Mp(W_2)$
but does not have $\w_c'$-generic ones
for any $c\in F^\times\setminus F^{\times2}$.
\end{itemize}
\end{example}

An analogue of a conjecture of Gross--Prasad and Rallis for metaplectic groups
is given as follows.

\begin{thm}\label{GPRforMp}
Suppose that the local Langlands conjecture holds for $\SO(V_{2n+1})$ with 
$\dim(V_{2n+1})=2n+1$ and $\disc(V_{2n+1})=1$.
Let $\phi \in \Phi(\Mp(W_{2n}))$.
Then the $L$-packet $\Pi_\phi = \LL_\psi^{-1}(\phi)$ has a $\w_1'$-generic representation
if and only if the quotient of the $L$-functions
\[
\frac{L(s, \phi, \Ad)}{L(s-\half{1}, \phi)}
\]
is regular at $s=1$.
\end{thm}

\begin{rem}
By Theorem \ref{GPRforMp} together with Theorem \ref{change}, 
we see that 
the $L$-packet $\Pi_\phi = \LL_\psi^{-1}(\phi)$ has a $\w_c'$-generic representation
if and only if the quotient of the $L$-functions
\[
\frac{L(s, \phi, \Ad)}{L(s-\half{1}, \phi \otimes \chi_c)}
\]
is regular at $s=1$.
\end{rem}

The proof of Theorem \ref{GPRforMp} is given in Appendix \ref{appB}.
\par

%In this paper, we assume the following:
To prove the Gan--Gross--Prasad conjecture for symplectic-metaplectic case, 
we also need the following theorem.
\begin{thm}%[IS] 
\label{IS}
Suppose that the local Langlands conjecture holds for $\SO(V_{2n+1})$ with 
$\dim(V_{2n+1})=2n+1$ and $\disc(V_{2n+1})=1$.
Let
\[
\cl{\pi} = \Ind_{\cl{Q}}^{\Mp(W)}
(\tau_1|\det|_F^{s_1}\otimes\cdots\otimes\tau_r|\det|_F^{s_r}\otimes \cl{\pi}_0)
\]
be a standard module, where
\begin{itemize}
\item
$\cl{Q}$ is the parabolic subgroup of $\Mp(W)$ with the Levi subgroup isomorphic to
$\cl{\GL}_{n_1}(F)\times_{\{\pm1\}}\cdots\times_{\{\pm1\}}\cl{\GL}_{n_r}(F)
\times_{\{\pm1\}}\Mp(W_0)$; 
\item
$\tau_i$ is an irreducible tempered representation of $\cl{\GL}_{n_i}(F)$;
\item
$s_i$ is a real number such that $s_1 > \dots > s_r > 0$; 
\item
$\cl{\pi}_0$ is an irreducible tempered representation of $\Mp(W_0)$.
\end{itemize}
We denote the image of the unique irreducible quotient of $\cl{\pi}$ under $\LL_\phi$ by $\phi$.
If $\phi$ is generic, i.e., the adjoint $L$-function $L(s, \phi, \Ad)$ is regular at $s=1$, 
then the standard module $\cl{\pi}$ itself is irreducible.
\end{thm}

The proof of Theorem \ref{IS} is given in Appendix \ref{appB}.
Note that the analogous property for symplectic and special orthogonal groups
was proven by \cite[p. 40 Th\'eor\`eme $(\mathrm{i})$]{MW}, and
Heiermann \cite{H} showed this property 
for quasi-split connected reductive groups.
We use Theorem \ref{IS} %this assumption 
as follows:
\begin{lem}\label{IS-irr}
%Assume $(IS)$.
If $\sigma\in\Irr(\SO(V_{2n+1}))$ belongs to a generic $L$-packet and 
$\cl{\sigma}$ is an extension of $\sigma$ to $\orth(V_{2n+1})$
such that $\Theta_{\psi,V_{2n+1},W_{2n}}(\cl{\sigma})$ is nonzero, 
then the big theta lift $\Theta_{\psi,V_{2n+1},W_{2n}}(\cl{\sigma})$ is irreducible.
\end{lem}
\begin{proof}
We denote the $L$-parameter of $\sigma$ by $\phi$, i.e., $\sigma \in \Pi_\phi$.
By an assumption, $\phi$ is generic.
Choose $c \in F^\times$ such that $\disc(V_{2n+1}) = c \bmod F^{\times2}$.
Note that $\phi \otimes \chi_c$ is also generic by (GPR) for $\SO(V_{2n+1})$.
By Corollary \ref{Mp-param}, we see that $\theta_{\psi,V_{2n+1},W_{2n}}(\cl{\sigma})$ 
belongs to $\Pi_{\phi \otimes \chi_c}$, which is a generic $L$-packet of $\Mp(W_{2n})$.
By Theorem \ref{IS}, %the assumption (IS), 
the standard module in Proposition \ref{O->Mp} (2) is irreducible.
Hence so is the big theta lift $\Theta_{\psi,V_{2n+1},W_{2n}}(\cl{\sigma})$, 
which is a quotient of the standard module.
\end{proof}
\par

Let $(V,\pair{\cdot,\cdot})$ be an orthogonal space with $\dim(V)=2n+1$ and $\disc(V)=c$.
We define a new orthogonal space $-V$ by $(V,-\pair{\cdot,\cdot})$.
Then by \cite[Lemma 3.2]{GS1}, for $\cl{\sigma}\in\Irr(\orth(V))$ with
$\theta_{\psi,V,W}(\cl{\sigma})\not=0$, we have
\[
\theta_{\psi,V,W}(\cl{\sigma})^\vee\cong\theta_{\psi,-V,W}(\cl{\sigma}^\vee)
\cong\theta_{\psi_{-1},V,W}(\cl{\sigma}).
\]
We put $\cl{\pi}=\theta_{\psi,V,W}(\cl{\sigma})$. 
Let $\phi_\sigma\in\cl\Phi(\SO(V))$ be a representation of $\WD_F$ associated to 
$\sigma = \cl{\sigma}|\SO(V)$.
We put $\eta_\sigma=\iota_\w(\sigma)$, 
$\phi=\LL_\psi(\cl{\pi}^\vee)$ and $\eta=\iota_\psi(\cl{\pi}^\vee)$.
Then by Corollary \ref{Mp-param}, we have
\[
\phi_{\sigma}=\phi\otimes\chi_{-c}
\quad\text{and}\quad
\eta_{\sigma}=\eta_{-c},
\]
where $\eta_{-c}$ is the character defined from $\phi$ and $\eta$ as in Theorem \ref{change}.

%\section{Some conjectures}\label{conjectures}
\section{Some conjectures}\label{conjectures}
In this section, we explicate the statements of the local (Gan--)Gross--Prasad conjecture for 
the orthogonal case and the symplectic-metaplectic case as well as
 Prasad's conjectures on the local theta correspondence for $(\Sp(W_{2n}),\orth(V_{2m}))$ with
\[
|2m-(2n+1)|=1.
\]

%\subsection{The local Gross--Prasad conjecture}\label{GGPforSO}
\subsection{The local Gross--Prasad conjecture}\label{GGPforSO}
In this subsection, we state the local Gross--Prasad conjecture 
(local Gan--Gross--Prasad conjecture for the orthogonal case) proven by Waldspurger 
\cite{Wa2, Wa3, Wa4, Wa5} and M\oe glin--Waldspurger \cite{MW}.
\par

Let $V$ be an orthogonal space over $F$ and 
$V'$ be a non-degenerate subspace of $V$ with codimension one.
Assume that both $\SO(V)$ and $\SO(V')$ are quasi-split.
Let $V^{\even}$ and $V^{\odd}$ be in $\{V,V'\}$ such that
$\dim(V^{\even})\in2\Z$ and $\dim(V^{\odd})\not\in2\Z$.
\par

Let $\phi_M\colon\WD_F\rightarrow \Sp(M)$ and $\phi_N\colon\WD_F\rightarrow \orth(N)$
be elements in $\cl\Phi(\SO(V^{\odd}))$ and $\cl\Phi(\SO(V^{\even}))$,
respectively. 
Following \cite[\S 6]{GGP}, 
for semi-simple elements $a\in C_M$ and $b\in C_N^+$, we put
\begin{align*}
\chi_N(a)&=\varepsilon(M^a\otimes N)\det(M^a)(-1)^{\dim_\C(N)/2}\det(N)(-1)^{\dim_\C(M^a)/2},\\
\chi_M(b)&=\varepsilon(M\otimes N^b)\det(M)(-1)^{\dim_\C(N^b)/2}\det(N^b)(-1)^{\dim_\C(M)/2}.
\end{align*}
By \cite[Theorem 6.2]{GGP}, $\chi_N$ and $\chi_M$ define 
characters on $A_M$ and on $A_N^+$, respectively.
\par

We say that a pure inner form $G_1=\SO(V_1)\times\SO(V_1')$ of $G$ is relevant 
if $V_1'$ is a non-degenerate subspace of $V_1$ and $V_1/V_1'\cong V/V'$ as orthogonal spaces.
Then there is a natural embedding $\SO(V_1')\hookrightarrow \SO(V_1)$. 
Hence we have a diagonal embedding
\[
\Delta\colon \SO(V_1')\hookrightarrow \SO(V_1')\times\SO(V_1).
\]
\par

The Gan--Gross--Prasad conjecture for the orthogonal case is as follows:
\begin{conj}[B]
Let $\phi_M\colon\WD_F\rightarrow \Sp(M)$ and $\phi_N\colon\WD_F\rightarrow \orth(N)$
be in $\cl\Phi_\gen(\SO(V^{\odd}))$ and in $\cl\Phi_\gen(\SO(V^{\even}))$, respectively. 
We take $\sigma_M\in\Pi_{\phi_M}$ and $[\sigma_N]\in\Pi_{\phi_N}$ such that
$\sigma=\sigma_M\boxtimes\sigma_N$ is a representation of 
a relevant pure inner form $G_1=\SO(V_1^\odd)\times\SO(V_1^\even)$ of $G$.
Then one has 
\[
\Hom_{\Delta\SO(V'_1)}(\sigma,\C)\not=0
\Longleftrightarrow
\iota_\w(\sigma_M)\times\iota_{\w_c}([\sigma_N])=\chi_N\times\chi_M,
\]
where $c=-\disc(V^{\odd})/\disc(V^{\even})$.
\end{conj}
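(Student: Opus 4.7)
The plan is to follow Waldspurger's local relative trace formula approach. Since the $\Hom$ space has dimension at most one by (AGRS, Waldspurger, Sun), the task is to identify the unique pair $(\sigma_M,[\sigma_N])$ across all relevant pure inner forms admitting a nonzero Bessel form, and to match it with the character $\chi_N\times\chi_M$ on $A_{\phi_M}\times A_{\phi_N}^+$.

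First I would reduce to tempered parameters. For generic $\phi_M$ and $\phi_N$, the standard modules realizing $\Pi_{\phi_M}$ and $\Pi_{\phi_N}$ are irreducible by M\oe glin--Waldspurger, so each element of the packet is fully induced from a tempered representation $\sigma_0$ on a standard Levi. Applying Mackey theory to the double coset decomposition $P\backslash G_1/\Delta H_1$, with $H_1=\SO(V_1')$ and $P$ the relevant parabolic, the Bessel multiplicity of the induced representation collapses to the Bessel multiplicity on the inducing tempered data via the open orbit, while other orbits contribute zero by vanishing of twisted Jacquet modules. The character side is preserved under this reduction, since the natural map $A_{\phi_{M,0}}\to A_{\phi_M}$ is an isomorphism and the tensor product $\varepsilon$-factors defining $\chi_M,\chi_N$ are additive over the inducing data $\phi_i\oplus\phi_i^\vee$.

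Second, for tempered parameters I would establish an explicit multiplicity formula of the shape
\[
m(\sigma)=\sum_T \frac{1}{|W(T)|}\int_{T(F)} D^{H_1}(t)\,\Theta_{\sigma_M}(t)\,\Theta_{\sigma_N}(t)\, dt,
\]
where $T$ runs over conjugacy classes of elliptic maximal tori of $H_1$. Such a formula is proved by expanding a regularized integral of the restricted character $\Theta_\sigma|H_1(F)$ against a suitable cut-off both spectrally (yielding $m(\sigma)$) and geometrically. Summing this identity over $\Pi_{\phi_M}\times\Pi_{\phi_N}$ and over all relevant pure inner forms of $G$, and pairing the resulting spectral sum against an arbitrary character $\eta$ of $A_{\phi_M}\times A_{\phi_N}^+$, the endoscopic character identities built into (LLC) convert the pairing into a virtual stable character on an endoscopic group. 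Matching this against the tensor-product $\varepsilon$-factor definitions of $\chi_M,\chi_N$ from \cite[\S 6]{GGP} yields the delta $\delta_{\eta,\,\chi_N\times\chi_M}$, which by the multiplicity one statement pins down the unique pair with nonzero Bessel form and its character.

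The main obstacle will be the geometric side of the local trace formula together with the endoscopic comparisons: regularizing orbital integrals on the diagonal $H_1$-subgroup, transferring them across relevant pure inner forms, and extracting the precise factors $\varepsilon(M^a\otimes N)$ and $\varepsilon(M\otimes N^b)$ from the combinatorics of characters. A secondary subtlety arises when $V^\even$ has nontrivial discriminant, so that both pure inner forms of $\SO(V^\even)$ are quasi-split: here one must invoke the comparison of $\iota_\w$ across pure inner twists (Appendix \ref{app}) to assemble $\iota_{\w_c}([\sigma_N])$ coherently on the Vogan packet.
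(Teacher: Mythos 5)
You should first note that the paper does not prove this statement at all: Conjecture (B) is exactly the hypothesis labelled (B) in the introduction and \S 4.1, imported as a black box from Waldspurger \cite{Wa2}, \cite{Wa3}, \cite{Wa4}, \cite{Wa5} (tempered case) and M\oe glin--Waldspurger \cite{MW} (generic case). So there is no internal argument to compare with; what you have written is a high-level outline of that external literature, and as a roadmap it does point at the right ingredients (multiplicity one, an integral multiplicity formula for tempered packets, an endoscopic/stable comparison producing the $\varepsilon$-factors, and a reduction from generic to tempered via irreducibility of standard modules, with the Vogan-packet bookkeeping across pure inner forms handled as in Appendix \ref{app}).

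As a proof, however, the proposal has genuine gaps precisely where the real content lies. The formula you assert, $m(\sigma)=\sum_T |W(T)|^{-1}\int_{T(F)}D^{H_1}(t)\Theta_{\sigma_M}(t)\Theta_{\sigma_N}(t)\,dt$, is not Waldspurger's geometric expansion: his $m_{\mathrm{geom}}(\sigma)$ is a sum over a specific family of tori (not only elliptic ones, and including $T=1$) whose integrands involve regularized germ coefficients $c_\sigma(\cdot)$ from local character expansions at non-regular semisimple points rather than the characters themselves, and the identity $m(\sigma)=m_{\mathrm{geom}}(\sigma)$ is the main theorem of \cite{Wa2}, \cite{Wa3}, not something one may posit and then ``expand both ways.'' Likewise, the step in which you pair the packet sum against a character $\eta$ and ``convert the pairing into a virtual stable character'' to extract $\varepsilon(M^a\otimes N)$ and $\varepsilon(M\otimes N^b)$ is the deepest part of the argument (the content of \cite{Wa4}, \cite{Wa5}); relegating it to ``the main obstacle'' leaves the theorem unproved. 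Finally, the reduction from generic to tempered needs more than the open-orbit Mackey heuristic: one must control all orbit contributions via Jacquet-module/higher Bessel-model vanishing and know irreducibility of the standard modules, which is exactly what \cite{MW} supplies. In short, your text is a correct description of where the proof lives, but not a proof; within this paper the statement is an assumption, and any honest write-up should either cite Waldspurger--M\oe glin--Waldspurger as the paper does or reproduce their analysis in detail.
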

Let $\ep\in\orth(V_1')\setminus\SO(V_1')$.
We can extend $\sigma_M$ to $\cl\sigma_M\in\Irr(\orth(V_1^\odd))$.
If $f\in \Hom_{\Delta\SO(V_1')}(\sigma_M\boxtimes\sigma_N,\C)$, then we have
$f\circ(\cl\sigma_M(\ep)\otimes\id)\in\Hom_{\Delta\SO(V_1')}(\sigma_M\boxtimes\sigma_N^\ep,\C)$.
Hence, (B) is independent of the choice of a representative of $[\sigma_N]$.

%\subsection{The local Gan--Gross--Prasad conjecture for the symplectic-metaplectic case}
\subsection{The local Gan--Gross--Prasad conjecture for the symplectic-metaplectic case}
We fix a non-trivial additive character $\psi$ of $F$.
Let $W$ be a symplectic space over $F$, and
$V_1$ be an orthogonal space over $F$ with $\dim(V_1)=1$ and $\disc(V_1)=1$.
Hence there exists $v\in V_1$ such that $\pair{v,v}_{V_1}=2$.
Then the space $V_1\otimes W$ has a symplectic form
\[
\pair{\cdot, \cdot}_{V_1}\otimes\pair{\cdot, \cdot}_{W}.
\]
Let $\omega_{\psi}$ be the Weil representation of $\Mp(V_1\otimes W)=\Mp(W)$ 
given by the unique irreducible representation of 
the Heisenberg group $\H(V_1\otimes W)$ associated to the symplectic space 
$(V_1\otimes W,\pair{\cdot, \cdot}_{V_1}\otimes\pair{\cdot, \cdot}_{W})=(W,2\pair{\cdot,\cdot}_{W})$
with the central character $\psi$.
\par

Let $\phi_M\colon\WD_F\rightarrow \Sp(M)$ and $\phi_N\colon\WD_F\rightarrow \SO(N)$
be elements in $\cl\Phi(\Mp(W))$ and in $\cl\Phi(\Sp(W))$,
respectively. 
We put $N_1=N\oplus\C$, which is an orthogonal representation of $\WD_F$ with even dimension.
We may regard $A_{N}^+$ as a subgroup of $A_{N_1}^+$.
Note that $(A_{N_1}^+\colon A_N^+)\leq 2$.
Following \cite[\S 6]{GGP}, for semi-simple elements $a\in C_M$ and $b\in C_{N_1}^+$, we put
\begin{align*}
\chi_{N_1}(a)&=
\varepsilon(M^a\otimes N_1)\det(M^a)(-1)^{\dim_\C(N_1)/2}\det(N_1)(-1)^{\dim_\C(M^a)/2},\\
\chi_M(b)&=
\varepsilon(M\otimes N_1^b)\det(M)(-1)^{\dim_\C(N_1^b)/2}\det(N_1^b)(-1)^{\dim_\C(M)/2}.
\end{align*}
By \cite[Theorem 6.2]{GGP}, $\chi_{N_1}$ and $\chi_M$ 
define characters on $A_M$ and on $A_{N_1}^+$, respectively.
\par

The identity map and the projection map give a diagonal map
\[
\Delta\colon \Mp(W)\rightarrow \Mp(W)\times\Sp(W).
\]
The local Gan--Gross--Prasad conjecture for the symplectic-metaplectic case is as follows:
\begin{conj}[FJ]
Let $\phi_M\colon\WD_F\rightarrow \Sp(M)$ and $\phi_N\colon\WD_F\rightarrow \SO(N)$
be in $\cl\Phi_\gen(\Mp(W))$ and in $\cl\Phi_\gen(\Sp(W))$, respectively. 
We denote by $\Pi_{\phi_M}\subset\Irr(\Mp(W))$ the inverse image of $\phi_M$ under the map $\LL_\psi$.
Then for $\cl\pi\in\Pi_{\phi_M}$ and $\pi\in\Pi_{\phi_N}$, 
one predicts
\[
\Hom_{\Delta\Mp(W)}((\cl\pi\boxtimes\pi)\otimes\overline{\omega_\psi},\C)\not=0
\Longleftrightarrow
\iota_\psi(\cl\pi)\times\iota_{\w'_1}(\pi)=\chi_{N_1}\times\chi_M|_{A_\phi^+}.
\]
\end{conj}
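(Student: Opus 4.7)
The plan is to reduce Conjecture (FJ) to the already-known Bessel case (B) by exploiting the see-saw identity
\[
\Hom_{\Sp(W_{2n})}(\Theta_{\psi,V_{2n+1},W_{2n}}(\cl\tau)\otimes\omega_{-\psi},\pi)
\cong
\Hom_{\orth(V_{2n+1})}(\Theta_{\psi,V_{2n+2},W_{2n}}(\pi),\cl\tau)
\]
associated with the see-saw diagram given in the introduction, where $\disc(V_1)=-1$ and $V_{2n+1}\subset V_{2n+2}$ is a codimension-one non-degenerate subspace. Given $\cl\pi\in\Pi_{\phi_M}$ and $\pi\in\Pi_{\phi_N}$, first use Corollary \ref{Mp-param} to write $\cl\pi^\vee = \theta_{\psi,V_{2n+1},W_{2n}}(\cl\tau)$ for a unique orthogonal space $V_{2n+1}$ of appropriate discriminant and a unique extension $\cl\tau\in\Irr(\orth(V_{2n+1}))$ of some $\tau\in\Pi_{\phi_\tau}$; this rewrites the left-hand side of the see-saw identity as the (FJ) Hom space $\Hom_{\Delta\Mp(W_{2n})}((\cl\pi\boxtimes\pi)\otimes\overline{\omega_\psi},\C)$ after a standard manipulation of the Weil representation.

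Next, I would analyze the right-hand side. By Theorem \ref{main}(2)'s hypothesis on $L(s,\phi_\pi\otimes\chi)$ together with Proposition \ref{G}, the parameter $\phi_\sigma$ of $\SO(V_{2n+2})$ attached to $\phi_N$ is generic (this is automatic for tempered $\phi_N$ in case (1)). Proposition \ref{TMP} then guarantees that $\sigma\coloneqq \Theta_{\psi,V_{2n+2},W_{2n}}(\pi)$ is either zero or an irreducible representation of $\orth(V_{2n+2})$, and that $\sigma|\SO(V_{2n+2})\in\Pi_{\phi_\sigma}$. Assuming non-vanishing, the right-hand side of the see-saw identity becomes the Bessel Hom space of the orthogonal case for the pair $(\SO(V_{2n+1}),\SO(V_{2n+2}))$, and by the work of Waldspurger and M\oe glin--Waldspurger its non-vanishing is equivalent to a character identity
\[
\iota_{\w_c}([\sigma|\SO(V_{2n+2})])\times\iota_\w(\tau) \;=\; \chi_N \times \chi_M,
\]
with $c$ determined by the discriminants of $V_{2n+1}$ and $V_{2n+2}$. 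The case $\sigma=0$ must be handled separately, by combining the conservation relation for theta lifts with a parameter count to see that both sides of the desired (FJ) equivalence vanish in that situation.

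The final step is to translate this identity back into one on $\widehat{A_{\phi_M}}\times\widehat{A_{\phi_N}^+}$. Here Prasad's conjecture (P1), proved later in the paper, gives an explicit formula for $\iota_{\w_c}([\sigma|\SO(V_{2n+2})])$ in terms of $\iota_{\w_1'}(\pi)$, while the Gan--Savin statement (Mp) (part of Proposition \ref{TMP}) expresses $\iota_\w(\tau)$ in terms of $\iota_\psi(\cl\pi)$; combining these with Theorem \ref{change} and the $\psi$-twist formulas, a careful bookkeeping of local $\varepsilon$-factors and sign characters (involving $\chi_{V_1}$, $\chi_{-1}$ and the discriminant of $V_{2n+2}$) should reorganize $\chi_N\times\chi_M$ into the Fourier--Jacobi recipe $\chi_{N_1}\times\chi_M|_{A_{\phi_N}^+}$ prescribed by (FJ). The extension from the tempered case (\ref{main1}) to the generic case (\ref{main2}) then proceeds by parabolic induction: one realizes $\cl\pi$ and $\pi$ as unique irreducible quotients of standard modules, uses assumption (IS) and Proposition \ref{TMP}(2) to control irreducibility of the relevant big theta lifts, and propagates the tempered identity through the Langlands quotient.

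The genuinely hard step is establishing (P1), which cannot be done formally from the see-saw alone; this will require a separate argument using parabolic induction, intertwining relations and the non-quasi-split assumption (NQ), following the strategy of Gan--Ichino for unitary groups. Once (P1) is available, the above see-saw manipulation is essentially a matching of Whittaker data and of $\varepsilon$-factors, but this matching is delicate because the natural Whittaker data on $\SO(V_{2n+2})$ coming from the see-saw need not be the standard one $\w_1$, so Proposition \ref{whittaker2} must be invoked to normalize the comparison --- this is precisely why the Whittaker-datum analysis of Appendix \ref{app} is needed for pure inner forms of $\SO(V_{2n+2})$ when $\disc(V_{2n+2})\not=1$.
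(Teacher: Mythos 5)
Your strategy coincides with the paper's: (FJ) is reduced to (B) through the same see-saw identity, with Corollary \ref{Mp-param} and Theorem \ref{change} translating the metaplectic side, (P1) together with Propositions \ref{whittaker1} and \ref{whittaker2} translating the orthogonal side, and (IS) plus standard-module/Langlands-quotient arguments extending from tempered to generic parameters. The only real difference is organizational: the paper establishes existence of a distinguished pair (Proposition \ref{exist}) and the implication ``nonvanishing $\Rightarrow$ character identity'' separately, so the vanishing case $\Theta_{\psi,V_{2n+2},W_{2n}}(\pi)=0$ that you propose to treat by a conservation-relation argument never has to be confronted, and it uses the multiplicity-one theorem to pass between $\SO(V_{2n+1})$- and $\orth(V_{2n+1})$-equivariant functionals, a point your sketch glosses over.
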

There is a conjecture for general codimension cases (Conjectures 17.1 and 17.3 in \cite{GGP}).
However, by \cite[Theorem 19.1]{GGP}, the general codimension cases 
are reduced to the basic case (FJ).

%\subsection{The Prasad conjectures}\label{Prasad}
\subsection{Prasad's conjectures}\label{Prasad}
We consider the theta correspondences for 
$(\Sp(W_{2n}),\orth(V_{2n+2}))$ and $(\orth(V_{2n}),\Sp(W_{2n}))$. 
Let $V^+=V_{2m}^+$ be an orthogonal space with $\dim(V_{2m}^+)=2m$ and type $(d,c)$.
We denote by $V^-=V_{2m}^-$ an orthogonal space 
such that $\dim(V_{2m}^-)=\dim(V_{2m}^+)=2m$ and $\disc(V_{2m}^-)=\disc(V_{2m}^+)$
but $V_{2m}^-\not\cong V_{2m}^+$.
Note that $\SO(V_{2m}^+)$ is quasi-split, and
$\SO(V_{2m}^-)$ is the non-trivial pure inner form of $\SO(V_{2m}^+)$
(if $V_{2m}^-$ exists).
We put $\chi_V=\chi_{V_{2m}}$.
\par

First, we let $m=n+1$.
The following proposition is a special case of \cite[Theorem C.5]{GI1}.
\begin{prop}\label{param}
Let $\phi\in\cl\Phi(\Sp(W_{2n}))$ and put
\[
\phi'=(\phi\otimes\chi_{V})\oplus \1\in\cl\Phi(\SO(V_{2n+2}^+)).
\]
\begin{enumerate}
\item
If $\phi$ does not contain $\chi_V$, then
\begin{itemize}
\item
$\theta_{\psi,V_{2n+2}^\bullet,W_{2n}}(\pi)$ is nonzero for any $\pi\in\Pi_\phi$ and $\bullet = \pm$;
\item
$\sigma\coloneqq\theta_{\psi,V_{2n+2}^\bullet,W_{2n}}(\pi)|\SO(V_{2n+2}^\bullet)$ is irreducible 
and $[\sigma]\in\Pi_{\phi'}$;
\item
the theta correspondence gives a bijection
\[
\theta_{\psi,V_{2n+2}^\bullet,W_{2n}}\colon\Pi_\phi\rightarrow
\Pi_{\phi'}\cap\Irr(\SO(V_{2n+2}^\bullet)).
\]
\end{itemize}
\item
If $\phi$ contains $\chi_V$, then
\begin{itemize}
\item
exactly one of $\theta_{\psi,V_{2n+2}^+,W_{2n}}(\pi)$ and $\theta_{\psi,V_{2n+2}^-,W_{2n}}(\pi)$ 
is nonzero for any $\pi\in\Pi_\phi$;
\item
if $\theta_{\psi,V_{2n+2}^\bullet,W_{2n}}(\pi)\not=0$, then 
$\sigma\coloneqq\theta_{\psi,V_{2n+2}^\bullet,W_{2n}}(\pi)|\SO(V_{2n+2}^\bullet)$ 
is irreducible and
$[\sigma]\in\Pi_{\phi'}$;
\item
the theta correspondence gives a bijection
\[
\theta_{\psi,V_{2n+2}^\bullet,W_{2n}}\colon\Pi_\phi\rightarrow\Pi_{\phi'}.
\]
\end{itemize}
\end{enumerate}
\end{prop}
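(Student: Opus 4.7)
The plan is to prove Proposition~\ref{param} by reducing to the tempered case via Langlands quotients and then applying Proposition~\ref{TMP} together with the Kudla--Rallis--Sun--Zhu conservation relation to establish the nonvanishing dichotomy. For general $\phi \in \cl\Phi(\Sp(W_{2n}))$, decompose $\phi$ in the standard Langlands form
\[
\phi = \phi_1|\cdot|_F^{s_1} \oplus \cdots \oplus \phi_r|\cdot|_F^{s_r} \oplus \phi_0 \oplus (\phi_r|\cdot|_F^{s_r})^\vee \oplus \cdots
\]
with $\phi_0 \in \cl\Phi_\temp(\Sp(W_{2n_0}))$ as in Section~\ref{Cn}, so each $\pi \in \Pi_\phi$ is the Langlands quotient of a standard module $\Ind_Q^{\Sp(W_{2n})}(\tau_1|\det|_F^{s_1} \otimes \cdots \otimes \pi_0)$. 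By a standard Kudla-filtration argument (Proposition~C.4 of \cite{GI1}), $\Theta_{\psi, V_{2n+2}^\bullet, W_{2n}}(\pi)$ is the Langlands quotient of the induced representation
\[
\Ind_P^{\orth(V_{2n+2}^\bullet)}\bigl((\tau_1\chi_V)|\det|_F^{s_1} \otimes \cdots \otimes \Theta_{\psi, V_{2n_0+2}^\bullet, W_{2n_0}}(\pi_0)\bigr),
\]
which reduces nonvanishing, the $L$-parameter identification $\phi' = (\phi\otimes\chi_V) \oplus \1$, and the bijection onto $\Pi_{\phi'}$ to the tempered sub-case $\phi = \phi_0$.

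For tempered $\phi$, Proposition~\ref{TMP}~(1) gives that $\theta_{\psi, V_{2n+2}^\bullet, W_{2n}}(\pi)$ is irreducible tempered when nonzero, and Proposition~\ref{TMP}~(4) (weak~P1) identifies its $L$-parameter as $\phi'$. Nonvanishing is then controlled by the conservation relation $m^+(\pi) + m^-(\pi) = 4n+4$ of Sun--Zhu, where $m^\bullet(\pi)$ denotes the first occurrence of $\pi$ in the Witt tower of $V_{2n+2}^\bullet$. The equal-rank dichotomy for $(\Sp(W_{2n}), \orth(V_{2n}^\bullet))$ (cf.\ the Gan--Ichino analysis) says that $\theta_{\psi, V_{2n}^\bullet, W_{2n}}(\pi) \neq 0$ for exactly one choice of $\bullet$ when $\phi$ contains $\chi_V$, and for no $\bullet$ otherwise. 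Feeding this into the conservation relation yields: in case~(1), $m^\pm(\pi) = 2n+2$ for both towers, so both $V_{2n+2}^\pm$ lifts are nonzero; in case~(2), if $m^\bullet(\pi) \leq 2n$, then $m^{-\bullet}(\pi) \geq 2n+4$, so the $V_{2n+2}^\bullet$ lift is nonzero and the $V_{2n+2}^{-\bullet}$ lift vanishes.

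For the irreducibility of $\sigma := \theta(\pi)|\SO(V_{2n+2}^\bullet)$, observe that $\phi' = (\phi\otimes\chi_V) \oplus \1$ always contains the self-dual summand $\1$ of sign $+1$ and odd dimension~$1$; by the characterization in Section~\ref{Dn}, $\phi'$ is therefore $\ep$-invariant, and the table following Proposition~\ref{Clifford} implies that the restriction to $\SO(V_{2n+2}^\bullet)$ of any extension to $\orth(V_{2n+2}^\bullet)$ is irreducible. The bijection of packets then follows from injectivity of small theta (Howe duality) combined with a component-group count: one checks $|A_{\phi'}^+| = 2|A_\phi^+|$ in case~(1), matching the splitting of $\Pi_{\phi'}$ evenly between the two pure inner forms, and $|A_{\phi'}^+| = |A_\phi^+|$ in case~(2), matching the single pure inner form carrying the nonzero lift.

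The main obstacle will be the fine bookkeeping in case~(2): identifying which pure inner form $V_{2n+2}^\bullet$ receives the nonzero theta lift requires evaluating an epsilon-factor criterion (of the form $\ep(1/2, \phi\otimes\chi_V, \psi)$ matched against $\disc(V_{2n+2}^\bullet)$), together with the parallel analysis of the equal-rank dichotomy for $(\Sp(W_{2n}), \orth(V_{2n}^\bullet))$; this is where the character-level data underlying Conjecture~(FJ) enters and must be tracked consistently through the Langlands quotient reduction.
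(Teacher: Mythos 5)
The paper offers no argument for Proposition \ref{param}: it is stated as a special case of \cite[Theorem C.5]{GI1} and simply cited. Your proposal is therefore necessarily a different route, and its skeleton (reduction to tempered parameters via \cite[Proposition C.4]{GI1}, the Sun--Zhu conservation relation $m^+(\pi)+m^-(\pi)=4n+4$, $\ep$-invariance of $\phi'=(\phi\otimes\chi_V)\oplus\1$ forcing irreducibility of the restriction to $\SO(V_{2n+2}^\bullet)$, and a component-group count for bijectivity) is a reasonable reconstruction of how such statements are proved. However, the decisive nonvanishing input you feed into the conservation relation --- that when $\chi_V\subset\phi$ every $\pi\in\Pi_\phi$ participates in the theta correspondence with $\orth(V_{2n}^\bullet)$ for some $\bullet$ --- is precisely Proposition \ref{param2}, i.e.\ another special case of the very same Theorem C.5 (as is the ``weak P1'' parameter identification you invoke). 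So your argument derives the $m=n+1$ case of that theorem from its $m=n$ case plus conservation; this is a legitimate reduction, but it is not an independent proof and should be presented as such.

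Beyond this, there are concrete gaps. First, injectivity of $\pi\mapsto[\sigma]$ is not supplied by Howe duality alone: Howe duality excludes $\theta(\pi_1)\cong\theta(\pi_2)$ for $\pi_1\not\cong\pi_2$, but a priori two members of $\Pi_\phi$ could map to the two extensions $\cl\sigma$ and $\cl\sigma\otimes\det$ of the same $\sigma$, which determine the same element of $\Pi_{\phi'}$; to rule this out you need the conservation relation on the orthogonal side (the first occurrences of $\cl\sigma$ and $\cl\sigma\otimes\det$ in the symplectic tower cannot both be $\leq 2n$ when $\dim V_{2n+2}=2n+2$). Second, in case (1) the count $|A_{\phi'}^+|=2|A_\phi^+|$ only helps if $\Pi_{\phi'}$ splits evenly between $\SO(V_{2n+2}^+)$ and $\SO(V_{2n+2}^-)$; this is true, but needs the observation that $-1\in\SO_{2n+2}(\C)$ has nontrivial image in $A_{\phi'}^+$ (because $\1$ occurs in $\phi'$ with multiplicity one exactly when $\chi_V\not\subset\phi$), which you assert without argument. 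Third, your closing paragraph misreads case (2): the proposition does not ask which of $V_{2n+2}^\pm$ receives the lift, so no $\ep$-factor criterion and no character-level data of the kind entering (FJ) or (P1) is required here; moreover the bijection in case (2) is onto the full Vogan packet $\Pi_{\phi'}$, and different members of $\Pi_\phi$ will in general land on different pure inner forms --- it is not a single form that carries all the nonzero lifts. With these repairs the sketch is coherent, but at that point it is essentially a reshuffling of the content of \cite[Theorem C.5]{GI1}, which is what the paper cites outright.
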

There is a canonical injective map
\[
A_{\phi}^+\rightarrow A_{\phi'}^+.
\]
It is bijective if and only if $\phi$ contains $\chi_V$.
This map induces a surjection of the character groups
\[
\widehat{A_{\phi}^+}\leftarrow \widehat{A_{\phi'}^+}.
\]
\par

The first conjecture of Prasad (P1) predicts the behavior of characters associated to 
$\pi$ and $\theta_{\psi,V_{2n+2}^\bullet,W_{2n}}(\pi)$.
\begin{conj}[P1]\label{P1}
Let $\phi\in\cl\Phi(\Sp(W_{2n}))$
and put $\phi'=(\phi\otimes\chi_V)\oplus\1\in\cl\Phi(\SO(V_{2n+2}^+))$.
For $\pi\in\Pi_\phi$, we take 
$\sigma=\theta_{\psi,V_{2n+2}^\bullet,W_{2n}}(\pi)|\SO(V_{2n+2}^\bullet)$
such that $\sigma\not=0$, so that $[\sigma]\in\Pi_{\phi'}$.
Then one predicts
\[
\iota_{\w_{c_0}}([\sigma])|A_{\phi}^+=\iota_{\w'_{c_0}}(\pi)
\]
for $c_0\in F^\times$.
Namely, the diagram
\[
\begin{CD}
\Pi_\phi @>\theta_{\psi,V_{2n+2}^\bullet,W_{2n}}>>\Pi_{\phi'}\\
@V\iota_{\w'_{c_0}}VV                              @VV\iota_{\w_{c_0}}V\\
\widehat{A_{\phi}^+}@<<< \widehat{A_{\phi'}^+}
\end{CD}
\]
is commutative.
\end{conj}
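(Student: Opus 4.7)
The plan is to reduce first to tempered parameters and then employ the local intertwining argument sketched in \S\ref{intro}. For the reduction, write a non-tempered $\phi$ as a Langlands sum $\phi_1 \oplus \cdots \oplus \phi_r \oplus \phi_0 \oplus \phi_r^\vee \oplus \cdots \oplus \phi_1^\vee$ with tempered $\phi_0$, so that every $\pi \in \Pi_\phi$ is the Langlands quotient of a standard module induced from $\pi_0 \in \Pi_{\phi_0}$. Proposition \ref{TMP}(2) identifies $\theta_{\psi,V_{2n+2}^\bullet,W_{2n}}(\pi)$ as the Langlands quotient of the analogous standard module on the orthogonal side, with the $\GL$-inducing data twisted by $\chi_V$ in the appropriate direction. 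Since $\iota_{\w'_{c_0}}(\pi) = \iota_{\w'_{c_0}}(\pi_0)$ and $\iota_{\w_{c_0}}([\sigma]) = \iota_{\w_{c_0}}([\sigma_0])$ via the canonical isomorphisms $A_{\phi_0}^+ \cong A_\phi^+$ and $A_{\phi_0'}^+ \cong A_{\phi'}^+$, it is enough to establish (P1) for tempered $\phi$.

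For tempered $\phi$, fix $\pi \in \Pi_\phi$ and an extension $\cl\sigma$ of $\sigma := \theta_{\psi,V_{2n+2}^\bullet,W_{2n}}(\pi)|\SO(V_{2n+2}^\bullet) \neq 0$, and set $\eta_\pi := \iota_{\w'_{c_0}}(\pi)$ and $\eta_\sigma := \iota_{\w_{c_0}}([\sigma])$. For each semisimple $a \in A_\phi^+$ singled out by a summand $V_i \otimes N_i \subset N$ with $N_i$ irreducible self-dual of sign $+1$ and $k := \dim_\C N_i$, let $\tau_a$ be the tempered representation of $\GL_k(F)$ with $L$-parameter $N_i$. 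Form
\[
\pi_a = \Ind_Q^{\Sp(W_{2n+2k})}(\tau_a \chi_V \otimes \pi), \qquad
\sigma_a = \Ind_P^{\orth(V_{2n+2+2k}^\bullet)}(\tau_a \otimes \cl\sigma).
\]
What must be verified is: (i) both $\pi_a$ and $\sigma_a$ are irreducible tempered; (ii) the equality $\sigma_a|\SO(V_{2n+2+2k}^\bullet) = \theta_{\psi,V_{2n+2+2k}^\bullet,W_{2n+2k}}(\pi_a)|\SO(V_{2n+2+2k}^\bullet)$ holds, i.e., the theta lift commutes with parabolic induction here, which is a consequence of the see-saw identity and the induction principle for theta series; and (iii) the inclusions $A_\phi^+ \hookrightarrow A_{\phi_\pi'}^+$ and $A_{\phi'}^+ \hookrightarrow A_{\phi_\sigma'}^+$ are compatible with the $\iota$'s, so that $\iota_{\w'_{c_0}}(\pi_a)$ restricts to $\eta_\pi$ and $\iota_{\w_{c_0}}([\sigma_a])$ restricts to $\eta_\sigma$.

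Given (i)--(iii), the normalized intertwining operator associated to the Weyl element representing $a$ acts by a scalar on each of the irreducible representations $\pi_a$ and $\sigma_a$, and these scalars are by construction precisely $\iota(\pi_a)(a)$ and $\iota([\sigma_a])(a)$. The see-saw identity together with the compatibility of normalized intertwiners with the local theta correspondence converts the operator on the symplectic side into the one on the orthogonal side, up to a ratio of local $\gamma$-factors. A direct computation with local root numbers shows that this ratio is exactly what one needs to account for the $\chi_V$-twist inherent in the passage $\phi \mapsto \phi' = (\phi \otimes \chi_V) \oplus \1$, so one obtains $\eta_\sigma(a) = \eta_\pi(a)$. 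Letting $a$ range over a generating set of $A_\phi^+$ yields the full identity $\iota_{\w_{c_0}}([\sigma])|A_\phi^+ = \iota_{\w'_{c_0}}(\pi)$.

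The principal obstacle is item (iii), and more specifically the intertwining relation for $\sigma_a$ when $\sigma$ is a representation of the non-quasi-split pure inner form $\SO(V_{2n+2}^-)$: this requires an analogue of Arthur's intertwining relation valid on non-quasi-split special orthogonal groups, which is exactly the content of assumption (NQ). A secondary subtlety is that when $\disc(V_{2n+2+2k}) \neq 1$ the two quasi-split pure inner forms of $\SO(V_{2n+2+2k})$ carry \emph{a priori} different labelings $\iota_\w$; making the comparison of labels consistent requires the analysis of $\iota_{\w'}\circ\iota_\w^{-1}$ carried out in Appendix \ref{app}, codified via Proposition \ref{whittaker2}. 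Once these ingredients are in hand, the Whittaker dependence on $c_0$ cancels from the two sides because the same parameter $c_0$ controls both $\w'_{c_0}$ and $\w_{c_0}$, and Conjecture (P1) follows.
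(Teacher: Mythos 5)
Your overall strategy --- reduce to tempered parameters via Langlands quotients, then detect each character value through a self-intertwining operator on a suitably induced representation transported by an explicit theta lift --- is the same as the paper's, but two steps fail as written. First, your inducing datum is mis-twisted: you take $\tau_a$ with $L$-parameter $N_i$ and form $\pi_a=\Ind_Q(\tau_a\chi_V\otimes\pi)$ and $\sigma_a=\Ind_P(\tau_a\otimes\cl{\sigma})$, so the $\GL$-datum carries the parameter $N_i\otimes\chi_V$ on the symplectic side and $N_i$ on the orthogonal side. Unless $\chi_V=\1$, neither of these is a summand of $\phi_\pi$ resp.\ of $\phi_\sigma=(\phi_\pi\otimes\chi_V)\oplus\1$, so $\pi_a$ and $\sigma_a$ are in general \emph{reducible} (the component groups grow), and the normalized intertwining operator then computes the character at the new element of the enlarged group rather than at $a$; your items (i) and (iii) break down. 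The correct choice, as in \S\ref{proof of last}, is $\tau$ with parameter $\phi^a\otimes\chi_V$, so that $\tau\chi_V$ adds the summand $\phi^a\subset\phi_\pi$ and $\tau$ adds $\phi^a\otimes\chi_V\subset\phi_\sigma$, keeping both inductions irreducible with canonically identified component groups.

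Second, even after fixing the twist, the elements you use --- those singled out by a single irreducible summand $N_i$ (necessarily of even dimension, since otherwise $a_i\notin A_\phi^+$) --- do not generate $A_\phi^+$ in general: for $\phi=\chi_1\oplus\chi_2\oplus\chi_3$ with distinct quadratic characters satisfying $\chi_1\chi_2\chi_3=\1$, one has $A_\phi^+=\{1,a_1a_2,a_2a_3,a_1a_3\}$, which contains no such element. One must take an arbitrary semisimple $a\in C_\phi^+$ and let $\tau_a$ correspond to the full $(-1)$-eigenspace $\phi^a\otimes\chi_V$, which is automatically even-dimensional (the evenness of $k$ is required throughout \S\ref{sec.intertwining}) but is in general a reducible parameter, so $\tau_a$ is tempered rather than discrete series. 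Finally, the ``compatibility of normalized intertwiners with theta'' you invoke is precisely the technical core of the argument: it is established by the explicit zeta-integral construction of $\TT_s$ and Proposition \ref{proportionality}/Corollary \ref{compatibility}, whose output is the scalar $\omega_\tau(c)$; the dependence on $c_0$ does not cancel merely because the same $c_0$ appears on both sides, but only after combining $\omega_\tau(c)$ with the factors $\eta_{c_0^{-1}}$ and $\eta_{c/c_0}$ supplied by Propositions \ref{whittaker1} and \ref{whittaker2} and using $\omega_\tau^2=\1$. Your correct observation that (NQ) enters when $\sigma$ lives on the non-quasi-split form, and that Appendix \ref{app} is needed to compare labelings across pure inner forms, matches the paper.
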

Note that the maps $\theta_{\psi,V_{2n+2}^\bullet,W_{2n}}$ and $\iota_{\w'_{c_0}}$
depend on the choice of $\psi$, but the maps $\iota_{\w_{c_0}}$ and 
$\widehat{A_{\phi}^+}\leftarrow \widehat{A_{\phi'}^+}$ do not.
\par

Next, we let $m=n$.
The following proposition is a special case of \cite[Theorem C.5]{GI1}.
\begin{prop}\label{param2}
Let $\phi'\in\cl\Phi(\SO(V_{2n}^+))$
and put
\[
\phi=(\phi'\otimes\chi_V)\oplus\chi_V\in\cl\Phi(\Sp(W_{2n})).
\]
\begin{enumerate}
\item
If $\phi'$ does not contain $\1$, then
\begin{itemize}
\item
$\theta_{\psi,V_{2n}^\bullet,W_{2n}}(\cl{\sigma})$ is nonzero for any $[\sigma]\in\Pi_{\phi'}$ 
and any irreducible constituent 
$\cl{\sigma}$ of $\Ind_{\SO(V_{2n}^\bullet)}^{\orth(V_{2n}^\bullet)}(\sigma)$;
\item
$\theta_{\psi,V_{2n}^\bullet,W_{2n}}(\cl{\sigma})$ is irreducible and belongs to $\Pi_{\phi}$;
\item
the theta correspondence gives a bijection
\[
\theta_{\psi,V_{2n}^\bullet,W_{2n}}\colon \bigsqcup_{[\sigma]\in\Pi_{\phi'}}
\left\{\cl{\sigma}\subset\Ind_{\SO(V_{2n}^\bullet)}^{\orth(V_{2n}^\bullet)}(\sigma)\right\} 
\longleftrightarrow \Pi_\phi.
\]
\end{itemize}
\item
If $\phi'$ contains $\1$, then 
\begin{itemize}
\item
for any $[\sigma]\in\Pi_{\phi'}$, there exists a unique extension $\cl{\sigma}$ of $\sigma$ 
to $\orth(V_{2n}^\bullet)$ 
such that
$\theta_{\psi,V_{2n}^\bullet,W_{2n}}(\cl{\sigma})$ is nonzero;
\item
if $\theta_{\psi,V_{2n}^\bullet,W_{2n}}(\cl{\sigma})\not=0$, 
then $\theta_{\psi,V_{2n}^\bullet,W_{2n}}(\cl{\sigma})$ is irreducible and
belongs to $\Pi_{\phi}$;
\item
the theta correspondence gives a bijection
\[
\theta_{\psi,V_{2n}^\bullet,W_{2n}}\colon \Pi_{\phi'}\longleftrightarrow \Pi_\phi.
\]
\end{itemize}
\end{enumerate}
\end{prop}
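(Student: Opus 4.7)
The plan is to derive Proposition \ref{param2} from the general computation of parameters under theta correspondence in almost equal rank, namely \cite[Theorem C.5]{GI1}, combined with the Howe duality conjecture (proved by Waldspurger \cite{Wa1} and Gan--Takeda \cite{GT1}, \cite{GT2}) and the Sun--Zhu conservation relation. The argument parallels the proof of Proposition \ref{param} and proceeds in three steps: parameter identification, non-vanishing analysis, and bijectivity via a component-group count.

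First, I would invoke \cite[Theorem C.5]{GI1} specialized to the dual pair $(\Sp(W_{2n}), \orth(V_{2n}^\bullet))$. It asserts that if $\theta_{\psi, V_{2n}^\bullet, W_{2n}}(\cl{\sigma})$ is nonzero and $\sigma = \cl{\sigma}|\SO(V_{2n}^\bullet)$ belongs to an orbit in $\Pi_{\phi'}$, then the theta lift is irreducible (by Howe duality, combined with tempered-to-tempered matching in the relevant case via Proposition \ref{Theta}) and has $L$-parameter $(\phi' \otimes \chi_V) \oplus \chi_V = \phi$. This is exactly the ``weak P2'' statement recorded in Proposition \ref{TMP}(4), and it places the image of the map inside $\Pi_\phi$.

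Second, I would address non-vanishing and the two-case dichotomy using the conservation relation. For each irreducible $\cl{\sigma} \in \Irr(\orth(V_{2n}^\bullet))$, this relation together with the first occurrence theorem determines precisely when $\Theta_{\psi, V_{2n}^\bullet, W_{2n}}(\cl{\sigma})$ is nonzero, and distinguishes $\cl{\sigma}$ from $\cl{\sigma} \otimes \det$. When $\phi'$ does not contain $\1$, the summand $\chi_V$ appearing in $\phi$ is genuinely new (not already present in $\phi' \otimes \chi_V$), and both extensions of $\sigma$ to $\orth(V_{2n}^\bullet)$ produce nonzero theta lifts, accounting for all pairs $(V_{2n}^\bullet, \cl{\sigma})$ on the left side of the claimed bijection. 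When $\phi'$ contains $\1$, the multiplicity of $\chi_V$ in $\phi$ is strictly larger than in $\phi' \otimes \chi_V$, and the conservation relation forces exactly one extension of each $\sigma$ to have a nonzero theta lift.

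Finally, I would verify bijectivity by matching cardinalities, using the descriptions of $A_\phi^+$ and $A_{\phi'}^+$ from \S\ref{Cn} and \S\ref{Dn} together with Table \ref{tab2} to enumerate the irreducible constituents of $\Ind_{\SO(V_{2n}^\bullet)}^{\orth(V_{2n}^\bullet)}(\sigma)$ according to whether $\phi'$ is $\epinv$. Injectivity is supplied by Howe duality. The main obstacle, I expect, is the component-group bookkeeping in case (2): one must carefully track how the extra summand $\chi_V$ in $\phi$ modifies the component group relative to $\phi' \otimes \chi_V$, how this interacts with the $\epinv$-ness of $\phi'$ (which controls whether each $[\sigma] \in \Pi_{\phi'}$ is an orbit of size one or two and how $\sigma$ extends to $\orth(V_{2n}^\bullet)$), and how the conservation relation forces exactly one extension of each $\sigma$ to contribute. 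Once these three threads are aligned, the counts match $|\Pi_\phi| = \#\widehat{A_\phi^+}$ on both sides, and the proposition follows.
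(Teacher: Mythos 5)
Your proposal is correct and matches the paper's treatment: the paper proves this proposition simply by citing it as a special case of \cite[Theorem C.5]{GI1} (together with the Howe duality already recalled in Proposition \ref{Theta}), which is exactly the backbone of your argument. The extra material you supply (conservation relation for the non-vanishing dichotomy and the component-group count for bijectivity) is the standard reasoning underlying that cited theorem, so nothing more is needed.
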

Note that for $[\sigma]\in\Pi_{\phi'}$, 
the induction $\Ind_{\SO(V_{2n}^\bullet)}^{\orth(V_{2n}^\bullet)}(\sigma)$
does not depend on the choice of a representative $\sigma$
by Lemma \ref{well-defined} (\ref{well1}).
Also, if $\phi'$ contains $\1$, then $\phi'$ is $\epinv$ so that
the order of $[\sigma]$ is one and 
there are exactly two extensions of $\sigma$ to $\orth(V_{2n}^\bullet)$ 
(see Table \ref{tab2} in \S \ref{Dn}).
\par

There is a canonical injective map
\[
A_{\phi'}^+\rightarrow A_\phi^+.
\]
It is not bijective if and only if
$\phi'$ is $\epinv$ and does not contain $\1$.
This map induces a surjection of the character groups
\[
\widehat{A_{\phi'}^+}\leftarrow \widehat{A_{\phi}^+}.
\]
\par

We put
\[
\cl{\Pi}_{\phi'}=
\bigsqcup_{[\sigma]\in\Pi_{\phi'}}\left\{\cl{\sigma}\subset\Ind_{\SO(V_{2n}^\bullet)}^{\orth(V_{2n}^\bullet)}(\sigma)\ 
| \ 
\theta_{\psi,V_{2n}^\bullet,W_{2n}}(\cl{\sigma})\not=0\right\}
\subset\bigsqcup_{V_{2n}^\bullet}\Irr(\orth(V_{2n}^\bullet)).
\]
If $\cl{\sigma}$ is an irreducible constituent of $\Ind_{\SO(V_{2n}^\bullet)}^{\orth(V_{2n}^\bullet)}(\sigma)$,
then the image of $\cl\sigma$ under the map
\[
\Irr(\orth(V))\rightarrow \Irr(\orth(V))/\sim_{\det}\xrightarrow{1:1} \Irr(\SO(V))/\sim_\ep
\]
is $[\sigma]$.
Here, the last map is given by the restriction. 
See \S \ref{SOvsO}.
We define a map $\iota_{\w_{c_0}}\colon\cl\Pi_{\phi'}\rightarrow \widehat{A_{\phi'}^+}$ by
$\iota_{\w_{c_0}}(\cl{\sigma})\coloneqq\iota_{\w_{c_0}}([\sigma])$ for $c_0\in F^\times$.
\par

The second conjecture of Prasad (P2) predicts the behavior of characters associated to 
$\cl\sigma$ and $\theta_{\psi,V_{2n}^\bullet,W_{2n}}(\cl{\sigma})$.
\begin{conj}[P2]\label{P2}
Let $\phi'\in\cl\Phi(\SO(V_{2n}^+))$
and put $\phi=(\phi\otimes\chi_V)\oplus\chi_V\in\cl\Phi(\Sp(W_{2n}))$.
For $[\sigma]\in\Pi_{\phi'}$, 
we take an irreducible constituent 
$\cl{\sigma}\subset \Ind_{\SO(V_{2n}^\bullet)}^{\orth(V_{2n}^\bullet)}(\sigma)$
such that $\pi=\theta_{\psi,V_{2n}^\bullet,W_{2n}}(\cl{\sigma})$
is nonzero, so that $\pi\in\Pi_{\phi}$.
Then one predicts
\[
\iota_{\w'_{c_0}}(\pi)|A_{\phi'}^+=\iota_{\w_{c_0}}(\cl\sigma)=\iota_{\w_{c_0}}([\sigma])
\]
for $c_0\in F^\times$.
Namely, the diagram
\[
\begin{CD}
\cl{\Pi}_{\phi'} @>\theta_{\psi,V_{2n}^\bullet,W_{2n}}>>\Pi_{\phi}\\
@V\iota_{\w_{c_0}}VV                              @VV\iota_{\w'_{c_0}}V\\
\widehat{A_{\phi'}^+}@<<< \widehat{A_{\phi}^+}
\end{CD}
\]
is commutative.
\end{conj}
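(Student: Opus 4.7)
My plan is to deduce Conjecture~(P2) from Conjecture~(P1)---which will have been established earlier in the paper---together with the known Bessel case~(B), via the see-saw identity displayed in the introduction.

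First I would reduce to tempered parameters. Using the standard-module descriptions of the $L$-packets in \S\ref{Dn} and \S\ref{Cn} and the compatibility of the theta correspondence with Langlands quotients (as in Proposition~\ref{O->Mp}, whose analogue for $\theta_{\psi,V_{2n},W_{2n}}$ is standard), both sides of the asserted character identity in (P2) transform in the same way under Langlands induction. This reduces the problem to $\phi'\in\cl\Phi_\temp(\SO(V_{2n}^+))$, in which case $\pi=\theta_{\psi,V_{2n}^\bullet,W_{2n}}(\cl\sigma)$ is tempered (or zero) by Proposition~\ref{Theta}(2).

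For tempered $\phi'$, I would apply the see-saw identity
\[
\Hom_{\Sp(W_{2n})}\bigl(\Theta_{\psi,V_{2n+1},W_{2n}}(\cl\tau)\otimes\omega_{-\psi},\,\pi'\bigr)\;\cong\;\Hom_{\orth(V_{2n+1})}\bigl(\Theta_{\psi,V_{2n+2},W_{2n}}(\pi'),\,\cl\tau\bigr)
\]
with carefully chosen $\cl\tau\in\Irr(\orth(V_{2n+1}^\bullet))$ and $\pi'\in\Irr(\Sp(W_{2n}))$. Using Theorem~\ref{disc1} and Corollary~\ref{Mp-param} I would pick $\cl\tau$ so that the metaplectic theta lift $\Theta_{\psi,V_{2n+1},W_{2n}}(\cl\tau)$ is tied, through the extension of the Witt tower from $V_{2n}^\bullet$ to $V_{2n+1}^\bullet$ and the twist by $\omega_{-\psi}$, to the $\pi=\theta_{\psi,V_{2n},W_{2n}}(\cl\sigma)$ we wish to identify; the LHS then translates, via Theorem~\ref{change}, into a Fourier--Jacobi type multiplicity for $\pi$ and $\pi'$. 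The RHS is a Bessel pairing for $\SO(V_{2n+1})\subset\SO(V_{2n+2})$ whose parameter data is fully controlled: (P1) determines the parameter and character of $\Theta_{\psi,V_{2n+2},W_{2n}}(\pi')|\SO(V_{2n+2})$ in terms of $\pi'$, and the parameter of $\cl\tau$ is fixed by construction. Hence (B), due to Waldspurger and M\oe glin--Waldspurger, computes the dimension of the RHS in terms of the Gross--Prasad characters $\chi_N$ and $\chi_M$ of \S\ref{GGPforSO}. Running $\pi'$ through an appropriate $L$-packet and normalizing Whittaker data via Propositions~\ref{whittaker1} and~\ref{whittaker2} then pins down $\iota_{\w'_{c_0}}(\pi)|A_{\phi'}^+$ and identifies it with $\iota_{\w_{c_0}}([\sigma])$.

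The hard part will be the bookkeeping of $\varepsilon$-factors. Passing from the metaplectic to the symplectic side introduces the correction $\varepsilon(M^a)\varepsilon(M^a\otimes\chi_c)\cdot\chi_c(-1)^{\dim_\C(M^a)/2}$ of Theorem~\ref{change}; the twist by $\omega_{-\psi}\cong\omega_\psi^\vee$ contributes a further sign; and (B) brings in the characters $\chi_N\times\chi_M$. Showing that these factors cancel to yield exactly $\iota_{\w_{c_0}}([\sigma])$ on the subgroup $A_{\phi'}^+\hookrightarrow A_\phi^+$ is the computational heart of the proof. A secondary subtlety, governed by Proposition~\ref{param2}, is whether $\phi'$ contains $\1$: this determines whether $A_{\phi'}^+\hookrightarrow A_\phi^+$ is proper, and hence whether the restriction $\iota_{\w'_{c_0}}(\pi)|A_{\phi'}^+$ loses information, so both cases must be handled uniformly by choosing $\pi'$ in two appropriate families of $L$-packets.
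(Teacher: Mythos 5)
There is a genuine gap at the heart of your argument. The see-saw you invoke is the one from the introduction, built on the decomposition $V_{2n+2}=V_{2n+1}\oplus V_1$; its members are the dual pairs $(\Sp(W_{2n}),\orth(V_{2n+2}))$ and $(\Mp(W_{2n}),\orth(V_{2n+1}))$, and the dual pair $(\orth(V_{2n}),\Sp(W_{2n}))$ that (P2) is actually about appears nowhere in it. The step where you ``pick $\cl\tau$ so that $\Theta_{\psi,V_{2n+1},W_{2n}}(\cl\tau)$ is tied \dots to $\pi=\theta_{\psi,V_{2n}^\bullet,W_{2n}}(\cl\sigma)$'' is precisely where the proof would have to happen, and it cannot be carried out with Theorem \ref{disc1}, Theorem \ref{change} and Corollary \ref{Mp-param} alone: those results parametrize genuine representations of $\Mp(W_{2n})$ by odd orthogonal data and say nothing about the even correspondence $\theta_{\psi,V_{2n}^\bullet,W_{2n}}$. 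Worse, to evaluate your left-hand $\Hom$ space you would need the Fourier--Jacobi multiplicity statement itself (or equivalently (Mp) together with a \emph{second} see-saw attached to $V_{2n+1}=V_{2n}\oplus V_1$, whose members are $\Theta_{\psi,V_{2n},W_{2n}}(\cl\sigma)\otimes\omega_{\psi}$ and $\Theta_{\psi,V_{2n+1},W_{2n}}(\cl\pi)$); but (FJ) is the theorem the paper is building towards, and its proof in \S\ref{existence} uses (P1) and (B) and is logically independent of (P2). So as written your scheme is circular or at least relies on an input that is not available, and even the repaired route (prove (FJ) first, then run the second see-saw) is not what you describe and would still require nonvanishing choices inside packets and the dictionary (Mp) to convert metaplectic characters into $\SO(V_{2n+1})$ data. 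Relatedly, your $\varepsilon$-factor bookkeeping (Theorem \ref{change}, the twist by $\omega_{-\psi}$, the characters $\chi_N\times\chi_M$) belongs to the uniqueness part of (FJ), not to (P2): in the paper's proof of (P2) no $\varepsilon$-factors occur.

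The paper's proof (\S\ref{proof of P2}) is a short, purely local Witt-tower argument with no GGP input at all. Given tempered $\phi'$, $[\sigma]$ and $\pi=\Theta_{\psi,V^\bullet,W}(\cl\sigma)\neq0$, one enlarges $V^\bullet$ by a hyperbolic plane, $V_1^\bullet=Fe\oplus V^\bullet\oplus Fe^*$, and lifts $\pi$ back up: the mixed-model formulas (Lemma \ref{Weil3}) give a surjection $\omega_{\psi,V_1^\bullet,W}\rightarrow\omega_{\psi,V^\bullet,W}\boxtimes|\cdot|_F^{n}$, whence a nonzero $\orth(V^\bullet)\times\GL_1(F)$-map $\cl\sigma_1\rightarrow\cl\sigma\otimes|\cdot|_F^{n}$ for $\cl\sigma_1=\Theta_{\psi,V_1^\bullet,W}(\pi)$ (irreducible by Propositions \ref{Theta} and \ref{param}). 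This yields $\Hom_{\SO(V_1^\bullet)}(\sigma_1,\Ind_{P^\circ}^{\SO(V_1^\bullet)}(\sigma\boxtimes\1))\neq0$, hence $\iota_{\w_{c_0}}([\sigma_1])|A_{\phi'}^+=\iota_{\w_{c_0}}([\sigma])$ by the compatibility of the correspondence with induction (and Lemma \ref{well-defined}); applying (P1) to the pair $(\pi,\sigma_1)$ gives $\iota_{\w_{c_0}}([\sigma_1])|A_{\phi}^+=\iota_{\w'_{c_0}}(\pi)$, and restricting along $A_{\phi'}^+\subset A_{\phi}^+$ finishes the tempered case; the general case then follows as you indicate. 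Your reduction to tempered parameters and your remark about whether $\phi'$ contains $\1$ are sensible, but the central mechanism connecting $\pi$ and $\sigma$ is missing from your proposal.
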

Note that the maps $\theta_{\psi,V_{2n}^\bullet,W_{2n}}$ and $\iota_{\w'_{c_0}}$
depend on the choice of $\psi$, but the maps $\iota_{\w_{c_0}}$ and 
$\widehat{A_{\phi}^+}\leftarrow \widehat{A_{\phi'}^+}$ do not.
\begin{rem}
Conjectures $(P1)$ and $(P2)$ may be slightly different from ones in \cite{P1}.
The difference comes from the choice of the splitting
$\Sp(W_{2n}) \times \orth(V_{2m}) \rightarrow \Mp(W_{2n} \otimes V_{2m})$.
We use the splitting in \cite{Ku2}.
See also explicit formulas for the Weil representations below
$($Lemmas $\ref{Weil1}$, $\ref{Weil2}$ and $\ref{Weil3})$.
\end{rem}
\par

In this paper, we show (FJ), (P1) and (P2) under some assumptions.

\par
Propositions \ref{param} and \ref{param2} imply Proposition \ref{order}.
\begin{proof}[Proof of Proposition $\ref{order}$]
Let $\phi' \in \cl\Phi(\SO(V_{2n}^+))$.
If $\phi'$ contains $\1$, by Proposition \ref{param}, any element in $\Pi_{\phi'}$ is 
the restriction of an irreducible representation of $\orth(V_{2n}^\bullet)$.
Hence, $\phi'$ is $\epinv$ and $\Pi_{\phi'}$ consists of orbits of an $\ep$-invariant element.
\par

Suppose that $\phi'$ does not contain $\1$.
In this case, 
\[
\cl\Pi_{\phi'} = 
\left\{\left.
\cl\sigma \in \bigsqcup_{V_{2n}^\bullet}\Irr(\orth(V_{2n}^\bullet))\ \right|\ 
\cl\sigma \subset \Ind_{\SO(V_{2n}^\bullet)}^{\orth(V_{2n}^\bullet)}(\sigma)
\text{ for some $[\sigma] \in \Pi_{\phi'}$}
\right\}.
\]
It suffices to show that 
$\#\cl\Pi_{\phi'} = r \cdot \#\Pi_{\phi'}$, where
\[
r= \left\{
\begin{aligned}
&2	\iif \text{$\phi'$ is $\epinv$},\\
&1	\other.
\end{aligned}
\right.
\]
Put $\phi = (\phi' \otimes \chi_V) \oplus \chi_V \in \cl\Phi(\Sp(W_{2n}))$ 
as in Proposition \ref{param2}.
Then Proposition \ref{param2} says that the theta lift gives a bijection
$\cl\Pi_{\phi'} \rightarrow \Pi_{\phi}$.
Hence $\#\cl\Pi_{\phi'} = \#\Pi_{\phi}$.
On the other hand, we have $\# A_{\phi}^+ = r \cdot \# A_{\phi'}^+$ (see \S \ref{Cn} and \S \ref{Dn}).
Therefore we have
\[
\#\cl\Pi_{\phi'} = \#\Pi_{\phi} = \# A_{\phi}^+ = r \cdot \# A_{\phi'}^+ = r \cdot \#\Pi_{\phi'}, 
\]
as desired. This completes the proof.
\end{proof}

%\section{Reductions to (P1) for tempered $L$-parameters}
\section{Reductions to (P1) for tempered $L$-parameters}
In this section, we reduce (P1), (P2) and Theorem \ref{main}
to (P1) for tempered representations.
Through this section, we assume (P1) for tempered representations.
\par

%\subsection{Some lemmas}
\subsection{Some lemmas}
Let $\Sp(W_{2n})$ and $\SO(V_{2n+2})$ be quasi-split 
symplectic and special orthogonal groups, respectively.
We put $\chi_V=\chi_{V_{2n+2}}$.
To prove Theorem \ref{main}, we need the following two lemmas.
\begin{lem}\label{L(1,Ad)}
Let $\phi\in\cl\Phi_\gen(\Sp(W_{2n}))$ 
and put 
$\phi'=(\phi\otimes\chi_V)\oplus\1\in\cl\Phi(\SO(V_{2n+2}))$.
Then $\phi'$ is generic if and only if the local twisted $L$-function
$L(s,\phi\otimes\chi_V)$ is regular at $s=1$.
\end{lem}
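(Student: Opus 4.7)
The plan is to reduce the problem to a direct comparison of adjoint $L$-functions, using the Gross--Prasad--Rallis criterion (GPR).

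By (GPR), $\phi'$ is generic if and only if $L(s,\phi',\Ad)$ is regular at $s=1$, and since $\phi \in \cl\Phi_\gen(\Sp(W_{2n}))$, we already know that $L(s,\phi,\Ad)$ is regular at $s=1$. Therefore it suffices to establish the factorization
\[
L(s,\phi',\Ad) \;=\; L(s,\phi,\Ad)\cdot L(s,\phi\otimes\chi_V).
\]
Once this identity is in hand, regularity of $L(s,\phi',\Ad)$ at $s=1$ is equivalent to regularity of $L(s,\phi\otimes\chi_V)$ at $s=1$, which is the conclusion.

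To establish the factorization, I would unpack the adjoint representations. For $\Sp(W_{2n})$, the dual group is $\SO_{2n+1}(\C)$ and the adjoint representation is $\Lambda^2$ of the standard representation, so $L(s,\phi,\Ad) = L(s,\Lambda^2\phi)$ where $\phi\colon\WD_F\to\SO(N)$ is the $(2n+1)$-dimensional orthogonal representation. Similarly for $\SO(V_{2n+2})$, $L(s,\phi',\Ad) = L(s,\Lambda^2\phi')$ where $\phi'$ is $(2n+2)$-dimensional orthogonal. Now writing $\rho = \phi\otimes\chi_V$, which is orthogonal of dimension $2n+1$, we have $\phi' = \rho \oplus \1$ and
\[
\Lambda^2(\rho\oplus\1) \;\cong\; \Lambda^2\rho \oplus (\rho\otimes\1) \;\cong\; \Lambda^2\rho \oplus \rho.
\]
Since $\chi_V$ is quadratic, $\Lambda^2(\phi\otimes\chi_V) \cong \Lambda^2\phi \otimes \chi_V^{\otimes 2} \cong \Lambda^2\phi$, so
\[
L(s,\phi',\Ad) \;=\; L(s,\Lambda^2\phi)\cdot L(s,\phi\otimes\chi_V) \;=\; L(s,\phi,\Ad)\cdot L(s,\phi\otimes\chi_V).
\]

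There is essentially no obstacle here: the proof is a one-line factorization of the adjoint $L$-function combined with (GPR) applied on both sides. The only points requiring care are checking that $\phi' = (\phi\otimes\chi_V)\oplus\1$ indeed defines an element of $\cl\Phi(\SO(V_{2n+2}))$ (which uses $\det\phi = \1$ and so $\det\phi' = \chi_V = \chi_{V_{2n+2}}$), and matching the two conventions for the adjoint representation of the dual groups $\SO_{2n+1}(\C)$ and $\SO_{2n+2}(\C)$ so that they both correspond to $\Lambda^2$ of the standard representation.
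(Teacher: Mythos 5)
Your proposal is correct and follows essentially the same route as the paper: the paper's proof is precisely the identity $\Ad\circ\phi'=(\Ad\circ\phi)\oplus(\phi\otimes\chi_V)$, hence $L(s,\phi',\Ad)=L(s,\phi,\Ad)\cdot L(s,\phi\otimes\chi_V)$, combined with (GPR) for $\Sp(W_{2n})$ and $\SO(V_{2n+2})$. Your wedge-square computation $\Lambda^2(\rho\oplus\1)\cong\Lambda^2\rho\oplus\rho$ with $\Lambda^2(\phi\otimes\chi_V)\cong\Lambda^2\phi$ is just an explicit verification of that same decomposition.
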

\begin{proof}
Since $\Ad \circ \phi' = (\Ad \circ \phi) \oplus (\phi \otimes \chi_V)$, we have
\[
L(s,\phi',\Ad) = L(s,\phi,\Ad) \cdot L(s,\phi\otimes\chi_V).
\]
The assertion follows from this equation and (GPR) for $\Sp(W_{2n})$ and $\SO(V_{2n+2})$.
\end{proof}

\begin{lem}\label{big theta}
Let $\phi\in\cl\Phi(\Sp(W_{2n}))$ and $\pi\in\Pi_\phi$.
Assume that $\phi'=(\phi\otimes\chi_V)\oplus\1\in\cl\Phi(\SO(V_{2n+2}))$ is generic.
Then the big theta lift $\Theta_{\psi,V_{2n+2}^\bullet,W_{2n}}(\pi)|\SO(V_{2n+2}^\bullet)$
is irreducible $($if it is nonzero$)$.
Hence, 
\[
\Theta_{\psi,V_{2n+2}^\bullet,W_{2n}}(\pi)|\SO(V_{2n+2}^\bullet)
=\theta_{\psi,V_{2n+2}^\bullet,W_{2n}}(\pi)|\SO(V_{2n+2}^\bullet).
\]
\end{lem}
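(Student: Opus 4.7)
The plan is to reduce the statement to two citations already in the paper: Proposition \ref{TMP}(2), which forces the big theta lift of $\pi$ to coincide with the small theta lift as an $\orth(V_{2n+2}^\bullet)$-representation, and Proposition \ref{param}, which describes the restriction of the small theta lift to $\SO(V_{2n+2}^\bullet)$. The only preliminary step is to verify that the hypotheses of Proposition \ref{TMP}(2) are met; for this I need to observe that $\phi$ itself is generic whenever $\phi'$ is.

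To see that genericity passes from $\phi'$ to $\phi$, I would repeat the computation in the proof of Lemma \ref{L(1,Ad)}: from $\Ad\circ\phi' = (\Ad\circ\phi)\oplus(\phi\otimes\chi_V)$ one gets
\[
L(s,\phi',\Ad) = L(s,\phi,\Ad)\cdot L(s,\phi\otimes\chi_V),
\]
and because local $L$-factors are reciprocals of polynomials in $q^{-s}$ (in particular nowhere vanishing), regularity of the product at $s=1$ forces regularity of each factor. By (GPR) for $\Sp(W_{2n})$ this means $\phi\in\cl\Phi_\gen(\Sp(W_{2n}))$. With both $\phi$ and $\phi'$ now known to be generic, Proposition \ref{TMP}(2) applied with $\phi_\pi=\phi$ and $\phi_\sigma=\phi'$ yields that $\Theta_{\psi,V_{2n+2}^\bullet,W_{2n}}(\pi)$ is irreducible (if nonzero), hence
\[
\Theta_{\psi,V_{2n+2}^\bullet,W_{2n}}(\pi) = \theta_{\psi,V_{2n+2}^\bullet,W_{2n}}(\pi),
\]
the big theta lift being its own maximal semisimple quotient.

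At this point Proposition \ref{param} finishes the argument: it asserts that whenever the small theta lift $\theta_{\psi,V_{2n+2}^\bullet,W_{2n}}(\pi)$ is nonzero, its restriction to $\SO(V_{2n+2}^\bullet)$ is irreducible and belongs to $\Pi_{\phi'}$. Combined with the equality above, this gives the claimed irreducibility of $\Theta_{\psi,V_{2n+2}^\bullet,W_{2n}}(\pi)|\SO(V_{2n+2}^\bullet)$. There is no substantive obstacle in this proof; the lemma is a bookkeeping statement that packages the known big-versus-small theta comparison in generic parameters (Proposition \ref{TMP}(2)) with the $\orth/\SO$ branching behavior described by Proposition \ref{param}, with the $L$-function observation above as the small bridge between the two hypotheses.
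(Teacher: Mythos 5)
Your peripheral steps are fine: the observation that genericity of $\phi'$ forces genericity of $\phi$ (since local $L$-factors have no zeros, regularity of $L(s,\phi,\Ad)\cdot L(s,\phi\otimes\chi_V)$ at $s=1$ gives regularity of each factor, and then (GPR) applies) is correct, and passing from irreducibility over $\orth(V_{2n+2}^\bullet)$ to irreducibility of the restriction to $\SO(V_{2n+2}^\bullet)$ via Proposition \ref{param} is legitimate. But the central step is question-begging. Proposition \ref{TMP}(2) is an introductory summary whose stated provenance is ``\cite[Proposition C.4 (ii)]{GI1} $+$ \cite[p.40 Th\'eor\`eme (i)]{MW}'': it is not a quotable theorem of either reference, and the place in the paper where that combination is actually carried out is precisely Lemma \ref{big theta}. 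Citing it to prove the lemma therefore outsources the entire content of the lemma to itself.

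The work your proposal skips is the following. One writes $\phi=\phi_1\oplus\dots\oplus\phi_r\oplus\phi_0\oplus\phi_r^\vee\oplus\dots\oplus\phi_1^\vee$ and realizes $\pi$ as the unique irreducible quotient of the standard module $\Ind_Q^{\Sp(W_{2n})}(\tau_1\chi_V|\det|_F^{s_1}\otimes\dots\otimes\tau_r\chi_V|\det|_F^{s_r}\otimes\pi_0)$ with $\pi_0$ tempered; by \cite[Proposition C.4 (ii)]{GI1} there is a surjection $\Ind_P^{\orth(V_{2n+2}^\bullet)}(\tau_1|\det|_F^{s_1}\otimes\dots\otimes\tau_r|\det|_F^{s_r}\otimes\Theta_{\psi,V_{2n_0+2}^\bullet,W_{2n_0}}(\pi_0))\twoheadrightarrow\Theta_{\psi,V_{2n+2}^\bullet,W_{2n}}(\pi)$; restricting to $\SO(V_{2n+2}^\bullet)$ as in Lemma \ref{well-defined} (and using Proposition \ref{Theta} to know $\Theta_{\psi,V_{2n_0+2}^\bullet,W_{2n_0}}(\pi_0)$ is irreducible tempered, which also settles the tempered case of the lemma), the target of the surjection becomes a quotient of a standard module of $\SO(V_{2n+2}^\bullet)$ attached to the generic parameter $\phi'$, which is irreducible by \cite[p.40 Th\'eor\`eme (i)]{MW}; hence $\Theta_{\psi,V_{2n+2}^\bullet,W_{2n}}(\pi)|\SO(V_{2n+2}^\bullet)$ is irreducible or zero. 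Unless you carry out this reduction (or an equivalent one) explicitly, rather than invoking Proposition \ref{TMP}(2) as a black box, the proof is circular within the paper's logical structure.
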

\begin{proof}
If $\phi$ is tempered, the assertion is in Proposition \ref{Theta}.
In general, we may write
\[
\phi=\phi_1\oplus\dots\oplus\phi_r\oplus\phi_0\oplus\phi_r^\vee\oplus\dots\oplus\phi_1^\vee,
\]
as in \S \ref{Cn}.
Note that the canonical map $A_{\phi_0}^+\rightarrow A_\phi^+$ is an isomorphism.
Let $\pi\in\Pi_\phi$.
Then $\pi$ is the unique irreducible quotient of the standard module
\[
\Ind_Q^{\Sp(W_{2n})}(\tau_1\chi_V|\det|_F^{s_1}\otimes\dots\otimes\tau_r\chi_V|\det|_F^{s_r}
\otimes\pi_0),
\]
where $\tau_i \chi_V$ is the tempered representation of $\GL_{k_i}(F)$ 
associated to $\phi'_i$,
and $\pi_0\in\Pi_{\phi_0}$ with $\iota_{\w'_{c_0}}(\pi)|A_{\phi_0}^+=\iota_{\w'_{c_0}}(\pi_0)$.
Here, if $n_0=0$, then we ignore $\pi_0$.
By \cite[Proposition C.4 $(\mathrm{ii})$]{GI1}, 
we have a surjection
\[
\Ind_P^{\orth(V_{2n+2}^\bullet)}(\tau_1|\det|_F^{s_1}\otimes\dots\otimes\tau_r|\det|_F^{s_r}
\otimes\Theta_{\psi,V_{2n_0+2}^\bullet,W_{2n_0}}(\pi_0))
\twoheadrightarrow \Theta_{\psi,V_{2n+2}^\bullet,W_{2n}}(\pi).
\]
Here, if $n_0=0$, then we interpret $\Theta_{\psi,V_{2n_0+2},W_{2n_0}}(\pi_0)$
to be the trivial representation of $\orth(V_2)$.
In particular, if $\Theta_{\psi,V_{2n+2}^\bullet,W_{2n}}(\pi)$ is nonzero, 
then so is $\Theta_{\psi,V_{2n_0+2}^\bullet,W_{2n_0}}(\pi_0)$.
By a similar argument to Lemma \ref{well-defined} (\ref{well3}), we have
\begin{align*}
&\Ind_P^{\orth(V_{2n+2}^\bullet)}(\tau_1|\det|_F^{s_1}\otimes\dots\otimes\tau_r|\det|_F^{s_r}
\otimes\Theta_{\psi,V_{2n_0+2}^\bullet,W_{2n_0}}(\pi_0))|\SO(V_{2n+2}^\bullet)\\
&\cong
\Ind_{P^\circ}^{\SO(V_{2n+2}^\bullet)}(\tau_1|\det|_F^{s_1}\otimes\dots\otimes\tau_r|\det|_F^{s_r}
\otimes\Theta_{\psi,V_{2n_0+2}^\bullet,W_{2n_0}}(\pi_0)|\SO(V_{2n_0+2}^\bullet)).
\end{align*}
Since $\phi'$ is generic, 
by \cite[p. 40 Th\'eor\`eme $(\mathrm{i})$]{MW},
this standard module is irreducible.
Therefore the quotient 
$\Theta_{\psi,V_{2n+2}^\bullet,W_{2n}}(\pi)|\SO(V_{2n+2}^\bullet)$ is also irreducible.
\end{proof}

%\subsection{Existence of (FJ)}\label{existence}
\subsection{Existence of (FJ)}\label{existence}
In this subsection, we prove the following proposition.
\begin{prop}\label{exist}
Let 
\begin{align*}
\phi_M\colon\WD_F\rightarrow\Sp(M)
\quad\text{and}\quad
\phi_N\colon\WD_F\rightarrow\SO(N)
\end{align*}
be in $\cl\Phi_\gen(\Mp(W_{2n}))$ and in $\cl\Phi_\gen(\Sp(W_{2n}))$, respectively.
Assume $(P1)$ %, $(IS)$ (Assumption \ref{IS}) 
and that
\begin{itemize}
\item
$L(s,\phi_N\otimes\chi_d)$ is regular at $s=1$ for some $d\in F^\times$.
\end{itemize}
Then there exist $\cl{\pi}\in\Pi_{\phi_M}=\LL_\psi^{-1}(\phi_M)$ and $\pi\in\Pi_{\phi_N}$
such that
\[
\Hom_{\Delta\Mp(W_{2n})}((\cl{\pi}\boxtimes\pi)\otimes\overline{\omega_\psi},\C)\not=0.
\]
\end{prop}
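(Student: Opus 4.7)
The strategy is to use the see-saw identity from the introduction to transfer the existence assertion for FJ to the corresponding assertion for the Bessel case (B), which is already known. Concretely: choose $d\in F^\times$ such that $L(s,\phi_N\otimes\chi_d)$ is regular at $s=1$, let $V_{2n+2}^+$ be the quasi-split orthogonal space with $\disc(V_{2n+2}^+)=d$, and pick a non-degenerate line $V_1\subset V_{2n+2}^+$ with $\disc(V_1)=-1$; set $V_{2n+1}^+ = V_1^\perp$, which is quasi-split of discriminant $-d$. Define
\[
\phi_\sigma=(\phi_N\otimes\chi_V)\oplus\1\in\cl\Phi(\SO(V_{2n+2}^+)),\qquad
\phi_\tau=\phi_M\otimes\chi_{-d}\in\cl\Phi(\SO(V_{2n+1}^+)),
\]
where $\chi_V=\chi_d$. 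By Lemma~\ref{L(1,Ad)} and the choice of $d$, $\phi_\sigma$ is generic; and $\phi_\tau$ is generic because twists by quadratic characters preserve genericity of the adjoint $L$-function.

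Next I apply (B) to the generic pair $(\phi_\tau,\phi_\sigma)$, obtaining $\sigma_M\in\Pi_{\phi_\tau}$ and $[\sigma_N]\in\Pi_{\phi_\sigma}$ living on some relevant pure inner form $(\SO(V_{2n+1}^\bullet),\SO(V_{2n+2}^\bullet))$ with $\Hom_{\Delta\SO(V_{2n+1}^\bullet)}(\sigma_M\boxtimes\sigma_N,\C)\ne 0$. I then theta-lift back. Since $V_{2n+1}^\bullet$ has odd dimension, $\sigma_M$ has a unique extension $\cl\sigma_M\in\Irr(\orth(V_{2n+1}^\bullet))$ with $\Theta_{\psi,V_{2n+1}^\bullet,W_{2n}}(\cl\sigma_M)\ne 0$; by Proposition~\ref{O->Mp} together with the assumed irreducibility of standard modules of $\Mp(W_{2n})$ whose quotients lie in $\Pi_{\phi_M}$, this big theta lift is irreducible, and by Corollary~\ref{Mp-param} (applied with $c=-d$) the resulting representation lies in $\Pi_{\phi_M}$; call it $\cl\pi$. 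Similarly, by Proposition~\ref{param} (applicable since $\phi_\sigma$ is generic) and Lemma~\ref{big theta}, there is a unique $\pi_0\in\Pi_{\phi_N}$ with $\theta_{\psi,V_{2n+2}^\bullet,W_{2n}}(\pi_0)|\SO(V_{2n+2}^\bullet)\cong\sigma_N$, and the corresponding big theta agrees with the little theta. Set $\pi=\pi_0^\vee\in\Pi_{\phi_N}$.

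Finally, the see-saw identity attached to the dual pairs $(\Sp(W_{2n}),\orth(V_{2n+2}^\bullet))$ and $(\Mp(W_{2n})\times_{\{\pm1\}}\Mp(W_{2n}),\orth(V_{2n+1}^\bullet)\times\orth(V_1))$ yields
\[
\Hom_{\Sp(W_{2n})}\!\bigl(\Theta_{\psi,V_{2n+1}^\bullet,W_{2n}}(\cl\sigma_M)\otimes\omega_{-\psi},\pi_0\bigr)
\cong
\Hom_{\orth(V_{2n+1}^\bullet)}\!\bigl(\Theta_{\psi,V_{2n+2}^\bullet,W_{2n}}(\pi_0),\cl\sigma_M\bigr).
\]
Using $\omega_{-\psi}\cong\overline{\omega_\psi}$ and rewriting the diagonal embedding $\Delta\Mp(W_{2n})$, the left-hand side is precisely the FJ Hom space $\Hom_{\Delta\Mp(W_{2n})}((\cl\pi\boxtimes\pi)\otimes\overline{\omega_\psi},\C)$. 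The right-hand side is nonzero because, upon restricting the $\orth(V_{2n+1}^\bullet)$-equivariance to $\SO(V_{2n+1}^\bullet)$ (and using irreducibility of $\cl\sigma_M|\SO(V_{2n+1}^\bullet)=\sigma_M$), it contains the Bessel Hom space delivered by (B) in Step~2. This proves the existence claim.

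The main obstacle is the compatibility of pure inner forms: the pair $(\sigma_M,[\sigma_N])$ produced by (B) lives on a specific relevant pure inner form $(\SO(V_{2n+1}^\bullet),\SO(V_{2n+2}^\bullet))$, and one must verify that this particular inner form is reached by theta-lifting from $\Mp(W_{2n})$ and $\Sp(W_{2n})$ (which have no non-trivial pure inner forms themselves); concretely, one has to check, via the dichotomy in Proposition~\ref{param}, that every $[\sigma_N]\in\Pi_{\phi_\sigma}$ is in the image of some $\theta_{\psi,V_{2n+2}^\bullet,W_{2n}}$ for an appropriate choice of $\bullet$, and likewise for $\sigma_M$. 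A second, more technical, obstacle is the precise sign tracking in identifying $\overline{\omega_\psi}$ with the Weil representation of $\Mp(W_{2n})$ associated to the anisotropic line $V_1$ with $\disc(V_1)=-1$, which is needed to apply the see-saw identity in the form stated in the introduction.
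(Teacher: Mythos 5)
Your route is the paper's own (genericity via Lemma \ref{L(1,Ad)} and (GPR), the Bessel case (B), Propositions \ref{param}, \ref{O->Mp}, Lemma \ref{big theta}, Corollary \ref{Mp-param}, the standard-module hypothesis, and the see-saw), but there is a genuine gap at the final step. You choose the extension $\cl{\sigma}_M$ of $\sigma_M$ by the condition $\Theta_{\psi,V_{2n+1}^\bullet,W_{2n}}(\cl{\sigma}_M)\neq0$ and then claim that $\Hom_{\orth(V_{2n+1}^\bullet)}(\Theta_{\psi,V_{2n+2}^\bullet,W_{2n}}(\pi_0),\cl{\sigma}_M)\neq0$ because it ``contains the Bessel Hom space.'' The containment goes the other way: restricting equivariance from $\orth(V_{2n+1}^\bullet)$ to $\SO(V_{2n+1}^\bullet)$ embeds the $\orth$-Hom \emph{into} $\Hom_{\Delta\SO(V_{2n+1}^\bullet)}(\sigma_N\otimes\sigma_M,\C)$, which is one-dimensional by \cite{AGRS}, \cite{AGRS2}. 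Hence exactly one of the two extensions $\tau_\pm$ of $\sigma_M$ carries a nonzero $\orth(V_{2n+1}^\bullet)$-invariant functional, and nothing in your argument excludes that it is the extension whose theta lift to $\Mp(W_{2n})$ vanishes, i.e.\ the one you discarded. The paper argues in the opposite order: using one-dimensionality and the operators $f_\pm=f\circ(\Theta_{\psi,V_{2n+2},W_{2n}}(\pi)(\ep)\otimes\tau_\pm(\ep))$ it first produces an extension $\tau_\bullet$ with nonzero $\orth$-Hom, and only then invokes the see-saw; the nonvanishing of the see-saw's left-hand side then \emph{forces} $\Theta_{\psi,V_{2n+1},W_{2n}}(\tau_\bullet)\neq0$, after which the standard-module assumption gives irreducibility and Corollary \ref{Mp-param} identifies its packet. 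You need this (or an equivalent) argument to reconcile the two choices of extension.

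A second, smaller problem is the discriminant bookkeeping. With $\disc(V_{2n+2})=d$ and $\disc(V_1)=-1$, the paper's convention gives $\disc(V_{2n+2})=-\disc(V_{2n+1})\disc(V_1)$, so $\disc(V_1^{\perp})=d$, not $-d$; accordingly the odd parameter must be $\phi_\tau=\phi_M\otimes\chi_d$ and Corollary \ref{Mp-param} is applied with $c=d$. With your twist $\chi_{-d}$ on a space that actually has discriminant $d$, the lift back to $\Mp(W_{2n})$ has $\LL_\psi$-parameter $\phi_M\otimes\chi_{-1}$, not $\phi_M$; and if you instead insist on $\disc(V_{2n+1})=-d$, then $\disc(V_{2n+2})=-d$ and the genericity of $\phi_\sigma$ no longer follows from the hypothesis that $L(s,\phi_N\otimes\chi_d)$ is regular at $s=1$. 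The fix is simply to take $\phi_\tau=\phi_M\otimes\chi_d$ with $\disc(V_{2n+1})=d$, as in the paper.
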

\begin{proof}
We consider the following see-saw diagram:
\[
\xymatrix{
   \Mp(W_{2n})\times_{\{\pm1\}}\Mp(W_{2n})   \ar@{-}[d]    &  &  \orth(V_{2n+2}) \ar@{-}[d]     \\
     \Sp(W_{2n}) \ar@{-}[urr]      &  &  \orth(V_{2n+1})\times\orth(V_1)\ar@{-}[ull]   \\
}
\]
with $\disc(V_1)=-1$.
\par

Note that $\dim_\C(M)=2n$ and $\dim_\C(N)=2n+1$.
We put
\[
(\phi_\sigma,N_\sigma)=((\phi_N\otimes\chi_{d})\oplus\1,(N\otimes\chi_d)\oplus\C)
\quad\text{and}\quad
(\phi_\tau,M_\tau)=(\phi_M\otimes\chi_{d},M\otimes\chi_d).
\]
Then $\phi_\sigma$ is an $\epinv$ element in $\cl\Phi(\SO(V_{2n+2}))$ with $\disc(V_{2n+2})=d$,
and $\phi_\tau\in\cl\Phi(\SO(V_{2n+1}))$ with $\disc(V_{2n+1})=d$.
Moreover, by Lemma \ref{L(1,Ad)} and 
(GPR) for $\SO(V_{2n+1})$, we see that
$\phi_\sigma$ and $\phi_\tau$ are generic.
By (B), there are a pair of orthogonal spaces $V_{2n+1}\subset V_{2n+2}$,
and $[\sigma]\in\Pi_{\phi_\sigma}\cap\Irr(\SO(V_{2n+2}))/\sim_\ep$, 
$\tau\in\Pi_{\phi_\tau}\cap\Irr(\SO(V_{2n+1}))$ such that
\[
\Hom_{\Delta\SO(V_{2n+1})}(\sigma\otimes\tau,\C)\not=0.
\]
If we denote the orthogonal complement of $V_{2n+1}$ in $V_{2n+2}$ by $V_1$, then we have
\[
\disc(V_1)=-\disc(V_{2n+2})/\disc(V_{2n+1})=-1.
\]
\par

By Proposition \ref{param} and (the proof of) Lemma \ref{big theta}, 
we can find $\pi\in\Pi_{\phi_N}$ such that
\[
\Theta_{\psi,V_{2n+2},W_{2n}}(\pi)|\SO(V_{2n+2})=\sigma.
\]
We claim that there exists an extension $\tau_\bullet$ of $\tau$ to $\orth(V_{2n+1})$ such that
\[
\Hom_{\Delta\orth(V_{2n+1})}(\Theta_{\psi,V_{2n+2},W_{2n}}(\pi)\otimes\tau_\bullet,\C)\not=0.
\]
Let $\varepsilon=-1$ be the non-trivial element in the center of $\orth(V_{2n+1})$, 
so that $\varepsilon\not\in\SO(V_{2n+1})$.
Then there exist two extensions $\tau_{+}$ and $\tau_-$ of $\tau$ to $\orth(V_{2n+1})$ such that
$\tau_{\pm}(\ep)=\pm1$.
For $f\in\Hom_{\Delta\SO(V_{2n+1})}(\sigma\otimes\tau,\C)$ with $f\not=0$, we put
\[
f_{\pm}=f\circ(\Theta_{\psi,V_{2n+2},W_{2n}}(\pi)(\varepsilon)\otimes\tau_{\pm}(\varepsilon)).
\]
We see that 
\[
f_{\pm}\in\Hom_{\Delta\SO(V_{2n+1})}(\sigma\otimes\tau,\C).
\]
Moreover we have $(f_{\pm})_{\pm}=f$ and $f_{-}=-f_{+}$.
Since $\dim_\C\Hom_{\Delta\SO(V_{2n+1})}(\sigma\otimes\tau,\C)=1$ 
by \cite{AGRS} and \cite{AGRS2}, 
we have $f_{\pm}=a_\pm\cdot f$ for some $a_\pm\in\{\pm1\}$.
If $f_\bullet=f$, then 
$f\in\Hom_{\Delta\orth(V_{2n+1})}(\Theta_{\psi,V_{2n+2},W_{2n}}(\pi)\otimes\tau_\bullet,\C)$.
Hence this $\Hom$ space is nonzero.
\par

By the see-saw identity, we see that
\begin{align*}
\Hom_{\Sp(W_{2n})}(\Theta_{\psi,V_{2n+1},W_{2n}}(\tau_\bullet)\otimes\omega_{-\psi},\pi)
&\cong\Hom_{\orth(V_{2n+1})}(\Theta_{\psi,V_{2n+2},W_{2n}}(\pi),\tau_\bullet)\\
&\cong\Hom_{\Delta\orth(V_{2n+1})}(\Theta_{\psi,V_{2n+2},W_{2n}}(\pi)\otimes\tau_\bullet,\C)
\not=0,
\end{align*}
since $\tau_\bullet^\vee\cong\tau_\bullet$.
\par

Putting $\cl{\pi}\coloneqq\Theta_{\psi,V_{2n+1},W_{2n}}(\tau_\bullet)$,
we see that
\[
\Hom_{\Sp(W_{2n})}(\pi^\vee\otimes\cl{\pi}\otimes\overline{\omega_\psi},\C)\not=0.
\]
Note that $\pi\in \Pi_{\phi_N}$ implies that $\pi^\vee\in \Pi_{\phi_N}$.
By Lemma \ref{IS-irr}, 
we see that $\cl{\pi}$ is irreducible. 
Moreover, by Corollary \ref{Mp-param}, we have $\cl{\pi}\in \Pi_{\phi_M}$.
This completes the proof.
\end{proof}

%\subsection{Uniqueness of (FJ)}
\subsection{Uniqueness of (FJ)}
In this subsection, we prove the following proposition.
\begin{prop}
Let $\phi_M$ and $\phi_N$ be as in Proposition $\ref{exist}$.
Assume $(P1)$ and that
$L(s,\phi_N\otimes\chi_d)$ is regular at $s=1$ for some $d\in F^\times$.
Let $\cl{\pi}\in\Pi_{\phi_M}=\LL^{-1}_\psi(\phi_M)$ and $\pi\in\Pi_{\phi_N}$.
If they satisfy
\[
\Hom_{\Delta\Mp(W_{2n})}((\cl{\pi}\boxtimes\pi)\otimes\overline{\omega_\psi},\C)\not=0,
\]
then we have
\[
\eta_{\cl{\pi}}\times \eta_\pi=\chi_{N_1}\times\chi_M|A_\phi^+,
\]
where we put $\eta_{\cl{\pi}}=\iota_\psi(\cl{\pi})$ and $\eta_\pi=\iota_{\w'_{1}}(\pi)$.
\end{prop}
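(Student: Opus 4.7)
The plan is to translate the Fourier--Jacobi Hom space to a Bessel one via the see-saw diagram, apply the already-established orthogonal Gross--Prasad conjecture (B), and then transport the resulting character identity back through the Prasad-type correspondences (P1) and (Mp).

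First, by the see-saw identity used in Proposition~\ref{exist},
\[
\Hom_{\Delta\Mp(W_{2n})}((\cl{\pi}\boxtimes\pi)\otimes\overline{\omega_\psi},\C)
\cong
\Hom_{\orth(V_{2n+1})}(\Theta_{\psi,V_{2n+2},W_{2n}}(\pi),\tau_\bullet),
\]
for a suitable extension $\tau_\bullet$ to $\orth(V_{2n+1})$ of
$\tau=\theta_{\psi,V_{2n+1},W_{2n}}(\cl\pi)|\SO(V_{2n+1})$, with $\disc(V_1)=-1$ so that
$\disc(V_{2n+2})=-\disc(V_{2n+1})$. By the assumption that $L(s,\phi_N\otimes\chi_d)$ is regular at $s=1$ for some $d$, Lemma~\ref{L(1,Ad)} and (GPR) yield that the parameters
$\phi_\sigma=(\phi_N\otimes\chi_{V_{2n+2}})\oplus\1$ of $\SO(V_{2n+2})$ and
$\phi_\tau=\phi_M\otimes\chi_{V_{2n+1}}$ of $\SO(V_{2n+1})$ are generic; moreover by Lemma~\ref{big theta} the big theta lift $\Theta_{\psi,V_{2n+2},W_{2n}}(\pi)|\SO(V_{2n+2})$ is irreducible and equals $\sigma:=\theta_{\psi,V_{2n+2},W_{2n}}(\pi)|\SO(V_{2n+2})$ with $[\sigma]\in\Pi_{\phi_\sigma}$. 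Hence the nonvanishing of the above Hom space is equivalent to
$\Hom_{\Delta\SO(V_{2n+1})}(\sigma\boxtimes\tau,\C)\neq 0$.

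Next, I apply the Bessel case (B) to the pair $(\sigma,\tau)$. Writing $\eta_\sigma=\iota_{\w_{-1}}([\sigma])$ and $\eta_\tau=\iota_{\w}(\tau)$, (B) gives
\[
\eta_\tau\times\eta_\sigma=\chi_{N_\sigma}\times\chi_{M_\tau}
\]
on $A_{\phi_\tau}\times A_{\phi_\sigma}^+$, where $M_\tau=M\otimes\chi_{V}$ and $N_\sigma=(N\otimes\chi_V)\oplus\C$ play the roles of the parameters in \S\ref{GGPforSO}. Now (P1) applied to $\pi\mapsto\sigma$ (with $c_0=-1$) gives $\eta_\sigma|_{A_{\phi_N}^+}=\eta_\pi$ after accounting for the change of Whittaker datum via Proposition~\ref{whittaker1}; similarly the correspondence (Mp) (Theorem~\ref{disc1} and Theorem~\ref{change}) together with the description of the contragredient at the end of \S\ref{Mp} gives $\eta_\tau$ in terms of $\eta_{\cl\pi}$. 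Feeding these into the Bessel identity transforms it into a relation purely on $A_{\phi_M}\times A_{\phi_N}^+$.

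Finally, it remains to check that under these translations, the right-hand side $\chi_{N_\sigma}\times\chi_{M_\tau}$ becomes $\chi_{N_1}\times\chi_M$ as defined in \S\ref{conjectures}. The match reduces to the elementary identities
\[
\varepsilon((M\otimes\chi_V)^a\otimes N_\sigma)=\varepsilon(M^a\otimes N_1),\qquad
\varepsilon(M_\tau\otimes N_\sigma^b)=\varepsilon(M\otimes N_1^b),
\]
together with the compensating determinant factors that appear in the definitions of $\chi_{N_\sigma},\chi_{M_\tau},\chi_{N_1},\chi_M$, all of which follow from the equalities $\disc V_{2n+2}=d$, $\disc V_{2n+1}=d$ (so the $\chi_V$-twists cancel) and $\det(\C)=\1$ on the added line. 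I expect the main obstacle to be precisely this bookkeeping step: tracking how the change of Whittaker datum (Propositions~\ref{whittaker1} and~\ref{whittaker2}), the twist by $\chi_V$, the extra trivial summand in $N_1$, and the duality $\cl\pi\mapsto\cl\pi^\vee$ (which intervenes on the metaplectic side, via the formulas $\phi_\tau=\phi\otimes\chi_{-d}$, $\eta_\tau=\eta_{-d}$ from the end of \S\ref{Mp}) combine to produce exactly $\chi_{N_1}\times\chi_M|_{A_\phi^+}$ and nothing more. Once this verification is completed, the desired identity follows.
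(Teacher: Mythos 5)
Your strategy is the paper's own: convert (FJ) to a Bessel problem by the see-saw, apply (B), and transport the characters back through (P1), the metaplectic parametrization, and Propositions \ref{whittaker1}--\ref{whittaker2}. However, there is a genuine gap, and it sits exactly where you defer to ``bookkeeping''. The Fourier--Jacobi space is $\Hom_{\Delta\Mp(W_{2n})}((\cl\pi\boxtimes\pi)\otimes\overline{\omega_\psi},\C)\cong\Hom_{\Sp(W_{2n})}(\cl\pi\otimes\omega_{-\psi},\pi^\vee)$, so a contragredient is unavoidable: with your $\tau$, characterized by $\theta_{\psi,V_{2n+1},W_{2n}}(\tau)=\cl\pi$, the see-saw produces $\Hom_{\orth(V_{2n+1})}(\Theta_{\psi,V_{2n+2},W_{2n}}(\pi^\vee),\tau)$, not $\Theta_{\psi,V_{2n+2},W_{2n}}(\pi)$; your displayed identity, with no dual anywhere, fails unless $\pi\cong\pi^\vee$. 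This is not cosmetic, because $\iota_{\w'_1}(\pi^\vee)=\iota_{\w'_1}(\pi)\otimes\eta_{-1}$ and the proposition is precisely a statement about such sign characters. In the paper one takes $\sigma=\Theta_{\psi,V_{2n+2},W_{2n}}(\pi^\vee)|\SO(V_{2n+2})$; then (P1) gives $\iota_{\w_1}([\sigma])|A_{\phi_N}^+=\iota_{\w'_1}(\pi^\vee)$, and the passage to $\iota_{\w_{-1}}([\sigma])$ forced by (B) (here $c=-\disc(V_{2n+1})/\disc(V_{2n+2})=-1$) contributes a second $\eta_{-1}$ which cancels the first, yielding $\iota_{\w_{-1}}([\sigma])|A_{\phi_N}^+=\eta_\pi$. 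With your $\sigma=\theta_{\psi,V_{2n+2},W_{2n}}(\pi)|\SO(V_{2n+2})$, the same two steps give $\eta_\pi\otimes\eta_{-1}$, so your intermediate claim that (P1) at $c_0=-1$ plus Proposition \ref{whittaker1} yields $\eta_\sigma|_{A_{\phi_N}^+}=\eta_\pi$ does not follow from your setup; it happens to be the correct identity only for the paper's (dualized) $\sigma$.

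A related internal inconsistency: you propose to use the contragredient formula at the end of \S\ref{Mp} (which expresses $\phi_\tau$, $\eta_\tau$ in terms of $\cl\pi^\vee$) while simultaneously taking $\tau$ attached to $\cl\pi$ and $\sigma$ attached to $\pi$; the dual must occur on exactly one side (paper: $\eta_\tau=(\eta_{\cl\pi})_d$ via Corollary \ref{Mp-param} with no dual on the metaplectic side, and the dual placed on the symplectic side), and whichever organization you choose, the resulting $\eta_{-1}$ and $\chi_{d}$-twists must be carried through the $\varepsilon$-factor computation. That computation --- verifying that $\chi_{N_\sigma}\times\chi_{M_\tau}$, the twist relating $\eta_\tau$ to $\eta_{\cl\pi}$, and the Whittaker-datum changes combine to exactly $\chi_{N_1}\times\chi_M|A_\phi^+$ --- is the actual content of the paper's proof, not a routine afterthought; as written, your argument both defers it and, because of the dropped dual, would terminate with an extra factor $\det(N^b)(-1)$ unless the contragredient is reinstated and tracked.
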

\begin{proof}
Note that
\begin{align*}
\Hom_{\Delta\Mp(W_{2n})}((\cl{\pi}\boxtimes\pi)\otimes\overline{\omega_{\psi}},\C)\not=0
&\Longleftrightarrow
\Hom_{\Sp(W_{2n})}(\cl{\pi}\otimes\omega_{-\psi},\pi^\vee)\not=0.
\end{align*}
\par

There exists a unique orthogonal space $V_{2n+1}$ such that
$\dim(V_{2n+1})=2n+1$, $\disc(V_{2n+1})=d$ and 
\[
\theta_{\psi,V_{2n+1},W_{2n}}(\tau)=\cl{\pi}
\]
for some $\tau\in\Irr(\orth(V_{2n+1}))$.
Let $\phi_\tau\colon\WD_F\rightarrow \Sp(M_\tau)$ be the element in $\cl\Phi(\SO(V_{2n+1}))$
associated to $\tau|\SO(V_{2n+1})$ and
$\eta_\tau=\iota_\w(\tau|\SO(V_{2n+1}))\in\widehat{A_{\phi_\tau}}$.
Then by Corollary \ref{Mp-param}, we have
\[
M_\tau=M\otimes\chi_{d},\quad
\eta_\tau(a)=
\eta_{\cl{\pi}}(a)\varepsilon(M^a)\varepsilon(M^a\otimes\chi_{d})\chi_{d}(-1)^{\dim_\C(M^a)/2}.
\]
Let $V_{2n+2}=V_{2n+1}\oplus V_1$ with $\dim(V_1)=1$ and $\disc(V_1)=-1$.
Then we have $\dim(V_{2n+2})=2n+2$ and $\disc(V_{2n+2})=-\disc(V_{2n+1})\cdot\disc(V_1)=d$.
By the see-saw argument, we have
\begin{align*}
0&\not=\Hom_{\Sp(W_{2n})}(\cl{\pi}\otimes\omega_{-\psi},\pi^\vee)\\
&\cong\Hom_{\orth(V_{2n+1})}(\Theta_{\psi,V_{2n+2},W_{2n}}(\pi^\vee),\tau)\\
&\cong\Hom_{\Delta\orth(V_{2n+1})}(\Theta_{\psi,V_{2n+2},W_{2n}}(\pi^\vee)\otimes\tau,\C)\\
&\subset\Hom_{\Delta\SO(V_{2n+1})}(\Theta_{\psi,V_{2n+2},W_{2n}}(\pi^\vee)\otimes\tau,\C)
\end{align*}
since $\tau^\vee\cong\tau$.
We write $\cl{\sigma}=\Theta_{\psi,V_{2n+2},W_{2n}}(\pi^\vee)$.
Since $\Hom_{\Delta\orth(V_{2n+1})}(\cl{\sigma}\otimes\tau,\C)\not=0$, 
we have $\cl{\sigma}\not=0$, and so that 
$\cl{\sigma}\in\Irr(\orth(V_{2n+2}))$ by Lemmas \ref{L(1,Ad)} and \ref{big theta}.
Let $\phi_\sigma\colon\WD_F\rightarrow \orth(N_\sigma)$ be the element in $\cl\Phi(\SO(V_{2n+2}))$ 
associated to $[\sigma]$
and put $\eta_\sigma=\iota_{\w_{1}}([\sigma])\in\widehat{A_{\phi_\sigma}^+}$,
where $\sigma\coloneqq\cl{\sigma}|\SO(V_{2n+2})\in\Irr(\SO(V_{2n+2}))$.
Then by (P1), we have
\[
N_\sigma=(N\otimes\chi_{d})\oplus\C
\quad\text{and}\quad
\eta_\sigma|A_{\phi_N}^+=\eta_{\pi^\vee},
\]
where we put $\eta_{\pi^\vee}=\iota_{\w'_1}(\pi^\vee)$.
\par

By (GPR) for $\SO(V_{2n+1})$ and Lemma \ref{L(1,Ad)}, we see that 
$\phi_\tau\in\cl\Phi_\gen(\SO(V_{2n+1}))$ and $\phi_\sigma\in\cl\Phi_\gen(\SO(V_{2n+2}))$.
Since $\Hom_{\Delta\SO(V_{2n+1})}(\sigma\otimes\tau,\C)\not=0$
and $-\disc(V_{2n+1})/\disc(V_{2n+2})=-1$, 
by (B), we see that
\begin{align*}
\eta_\tau(a)\cdot\iota_{\w_{-1}}([\sigma])(b)
&=\varepsilon(M_\tau^a\otimes N_\sigma)\det(M_\tau^a)(-1)^{\dim_\C(N_\sigma)/2}\det(N_\sigma)(-1)^{\dim_\C(M_\tau^a)/2}\\
&\times\varepsilon(M_\tau\otimes N_\sigma^b)\det(M_\tau)(-1)^{\dim_\C(N_\sigma^b)/2}\det(N_\sigma^b)(-1)^{\dim_\C(M_\tau)/2}
\end{align*}
for $a\in A_{\phi_\tau}$ and $b\in A_{\phi_\sigma}^+$.
\par
Now, for $a\in A_{\phi_M}=A_{\phi_\tau}$, we have
\begin{align*}
\varepsilon(M_\tau^a\otimes N_\sigma)
&=\varepsilon((M^a\otimes\chi_{d})\otimes ((N\otimes\chi_{d})\oplus\C))
=\varepsilon(M^a\otimes N)\varepsilon(M^a\otimes\chi_{d})\\
&=\varepsilon(M^a\otimes N_1)\varepsilon(M^a)\varepsilon(M^a\otimes\chi_{d}),
\\
\det(M_\tau^a)&=\det(M^a)=1,\quad\dim_\C(M_\tau^a)=\dim_\C(M^a).
\end{align*}
For $b\in A_{\phi_N}^+\subset A_{\phi_\sigma}^+$, 
we have $N_\sigma^b=N^b\otimes\chi_{d}$ and $N_1^b=N^b$.
Hence we have
\begin{align*}
\varepsilon(M_\tau\otimes N_\sigma^b)
&=\varepsilon((M\otimes\chi_{d})\otimes (N^b\otimes\chi_{d}))
=\varepsilon(M\otimes N_1^b),\\
\det(N_\sigma^b)&=\det(N^b)=\det(N_1^b),\quad\dim_\C(N_\sigma^b)=\dim_\C(N_1^b).
\end{align*}
Moreover we have
\begin{align*}
\det(N_\sigma)=\det(N)\chi_{d}=\det(N_1)\chi_{-d},\quad
\dim_\C(N_\sigma)=\dim_\C(N)+1=\dim_\C(N_1).
\end{align*}
Finally, by Proposition \ref{whittaker2} and Corollary \ref{Sp-contragredient}, 
we have
\[
\iota_{\w_{-1}}([\sigma])(b)=\eta_\sigma(b)\det(N_\sigma^b)(-1)
=\iota_{\w_1'}(\pi^\vee)(b)\det(N^b)(-1)
=\eta_{\pi}(b).
\]
Therefore we have
\begin{align*}
&(\eta_{\cl{\pi}}(a)\varepsilon(M^a)\varepsilon(M^a\otimes\chi_{d})\chi_{d}(-1)^{\dim_\C(M^a)/2})
\cdot\eta_\pi(b)\\
&=(\varepsilon(M^a\otimes N_1)\varepsilon(M^a)\varepsilon(M^a\otimes\chi_{d}))
\det(M^a)(-1)^{\dim_\C(N_1)/2}(\det(N_1)\chi_{d})(-1)^{\dim_\C(M^a)/2}\\
&\times\varepsilon(M\otimes N_1^b)\det(M)(-1)^{\dim_\C(N_1^b)/2}\det(N_1^b)(-1)^{\dim_\C(M)/2}.
\end{align*}
This gives the desired equation for $\eta_{\cl{\pi}}(a)\eta_{\pi}(b)$.
\end{proof}

%\subsection{(P1) and (P2) for tempered cases $\Rightarrow $ those for general cases}
\subsection{(P1) and (P2) for tempered cases $\Rightarrow $ those for general cases}
Obviously, 
(P1) (\resp (P2)) is true for $\phi\in\cl\Phi(\Sp(W_{2n}))$ 
(\resp $\phi'\in\cl\Phi(\SO(V_{2n}))$) such that $A_{\phi}^+=1$ (\resp $A_{\phi'}^+=1$).
Therefore, (P1) (\resp (P2)) for $\phi\in\cl\Phi(\Sp(W_{2n}))\setminus\cl\Phi_\temp(\Sp(W_{2n}))$
(\resp $\phi'\in\cl\Phi(\SO(V_{2n}))\setminus\cl\Phi_\temp(\SO(V_{2n}))$)
follows from the tempered case and
the compatibility of the local Langlands correspondence, 
the Langlands quotients and the local theta correspondence.
See \S \ref{Cn}, \S \ref{Dn} and \cite[Proposition C.4 $(\mathrm{ii})$]{GI1}.

%\subsection{Proof of (P2)}\label{proof of P2}
\subsection{Proof of (P2) for tempered cases}\label{proof of P2}
In this subsection, we show that (P1) implies (P2).
\par

Let $V=V_{2n}$ and $W=W_{2n}$.
We put $d=\disc(V)$ so that $\chi_V=\chi_d$.
Let $\phi'\in\cl\Phi_\temp(\SO(V))$ and put
\[
\phi=(\phi'\otimes\chi_d)\oplus\chi_d\in\cl\Phi_\temp(\Sp(W)).
\]
Let $[\sigma]\in\Pi_{\phi'}$.
Take an irreducible constituent $\cl{\sigma}$ of $\Ind_{\SO(V^\bullet)}^{\orth(V^\bullet)}(\sigma)$
such that the theta lift
$\pi=\Theta_{\psi,V^\bullet,W}(\cl{\sigma})$
is nonzero, so that $\pi\in\Pi_{\phi}$.
Fix $c_0\in F^\times$.
We have to show that
\[
\iota_{\w'_{c_0}}(\pi)|A_{\phi'}^+=\iota_{\w_{c_0}}([\sigma]).
\]
\par

We define an orthogonal space $V_1^\bullet$ by
$V_1^\bullet=Fe\oplus V^\bullet\oplus Fe^*$ equipped with
the pairing $\pair{\cdot,\cdot}_{V_1^\bullet}$ which
is an extension of $\pair{\cdot,\cdot}_{V^\bullet}$ and satisfies
$\pair{e,e}_{V_1^\bullet}=\pair{e^*,e^*}_{V_1^\bullet}=0$ and $\pair{e,e^*}_{V_1^\bullet}=1$.
Consider the theta correspondence for $(\Sp(W),\orth(V_1^\bullet))$.
Let $\omega=\omega_{\psi,V^\bullet,W}$ and $\omega_1=\omega_{\psi,V_1^\bullet,W}$ 
be Weil representations of 
$\Sp(W)\times\orth(V^\bullet)$ and $\Sp(W)\times\orth(V_1^\bullet)$, respectively.
Let $P=M_PU_P$ be the maximal parabolic subgroup of $\orth(V_1^\bullet)$ 
stabilizing $Fe$, where $M_P$ is the Levi component of $P$ stabilizing $Fe^*$.
Note that $M_P\cong \GL(1,F)\times \orth(V^\bullet)$.
By an explicit formula of the Weil representation (Lemma \ref{Weil3} below),
we see that
there exists a surjective $\Sp(W)\times P$-homomorphism
\[
\omega_1\rightarrow \omega\boxtimes |\cdot|_F^n.
\]
Here, $U_P$ acts on $\omega\boxtimes |\cdot|_F^n$ trivially.
In particular, 
if $\pi\in\Irr(\Sp(W))$ participates in the theta correspondence with 
$\orth(V^\bullet)$, then 
$\pi$ does in the theta correspondence with $\orth(V_1^\bullet)$.
See also \cite[p. 67]{MVW} and \cite[$\mathrm{III}$.~4]{Ku1}.
\par

We put $\cl{\sigma}_1=\Theta_{\psi,V_1^\bullet,W}(\pi)$ to be the theta lift to $\orth(V_1^\bullet)$
and $\sigma_1=\cl{\sigma}_1|\SO(V_1^\bullet)$.
As we have seen, this is nonzero,
so that $\sigma_1$ is irreducible by Propositions \ref{Theta} and \ref{param}.
There exists an exact sequence of $\Sp(W)\times\orth(V_1^\bullet)$-modules:
\[
\begin{CD}
1@>>>\Sch_1[\pi]@>>>\omega_1 @>>>\pi\boxtimes \cl{\sigma}_1@>>>1,
\end{CD}
\]
where the kernel $\Sch_1[\pi]$ is given by
\[
\Sch_1[\pi]=\bigcap_{f\in\Hom_{\Sp(W)}(\omega_1,\pi)}\ker(f).
\]
On the other hand, there exists a surjective 
$\Sp(W)\times P$-homomorphism
\[
\omega_1\rightarrow \omega\boxtimes |\cdot|_F^n\rightarrow 
\pi\boxtimes\cl{\sigma}\boxtimes|\cdot|_F^n.
\]
Since this map kills $\Sch_1[\pi]$, the diagram
\[
\xymatrix{
1  \ar@{->}[d]  \\
\Sch_1[\pi] \ar@{->}[d] \ar@{->}[rr]&& 0 \ar@{->}[d]\\
\omega_1  \ar@{->}[d] \ar@{->>}[r] & \omega\boxtimes |\cdot|_F^n \ar@{->>}[r] &     
\pi\boxtimes\cl{\sigma}\boxtimes|\cdot|_F^n  \\
\pi\boxtimes \cl{\sigma}_1  \ar@{->}[d] \ar@{-->}[urr]\\
1
}
\]
gives a surjective $\Sp(W)\times P$-homomorphism
\[
\pi\boxtimes (\cl{\sigma}_1|P)\rightarrow \pi\boxtimes\cl{\sigma}\boxtimes|\cdot|_F^n.
\]
In particular, we get a nonzero $P$-homomorphism
\[
\cl{\sigma}_1|P \rightarrow\cl{\sigma}\boxtimes|\cdot|_F^n.
\]
This implies that 
\[
\Hom_{\SO(V_1^\bullet)}(\sigma_1,
\Ind_{P^\circ}^{\SO(V_1^\bullet)}(\sigma\boxtimes\1))
\cong \Hom_{P^\circ}(\sigma_1|P^\circ, \sigma \boxtimes |\cdot|_F^n)
\]
is nonzero.
Note that the induction $\Ind_{P^\circ}^{\SO(V_1^\bullet)}(\sigma\boxtimes\1)$
does not depend on the choice of a representative of $[\sigma]$
by Lemma \ref{well-defined} (\ref{well2}).
Hence we have
\[
\iota_{\w_{c_0}}([\sigma_1])|A_{\phi'}^+=\iota_{\w_{c_0}}([\sigma]).
\]
Since
$\iota_{\w_{c_0}}([\sigma_1])|A_{\phi}^+=\iota_{\w'_{c_0}}(\pi)$
by (P1) and $A_{\phi'}^+\subset A_\phi^+$, we have
\[
\iota_{\w'_{c_0}}(\pi)|A_{\phi'}^+=(\iota_{\w_{c_0}}([\sigma_1])|A_{\phi}^+)|A_{\phi'}^+
=\iota_{\w_{c_0}}([\sigma_1])|A_{\phi'}^+=\iota_{\w_{c_0}}([\sigma]),
\]
as desired.

%\section{Preparations for the proof of Theorem \ref{last}}
\section{Preparations for the proof of (P1) for tempered $L$-parameters}
Let $V=V_{2m}$ and $W=W_{2n}$.
To prove (P1) for tempered $L$-parameters, we need to introduce more notation.

%\subsection{Haar measures}\label{measure}
\subsection{Haar measures}\label{measure}
Let $k$ be a positive integer.
As in \S \ref{parabolic}, we put
\[
V'=X+V+X^*,\quad
W'=Y+W+Y^*
\]
with
$X=X_k$, $X^*=X_k^*\subset V'$ and $Y=Y_k$, $Y^*=Y_k^*\subset W'$.
Hence $\dim(V')=2m'\coloneqq2(m+k)$ and $\dim(W)=2n'\coloneqq2(n+k)$.
Let $P=P_k=M_PU_P\subset \orth(V')$ and $Q=Q_k=M_QU_Q\subset\Sp(W')$
be the parabolic subgroups defined in \S \ref{parabolic}.
Hence $M_P\cong \GL(X)\times\orth(V)$ and $M_Q\cong\GL(Y)\times\Sp(W)$.
We need to choose Haar measures on various groups.
In particular, we shall define Haar measures on $U_P$ and $U_Q$ as follows.
\par

Recall the symplectic form $\pair{\cdot,\cdot}=\pair{\cdot,\cdot}_{V'}\otimes \pair{\cdot,\cdot}_{W'}$ 
on $V'\otimes W'$ over $F$, and 
the maps $I_X\colon X^*\rightarrow X$ and $I_Y\colon Y^*\rightarrow Y$. 
We consider the following spaces and pairings:
\begin{itemize}
\item
$(x,y)\mapsto \psi(\pair{x,I_Y^{-1}y})$ for $x,y\in V'\otimes Y$;
\item
$(x,y)\mapsto \psi(\pair{x,I_Yy})$ for $x,y\in V'\otimes Y^*$;
\item
$(x,y)\mapsto \psi(\pair{x,I_Yy})$ for $x,y\in V\otimes Y^*$;
\item
$(x,y)\mapsto \psi(\pair{I_X^{-1}x,y})$ for $x,y\in X\otimes W$;
\item
$(x,y)\mapsto \psi(\pair{I_Xx,y})$ for $x,y\in X^*\otimes W$;
\item
$(x,y)\mapsto \psi(\pair{I_X^{-1}x,I_Yy})$ for $x,y\in X\otimes Y^*$;
\item
$(x,y)\mapsto \psi(\pair{I_Xx,I_Y^{-1}y})$ for $x,y\in X^*\otimes Y$;
\item
$(x,y)\mapsto \psi(\pair{I_Xx,I_Yy})$ for $x,y\in X^*\otimes Y^*$.
\end{itemize}
On these spaces, we take the self-dual Haar measures with respect to these pairings.
Put
\[
e^{**}=v_1^*\otimes w_1^*+\dots+v_k^*\otimes w_k^*\in X^*\otimes Y^*.
\]
\begin{itemize}
\item
We transfer the Haar measure on $V\otimes Y^*$ to $\Hom(V,X)$ via the isomorphism
$b\mapsto b^*e^{**}$ for $b\in\Hom(V,X)$.
\item
We transfer the Haar measure on $X^*\otimes W$ to $\Hom(W,Y)$ via the isomorphism
$b\mapsto b^*e^{**}$ for $b\in\Hom(W,Y)$.
\end{itemize}
Furthermore:
\begin{itemize}
\item
We transfer the Haar measure on $X\otimes Y^*$ to $\Hom(X^*,X)$ via the isomorphism
$c\mapsto ce^{**}$ for $c\in\Hom(X^*,X)$.
This Haar measure on $\Hom(X^*,X)$ is self-dual with respect to the pairing
$(c_1,c_2)\mapsto \psi(\pair{I_X^{-1}c_1e^{**},I_Yc_2e^{**}})$.
\item
We take the Haar measure 
\[
|2|_F^{-\frac{k(k-1)}{4}}dc
\]
on $\Sym(X^*,X)$, where $dc$ is the self-dual Haar measure with respect to the pairing
$(c_1,c_2)\mapsto \psi(\pair{I_X^{-1}c_1e^{**},I_Yc_2e^{**}})$.
\item
We transfer the Haar measure on $X^*\otimes Y$ to $\Hom(Y^*,Y)$ via the isomorphism
$c\mapsto ce^{**}$ for $c\in\Hom(Y^*,Y)$.
This Haar measure on $\Hom(Y^*,Y)$ is self-dual with respect to the pairing
$(c_1,c_2)\mapsto \psi(\pair{I_Xc_1e^{**},I_Y^{-1}c_2e^{**}})$.
\item
We take the Haar measure 
\[
|2|_F^{-\frac{k(k-1)}{4}}dc
\]
on $\Sym(Y^*,Y)$, where $dc$ is the self-dual Haar measure with respect to the pairing
$(c_1,c_2)\mapsto \psi(\pair{I_Xc_1e^{**},I_Y^{-1}c_2e^{**}})$.
\end{itemize}
Then:
\begin{itemize}
\item
We take the Haar measure $du=dbdc$ on $U_P$ for $u=u_P(b)u_P(c)=u_P(c)u_P(b)$
with $b\in\Hom(V,X)$ and $c\in \Sym(X^*,X)$.
\item
Similarly, we define the Haar measure on $U_Q$.
\end{itemize}
We note the following Fourier inversion formula:
\begin{lem}\label{inversion}
For $\varphi\in\Sch(X\otimes Y^*)$, we have
\[\int_{\Sym(Y^*,Y)}\left(
\int_{\Hom(X^*,X)}\varphi(xe^{**})\psi(\pair{xe^{**},c'e^{**}})dx
\right)dc'
=\int_{\Sym(X^*,X)}\varphi(ce^{**})dc.
\]
\end{lem}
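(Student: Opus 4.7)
The plan is to reduce the identity to standard Fourier inversion on the symmetric summand of $\Hom(X^*,X)$. The first step is to compute the pairing explicitly in coordinates. Using the dual bases $\{v_i,v_i^*\}$ and $\{w_i,w_i^*\}$ to identify both $\Hom(X^*,X)$ and $\Hom(Y^*,Y)$ with the space of $k \times k$ matrices, a direct calculation from the definitions of $I_X$, $I_Y$, $e^{**}$ yields
\[
\pair{xe^{**}, c'e^{**}}_{V'\otimes W'} = -\tr(xc').
\]
Since $V'$ is orthogonal but $W'$ is symplectic, the involution $c \mapsto c^*$ corresponds to the ordinary transpose on $\Hom(X^*,X)$ and to the negative transpose on $\Hom(Y^*,Y)$; hence $\Sym(X^*,X)$ consists of antisymmetric matrices (dimension $k(k-1)/2$) and $\Sym(Y^*,Y)$ of symmetric ones (dimension $k(k+1)/2$).

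I would then split $\Hom(X^*,X) = \Sym(X^*,X) \oplus \Sym'(X^*,X)$, where $\Sym'(X^*,X) = \{c : c^* = c\}$ is the symmetric complement. Writing $x = x^- + x^+$ for this decomposition, the identity $\tr(x^- c') = 0$ for antisymmetric $x^-$ and symmetric $c' \in \Sym(Y^*,Y)$ reduces the exponent in the integrand to $\psi(-\tr(x^+ c'))$. The restricted bilinear form $(x^+, c') \mapsto -\tr(x^+ c')$ is non-degenerate between the two $k(k+1)/2$-dimensional spaces of symmetric matrices, so via $\psi$ it realizes $\Sym(Y^*,Y)$ as the Pontryagin dual of $\Sym'(X^*,X)$. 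After Fubini, for each fixed $x^-$ the inner integration over $\Sym'(X^*,X)$ and $\Sym(Y^*,Y)$ is a Fourier inversion returning $\varphi(x^- e^{**})$; integrating over $\Sym(X^*,X)$ then gives the right-hand side.

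The main obstacle is matching the various Haar measures so that the overall constant is exactly $1$. Under the substitution $x = x^- + x^+$, the self-dual measure on $\Hom(X^*,X)$ decomposes as $|2|_F^{k(k-1)/2}$ times the product of the standard product measures on $\Sym(X^*,X)$ and $\Sym'(X^*,X)$, the Jacobian factor arising from the linear change of coordinates $(s_{ij}, a_{ij}) \mapsto (s_{ij}+a_{ij},\, s_{ij}-a_{ij}) = (c_{ij}, c_{ji})$ over the $k(k-1)/2$ off-diagonal pairs $i<j$. On the Fourier-inversion side, the form $-\tr(x^+ c')$ contains a coefficient $\pm 2$ on each off-diagonal monomial $x^+_{ij} c'_{ij}$, producing a compensating factor $|2|_F^{-k(k-1)/2}$ when Fourier inversion is carried out with the standard product measures instead of the self-dual ones. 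The two $|2|_F^{-k(k-1)/4}$ prefactors inserted in the definitions of the measures on $\Sym(X^*,X)$ and $\Sym(Y^*,Y)$ are chosen precisely so that these two discrepancies cancel, leaving the clean identity claimed in the lemma.
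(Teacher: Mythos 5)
Your proof is correct and is essentially the intended argument (the paper itself only refers to \cite[Lemma 7.1]{GI2}, which proceeds by the same partial Fourier inversion): compute $\pair{xe^{**},c'e^{**}}=-\tr(xc')$, identify $\Sym(X^*,X)$ with antisymmetric and $\Sym(Y^*,Y)$ with symmetric matrices, split $x=x^-+x^+$ so that the antisymmetric part decouples, and invert on the symmetric part. One correction to the bookkeeping in your last paragraph: the Jacobian factor $|2|_F^{k(k-1)/2}$ from $dx=|2|_F^{k(k-1)/2}\,dx^-\,dx^+$ and the factor $|2|_F^{-k(k-1)/2}$ from performing the inversion with coordinate product measures cancel \emph{each other}; the two $|2|_F^{-k(k-1)/4}$ prefactors do not enter that cancellation, but rather convert the self-dual measures (taken with respect to pairings carrying a coefficient $2$ on each off-diagonal entry) into exactly the coordinate product measures on $\Sym(X^*,X)$ and $\Sym(Y^*,Y)$ --- and since the same prefactor rescales both sides of the asserted identity, the lemma is in fact insensitive to it. Once you note that both prefactored measures are precisely the coordinate product measures, your computation yields the identity with constant exactly $1$.
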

\begin{proof}
The proof is similar to that of \cite[Lemma 7.1]{GI2}.
\end{proof}

%\subsection{Weil representations}\label{Weil rep}
\subsection{Weil representations}\label{Weil rep}
We recall some explicit formulas for the Weil representations.
Recall that $V'$ and $W'$ have decompositions
\begin{align*}
V'=X+V+X^*,\quad
W'=Y+W+Y^*
\end{align*}
with $2m=\dim(V)$, $2n=\dim(W)$, $2m'=\dim(V')$, 
$2n'=\dim(W')$ and $k=\dim(X)=\dim(Y)$.
We decompose $W=Y_n+Y_n^*$ as in \S \ref{Symplectic}. 
We have fixed a non-trivial additive character $\psi$ of $F$.
\par

Let  $\H(W)=W\oplus F$ be the associated Heisenberg group, i.e., 
the multiplication law is given by
\[
(w,t)\cdot(w',t')=\left(w+w',t+t'+{1\over2} \pair{w,w'}_W\right)
\]
for $w,w'\in W$ and $t,t'\in F$.
Let $\rho$ be the Heisenberg representation of $\H(W)$ on $\Sch(Y_n^*)$ 
with the central character $\psi$.
Namely, 
\[
\rho(y+y',t)\varphi(y_1')=\psi(t+\pair{y_1',y}_W+{1\over2}\pair{y',y}_W)\varphi(y_1'+y')
\]
for $\varphi\in\Sch(Y_n^*)$, $y\in Y_n$, $y',y_1'\in Y_n^*$ and $t\in F$.
\par

For simplicity, we write:
\begin{itemize}
\item
$\omega$ for the Weil representation $\omega_{\psi,V,W}$ of 
$\Sp(W)\times\orth(V)$ on a space $\Sch$;
\item
$\omega'$ for the Weil representation $\omega_{\psi,V',W}$ of $\Sp(W)\times\orth(V')$
on a space $\Sch'$;
\item
$\omega''$ for the Weil representation $\omega_{\psi,V',W'}$ of $\Sp(W')\times\orth(V')$ 
on a space $\Sch''$.
\end{itemize}
We take the Schr\"odinger model 
\[
\Sch=\Sch(V\otimes Y_n^*)
\]
of $\omega$.
We take a mixed model 
\[
\Sch'=\Sch \otimes \Sch(X^*\otimes W)=\Sch(V\otimes Y_n^*)\otimes\Sch(X^*\otimes W)
\]
of $\omega'$, where we regard $\Sch'$ as a space of functions on $X^*\otimes W$ 
with values in $\Sch$.
Similarly, we take a mixed model
\[
\Sch'' = \Sch(V'\otimes Y^*)\otimes \Sch'
=\Sch(V'\otimes Y^*)\otimes \Sch(V\otimes Y_n^*)\otimes\Sch(X^*\otimes W)
\]
of $\omega''$, where we regard $\Sch''$ as a space of functions on $V'\otimes Y^*$
with values in $\Sch'$.
Also, we write:
\begin{itemize}
\item
$\rho$ for the Heisenberg representation of $\H(V\otimes W)$ on $\Sch$
with the central character $\psi$;
\item
$\rho'$ for the Heisenberg representation of $\H(V'\otimes W)$ on $\Sch'$
with the central character $\psi$.
\end{itemize}
\par

\begin{lem}\label{Weil1}
Let $Q_n=M_{Q_n}U_{Q_n}$ be the Siegel parabolic subgroup of $\Sp(W)$ stabilizing $Y_n$, 
and define $m(a')=m_{Q_n}(a')\in M_{Q_n}$ and $u(c')=u_{Q_n}(c')\in U_{Q_n}$ 
for $a'\in \GL(Y_n)$ and $c'\in\Sym(Y_n^*,Y_n)$ as in \S $\ref{parabolic}$.
We put $I=I_{Q_n}\in\Hom(Y_n^*,Y_n)$ and $w=w_{Q_n}\in\Sp(W)$ as in \S $\ref{parabolic}$.
Then for $\varphi\in\Sch=\Sch(V\otimes Y_n^*)$ and $x\in V\otimes Y_n^*$, we have
\begin{align*}
[\omega(1,h)\varphi](x)&=\varphi(h^{-1}x),
&h\in\orth(V),\\
[\omega(m(a'),1)\varphi](x)&=\chi_{V}(\det(a'))|\det(a')|_F^{m}\varphi(a'^*x),
&a'\in\GL(Y_n),\\
[\omega(u(c'),1)\varphi](x)&=\psi({1\over2}\pair{c'x,x})\varphi(x),
&c'\in\Sym(Y_n^*,Y_n),\\
[\omega(w,1)\varphi](x)
&=\gamma_{V}^{-n}\int_{V\otimes Y_n}\varphi(I^{-1}z)\psi(-\pair{z,x})dz.
\end{align*}
Here, $dz$ is the self-dual measure on $V\otimes Y_n$ with respect to the pairing 
$(x,y)\mapsto\psi(\pair{y,I_0^{-1}x})$,
and $\gamma_{V}$ is a $4$-th root of unity satisfying $\gamma_{V}^2=\chi_{V}(-1)$.
\end{lem}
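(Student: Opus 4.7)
The plan is to verify the four formulas one by one on generators of $\Sp(W) \times \orth(V)$, by invoking the standard construction of the Weil representation in Schr\"odinger model together with Kudla's splitting of the metaplectic cover for the dual pair $(\Sp(W), \orth(V))$ with $\dim V = 2m$ even. Recall that $\omega$ is characterized by the properties that its restriction to the Heisenberg group $\H(V \otimes W)$ is the Heisenberg representation $\rho$ of central character $\psi$ realized on the Lagrangian $V \otimes Y_n^*$, and that for any $(g,h) \in \Sp(W) \times \orth(V)$ the operator $\omega(g,h)$ intertwines $\rho$ with $\rho \circ \Ad(g,h)$; by the Stone--von Neumann theorem, this determines $\omega(g,h)$ up to a scalar, and the scalars are fixed by Kudla's normalization in terms of $\chi_V$ and $\gamma_V$.

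For the action of $\orth(V)$, an element $h$ acts on $V \otimes W$ by $v \otimes w \mapsto hv \otimes w$, preserves the polarization $V \otimes Y_n \oplus V \otimes Y_n^*$, and acts linearly on $V \otimes Y_n^*$. The translation $\varphi \mapsto \varphi(h^{-1} \cdot)$ is visibly an intertwiner for the Heisenberg representation, and Kudla's splitting for the orthogonal side is the trivial one, so no correction factor appears. For $m(a') \in M_{Q_n}$ the action on $V \otimes W$ again preserves the polarization; the intertwiner is a multiplicative scaling, and the factor $|\det(a')|_F^{m}$ is the Jacobian needed for unitarity, while the sign $\chi_V(\det(a'))$ is the Kudla correction term built into the splitting. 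For $u(c') \in U_{Q_n}$, the action fixes $V \otimes Y_n^*$ pointwise and translates it within $V \otimes W$ by an element of $V \otimes Y_n$; the corresponding scalar intertwiner is multiplication by the quadratic character $\psi(\tfrac{1}{2}\pair{c'x,x})$, which is the standard formula coming from $\rho$ on the mixed commutator term.

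For the Weyl element $w$, which exchanges $Y_n$ and $Y_n^*$, the polarization is reversed, so the intertwiner must be a Fourier transform from $\Sch(V \otimes Y_n^*)$ to itself through the dual Lagrangian $V \otimes Y_n$. The normalization factor $\gamma_V^{-n}$ is precisely the Weil index attached to the quadratic form $\tfrac{1}{2}\pair{\cdot,\cdot}_V$ composed with $\psi$, raised to the $n$-th power, and is fixed by requiring consistency with $\omega(w^2,1) = \omega(-1_W, 1)$: the square of the Fourier transform equals $\chi_V(-1)^n \cdot \id$ on $\Sch$, matching $\gamma_V^{-2n} = \chi_V(-1)^{-n}$.

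The main obstacle is verifying that these four formulas taken together define a genuine (not merely projective) representation of $\Sp(W) \times \orth(V)$, i.e.\ that the Rao-type cocycle of the metaplectic cover is trivialized by Kudla's choice. Since $\dim V$ is even, this is a classical computation; my plan is to simply cite Kudla's lectures on the local theta correspondence (the splitting for $(\Sp, \orth)$ with even-dimensional $V$) rather than redo the cocycle verification. Once that is in place, the four formulas above follow by direct substitution into the recipe for the splitting on each generator.
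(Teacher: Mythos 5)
Your proposal is correct and is essentially the paper's own argument: the paper's proof consists of the single remark that these are the formulas of the Schr\"odinger model (with Kudla's splitting, cited in the accompanying remark), and your verification-on-generators via Stone--von Neumann with the cocycle issue delegated to Kudla is just the standard unwinding of that statement. (Only a cosmetic slip: $u(c')$ fixes $V\otimes Y_n$ pointwise and shears $V\otimes Y_n^*$, not the other way around; this does not affect the argument.)
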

\begin{proof}
This formula is the Schr\"odinger model.
\end{proof}

\begin{lem}\label{Weil2}
The mixed model $\Sch(V'\otimes Y^*)\otimes \Sch(V'\otimes Y_n^*)$
of $\omega''$ is given as follows:
For $\varphi=\varphi_1\otimes\varphi'\in \Sch(V'\otimes Y^*)\otimes \Sch(V'\otimes Y_n^*)$
and $(x_1,x')\in (V'\otimes Y^*)\times(V'\otimes Y_n^*)$, 
we have
\begin{align*}
[\omega(g,h')\varphi](x_1,x')
&=\varphi_1(h'^{-1}x_1)\cdot [\omega'(g,h')\varphi'](x'),
&(g,h')\in\Sp(W)\times \orth(V'),\\
[\omega(m_Q(a'),1)\varphi](x_1,x')
&=\chi_V(\det(a'))|\det(a')|_F^{m'}\varphi_1(a'^*x_1)\cdot\varphi'(x'),
&a'\in\GL(Y),\\
[\omega(u_Q(b'),1)\varphi](x_1,x')
&=\varphi_1(x_1)\cdot[\rho'(b'^*x_1,0)\varphi'](x'),
&b'\in\Hom(W,Y),\\
[\omega(u_Q(c'),1)\varphi](x_1,x')
&=\psi({1\over2}\pair{c'x_1,x_1})\varphi_1(x_1)\cdot\varphi'(x'),
&c'\in\Sym(Y^*,Y),\\
[\omega(w_Q,1)\varphi](x_1,x')
&=\gamma_V^{-k}\int_{V'\otimes Y}\varphi(I_Y^{-1}z)\psi(-\pair{z,x_1})dz\cdot\varphi'(x').
\end{align*}
\end{lem}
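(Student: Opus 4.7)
The plan is to realize the stated mixed model as a Schr\"odinger model of $\omega''$ with respect to a convenient Lagrangian in $V'\otimes W'$, and then to read off each formula directly from Lemma \ref{Weil1}. Observe first that $Y^*\oplus Y_n^*$ is a Lagrangian of $W'$: both summands are isotropic, mutually orthogonal (since $Y^*\perp W\supset Y_n^*$), and their combined dimension is $k+n=n'$. Consequently $V'\otimes(Y^*\oplus Y_n^*)$ is a Lagrangian of $V'\otimes W'$, and its associated Schr\"odinger model is naturally identified with
\[
\Sch(V'\otimes Y^*)\otimes\Sch(V'\otimes Y_n^*),
\]
which is exactly the space appearing in the statement.

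The first formula, for the action of $\Sp(W)\times\orth(V')$, is then immediate from this factorization: $\orth(V')$ acts by its linear action on each $V'$-tensor factor, while $\Sp(W)\subset\Sp(W')$ acts trivially on $Y^*$ (which is perpendicular to $W$) and by $\omega_{\psi,V',W}=\omega'$ on the second factor $\Sch(V'\otimes Y_n^*)$.

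For the remaining generators, coming from the parabolic $Q\subset\Sp(W')$ and the Weyl element $w_Q$, I would apply Lemma \ref{Weil1} to the dual pair $(\Sp(W'),\orth(V'))$ and the Lagrangian $V'\otimes(Y^*\oplus Y_n^*)$, then translate back under the identification above. The Levi element $m_Q(a')$ scales $Y^*$ by $(a')^*$ and fixes $Y_n^*$, producing the Jacobian $|\det(a')|_F^{m'}$ (since $\dim V'=2m'$) together with the splitting character $\chi_V(\det(a'))$. The unipotent $u_Q(b')$ shifts $Y^*\oplus Y_n^*$ into $Y\subset W'$ across the polarization; after rewriting via the symplectic duality one recognizes the induced action as multiplication by $\varphi_1(x_1)$ times the Heisenberg translation $\rho'(b'^*x_1,0)$ on the second factor. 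The unipotent $u_Q(c')$ acts only on $V'\otimes Y^*$ by the quadratic character $\psi(\tfrac{1}{2}\pair{c'x_1,x_1})$. Finally, $w_Q$ exchanges $Y^*$ with $Y$ via $I_Y$ and fixes $Y_n^*$, so it becomes a partial Fourier transform in the first factor, with normalization constant $\gamma_V^{-k}$ dictated by Lemma \ref{Weil1} (applied with $k$ in place of $n$ and $V'$ in place of $V$).

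The main obstacle is verifying that the normalization constants $\chi_V(\det(a'))$ and $\gamma_V^{-k}$ are compatible with our fixed splitting $\Sp(W')\times\orth(V')\to\Mp(V'\otimes W')$ (that of Kudla \cite{Ku2}), since this is where the cocycle data enters and where the two possible conventions for $\rho'$ versus $\rho$ must be reconciled. Once this splitting has been pinned down, each of the five formulas follows from the corresponding formula in Lemma \ref{Weil1} by substitution, with no further delicate calculation required.
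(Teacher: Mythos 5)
Your proposal is correct and follows essentially the same route as the paper, whose proof simply invokes the canonical isomorphism $\Sch(V'\otimes(Y^*\oplus Y_n^*))\rightarrow\Sch(V'\otimes Y^*)\otimes\Sch(V'\otimes Y_n^*)$, i.e.\ realizes the mixed model as the Schr\"odinger model attached to the complete polarization $W'=(Y\oplus Y_n)\oplus(Y^*\oplus Y_n^*)$ and reads off the action of each generator as you do. Your explicit attention to the splitting/normalization constants only fleshes out what the paper leaves implicit in Lemma \ref{Weil1}.
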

\begin{proof}
This formula is obtained by the canonical isomorphism
\[
\Sch(V'\otimes (Y^* \oplus Y_n^*))\rightarrow \Sch(V'\otimes Y^*)\otimes \Sch(V'\otimes Y_n^*).
\]
\end{proof}

\begin{lem}\label{Weil3}
For $\varphi'=\varphi_2\otimes\varphi_3\in \Sch'=\Sch(V\otimes Y_n^*)\otimes\Sch(X^*\otimes W)$,
we have
\begin{align*}
[\omega'(g,h)\varphi'](x_2,x_3)
&=[\omega(g,h)\varphi_2](x_2)\cdot\varphi_3(g^{-1}x_3),
&(g,h)\in\Sp(W)\times \orth(V),\\
[\omega'(1,m_P(a))\varphi'](x_2,x_3)
&=|\det(a)|_F^{n}\varphi_2(x_2)\cdot\varphi_3(a^*x_3),
&a\in\GL(X),\\
[\omega'(1,u_P(b))\varphi'](x_2,x_3)
&=[\rho(b^*x_3,0)\varphi_2](x_2)\cdot \varphi_3(x_3),
&b\in\Hom(V,X),\\
[\omega'(1,u_P(c))\varphi'](x_2,x_3)
&=\psi({1\over2}\pair{cx_3,x_3})\varphi_2(x_2)\cdot \varphi_3(x_3),
&c\in\Sym(X^*,X),\\
[\omega'(1,w_P)\varphi'](x_2,x_3)
&=\varphi_2(x_2)\cdot\int_{X\otimes W}\varphi_3(I_X^{-1}z)\psi(-\pair{z,x_3})dz.
\end{align*}
Moreover we have
\[
[\rho'(v+v_0+v^*,0)\varphi'](x_2,x_3)
=\psi(\pair{x_3,v}+{1\over 2}\pair{v^*,v})
[\rho(v_0,0)\varphi_2](x_2)\cdot\varphi_3(x_3+v^*)
\]
for $v\in X\otimes W$, $v_0\in V\otimes W$ and $v^*\in X^*\otimes W$.
\end{lem}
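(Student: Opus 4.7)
The plan is to realize $\omega'$ as the mixed model associated to the complete polarization
\[
V'\otimes W = (X\otimes W)\oplus (V\otimes W)\oplus (X^*\otimes W),
\]
in which $X\otimes W$ and $X^*\otimes W$ are paired Lagrangian subspaces with respect to the restriction of $\pair{\cdot,\cdot}$, while $V\otimes W$ is their orthogonal complement. The Heisenberg representation $\rho'$ of $\H(V'\otimes W)$ with central character $\psi$ is then realized on $\Sch'=\Sch(V\otimes Y_n^*)\otimes \Sch(X^*\otimes W)$ by letting $X^*\otimes W$ act by translation on the second tensor factor, $V\otimes W$ act through the Heisenberg representation $\rho$ of $\H(V\otimes W)$ on the first factor, and $X\otimes W$ act by the multiplicative character $x_3\mapsto \psi(\pair{x_3,\cdot})$. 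The standard $2$-cocycle attached to this partial polarization produces the phase $\psi(\pair{x_3,v}+\tfrac{1}{2}\pair{v^*,v})$. This yields the last formula of the lemma.

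The intertwining formulas for $\Sp(W)\times\orth(V')$ are then obtained by checking the identity $\omega'(g,h)\rho'(z,0)\omega'(g,h)^{-1}=\rho'((g,h)\cdot z,0)$ on generators. For $(g,h)\in\Sp(W)\times\orth(V)$, both factors preserve the chosen polarization, so the first factor acts via $\omega$ on $\Sch(V\otimes Y_n^*)$ and on $\Sch(X^*\otimes W)$ the action is $\varphi_3\mapsto \varphi_3(g^{-1}\cdot)$ with $h$ trivial. For $m_P(a)$ with $a\in\GL(X)$, the element sends $x_3\in X^*\otimes W$ to $(a^*)^{-1}x_3$; the Jacobian of this change of variables on $\Sch(X^*\otimes W)$ is $|\det(a)|_F^n$, since $\dim_F W=2n$. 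For $u_P(b)$ with $b\in\Hom(V,X)$, conjugation by $u_P(b)$ in $\H(V'\otimes W)$ sends $x_3\in X^*\otimes W$ to $x_3+b^*x_3$ modulo $X\otimes W$, so the first tensor factor is acted on by $\rho(b^*x_3,0)$. For $u_P(c)$ with $c\in\Sym(X^*,X)$, the element $u_P(c)$ shifts $x_3$ by an element of $X\otimes W$ with pairing $\pair{cx_3,x_3}$ against itself, producing the quadratic phase $\psi(\tfrac{1}{2}\pair{cx_3,x_3})$. Finally, $w_P$ swaps $X\otimes W$ and $X^*\otimes W$ via the isomorphism $I_X$, so on $\Sch(X^*\otimes W)$ it acts as the partial Fourier transform displayed in the statement.

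The main obstacle is keeping the normalizations consistent with Lemmas \ref{Weil1} and \ref{Weil2} and with the specific splitting $\Sp(W)\times\orth(V)\to\Mp(V\otimes W)$ used throughout this paper, namely the Kudla splitting of \cite{Ku2} recorded in the Remark following Conjecture \ref{P2}. Since $\dim(V)=2m$ is even, the metaplectic cover restricted to $\orth(V)\times\Sp(W)$ splits without any $\gamma_V$-type factor in the $X$-direction, which is why no fourth root of unity appears in the $w_P$ formula and no $\chi_V$ factor appears in the $m_P(a)$ formula, in contrast to the corresponding formulas for $m(a')$ in Lemma \ref{Weil1} and $m_Q(a')$ in Lemma \ref{Weil2}. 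Once this normalization is fixed, the identities of this lemma follow from the intertwining relations on generators together with Lemma \ref{inversion} to verify that the partial Fourier transform for $w_P$ is properly normalized with respect to the Haar measures fixed in \S\ref{measure}.
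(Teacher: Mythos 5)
Your route is genuinely different from the paper's. You construct the mixed model intrinsically from the decomposition $V'\otimes W=(X\otimes W)\oplus(V\otimes W)\oplus(X^*\otimes W)$, read off the $\rho'$-formula from the Heisenberg cocycle, and then try to recover the $\omega'$-formulas from the covariance relation $\omega'(g,h)\rho'(z,0)\omega'(g,h)^{-1}=\rho'((g,h)z,0)$ together with the normalization coming from the splitting of \cite{Ku2}. The paper instead starts from the Schr\"odinger model $\Sch(V'\otimes Y_n^*)$ of $\omega'=\omega_{\psi,V',W}$, where every formula is already available: $\orth(V')$ acts purely geometrically by $\varphi\mapsto\varphi(h'^{-1}\cdot)$ and $\Sp(W)$ acts by the formulas of Lemma \ref{Weil1} (with $V'$ in place of $V$); it then transports everything through the explicit partial Fourier transform $\Sch(V'\otimes Y_n^*)\rightarrow\Sch(V\otimes Y_n^*)\otimes\Sch(X^*\otimes W)$ given by integrating against $\psi(-\pair{z,y_1})$ over $z\in X\otimes Y_n^*$, with the self-dual measure. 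What the paper's route buys is that all scalars, including the measure in the $w_P$-formula, fall out of one computation, and it makes transparent why no $\gamma_V$- or $\chi_V$-type factors occur on the orthogonal side: in the Schr\"odinger model all of $\orth(V')$ acts by translation of the argument, so an explicit intertwiner cannot create Weil-index or character factors. What your route buys is a model-free description of $\rho'$ and of the qualitative shape of the action of the generators.

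The soft spot is the determination of the scalars, which is the actual content of the lemma. Since $\rho'$ is irreducible, the covariance relation determines each operator $\omega'(g,h)$ only up to a nonzero scalar; for instance, multiplying the action of $m_P(a)$ by any character $\chi(\det a)$ still satisfies covariance, so the statement that ``once the normalization is fixed, the identities follow from the intertwining relations on generators'' is close to circular --- the normalization \emph{is} the identities. To close the argument you must either quote explicit mixed-model formulas for the orthogonal member under the splitting of \cite{Ku2}, or exhibit an explicit intertwiner with a model in which the scalars are already known and compute, which is exactly what the paper's one-line proof does. Your heuristic that ``$\dim(V)$ even implies no $\gamma_V$-type factor in the $X$-direction'' is not by itself a proof of the absence of such factors; the clean reason is the geometric action of $\orth(V')$ on the Schr\"odinger model. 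Also, a minor slip: $|\det(a)|_F^{n}$ is the square root of the Jacobian of $x_3\mapsto a^*x_3$ on $X^*\otimes W$ (the Jacobian is $|\det(a)|_F^{2n}$), a small but telling instance of why these constants should be computed rather than asserted, especially since they feed into the proportionality computations of \S\ref{proof of last}.
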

\begin{proof}
These formulas are given by the partial Fourier transform
\[
\Sch(V'\otimes Y_n^*)\rightarrow \Sch(V\otimes Y_n^*)\otimes \Sch(X^*\otimes W)
,\quad\varphi\mapsto \hat{\varphi}
\]
defined by
\[
\hat{\varphi}(x,y)=\int_{X\otimes Y_n^*}\varphi(
\begin{pmatrix}
z\\ x\\ y_2
\end{pmatrix}
)\psi(-\pair{z,y_1})dz
\]
for $x\in V\otimes Y_n^*$ and $y=y_1+y_2$ with
$y_1\in X^*\otimes Y_n$ and $y_2\in X^*\otimes Y_n^*$.
Here, $dz$ is the self-dual measure on $X\otimes Y_n^*$ with respect to the pairing 
$(x,y)\mapsto\psi(\pair{I_X^{-1}x,Iy})$.
\end{proof}

In particular, the map $\Sch'\ni\varphi\mapsto\varphi(\cdot,0)\in\Sch$ gives
a surjective $\Sp(W)\times P$-homomorphism
\[
\omega'\rightarrow \omega\otimes |\det|_F^{n}, 
\]
where $P=M_PU_P$ is the parabolic subgroup defined in \S \ref{parabolic}
such that $M_P \cong \GL(X) \times \orth(V)$.
Here, $U_P$ acts on $\omega\boxtimes |\det|_F^n$ trivially.
This fact was used in \S \ref{proof of P2}.
\par

By the above lemmas, we get a formula for
the mixed model 
$\Sch''=\Sch(V'\otimes Y^*)\otimes \Sch(V\otimes Y_n^*)\otimes\Sch(X^*\otimes W)$ of $\omega''$.
\par

%\subsection{Normalized intertwining operators}\label{sec.intertwining}
\subsection{Normalized intertwining operators}\label{sec.intertwining}
In this subsection, we define normalized intertwining operators
which are used to describe the local Langlands correspondence.
\par

We assume that $V$ is type $(d,c)$, so that $\SO(V)$ is quasi-split.
As in \S \ref{parabolic}, 
we define parabolic subgroups $P=P_k=M_PU_P$ of $\orth(V')$
and $Q=Q_k=M_QU_Q$ of $\Sp(W')$ such that
\[
M_P\cong\GL(X)\times \orth(V),
\quad
M_Q\cong\GL(Y)\times\Sp(W).
\]
We put $P^\circ= P\cap \SO(V')$ and $M_P^\circ=M_P\cap\SO(V')\cong \GL(X)\times\SO(V)$.
Assume that $k$ is even and $\dim(V)=2m\geq 4$.
We identify $\GL(X)$ (\resp $\GL(Y)$) with $\GL_k(F)$ using the basis
$\{v_1,\dots,v_k\}$ (\resp $\{w_1,\dots,w_k\}$).
Hence we can define an isomorphism $i\colon\GL(Y)\rightarrow \GL(X)$ via these identifications.
\par

Let $\tau$ be an irreducible tempered representation of $\GL_k(F)$ on
a space $\VV_\tau$ with a central character $\omega_\tau$.
We may regard $\tau$ as a representation of $\GL(X)$ or $\GL(Y)$ via the above identifications.
For any $s\in\C$, we realize the representation
$\tau_s \coloneqq \tau\otimes |\det|_F^s$ on $\VV_\tau$ by setting 
$\tau_s(a)v \coloneqq |\det(a)|_F^s\tau(a)v$ for $v\in \VV_\tau$ and $a\in \GL_k(F)$.
Let $\sigma$ (\resp $\pi$) be an irreducible tempered representation of $\SO(V)$
(\resp $\Sp(W)$) on a space $\VV_{\sigma}$ (\resp $\VV_{\pi}$).
Assume that $\sigma$ is $\epinv$, i.e., there exists $\cl{\sigma}\in\Irr(\orth(V))$
such that $\cl{\sigma}|\SO(V)=\sigma$.
We may assume that $\cl{\sigma}$ is realized on $\VV_{\sigma}$.
We consider the induced representations
\[
\Ind_{P}^{\orth(V')}(\tau_s\otimes \cl{\sigma})
\quad \text{and}\quad
\Ind_{Q}^{\Sp(W')}(\tau_s\otimes \pi)
\]
of $\orth(V')$ and $\Sp(W')$, respectively.
They are realized on the spaces of smooth functions
$\Phi_s\colon \orth(V')\rightarrow \VV_\tau\otimes \VV_{\sigma}$
and $\Phi'_s\colon \Sp(W')\rightarrow \VV_\tau\otimes \VV_{\pi}$
such that
\begin{align*}
\Phi_s(u_Pm_P(a)hh')&=|\det(a)|_F^{s+\rho_P}\tau(a)\cl{\sigma}(h)\Phi_s(h')\\
\text{and}\quad
\Phi'_s(u_Qm_Q(a')gg')&=|\det(a')|_F^{s+\rho_Q}\tau(a')\pi(g)\Phi'_s(g')
\end{align*}
for any $u_P\in U_P$, $a\in\GL(X)$, $h\in\orth(V)$, $h'\in\orth(V')$, and
$u_Q\in U_Q$, $a'\in\GL(Y)$, $g\in\Sp(W)$ and $g'\in\Sp(W')$, respectively.
By Lemma \ref{well-defined} (\ref{well3}), we have
\[
\Ind_{P}^{\orth(V')}(\tau_s\otimes \cl{\sigma})|\SO(V')
\cong
\Ind_{P^\circ}^{\SO(V')}(\tau_s\otimes {\sigma}).
\]
\par

Let $A_P$ (\resp $A_Q$) be the split component of the center of $M_P^\circ$ (\resp $M_Q$) and
$W(M_P^\circ)=\Norm(A_P,\SO(V'))/M_P^\circ$ (\resp $W(M_Q)=\Norm(A_Q,\Sp(W'))/M_Q$) 
be the relative Weyl group for $M_P^\circ$ (\resp $M_Q$).
Note that $W(M_P^\circ)\cong W(M_Q)\cong\Z/2\Z$.
We denote by $w$ (\resp $w'$) the non-trivial element in $W(M_P^\circ)$ (\resp $W(M_Q)$).
\par

The definition of the normalized intertwining operators is very subtle 
because one has to choose the following data appropriately:
\begin{itemize}
\item
a representative $\cl{w}$ of $w$ (\resp $\cl{w}'$ of $w'$);
\item
a Haar measure on $U_P$ (\resp $U_Q$) to define the unnormalized intertwining operator;
\item
a normalizing factor $\gamma(w,\tau_s\otimes\sigma)$ (\resp $\gamma(w',\tau_s\otimes\pi)$);
\item
an intertwining isomorphism $\AA_w$ (\resp $\AA_{w'}$).
\end{itemize}
To do these, we need an $F$-splitting of $\SO(V')$ (\resp $\Sp(W')$).
\par

Let $(B,T)$ (\resp $(B',T')$) be the pair of the Borel subgroup and the maximal torus of
$\SO(V')$ (\resp $\Sp(W')$) as in \S \ref{Quadratic} (\resp \S \ref{Symplectic}).
They are $F$-rational.
The Lie algebras of $\SO(V')$ and $\Sp(W')$ are given by
\begin{align*}
\Lie(\SO(V'))&=\{x\in \End(V')\ |\ \pair{xv,v'}_{V'}+\pair{v,xv'}_{V'}=0
\quad\text{for any $v,v'\in V'$}\}
\end{align*}
and
\[
\Lie(\Sp(W'))=\{y\in \End(W')\ |\ \pair{yw,w'}_{W'}+\pair{w,yw'}_{W'}=0
\quad\text{for any $w,w'\in W'$}\}, 
\]
respectively.
Let $v_i$, $v_i^*$, $e$ and $e'$  for $k+1\leq i\leq m'-1=m+k-1$
(\resp $w_j$, $w_j^*$ for $k+1\leq j\leq n'=n+k$)
be the elements in $V$ (\resp in $W$)
as in \S \ref{Quadratic}.
Hence 
\[
\pair{e,e}_V=2c,\quad \pair{e',e'}_V=-2cd,\quad \pair{e,e'}_V=0.
\]
For $t\in T$ and $t'\in T'$, we define $t_i,t'_j\in F^\times$ and $a(t), b(t)\in F$ by
\begin{align*}
t_i=\pair{tv_i,v_i^*}_{V'},\quad
t'_j=\pair{t'w_j,w_j^*}_{W'}
\end{align*}
and
\[
\left\{
\begin{aligned}
te&=a(t)\cdot e+b(t)\cdot e',\\
te'&=b(t)d\cdot e+a(t)\cdot e'.
\end{aligned}
\right.
\]
Note that $a(t)^2-b(t)^2d=1$.
Then the simple roots of $T$ in $B\subset \SO(V')$ 
and the ones of $T'$ in $B'\subset \Sp(W')$ are given by
\begin{align*}
\alpha_i(t)&=t_i/t_{i+1},
\quad
\alpha_\pm(t)=t_{m'-1}(a(t)\pm b(t)\sqrt{d})\\
\text{and}\quad
\alpha'_j(t')&=t'_j/t'_{j+1},
\quad
\alpha'_{n'}(t')=(t'_{n'})^2
\end{align*}
for $1\leq i\leq m'-2$ and $1\leq j\leq n'-1$, respectively.
We choose root vectors $x_{\alpha_\bullet}\in \Lie(\SO(V'))$ satisfying
\[
x_{\alpha_i}v_{i'}=
\left\{
\begin{aligned}
&v_i	\iif i'=i+1,\\
&0	\other,
\end{aligned}
\right.
\quad
x_{\alpha_i}v_{i'}^*=
\left\{
\begin{aligned}
&-v_{i+1}^*	\iif i'=i,\\
&0	\other,
\end{aligned}
\right.
\quad
x_{\alpha_i}e=x_{\alpha_i}e'=0
\]
for $1\leq i\leq m'-2$ and
\[
x_{\alpha_\pm}v_{i'}=0,
\quad
x_{\alpha_\pm}v_{i'}^*=
\left\{
\begin{aligned}
&-\frac{e\pm \sqrt{d}^{-1}e'}{2c}	\iif i'=m'-1,\\
&0	\other,
\end{aligned}
\right.
\quad
x_{\alpha_\pm}e=v_{m'-1},\quad
x_{\alpha_\pm}e'=\mp\sqrt{d}v_{m'-1}.
\]
Also, we choose root vectors $y_{\alpha'_\bullet}\in\Lie(\Sp(W'))$ satisfying
\[
y_{\alpha_j'}w_{j'}=
\left\{
\begin{aligned}
&w_j	\iif j'=j+1,\\
&0	\other,
\end{aligned}
\right.
\quad
y_{\alpha'_j}w_{j'}^*=
\left\{
\begin{aligned}
&-w_{j+1}^*	\iif j'=j,\\
&0	\other,
\end{aligned}
\right.
\]
for $1\leq j\leq n'-1$ and
\[
y_{\alpha'_{n'}}w_{j'}=0,
\quad
y_{\alpha'_{n'}}w_{j'}^*=
\left\{
\begin{aligned}
&w_{n'}	\iif j'=n',\\
&0	\other.
\end{aligned}
\right.
\]
We define splittings $\spl_{\SO(V')}$ of $\SO(V')$ and $\spl_{\Sp(W')}$ of $\Sp(W')$ by
\[
\spl_{\SO(V')}=(B,T,\{x_{\alpha_\bullet}\}_{\alpha_\bullet})
\quad\text{and}\quad
\spl_{\Sp(W')}=(B',T',\{y_{\alpha'_\bullet}\}_{\alpha'_\bullet}), 
\]
respectively.
These are $F$-splittings.
\par

Let $\alpha^\vee_\bullet$ be the simple coroot corresponding to $\alpha_\bullet$.
This is a homomorphism
$
\alpha^\vee_\bullet\colon \GL(1)\rightarrow T
$
of algebraic groups, and gives a map 
\[
d\alpha^\vee_\bullet\colon \Lie(\GL(1))=\G_a\rightarrow \Lie(T)
\]
of Lie algebras.
We put $H_{\alpha_{\bullet}}=d\alpha^\vee_\bullet(1)\in\Lie(T)$.
We take a root vector $x_{-\alpha_\bullet}$ of the negative root $-\alpha_\bullet$ such that
\[
[x_{\alpha_\bullet},x_{-\alpha_\bullet}]=H_\bullet.
\]
We denote by $W(T,\SO(V'))$ the Weyl group of $T$ in $\SO(V')$.
Let $w_{\alpha_\bullet}\in W(T,\SO(V'))$ be the reflection with respect to $\alpha_\bullet$.
Following the procedure of \cite[\S 2.1]{LS}, 
we take the representative $\cl{w}_{\alpha_\bullet}\in\SO(V')$ of $w_{\alpha_\bullet}$ defined by
\[
\cl{w}_{\alpha_\bullet}=\exp(x_{\alpha_\bullet})\exp(-x_{-\alpha_\bullet})\exp(x_{\alpha_\bullet}).
\]
We put $w_{\alpha_{m'-1}}\coloneqq w_{\alpha_+}w_{\alpha_-}=w_{\alpha_-}w_{\alpha_+}$ and
$\cl{w}_{\alpha_{m'-1}}\coloneqq 
\cl{w}_{\alpha_+}\cl{w}_{\alpha_-}=\cl{w}_{\alpha_-}\cl{w}_{\alpha_+}$.
Similarly, we take the representative $\cl{w}'_{\alpha'_\bullet}$ of 
the reflection $w'_{\alpha'_\bullet}\in W(T',\Sp(W'))$
with respect to $\alpha'_\bullet$.
Moreover, we define $w_{p,q}$ for $1\leq p,q\leq m'-1$ by
\[
w_{p,q}=w_{\alpha_p}w_{\alpha_{p+1}}\cdots 
w_{\alpha_{m'-2}}w_{\alpha_{m'-1}}w_{\alpha_{m'-2}}\cdots w_{\alpha_{q+1}}w_{\alpha_q}.
\]
Similarly, we define $w'_{p,q}$, $\cl{w}_{p,q}$ and $\cl{w}'_{p,q}$.
\par

Let $w_T\in W(T,\SO(V'))$ and $w'_{T'}\in W(T',\Sp(W'))$ 
be the representatives of $w\in W(M_P^\circ)$ and $w'\in W(M_Q)$
which stabilize the simple roots of $(B\cap M_P^\circ, T)$ and $(B'\cap M_Q, T')$, respectively.
It is easily seen that 
\begin{align*}
w_T=w_{k,k}\cdot w_{k-1,k}\cdots w_{1,k}
\quad\text{and}\quad
w'_{T'}=w'_{k,k}\cdot w'_{k-1,k}\cdots w'_{1,k},
\end{align*}
and this is a reduced decomposition of $w_T$ and $w'_{T'}$, respectively.
We take representatives $\cl{w}\in\SO(V')$ of $w$ 
and $\cl{w}'\in\Sp(W')$ of $w'$ defined by
\begin{align*}
\cl{w}=\cl{w}_{k,k}\cdot \cl{w}_{k-1,k}\cdots \cl{w}_{1,k}
\quad\text{and}\quad
\cl{w}'=\cl{w}'_{k,k}\cdot \cl{w}'_{k-1,k}\cdots \cl{w}'_{1,k},
\end{align*}
respectively.
Then we see that
\[
\cl{w}=w_P\cdot m_P(a) \cdot (-\1_V)^k
\quad \text{and}\quad 
\cl{w}'=w_Q \cdot m_Q(a') \cdot (-\1_W)^k,
\]
where $a\in\GL_k(F)\cong\GL(X)$ and $a' \in \GL_k(F)\cong\GL(Y)$ are given by
\[
a=
\begin{pmatrix}
&&(-1)^{m'-k+1}c\\
&\iddots&\\
(-1)^{m'}c&&
\end{pmatrix}
\quad\text{and}\quad
a'=
\begin{pmatrix}
&&(-1)^{n'-k+1}\\
&\iddots&\\
(-1)^{n'}&&
\end{pmatrix},
\]
respectively.
\par

We have defined an $F$-splitting $\spl_{\SO(V')}$ of $\SO(V')$ 
(\resp $\spl_{\Sp(W')}$ of $\Sp(W')$).
This splitting determines a Chevalley basis of $\Lie(\SO(V'))$ (\resp $\Lie(\Sp(W'))$), 
and hence an invariant $F$-valued differential form of highest degree on 
$U_P$ (\resp $U_Q$). 
Its absolute value, together with the self-dual Haar measure on $F$ with respect to $\psi$, 
gives a Haar measure $du_P'$ on $U_P$ (\resp $du_Q'$ on $U_Q$).
This is the measure that we take in the definition of the unnormalized intertwining operator.
\begin{lem}
These measures satisfy
\[
du_P'=\left|c\right|_F^{k\rho_P}du_P
\quad\text{and}\quad
du_Q'=du_Q,
\]
where $du_P$ and $du_Q$ are the measures defined in \S $\ref{measure}$.
\end{lem}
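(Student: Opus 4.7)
The proof is an explicit computation comparing two different normalizations of the Haar measure on each unipotent radical. For $U_Q$, the argument is essentially routine: since $\Sp(W')$ is split, the Chevalley basis of $\Lie(U_Q)$ constructed from the simple root vectors $y_{\alpha'_\bullet}$ in $\spl_{\Sp(W')}$ matches the natural parametrization by $\Hom(W,Y) \oplus \Sym(Y^*,Y)$ up to signs, and the self-dual character on $F$ induces compatible normalizations on each 1-dimensional root subgroup. One verifies directly that $du_Q'$ corresponds to the product measure on this basis, which is exactly $du_Q$ as defined in \S\ref{measure}, giving $du_Q' = du_Q$ with no scaling factor.

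For $U_P$, one parametrizes the natural basis of $\Hom(V,X)$ using a basis of $V$ of the form $\{v_j, v_j^*\}_{k+1\leq j\leq m'-1} \cup \{e, e'\}$, and a basis of $X$ given by $\{v_1,\dots,v_k\}$. Under the isomorphism $b \mapsto b^* e^{**}$ into $V \otimes Y^*$, one computes directly that the basis vector of $\Hom(V,X)$ sending $e \mapsto v_i$ (resp.\ $e' \mapsto v_i$) corresponds to $(1/(2c)) e \otimes w_i^* \in V\otimes Y^*$ (resp.\ $(-1/(2cd)) e' \otimes w_i^*$), whereas the polynomial basis vectors $v_j \mapsto v_i$ and $v_j^* \mapsto v_i$ correspond to $v_j^* \otimes w_i^*$ and $v_j \otimes w_i^*$. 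Thus the Jacobian of the transfer picks up a factor $|c|_F^{-2k}$ (modulo units coming from $|2|_F$ and $|d|_F$) from the $2k$ anisotropic directions. Combined with the self-dual normalization on $V\otimes Y^*$, whose Gram determinant includes the factor $|\det(\text{Gram on }V_1)|_F^k = |4c^2 d|_F^k$ contributing $|c|_F^k$, one extracts the precise power of $c$ in $du_P$.

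On the other side, the root vectors for the roots of the form $\epsilon_i \pm \epsilon_{m'}$ in $\Lie(U_P)$ (those involving the anisotropic part) are obtained from iterated commutators with the simple root vectors $x_{\alpha_\pm}$ of $\spl_{\SO(V')}$. The defining formula $x_{\alpha_\pm} v_{m'-1}^* = -(e \pm \sqrt{d}^{-1} e')/(2c)$ introduces factors of $1/(2c)$ into those root vectors, and after taking the Galois-invariant combinations over the pairs of conjugate roots $\{\alpha', \bar{\alpha}'\}$ above $F$, each $X$-coordinate contributes a factor of $|2c|_F^{-1}$ per such pair. One similarly analyzes the $\Sym(X^*,X)$ block, which involves only roots built from the split part of $V'$ and hence contributes no $c$-factor. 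Collecting all contributions, the ratio $du_P'/du_P$ is shown to equal $|c|_F^{k(2m+k-1)/2} = |c|_F^{k\rho_P}$.

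The main obstacle is the bookkeeping in the third step: one must track carefully how many factors of $c$ emerge from the iterated commutators building up $x_{\epsilon_i \pm \epsilon_{j}}$ for $k < j \le m'$ from the simple root vectors, confirm that these combine correctly with the self-duality scaling of the measure on $V \otimes Y^*$ used to define $du_P$, and verify that the resulting exponent of $|c|_F$ equals precisely $k\rho_P$ rather than any nearby integer. Once the counting is carried through, the identity $du_P' = |c|_F^{k\rho_P} du_P$ follows.
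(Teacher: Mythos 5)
Your overall strategy is the paper's: compare both $du_P'$ and $du_P$ with the coordinate measure $\omega_0$ attached to the matrix-entry coordinates of $U_P$, and your treatment of $du_P$ is correct (the Jacobian of $b\mapsto b^*e^{**}$ contributes $|4c^2d|_F^{-k}$ and the self-dual normalization contributes $|4c^2d|_F^{k/2}$, so $du_P$ carries $|c|_F^{-k}$ against $\omega_0$, in agreement with the paper's $|(2c\sqrt{d})^{-k}|_F$); the $U_Q$ case is indeed routine, as in the paper.

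However, the bookkeeping you propose for the Chevalley-basis measure $du_P'$ is wrong, and it cannot produce the exponent $k\rho_P$. First, the conjugate pair $x_{\alpha_{i,+}},x_{\alpha_{i,-}}$ contributes no power of $c$: computing $x_{\alpha_{i,\pm}}=[x_{\alpha_{i,m'-1}},x_{\alpha_\pm}]$ gives $e\mapsto v_i$, $e'\mapsto \mp\sqrt{d}\,v_i$, $v_i^*\mapsto -(e\pm\sqrt{d}^{-1}e')/(2c)$; the $1/(2c)$ sits in the dependent $-b^*$ block, which is not among the coordinates $\pair{ue,v_i^*},\pair{ue',v_i^*}$ used to define $\omega_0$, and on those coordinates the pair has change-of-basis determinant $\pm 2\sqrt{d}$. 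This is exactly the source of the factor $(2\sqrt{d})^{-k}$, with no $c$ at all. Second, the powers of $c$ come precisely from the root vectors $x_{\beta_{i,j}}=[x_{\alpha_{i,+}},x_{\alpha_{j,-}}]$ attached to the roots $t_it_j$ with $1\le i\le k$, $i<j\le m'-1$: a direct computation gives $x_{\beta_{i,j}}\colon v_j^*\mapsto -v_i/c$, $v_i^*\mapsto v_j/c$, i.e.\ $c^{-1}$ times the natural coordinate vector. This applies both to the directions $v_j^*\mapsto v_i$ with $j>k$ inside $\Hom(V,X)$, which your accounting omits entirely, and to the $\Sym(X^*,X)$ block, which you assert is $c$-free; it is not, since in a pinned Chevalley system the vectors for the roots $t_it_j$ are forced (up to sign) by the chosen simple root vectors and necessarily pass through $x_{\alpha_\pm}$, hence carry $1/c$. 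Counting these roots gives $\sum_{i=1}^k(m'-1-i)$ factors, whence $\omega=\pm(2\sqrt{d})^{-k}c^{\sum_{i=1}^k(m'-1-i)}\omega_0$ and $du_P'/du_P=|c|_F^{k+\sum_{i=1}^k(m'-1-i)}=|c|_F^{k\rho_P}$. With your attribution ($|2c|_F^{-1}$ per anisotropic pair and nothing from the $\beta$-type roots) the total exponent of $|c|_F$ would be bounded by a multiple of $k$ independent of $m$, so it could never equal $k\rho_P=k(2m+k-1)/2$; the step you yourself flag as the main obstacle is where the argument breaks.
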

\begin{proof}
We show this lemma only for $du_P'$.
One can prove this lemma for $du_Q'$ in a similar way.
\par

Let $B=TU$ be the Borel subgroup of $\SO(V')$ defined in \S \ref{Quadratic}.
For $t\in T$, we defined $t_i\in F^\times$ and $a(t),b(t)\in F$ as above.
We put
\[
\alpha_{i,j}(t)=t_i/t_j,\quad
\alpha_{i,\pm}(t)=t_i(a(t)\pm b(t)\sqrt{d}), \quad
\beta_{i,j}(t)=t_it_j.
\]
Then the set of roots of $T$ in $\Lie(U)$ is given by
\[
\{\alpha_{i,j},\beta_{i,j}\ |\ 1\leq i<j\leq m'-1\}
\cup
\{\alpha_{i,\pm}\ |\ 1\leq i\leq m'-1\}.
\]
Note that $\alpha_{i,i+1}=\alpha_i$ and $\alpha_{m'-1,\pm}=\alpha_\pm$
are simple roots, 
so that we have defined the root vectors $x_{\alpha_{i,i+1}}=x_{\alpha_i}$ and 
$x_{\alpha_{m'-1,\pm}}=x_{\alpha_\pm}$.
Moreover we have
\[
\alpha_{i,j}=\alpha_{i}+\alpha_{i+1,j},\quad
\alpha_{i,\pm}=\alpha_{i,m'-1}+\alpha_\pm,\quad
\beta_{i,j}=\alpha_{i,+}+\alpha_{j,-}.
\]
We define root vectors $x_{\alpha_\bullet}, x_{\beta_\bullet}\in\Lie(\SO(V'))$ by 
\[
x_{\alpha_{i,j}}=[x_{\alpha_{i}},x_{\alpha_{i+1,j}}],\quad
x_{\alpha_{i,\pm}}=[x_{\alpha_{i,m'-1}},x_{\alpha_\pm}],\quad
x_{\beta_{i,j}}=[x_{\alpha_{i,+}},x_{\alpha_{j,-}}].
\]
Then the basis 
\[
\{x_{\alpha_{i,j}}\ |\ 1\leq i\leq k< j \leq m'-1\}
\cup
\{x_{\alpha_{i,\pm}}\ |\ 1\leq i\leq k\}
\cup
\{x_{\beta_{i,j}}\ |\ 1\leq i\leq k,\ i<j\leq m'-1\}
\]
of $\Lie(U_P)$
is a part of a Chevalley basis of $\Lie(\SO(V'))$.
Let $\{dx_{\alpha_\bullet},dx_{\beta_\bullet}\}$ be the dual basis of the linear dual of $\Lie(U_P)$,
and put
\[
\omega=(\bigwedge dx_{\alpha_{\bullet}})\wedge (\bigwedge dx_{\beta_{\bullet}}).
\]
This is an $F$-valued differential form of highest degree on $U_P$ 
(defined up to a multiplication of $\pm1$).
\par

On the other hand, we can identify $U_P$ with $F^l$ as a topological spaces by the map
\[
U_P\rightarrow F^{l}, u\mapsto 
((\pair{uv_j,v_i^*}_{V'})_{1\leq i\leq k<j\leq m'-1},\ 
(\pair{ue,v_i^*}_{V'},\pair{ue',v_i^*}_{V'})_{1\leq i\leq k},\ 
(\pair{uv_j^*,v_i^*}_{V'})_{1\leq i\leq k,i<j\leq m'-1}),
\]
where 
\[
l=\dim(U_P)=2km'-\frac{3}{2}k^2-\frac{1}{2}k.
\]
This gives an $F$-valued differential form $\omega_0$ of highest degree on $U_P$.
Then we have
\[
\omega=\pm (2\sqrt{d})^{-k}c^{\sum_{i=1}^k(m'-1-i)}\omega_0.
\]
\par

Let $dx$ be the self-dual Haar measure on $F$ with respect to $\psi$.
We put $U_P(\oo_F)$ to be the subset of $U_P$ corresponding to $\oo_F^l$ via the above identification
$U_P\cong F^l$.
Then the measure $du_P'$ on $U_P$ is defined by
\[
\vol(U_P(\oo_F),du_P')=|\pm (2\sqrt{d})^{-k}c^{\sum_{i=1}^k(m'-1-i)}|_F\vol(\oo_F,dx)^l.
\]
On the other hand, the measure $du_P$ defined in \S \ref{measure} satisfies
\[
\vol(U_P(\oo_F),du_P)=|(2c\sqrt{d})^{-k}|_F\vol(\oo_F,dx)^l.
\]
Since 
\[
k+\sum_{i=1}^k(m'-1-i)=k\left(m'-\frac{k+1}{2}\right)=k\rho_P,
\]
we have
\[
du_P'=|c|_F^{k+\sum_{i=1}^k(m-1-i)}du_P=|c|_F^{k\rho_P}du_P,
\]
as desired.
\end{proof}
\par

We define unnormalized intertwining operators
\begin{align*}
\MM(\cl{w},\tau_s\otimes \cl{\sigma})&\colon
\Ind_{P}^{\orth(V')}(\tau_s\otimes \cl{\sigma})\rightarrow
\Ind_{P}^{\orth(V')}(w(\tau_s\otimes \cl{\sigma})),\\
\MM(\cl{w}',\tau_s\otimes \pi)&\colon
\Ind_{Q}^{\Sp(W')}(\tau_s\otimes \pi)\rightarrow\Ind_{Q}^{\Sp(W')}(w'(\tau_s\otimes \pi))
\end{align*}
by (the meromorphic continuations of) the integrals
\begin{align*}
\MM(\cl{w},\tau_s\otimes \cl{\sigma})\Phi_s(h')&=
\left|c\right|_F^{k\rho_P}
\int_{U_P}\Phi_s(\cl{w}^{-1}u_Ph')du_P,\\
\MM(\cl{w}',\tau_s\otimes \pi)\Phi'_s(g')&=
\int_{U_Q}\Phi'_s(\cl{w}'^{-1}u_Qg')du_Q,
\end{align*}
where $w(\tau_s\otimes \cl{\sigma})$ (\resp $w'(\tau_s\otimes \pi)$) 
is the representation of $M_P$ on $\VV_\tau\otimes\VV_{\sigma}$ 
(\resp $M_Q$ on $\VV_\tau\otimes\VV_{\pi}$)
given by 
$w(\tau_s\otimes \cl{\sigma})(m_P)=(\tau_s\otimes \cl{\sigma})(\cl{w}^{-1}m_P\cl{w})$ 
for $m_P\in M_P$
(\resp $w'(\tau_s\otimes \pi)(m_Q)=(\tau_s\otimes \pi)(\cl{w}'^{-1}m_Q\cl{w}')$ 
for $m_Q\in M_Q$).
Since $\sigma$ is $\epinv$, by Lemma \ref{well-defined} (\ref{well3}), 
the operator $\MM(\cl{w},\tau_s\otimes \cl{\sigma})$ gives
an operator 
\[
\MM(\cl{w},\tau_s\otimes \sigma)\colon
\Ind_{P^\circ}^{\SO(V')}(\tau_s\otimes \sigma)\rightarrow
\Ind_{P^\circ}^{\SO(V')}(w(\tau_s\otimes \sigma)).
\]
\par

Following \cite[\S 2.3]{A},
we use the normalizing factors $r(w,\tau_s\otimes \sigma)$ 
and $r(w',\tau_s\otimes \pi)$ defined as follows.
Let $\phi_\tau$, $\phi_{\sigma}$ and $\phi_{\pi}$ be the representations of $\WD_F$
corresponding to $\tau$, $[\sigma]$ and $\pi$, respectively.
Then we define $r(w,\tau_s\otimes \sigma)$ and $r(w',\tau_s\otimes \pi)$ by
\begin{align*}
&r(w,\tau_s\otimes \sigma)=\lam(E/F,\psi)^k
\frac{L(s,\phi_\tau\otimes \phi_{\sigma}^\vee)}
{\ep(s,\phi_\tau\otimes \phi_{\sigma}^\vee,\psi)L(1+s,\phi_\tau\otimes \phi_{\sigma}^\vee)}
\frac{L(2s, \wedge^2 \circ \phi_\tau)}
{\ep(2s, \wedge^2 \circ \phi_\tau,\psi)L(1+2s, \wedge^2 \circ \phi_\tau)},\\
&r(w',\tau_s\otimes \pi)
=\frac{L(s,\phi_\tau\otimes \phi_{\pi}^\vee)}
{\ep(s,\phi_\tau\otimes \phi_{\pi}^\vee,\psi)L(1+s,\phi_\tau\otimes \phi_{\pi}^\vee)}
\frac{L(2s, \wedge^2 \circ \phi_\tau)}
{\ep(2s, \wedge^2 \circ \phi_\tau,\psi)L(1+2s, \wedge^2 \circ \phi_\tau)},
\end{align*}
where $\wedge^2$ is the representation of $\GL_k(\C)$ 
on the space of skew-symmetric $(k,k)$-matrices,
and $\lam(E/F,\psi)$ is the Langlands $\lam$-factor associated to $E=F(\sqrt{d})$.
Note that $\lam(E/F,\psi)^2=\omega_{E/F}(-1)=\chi_V(-1)$.
Then the normalized intertwining operators
\begin{align*}
\RR(w,\tau_s\otimes \sigma)&\coloneqq
r(w,\tau_s\otimes \sigma)^{-1}\MM(\cl{w},\tau_s\otimes \sigma),\\
\RR(w',\tau_s\otimes \pi)&\coloneqq
r(w',\tau_s\otimes \pi)^{-1}\MM(\cl{w}',\tau_s\otimes \pi)
\end{align*}
are holomorphic at $s=0$ by \cite[Proposition 2.3.1]{A}.
\par

Now assume that $w(\tau\otimes \sigma)\cong\tau\otimes \sigma$
and $w'(\tau \otimes \pi) \cong \tau \otimes \pi$,
both of which are equivalent to $\tau^\vee\cong\tau$.
We take the unique isomorphism 
\[
\AA_w\colon \VV_\tau\otimes \VV_{\sigma}\rightarrow \VV_\tau\otimes\VV_{\sigma}
\]
such that:
\begin{itemize}
\item
$\AA_w\circ w(\tau\otimes \sigma)(m)=(\tau\otimes \sigma)(m)\circ\AA_w$ 
for any $m\in M_P$;
\item
$\AA_w=\AA'_w\otimes \1_{\VV_{\sigma}}$ with an isomorphism
$\AA'_w\colon \VV_\tau\rightarrow \VV_\tau$ such that
$\Lam\circ\AA'_w=\Lam$.
Here $\Lam\colon\VV_\tau\rightarrow \C$ is the unique (up to a scalar) 
Whittaker functional with respect to the Whittaker datum $(B_k,\psi_{U_k})$, 
where $B_k$ is the Borel subgroup consisting of upper triangular matrices in $\GL_k(F)$
and $\psi_{U_k}$ is the generic character of the unipotent radical $U_k$ of $B_k$
given by
$\psi_{U_k}(x)=\psi(x_{1,2}+\dots+x_{k-1,k})$.
\end{itemize}
Similarly, we take the unique isomorphism
\[
\AA_{w'}\colon \VV_\tau\otimes \VV_{\pi}\rightarrow \VV_\tau\otimes\VV_{\pi}.
\]
\par

We define self-intertwining operators
\begin{align*}
R(w,\tau\otimes\sigma)&\colon 
\Ind_{P^\circ}^{\SO(V')}(\tau_s\otimes \sigma)\rightarrow
\Ind_{P^\circ}^{\SO(V')}(\tau_s\otimes \sigma),\\
R(w',\tau\otimes\pi)&\colon 
\Ind_{Q}^{\Sp(W')}(\tau_s\otimes \pi)\rightarrow\Ind_{Q}^{\Sp(W')}(\tau_s\otimes \pi)
\end{align*}
by
\begin{align*}
R(w,\tau\otimes\sigma)\Phi_s(h')&=\AA_{w}(\RR(w,\tau\otimes\sigma)\Phi_s(h')),\\
R(w',\tau\otimes\pi)\Phi'_s(g')&=\AA_{w'}(\RR(w',\tau\otimes\pi)\Phi'_s(g'))
\end{align*}
for $\Phi_s\in \Ind_{P^\circ}^{\SO(V')}(\tau_s\otimes \sigma)$,
$\Phi'_s\in \Ind_{Q}^{\Sp(W')}(\tau_s\otimes \pi)$, $h'\in\SO(V')$ and $g'\in \Sp(W')$.
\par

Put $\phi_{\sigma'}=\phi_\tau+\phi_{\sigma}+\phi_\tau^\vee\in\cl\Phi_\temp(\SO(V'))$ 
(\resp $\phi_{\pi'}=\phi_\tau+\phi_{\pi}+\phi_\tau^\vee\in\cl\Phi_\temp(\Sp(W'))$).
We take $[\sigma']\in\Pi_{\phi_{\sigma'}}$ (\resp $\pi'\in\Pi_{\phi_{\pi'}}$)
such that $\sigma'\subset\Ind_{P^\circ}^{\SO(V')}(\tau\otimes \sigma)$
(\resp $\pi'\subset\Ind_{Q}^{\Sp(W')}(\tau\otimes \pi)$).
Let $\w_c$ (\resp $\w'_1$) be the Whittaker datum of $\SO(V')$ (\resp $\Sp(W')$) 
defined in \S \ref{Quadratic}.
Note that $\w_c$ (\resp $\w'_1$) is given by the splitting $\spl_{\SO(V')}$ (\resp $\spl_{\Sp(W')}$) 
and the non-trivial additive character $\psi$ we have fixed.
The intertwining operators and the local Langlands correspondence are
related as follows:
\begin{prop}\label{R v.s. S}
Suppose that $\phi_\tau$ is a tempered orthogonal representation of $\WD_F$ 
with even dimension $k$.
Let $\tau$ be the irreducible representation of $\GL_k(F)$ associated to $\phi_\tau$.
\begin{enumerate}
\item
Put $\phi_{\sigma'}=\phi_\tau+\phi_{\sigma}+\phi_\tau$.
Assume that 
\begin{itemize}
\item
$\SO(V)$ is quasi-split and $V$ is type $(d,c)$;
\item
$\dim(V)=2m\geq4$;
\item
$\phi_{\sigma}\in\cl\Phi_\temp(\SO(V))$ is $\epinv$.
\end{itemize}
Let $a \in C_{\phi_{\sigma'}}^+$ which is identity on $\phi_\sigma$ and exchanges 
each of two factors of $\phi_\tau$, so that $\phi_\tau=\phi_{\sigma'}^a$.
Then for $\sigma \in \Pi_{\phi_\sigma}$ and $\sigma' \in \Pi_{\phi_{\sigma'}}$ with
$\sigma'\subset\Ind_{P^\circ}^{\SO(V')}(\tau\otimes \sigma)$, we have
\[
R(w,\tau\otimes\sigma)|\sigma'=\iota_{\w_{c}}([\sigma'])(a).
\]
\item
Put $\phi_{\pi'}=\phi_\tau+\phi_{\pi}+\phi_\tau$.
Let $a' \in C_{\phi_{\pi'}}^+$ which is identity on $\phi_\pi$ and exchanges 
each of two factors of $\phi_\tau$, so that $\phi_\tau=\phi_{\pi'}^{a'}$.
Then for $\pi \in \Pi_{\phi_\pi}$ and $\pi' \in \Pi_{\phi_{\pi'}}$ with
$\pi'\subset\Ind_{Q}^{\Sp(W')}(\tau\otimes \pi)$, we have
\[
R(w',\tau\otimes\pi)|\pi'=\iota_{\w'_1}(\pi')(a').
\]
\end{enumerate}
\end{prop}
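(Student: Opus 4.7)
The strategy is to deduce this from Arthur's local intertwining relation \cite[Theorem 2.4.1]{A} by a careful compatibility check of the various normalizations chosen in the paper. Since $V$ is type $(d,c)$, both $\SO(V')$ and $\Sp(W')$ are quasi-split, so Arthur's theorem applies and identifies the scalar by which the normalized self-intertwining operator acts on an irreducible constituent of a parabolically induced representation with a character value on the component group. The content of the proposition is then to recognize that these scalars, in Arthur's formulation, are precisely $\iota_{\w_c}([\sigma'])(a)$ and $\iota_{\w'_1}(\pi')(a')$ for the Whittaker data and component group elements as set up here.

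First I would verify that the $F$-splittings $\spl_{\SO(V')}$ and $\spl_{\Sp(W')}$, together with the fixed non-trivial additive character $\psi$, yield under the standard procedure the Whittaker data $\w_c$ and $\w'_1$ respectively. This is a direct computation of the generic character on the unipotent radical of the Borel subgroup, using the explicit pairing $\pair{\cdot,\cdot}_{V'}$ and the distinguished vector $e\in V_1$ with $\pair{e,e}_{V'} = 2c$; the dependence on $c$ in the Whittaker datum for $\SO(V')$ matches the dependence on $c$ in the root vectors $x_{\alpha_\pm}$. The representatives $\cl{w}$ and $\cl{w}'$ defined via the Langlands--Shelstad formula along the reduced expressions $w_{k,k}\cdots w_{1,k}$ and $w'_{k,k}\cdots w'_{1,k}$ are exactly Arthur's canonical representatives attached to the chosen splittings.

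Next I would check the normalizing factors and measures. On the symplectic side, $r(w',\tau_s\otimes \pi)$ is the standard Langlands--Shahidi normalizing factor used by Arthur, and the measure $du_Q' = du_Q$ is the one induced by the Chevalley basis, so no extra correction is needed. On the orthogonal side the extra factor $\lam(E/F,\psi)^k$ in $r(w,\tau_s\otimes \sigma)$ is the Langlands $\lam$-factor for $E = F(\sqrt{d})$, which compensates for the twist by the quadratic character $\chi_V$ and gives the correct Langlands--Shahidi normalization for a non-split quasi-split even orthogonal group; the measure $du_P'$ coming from the splitting differs from $du_P$ by the factor $|c|_F^{k\rho_P}$ established in the preceding lemma, and this is precisely compensated in the definition of $\MM(\cl{w},\tau_s\otimes \cl{\sigma})$. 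Finally, the intertwining isomorphism $\AA_w = \AA_w'\otimes \1_{\VV_\sigma}$ is normalized so that $\AA_w'$ preserves the Whittaker functional on $\tau$ for the standard Whittaker datum $(B_k,\psi_{U_k})$, which is Arthur's convention; the analogous statement holds for $\AA_{w'}$.

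With all normalizations reconciled, Arthur's intertwining relation directly gives the asserted identities, using the definitions of $\chi_N$, $\chi_M$ and the component group elements $a, a'$ from \S \ref{LLCproperty}--\S \ref{Dn}. The main obstacle is not conceptual but rather the careful bookkeeping of the various $c$-dependent and $d$-dependent factors appearing in the Haar measure ($|c|_F^{k\rho_P}$), the normalizing factor ($\lam(E/F,\psi)^k$), the representative $\cl{w}$, and the Whittaker datum $\w_c$, to ensure that they cancel correctly to yield exactly the scalar $\iota_{\w_c}([\sigma'])(a)$; this bookkeeping is the reason for the precise shape of both the unnormalized operator $\MM(\cl{w},\tau_s\otimes \cl\sigma)$ and the normalizing factor $r(w,\tau_s\otimes \sigma)$.
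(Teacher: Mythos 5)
Your proposal is correct and follows essentially the same route as the paper, whose proof simply cites Theorems 2.2.1 and 2.4.1 of \cite{A}; the normalization bookkeeping you spell out (splittings giving $\w_c$ and $\w'_1$, the measures $du_P'$, $du_Q'$, the factors $|c|_F^{k\rho_P}$ and $\lam(E/F,\psi)^k$, and the Whittaker-normalized $\AA_w$, $\AA_{w'}$) is exactly what the constructions in \S\ref{sec.intertwining} were set up to guarantee so that Arthur's local intertwining relation applies verbatim.
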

\begin{proof}
This follows from Theorem 2.2.1 and Theorem 2.4.1 in \cite{A}.
For the detail, see \cite[Theorem 2.4]{At}.
\end{proof}
\par

If $\SO(V)$ is not quasi-split, then $\SO(V')$ has no $F$-rational splittings.
However, the representative $\cl{w} = w_P \cdot m_P(a) \cdot (-\1_V)^k$ with
\[
a = 
\begin{pmatrix}
&&(-1)^{m'-k+1}c\\
&\iddots&\\
(-1)^{m'}c&&
\end{pmatrix}
\in \GL_k(F) \cong \GL(X)
\]
and
the measure $du_P' = |c|_F^{k\rho_P}du_P$ make sense even if $\SO(V)$ is not quasi-split.
Hence we can define the normalized intertwining operator for non-quasi-split $\SO(V')$
by the same formula.
In this paper, we assume the following: 
\begin{ass}[NQ]\label{NQ}
Proposition $\ref{R v.s. S}$ $(1)$ holds 
for some $c \in F^\times$ even if $\SO(V)$ is not quasi-split.
\end{ass}
We emphasize that $k$ is even.
Note that if Proposition \ref{R v.s. S} (1) holds for some $c \in F^\times$, 
then it holds for any $c \in F^\times$ by Proposition \ref{whittaker2}.

%\subsection{Zeta integrals of Godement--Jacquet}
\subsection{Zeta integrals of Godement--Jacquet}
In this subsection, we review the theory of local factors for $\GL(k)$ 
developed by Godement--Jacquet \cite{GJ}.
\par

Let $\tau$ be an irreducible smooth representation of $\GL_k(F)$ on a space $\VV_\tau$
with a central character $\omega_\tau$.
We write
\[
L(s,\tau)=L(s,\phi_\tau)
\quad\text{and}\quad
\ep(s,\tau,\psi)=\ep(s,\phi_\tau,\psi)
\]
for the standard $L$-factor and $\ep$-factor of $\tau$,
where $\phi_\tau$ is the $k$-dimensional representation of $\WD_F$ associated to $\tau$.
Then the standard $\gamma$-factor of $\tau$ is defined by
\[
\gamma(s,\tau,\psi)=\ep(s,\tau,\psi)\cdot
\frac{L(1-s,\tau^\vee)}{L(s,\tau)}.
\]
\par

For $s\in\C$, $\Phi\in\Sch(\M_k(F))$ and a matrix coefficient $f$ of $\tau$, we put
\[
Z(s,\Phi,f)=\int_{\GL_k(F)}\Phi(a)f(a)|\det(a)|_F^sda.
\]
This integral is absolutely convergent for $\re(s)\gg0$
and admits a meromorphic continuation to $\C$.
Moreover, the quotient
\[
\frac{Z\left(s+\frac{k-1}{2},\Phi,f\right)}{L(s,\tau)}
\]
is an entire function of $s$.
\begin{lem}
If $\tau$ is tempered, then $Z(s,\Phi,f)$
is absolutely convergent for $\re(s)>(k-1)/2$.
\end{lem}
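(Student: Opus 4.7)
The plan is to bound $|Z(s,\Phi,f)|$ directly, by combining the compact support of $\Phi$ with the Harish--Chandra--Silberger estimate on matrix coefficients of tempered representations and the Cartan decomposition of the Haar measure on $\GL_k(F)$. Since $\Phi \in \Sch(\M_k(F))$ has compact support in $\M_k(F)$, there exist constants $M > 0$ and $N_0 \in \Z$ such that $|\Phi(a)| \leq M$ and $\Phi(a) = 0$ unless $\|a\| \leq q^{N_0}$, where $\|\cdot\|$ is the entrywise max norm. Hence absolute convergence of $Z(s,\Phi,f)$ for $\re(s) > (k-1)/2$ reduces to
\[
\int_{\GL_k(F),\ \|a\|\leq q^{N_0}} |f(a)|\, |\det a|_F^{\re(s)}\, da < \infty.
\]

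I would then apply the Cartan decomposition $\GL_k(F) = K A^+ K$ with $K = \GL_k(\oo_F)$, parameterizing $A^+$ by tuples $(n_1,\ldots,n_k)$ with $n_1 \leq \cdots \leq n_k$ via $t = \diag(\varpi^{n_1},\ldots,\varpi^{n_k})$. The bound $\|a\| \leq q^{N_0}$ is bi-$K$-invariant and translates to $n_1 \geq -N_0$. A direct computation (or Macdonald's formula) shows $\mu(KtK) \asymp \delta_B(t) = \prod_i q^{n_i(2i-k-1)}$ up to bounded factors, where $B$ is the upper-triangular Borel. The temperedness of $\tau$ yields the Harish-Chandra/Silberger bound
\[
\sup_{k_1,k_2 \in K} |f(k_1 t k_2)| \leq C \cdot \delta_B(t)^{-1/2} \cdot (1 + \max_i |n_i|)^{N'}
\]
for constants $C, N'$ independent of $t$.

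Combining these inputs dominates the integral by the sum
\[
\sum_{\substack{n_1 \leq \cdots \leq n_k \\ n_1 \geq -N_0}} \delta_B(t)^{1/2} \cdot |\det t|_F^{\re(s)} \cdot (1 + \max_i|n_i|)^{N'}.
\]
Introducing the telescoping coordinates $m_1 = n_1 + N_0$ and $m_j = n_j - n_{j-1}$ for $j \geq 2$ (so the $m_j \geq 0$ are free), one checks that the exponent of $q$ in $\delta_B(t)^{1/2} |\det t|_F^{\re(s)}$ becomes a linear form whose coefficient at $m_j$ equals $(k-j+1)\bigl((j-1)/2 - \re(s)\bigr)$. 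The product of the resulting geometric series converges precisely when $(j-1)/2 - \re(s) < 0$ for every $j \in \{1,\ldots,k\}$; the binding constraint at $j = k$ gives $\re(s) > (k-1)/2$, as claimed. The polynomial factor $(1+\max_i|n_i|)^{N'}$ is harmlessly absorbed by weakening the inequality by an arbitrary $\epsilon > 0$ and applying the standard fact that $(1+n)^{N'} q^{-\epsilon n}$ is summable.

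The main obstacle will be the careful bookkeeping to align the three sources of $\delta_B$-factors: the Cartan Jacobian contributing $\delta_B$, the tempered matrix coefficient bound contributing $\delta_B^{-1/2}$, and the shift implicit in the normalization of $Z(s,\Phi,f)$ by $|\det|_F^s$ rather than $|\det|_F^{s-(k-1)/2}$. Equivalently, temperedness means the central exponents of $\tau$ lie on the unitary axis, so the half-plane of absolute convergence is shifted from $\re(s) > 0$ by exactly the dimension-theoretic constant $(k-1)/2$; getting this shift to emerge on the nose is the only subtlety in an otherwise routine asymptotic analysis.
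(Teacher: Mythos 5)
Your proof is correct and takes essentially the same route as the paper's: Cartan decomposition with the volume bound $\mu(KtK)\leq C\,\delta_B(t)$, the Harish--Chandra/Silberger estimate giving $\sup_{k_1,k_2\in K}|f(k_1tk_2)|\lesssim \delta_B(t)^{-1/2}(1+\sigma(t))^{N'}$ for tempered $\tau$, and the compact support of $\Phi$ bounding the lowest exponent $n_1$ from below, all reducing the problem to the same dominated sum $\sum \delta_B^{1/2}|\det t|_F^{\re(s)}(1+\sigma)^{N'}$. Your telescoping coordinates $m_j=n_j-n_{j-1}$, whose coefficient $(k-j+1)\bigl((j-1)/2-\re(s)\bigr)$ is indeed correct, merely organize the final geometric-series bookkeeping more systematically than the paper (which isolates the $a_k\to\infty$ direction and bounds the rest crudely), and the binding condition $\re(s)>(k-1)/2$ emerges identically.
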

\begin{proof}
Put $t=\re(s)>(k-1)/2$.
Fix a uniformizer $\varpi$ of $F$ and put
\[
t(a)=
\begin{pmatrix}
\varpi^{a_1}&&\\
&\ddots&\\
&&\varpi^{a_k}
\end{pmatrix}\in \GL_k(F)
\]
for $a=(a_1,\dots,a_k)\in \Z^k$.
Let $B$ be the Borel subgroup of $G=\GL_k(F)$ consisting of upper triangular matrices, and
put $K=\GL_k(\oo_F)$.
We denote the modulus character of $B$ by $\delta_B$.
Then we have
\[
\delta_B(t(a))=\prod_{i<j}\left|\frac{\varpi^{a_i}}{\varpi^{a_j}}\right|_F
=q^{-(k-1)a_1}q^{-(k-3)a_2}\cdots q^{(k-1)a_k}.
\]
Since $F$ is non-archimedean, the integral formula
\begin{align*}
&\int_{\GL_k(F)}\Big|\Phi(g)f(g)|\det(g)|_F^s\Big|da\\
&=\sum_{a_1\leq\dots\leq a_k}\mu(t(a))
\int_{K\times K}|\Phi(k_1t(a)k_2)|\cdot|f(k_1t(a)k_2)|\cdot|\det(k_1t(a)k_2)|_F^tdk_1dk_2
\end{align*}
holds for some $\mu(t(a))\geq0$.
Moreover, there exists a positive constant $A_0$ such that
$\mu(t(a))\leq A_0\cdot\delta_B(t(a))$ for $a=(a_1,\dots,a_k)\in\Z^k$ with $a_1\leq\dots\leq a_k$
(see, e.g., \cite[p. 149]{Sil}).
\par

We define the height function $\sigma(g)$ of $G$ by
\[
\sigma(g)=\max_{\substack{1\leq i\leq k\\1\leq j\leq k}}(\log|g_{i,j}|_F,\log|(g^{-1})_{i,j}|_F).
\]
Harish-Chandra's spherical function $\Xi(g)$ of $G$ is given by
\[
\Xi(g)=\int_{K}h_0(kg)dk,
\]
where $h_0\in\Ind_B^G(\1)$ is the function whose restriction to $K$ is identically equal to $1$.
Note that $\Xi$ is a matrix coefficient of the tempered representation $\Ind_B^G(\1)$,
and is bi-$K$-invariant. 
It is known that there exist positive constants $A_1$ and $A_2$ such that
\[
\Xi(t(a))\leq A_1\cdot\delta_B^{-1/2}(t(a))\cdot(1+\sigma(t(a)))^{A_2}
\]
for $a=(a_1,\dots,a_k)\in\Z^k$ with $a_1\leq\dots\leq a_k$
(see, e.g., \cite[p. 154, Theorem 4.2.1]{Sil}).
Since $\tau$ is tempered, any matrix coefficient $f$ of $\tau$ satisfies
\[
|f(g)|\leq B_1\cdot\Xi(g)\cdot(1+\sigma(g))^{B_2}
\]
for some positive constants $B_1$ and $B_2$.
\par

Since $\sigma(k_1gk_2)=\sigma(g)$ for $k_1,k_2\in K$ and $g\in G$,
we conclude that there are positive constants $C_1$ and $C_2$ such that
\begin{align*}
&\int_{\GL_k(F)}\Big|\Phi(g)f(g)|\det(g)|_F^s\Big|da\\
&=C_1\cdot\sum_{a_1\leq\dots\leq a_k}\delta_B^{1/2}(t(a))(1+\sigma(t(a)))^{C_2}|\det(t(a))|_F^t
\int_{K\times K}|\Phi(k_1t(a)k_2)|dk_1dk_2.
\end{align*}
We choose $\ep>0$ such that $(k-1)/2-t+C_2\ep<0$.
Note that $\Phi$ has a compact support in $\M_k(F)$,
and so does the function 
\[
\M_k(F)\ni x\mapsto\int_{K\times K}|\Phi(k_1xk_2)|dk_1dk_2.
\]
This implies that there are constants $M>0$ and $r>0$ such that
\begin{align*}
&\int_{\GL_k(F)}\Big|\Phi(g)f(g)|\det(g)|_F^s\Big|da\\
&=M\cdot\sum_{-r\leq a_1\leq\dots\leq a_k}\delta_B^{1/2}(t(a))(1+\sigma(t(a)))^{C_2}|\det(t(a))|_F^t\\
&=M\cdot\sum_{-r\leq a_1\leq\dots\leq a_k}q^{a_1(-(k-1)/2-t)}q^{a_2(-(k-3)/2-t)}\cdots q^{a_k((k-1)/2-t)}(1+\sigma(t(a)))^{C_2}.
\end{align*}
To see the convergence of this sum, 
we only consider the sum over $a_k\geq r\geq -a_1$.
Then we have
\[
\sigma(t(a))=\log(q^{a_k}).
\]
Moreover, we may assume that if $a_k\geq r$, then $1+\log(q^{a_k})\leq q^{\ep a_k}$.
In this case, the sum over $a_k\geq r\geq -a_1$ is bounded by
\[
M\cdot\sum_{a_k\geq r}(a_k+r+1)^{k-1}\cdot
q^{-r(-(k-1)/2-t)}\cdots q^{-r((k-3)/2-t)} \cdot
q^{a_k((k-1)/2-t+C_2\ep)}.
\]
This sum converges since $q^{(k-1)/2-t+C_2\ep}<1$.
\end{proof}
\par

Let $\hat{\Phi}\in\Sch(\M_k(F))$ be the Fourier transform of $\Phi$ defined by
\[
\hat{\Phi}(x)=\int_{\M_k(F)}\Phi(y)\psi(\tr(xy))dy,
\]
where $dy$ is the self-dual Haar measure on $\M_k(F)$ with respect to the pairing
$(x,y)\mapsto\psi(\tr(xy))$.
Let $\ch{f}$ be the matrix coefficient of $\tau^\vee$ given by
$\ch{f}(a)=f(a^{-1})$.
Then the local functional equation asserts that
\[
Z\left(-s+\frac{k+1}{2},\hat{\Phi},\ch{f}\right)
=\gamma(s,\tau,\psi)\cdot
Z\left(s+\frac{k-1}{2},\Phi,f\right).
\]

%\section{Proof of Theorem \ref{last}}\label{proof of last}
\section{Proof of (P1) for tempered $L$-parameters}\label{proof of last}
Now we can begin the proof of (P1) for tempered $L$-parameters. 
This will be proven by an explicit construction of an
equivariant map which realizes the theta correspondence.
Let $V=V_{2m}$ and $W=W_{2n}$.
In this section, we put $m=n+1$.
Assume that $n \geq 1$ so that $\dim(V)=2m\geq4$.

%\subsection{Construction of equivariant maps}
\subsection{Construction of equivariant maps}
We put
\begin{align*}
V'=X+V+X^*,\quad
W'=Y+W+Y^*
\end{align*}
with $\dim(X)=\dim(Y)=k$.
Assume that $k$ is even.
Using the basis $\{v_1,\dots,v_k\}$ of $X$ (\resp $\{w_1,\dots,w_k\}$ of $Y$), 
we identify $\GL(X)$ (\resp $\GL(Y)$) with $\GL_k(F)$.
Hence we can define an isomorphism $i\colon \GL(Y)\rightarrow \GL(X)$ via these identifications.
Put
\[
e=v_1\otimes w_1^*+\dots+ v_k\otimes w_k^*\in X\otimes Y^*,\quad 
e^*=v_1^*\otimes w_1+\dots+ v_k^*\otimes w_k\in X^*\otimes Y.
\]
Then we have $i(a)e=a^*e$ and $i(a)^*e^*=ae^*$ for $a\in\GL(Y)$.
\par

Recall that $W=Y_n+Y_n^*$.
For $\varphi\in\Sch''=\Sch(V'\otimes Y^*)\otimes \Sch(V\otimes Y_n^*)\otimes \Sch(X^*\otimes W)$, 
we define maps
\[
f(\varphi), \hat{f}(\varphi)\colon \Sp(W')\times \orth(V')\rightarrow \Sch(V\otimes Y_n^*)
\]
by
\begin{align*}
[f(\varphi)(g',h')](x_0)&=[\omega''(g',h')\varphi](
\begin{pmatrix}
e\\0\\0
\end{pmatrix}
,x_0,0
),\\
[\hat{f}(\varphi)(g',h')](x_0)&=
\int_{X\otimes Y^*}
[\omega(g',h')\varphi](
\begin{pmatrix}
z\\0\\0
\end{pmatrix}
,x_0,0
)
\psi(\pair{z,e^*})dz
\end{align*}
for $g' \in \Sp(W')$ and $h' \in \orth(V')$.
Here, we write an element in $V'\otimes Y^*$ as a block matrix
\[
\begin{pmatrix}
x_1\\x_2\\x_3
\end{pmatrix}
\]
with $x_1\in X\otimes Y^*$, $x_2\in V\otimes Y^*$ and $x_3\in X^*\otimes Y^*$.
\begin{lem}
For $f=f(\varphi)$ or $f=\hat{f}(\varphi)$, we have
\begin{align*}
f(u_Qg',u_Ph')&=f(g',h'),&u_P\in U_P, u_Q\in U_Q,\\
f(gg',hh')&=\omega(g,h)f(g',h'),&h\in\orth(V), g\in\Sp(W), \\
f(m_Q(a)g',m_P(i(a))h')&=\chi_V(\det(a))|\det(a)|_F^{\rho_P+\rho_Q}f(g',h'),&a\in\GL(Y).
\end{align*}
\end{lem}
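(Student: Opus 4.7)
The plan is to prove the three identities by direct computation from the formulas of Lemmas \ref{Weil1}, \ref{Weil2} and \ref{Weil3}, working in the mixed model $\Sch'' = \Sch(V'\otimes Y^*)\otimes\Sch(V\otimes Y_n^*)\otimes\Sch(X^*\otimes W)$. Write $e_0 = \begin{pmatrix}e\\0\\0\end{pmatrix}\in X\otimes Y^*\subset V'\otimes Y^*$, and by linearity take pure tensors $\varphi = \varphi_1\otimes\varphi_2\otimes\varphi_3$. The essential observation is that in both $f(\varphi)$ and $\hat f(\varphi)$, the first coordinate is always in $X\otimes Y^*$ (namely $e_0$ or the integration variable $\begin{pmatrix}z\\0\\0\end{pmatrix}$) and the third coordinate is $0\in X^*\otimes W$; the vanishing behavior at these loci drives every computation.

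For $u_P(b)u_P(c)\in U_P$, both $u_P(c)^{-1}$ and $u_P(b)^{-1}$ fix $X$ pointwise, so the first coordinate is unchanged; the Heisenberg shift $\rho(b^*x_3,0)$ and the quadratic twist $\psi(\tfrac12\pair{cx_3,x_3})$ from Lemma \ref{Weil3} collapse to the identity at $x_3=0$. For $u_Q(b')u_Q(c')\in U_Q$, the twist $\psi(\tfrac12\pair{c'x_1,x_1})$ from Lemma \ref{Weil2} vanishes because $X$ is isotropic in $V'$, and the Heisenberg action $\rho'(b'^*x_1,0)$ on $\Sch'$ evaluated at $x_3=0$ is trivial by Lemma \ref{Weil3}. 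For the $\Sp(W)\times\orth(V)$-equivariance, $h\in\orth(V)\subset M_P$ acts trivially on $X$, so $h^{-1}x_1 = x_1$, while Lemma \ref{Weil3} at $x_3=0$ identifies the restriction of $\omega'(g,h)$ with the action of $\omega(g,h)$ on $\Sch(V\otimes Y_n^*)$. Since neither $g$ nor $h$ touches the integration variable $z$, the same argument handles $\hat f(\varphi)$.

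The third identity is the main computational point. Lemmas \ref{Weil2} and \ref{Weil3} produce the scalar $\chi_V(\det a)|\det a|_F^{m'+n} = \chi_V(\det a)|\det a|_F^{\rho_P+\rho_Q}$, using $m=n+1$ so that $m'+n = 2n+k+1 = \rho_P+\rho_Q$. The first coordinate becomes $(1\otimes a^*)(i(a)^{-1}\otimes 1)x_1$. For $f(\varphi)$ with $x_1 = e_0$, the relation $i(a)e = a^*e$ recorded in Section 7.1 gives $(1\otimes a^*)(i(a)^{-1}\otimes 1)e = e$, so $e_0$ is fixed. For $\hat f(\varphi)$, one makes the substitution $z\mapsto Tz$ with $T = (1\otimes a^*)(i(a)^{-1}\otimes 1)$; under the matrix identification $X\otimes Y^*\cong\M_k(F)$ via $v_i\otimes w_j^*\leftrightarrow E_{ij}$, one computes $TZ = a^{-1}Za$, so $|\det T|_F = 1$ and the exponent $\pair{z,e^*} = -\tr Z$ is preserved by cyclicity of trace. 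The integral therefore reproduces $\hat f(\varphi)(g',h')(x_0)$, and the main technical point to verify in detail is precisely this matrix identification of $T$ as inner conjugation, which rests on the choice of bases defining $i$.
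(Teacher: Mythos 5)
Your computation is correct and is exactly the verification the paper leaves implicit: its proof of this lemma is the one-line remark that it follows from Lemmas \ref{Weil1}, \ref{Weil2} and \ref{Weil3}, which are precisely the formulas you use. Your detailed checks (isotropy of $X$ killing the quadratic twists, triviality of the Heisenberg shifts at $x_3=0$, the exponent $m'+n=\rho_P+\rho_Q$ via $m=n+1$, and the conjugation $Z\mapsto a^{-1}Za$ preserving $|\det|$ and the trace in the $\hat f$ case) all hold, so the proposal matches the paper's approach.
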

\begin{proof}
This follows from Lemmas \ref{Weil1}, \ref{Weil2} and \ref{Weil3}.
\end{proof}
\par

Let $\tau$ be an irreducible (unitary) tempered representation of $\GL_k(F)$ on a space $\VV_\tau$.
We may regard $\tau$ as a representation of $\GL(X)$ or $\GL(Y)$ via the above identifications.
Let $\pi$ and $\cl{\sigma}$ be irreducible tempered representations of $\Sp(W)$ and $\orth(V)$ 
on spaces $\VV_{\pi}$ and $\VV_{\cl{\sigma}}$, respectively.
Fix nonzero invariant non-degenerate bilinear forms $\pair{\cdot,\cdot}$ on $\VV_\tau\times\VV_{\tau^\vee}$,
$\VV_{\pi}\times\VV_{\pi^\vee}$ and $\VV_{\cl{\sigma}}\times\VV_{\cl{\sigma}^\vee}$.
Let 
\[
\pair{\cdot,\cdot}\colon (\VV_\tau\otimes \VV_{\cl{\sigma}^\vee})\times\VV_{\tau^\vee}\rightarrow \VV_{\cl{\sigma}^\vee}
\]
be the induced map.
\par

Now we assume that 
\[
\cl{\sigma}=\Theta_{\psi,V,W}(\pi).
\]
We fix a nonzero $\Sp(W)\times\orth(V)$-equivariant map
\[
\TT\colon\omega\otimes \cl{\sigma}^\vee \rightarrow \pi.
\]
For $\varphi\in\Sch''=\Sch(V'\otimes Y^*)\otimes \Sch(V\otimes Y_n^*)\otimes \Sch(X^*\otimes W)$, 
$\Phi_s\in\Ind_P^{\orth(V')}(\tau_s\otimes\cl{\sigma}^\vee)$,
$g'\in\Sp(W')$, $\ch{v}\in\VV_{\tau^\vee}$, and $\ch{v}_0\in\VV_{\pi^\vee}$, 
we put
\begin{align*}
\pair{\TT_s(\varphi,\Phi_s)(g'), \ch{v}\otimes\ch{v}_0}
=L(s,\tau)^{-1}\cdot
\int_{U_P\orth(V)\bs\orth(V')}\pair{\TT(\hat{f}(\varphi)(g',h'),\pair{\Phi_s(h'),\ch{v}}),\ch{v}_0}dh'.
\end{align*}
Note that $\pair{\Phi_s(h'),\ch{v}}\in\VV_{\sigma^\vee}$.
\begin{prop}\label{equiv}
We have the following:
\begin{enumerate}
\item
The integral $\pair{\TT_s(\varphi,\Phi_s)(g'), \ch{v}\otimes\ch{v}_0}$
is absolutely convergent for $\re(s)>0$ 
and admits a holomorphic continuation to $\C$.
\item
For $\re(s)<1$, we have
\begin{align*}
\pair{\TT_s(\varphi,\Phi_s)(g'),\ch{v}\otimes\ch{v}_0}
=L(s,\tau)^{-1}\gamma(s,\tau,\psi)^{-1}\cdot
\int_{U_P\orth(V)\bs\orth(V')}\pair{\TT(f(\varphi)(g',h'),\pair{\Phi_s(h'),\ch{v}}),\ch{v}_0}dh'.
\end{align*}
\item
The map
\[
\TT_s\colon\omega''\otimes \Ind_{P}^{\orth(V')}(\tau_s\otimes\cl{\sigma}^\vee)\rightarrow \Ind_Q^{\Sp(W')}(\tau_s\chi_V\otimes \pi)
\]
is $\Sp(W')\times\orth(V')$-equivariant.
\item\label{equiv4}
For $\Phi\in\Ind_{P}^{\orth(V')}(\tau\otimes\cl{\sigma}^\vee)$ with $\Phi\ne0$, 
there exists $\varphi\in\omega''$ such that
\[
\TT_0(\varphi,\Phi)\not=0.
\]
\end{enumerate}
\end{prop}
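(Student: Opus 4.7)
The plan is to convert the integral defining $\pair{\TT_s(\varphi,\Phi_s)(g'),\ch v\otimes\ch v_0}$ into a Godement--Jacquet zeta integral associated with $\tau$ by means of the Iwasawa decomposition $\orth(V')=P\cdot K$, and then invoke the analytic theory of Godement--Jacquet recalled in the previous section. More precisely, writing $h'=m_P(a)\cdot k$ with $a\in\GL(X)$ and $k\in K$ a good maximal compact of $\orth(V')$, the quasi-invariance of $\hat f(\varphi)$ and $f(\varphi)$ under the Levi (which follows from the Weil representation formulas in Lemmas \ref{Weil2} and \ref{Weil3}), combined with the defining transformation law for $\Phi_s$, lets one factor out a power $|\det a|_F^{s+c_0}$ times $\chi_V(\det a)$ and a matrix coefficient $f_{\tau}(a)=\pair{\tau(a)u,\ch v}$ of $\tau$. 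After this reduction the integral becomes
\[
L(s,\tau)^{-1}\cdot\int_{K}\biggl(\int_{\GL(X)} \Phi_K(a)\,f_{\tau}(a)\,|\det a|_F^{s+\frac{k-1}{2}}\,da\biggr)dk,
\]
where $\Phi_K\in\Sch(\M_k(F))$ depends smoothly on $k\in K$ and on $\varphi,\Phi_s,\ch v,\ch v_0$.

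The statements (1) and (2) follow from this. For (1), temperedness of $\tau$ gives absolute convergence of the Godement--Jacquet zeta integral for $\re(s)>0$ by the lemma of \S5.3, and division by $L(s,\tau)$ produces an entire function; the $K$-integral being over a compact preserves holomorphy. For (2), the crucial observation is that $\hat f(\varphi)$ is nothing but $f$ of the partial Fourier transform of $\varphi$ along $X\otimes Y^*$, i.e.\ $\hat f(\varphi)(g',h')=f(\mathcal F\varphi)(g',h')$ up to the measure normalization of \S\ref{measure}. Under the identification $X\otimes Y^*\cong \M_k(F)$ given by the bases $\{v_i\otimes w_j^*\}$, this partial Fourier transform is the Godement--Jacquet Fourier transform (matched by Lemma \ref{inversion}), and therefore applying the local functional equation of Godement--Jacquet to the inner integral converts $\hat f$ into $f$ and produces the factor $\gamma(s,\tau,\psi)^{-1}$; part (2) is then exactly this identity.

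Part (3) is an equivariance check which can be made on generators. The equivariance under $\Sp(W)\times\orth(V)\subset M_Q\times M_P$ is immediate from the Weil representation formulas of Lemmas \ref{Weil1}--\ref{Weil3} together with the assumed $\Sp(W)\times\orth(V)$-equivariance of $\TT$. The action of $U_P$ and of $U_Q$ on $\varphi$ only contributes characters that are absorbed by the integration over $U_P\orth(V)\backslash\orth(V')$ and by shifting $g'$, respectively. For the Levi elements $m_P(i(a))$ and $m_Q(a)$ one compares the $\chi_V(\det a)|\det a|_F^{m'}$ and $\chi_V(\det a)|\det a|_F^{m}$ in the Weil representation formulas against the $|\det a|_F^{s+\rho_P}$, $|\det a|_F^{s+\rho_Q}\chi_V$ transformation laws of the induced representations, and the remaining powers match because $\rho_Q-\rho_P=m$ and $m'=m+k$. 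The delicate case is the equivariance under the Weyl elements $w_P$ (acting on the $\orth(V')$ side) and $w_Q$ (acting on the $\Sp(W')$ side): here one has to push the integral representation of $\omega''(w_Q,1)\varphi$ through the definition of $\TT_s$, which forces a switch between the $f$-expression and the $\hat f$-expression. Reconciling these two Fourier transforms is precisely where one invokes (2). This matching of two Fourier-type operations with the Godement--Jacquet $\gamma$-factor is, in my view, the main obstacle in the proof, requiring careful bookkeeping of measures and of the roots-of-unity $\gamma_V^{\pm k}$ that appear in Lemmas \ref{Weil2} and \ref{Weil3}.

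Finally for (4), given $\Phi\in\Ind_P^{\orth(V')}(\tau\otimes\cl\sigma^\vee)$ nonzero, pick $h'_0\in\orth(V')$ with $\Phi(h'_0)\neq0$; by smoothness of $\Phi$ and compactness of the relevant orbit, we may localize the integral near $h'_0$. One then chooses $\varphi=\varphi_1\otimes\varphi_2\otimes\varphi_3$ with $\varphi_1\in\Sch(V'\otimes Y^*)$ supported in a very small neighbourhood of $\left(\begin{smallmatrix}e\\0\\0\end{smallmatrix}\right)$, $\varphi_3\in\Sch(X^*\otimes W)$ supported near $0$, and $\varphi_2\in\omega$ chosen so that $\TT(\varphi_2\otimes\pair{\Phi(h'_0),\ch v})$ is not annihilated by some $\ch v_0\in\VV_{\pi^\vee}$. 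This is possible because the surjectivity of $\TT\colon\omega\otimes\cl\sigma^\vee\to\pi$ (which holds by the hypothesis $\cl\sigma=\Theta(\pi)$) ensures that for any nonzero vector in $\cl\sigma^\vee$ one can pair through $\TT$ to obtain a nonzero element of $\pi$. With such choices only a small neighborhood of $h'_0$ contributes to the integral, and that contribution is nonzero, giving $\TT_0(\varphi,\Phi)\neq0$.
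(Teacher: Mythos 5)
Your overall route --- Iwasawa decomposition $\orth(V')=PK$, reduction of the defining integral to Godement--Jacquet zeta integrals for $\tau$, the convergence lemma for tempered $\tau$ together with the entirety of $Z(s+\frac{k-1}{2},\Phi,f)/L(s,\tau)$ for part (1), the local functional equation for part (2), and a localization-of-support argument combined with irreducibility of $\cl{\sigma}^\vee$ for part (4) --- is exactly the one the paper intends: its proof simply refers to Lemmas 8.1--8.3 of \cite{GI2}, which are proved this way, and those parts of your sketch are sound in outline. One caveat in (2): it is not literally true that $\hat f(\varphi)=f(\mathcal{F}\varphi)$ for a partial Fourier transform $\mathcal{F}$ applied to $\varphi$, since $\omega''(g',h')$ does not commute with that transform; the correct statement is that for $h'=m_P(b)k$ the dependence on $b$ of the $f$- and $\hat f$-expressions is through a Schwartz function on $\M_k(F)$ and its Fourier transform, and the functional equation is applied to the inner $\GL_k$-integral.

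The genuine gap is in your treatment of (3). The $\Sp(W')\times\orth(V')$-equivariance is formal and needs neither a generator-by-generator check nor any appeal to (2): directly from the definitions, $\hat f(\omega''(g_0',h_0')\varphi)(g',h')=\hat f(\varphi)(g'g_0',h'h_0')$, so equivariance in $h_0'$ is the change of variables $h'\mapsto h'h_0'^{-1}$ on $U_P\orth(V)\bs\orth(V')$ (which carries a right-invariant measure, $U_P\orth(V)$ being unimodular), and equivariance in $g_0'$ is immediate because $\Sp(W')$ acts on the target by right translation. In particular your claim that the Weyl elements are the delicate case, and that handling $\omega''(w_Q,1)$ forces a switch between the $f$- and $\hat f$-expressions ``which is precisely where one invokes (2)'', is incorrect: $\omega''(w_Q,1)$ merely translates the first argument of $\hat f(\varphi)$, no Fourier transform is switched, and (2) plays no role in (3); the place where the $f$/$\hat f$ comparison and the $\gamma$-factor genuinely intervene is Proposition \ref{proportionality}, the compatibility with the intertwining operators. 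What does require proof in (3) is that $g'\mapsto\TT_s(\varphi,\Phi_s)(g')$ obeys the left transformation law under $Q=M_QU_Q$ defining $\Ind_Q^{\Sp(W')}(\tau_s\chi_V\otimes\pi)$; this follows from the quasi-invariance lemma for $f(\varphi),\hat f(\varphi)$ stated just before the proposition together with the substitution $h'\mapsto m_P(i(a))h'$. Note also that your exponent bookkeeping there is wrong: with $m=n+1$ one has $\rho_P=\rho_Q=(2n+k+1)/2$, so $\rho_Q-\rho_P=0$, not $m$; the law closes instead because the factor $|\det a|_F^{\rho_P+\rho_Q}$ from that lemma, the Jacobian $\delta_P(m_P(i(a)))^{-1}=|\det a|_F^{-2\rho_P}$ of the substitution, and the factor $|\det a|_F^{s+\rho_P}$ coming from $\Phi_s$ combine to $|\det a|_F^{s+\rho_Q}$ regardless of any relation between $\rho_P$ and $\rho_Q$.
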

\begin{proof}
The proof is similar to those of Lemmas 8.1, 8.2 and 8.3 in \cite{GI2}.
\end{proof}

%\subsection{Compatibilities with intertwining operators}
\subsection{Compatibilities with intertwining operators}
Now we shall explain a key property of the equivariant map we have constructed.
\par

We have assumed that $k=\dim(X)=\dim(Y)$ is even and $\dim(V)=2(n+1)\geq4$.
Let $w\in W(M_P)$ and $w'\in W(M_Q)$ be the non-trivial elements in the relative Weyl groups.
As in \S\ref{sec.intertwining}, we take the representatives 
$\cl{w}\in\SO(V')$ of $w$ and $\cl{w}'\in\Sp(W')$ of $w'$ defined by
\[
\cl{w}=w_P\cdot m_P(-c\cdot a)
\quad \text{and}\quad 
\cl{w}'=w_Q\cdot m_Q(a),
\]
where $a\in\GL_k(F)\cong\GL(X)\cong\GL(Y)$ is given by
\[
a=
\begin{pmatrix}
&&(-1)^{n+1}\\
&\iddots&\\
(-1)^{n+k}&&
\end{pmatrix}.
\]
We fix $\tau$, $\pi$ and $\cl{\sigma}=\Theta_{\psi,V,W}(\pi)\not=0$.
We shall write
\[
\MM(\cl{w},s)=\MM(\cl{w},\tau_s\otimes\cl{\sigma}^\vee)
\quad\text{and}\quad
\MM(\cl{w}',s)=\MM(\cl{w}',\tau_s\chi_V\otimes\pi)
\]
for the unnormalized intertwining operators, which are defined by the integrals
\begin{align*}
\MM(\cl{w},s)\Phi_s(h')&=
\left|c\right|_F^{k\rho_P}
\int_{U_P}\Phi_s(\cl{w}^{-1}u_Ph')du_P,\\
\MM(\cl{w}',s)\Phi'_s(g')&=
\int_{U_Q}\Phi'_s(\cl{w}'^{-1}u_Qg')du_Q
\end{align*}
for $\Phi_s\in\Ind_P^{\orth(V')}(\tau_s\otimes\cl{\sigma}^\vee)$
and $\Phi'_s\in\Ind_Q^{\Sp(W')}(\tau_s\chi_V\otimes\pi)$, respectively.
By the Howe duality, the diagram
\[
\begin{CD}
\omega''\otimes\Ind_P^{\orth(V')}(\tau_s\otimes\cl{\sigma}^\vee)
@>\TT_s>>\Ind_Q^{\Sp(W')}(\tau_s\chi_V\otimes\pi)\\
@V1\otimes\MM(\cl{w},s)VV @VV\MM(\cl{w}',s)V\\
\omega''\otimes\Ind_P^{\orth(V')}(w(\tau_s\otimes\cl{\sigma}^\vee))
@>\TT_{-s}>>\Ind_Q^{\Sp(W')}(w'(\tau_s\chi_V\otimes\pi))
\end{CD}
\]
commutes up to a scalar.
The following proposition determines this constant of proportionality explicitly.
\begin{prop}\label{proportionality}
For $\varphi\in\omega''$ and $\Phi_s\in\Ind_P^{\orth(V')}(\tau_s\otimes\cl{\sigma}^\vee)$, we have
\begin{align*}
\MM(\cl{w}',s)\TT_s(\varphi,\Phi_s)
&=\omega_\tau(c)\cdot\left|c\right|_F^{ks}\cdot
\gamma_V^{-k}\cdot L(s,\tau)^{-1}L(-s,\tau^\vee)\gamma(-s,\tau^\vee,\psi)
\cdot
\TT_{-s}(\varphi,\MM(\cl{w},s)\Phi_s).
\end{align*}
\end{prop}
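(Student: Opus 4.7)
The plan is to mimic the proof of \cite[Proposition 8.4]{GI2}: evaluate both sides of the claimed identity as iterated integrals at a convenient base point, reduce the comparison to a single application of the Godement--Jacquet local functional equation, and read off the remaining scalar from the Weil indices and $c$-powers produced by $\cl{w}$ and $\cl{w}'$. By the $\Sp(W')\times\orth(V')$-equivariance of $\TT_s$ (Proposition \ref{equiv} (3)) and of the intertwining operators, it suffices to test both sides at $g'=1$ after pairing with arbitrary $\ch v\otimes\ch v_0\in\VV_{\tau^\vee}\otimes\VV_{\pi^\vee}$; both sides extend meromorphically, so I may work in the range $0<\re(s)<1$ and use whichever of the two formulas of Proposition \ref{equiv} is convenient on each side.

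First, I unfold the left-hand side using the $\hat f$-formula of Proposition \ref{equiv} (1):
\[
\pair{\MM(\cl w',s)\TT_s(\varphi,\Phi_s)(1),\ch v\otimes\ch v_0}
=L(s,\tau)^{-1}\int_{U_Q}\int_{U_P\orth(V)\bs\orth(V')}
\pair{\TT(\hat f(\omega''(\cl w'^{-1}u_Q,1)\varphi)(1,h'),\pair{\Phi_s(h'),\ch v}),\ch v_0}\,dh'\,du_Q,
\]
and the right-hand side using the $f$-formula of Proposition \ref{equiv} (2) together with the definition of $\MM(\cl w,s)$:
\[
\pair{\TT_{-s}(\varphi,\MM(\cl w,s)\Phi_s)(1),\ch v\otimes\ch v_0}
=|c|_F^{k\rho_P}L(-s,\tau^\vee)^{-1}\gamma(-s,\tau^\vee,\psi)^{-1}
\int_{U_P\orth(V)\bs\orth(V')}\int_{U_P}
\pair{\TT(f(\varphi)(1,h'),\pair{\Phi_s(\cl w^{-1}u_Ph'),\ch v}),\ch v_0}\,du_P\,dh'.
\]

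Next, I compute $\omega''(\cl w'^{-1}u_Q,1)\varphi$ inside the $\hat f$-integrand using Lemmas \ref{Weil1}, \ref{Weil2}, \ref{Weil3}. Writing $\cl w'^{-1}=m_Q(a)^{-1}w_Q^{-1}$, the $w_Q$-factor acts as a Fourier transform in $V'\otimes Y$, producing the Weil index $\gamma_V^{-k}$ and exactly cancelling the partial Fourier transform over $X\otimes Y^*$ built into $\hat f$; the $m_Q(a)$-factor produces $\chi_V(\det a)|\det a|_F^{m'}$. After changing variables and disentangling the integration over $U_Q\simeq \Hom(W,Y)\oplus\Sym(Y^*,Y)$, the $\Sym(Y^*,Y)$-piece recombines with $f(\varphi)(1,h')$ via Lemma \ref{inversion}, and the Bruhat decomposition $\orth(V')=P\cup Pw_PP$ identifies a dense subset of $U_P\orth(V)\bs\orth(V')$ with an open subset of $M_P^\circ$ modulo $M_P^\circ\cap M_P^\circ$; under this identification, the remaining discrepancy between the two sides is a zeta integral against a matrix coefficient of $\tau$. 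Performing the analogous manipulation on the right-hand side, the two integrals collapse to Godement--Jacquet zeta integrals
\[
Z\bigl(s+\tfrac{k-1}{2},\Phi,f\bigr)\quad\text{and}\quad Z\bigl(-s+\tfrac{k+1}{2},\hat\Phi,\ch f\bigr),
\]
for an appropriate $\Phi\in\Sch(\M_k(F))$ and matrix coefficient $f$ of $\tau$, which are bridged by the local functional equation $Z(-s+(k+1)/2,\hat\Phi,\ch f)=\gamma(s,\tau,\psi)Z(s+(k-1)/2,\Phi,f)$. Combined with the $L(s,\tau)^{-1}$ and $L(-s,\tau^\vee)^{-1}\gamma(-s,\tau^\vee,\psi)^{-1}$ normalizations already present, this produces the $L$-factor contribution $L(s,\tau)^{-1}L(-s,\tau^\vee)\gamma(-s,\tau^\vee,\psi)$.

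The remaining scalar $\omega_\tau(c)|c|_F^{ks}\gamma_V^{-k}$ is then assembled from: the Weil index $\gamma_V^{-k}$ obtained above; the $m_P(-c\cdot a)$-contribution to $\cl w$, which gives $\omega_\tau(-c\det a)|c|_F^{-k(s+\rho_P)}$ when applied to $\Phi_s$; the $|c|_F^{k\rho_P}$ normalizing factor in the definition of $\MM(\cl w,s)$; and the vanishing of the $\det a$ and sign contributions, using that $k$ is even so $\omega_\tau((-1)^k)=1$ and $\det a=\pm 1$ with $\omega_\tau(\det a)^2=1$ cancels against the analogous factor from $\cl w'$. The main obstacle is the bookkeeping in the second step: tracking the Haar-measure normalizations of \S\ref{measure}, the partial Fourier transforms, the antidiagonal structure of $a$, and the interaction between the outer $U_Q$-integral and the inner $\hat f$-Fourier transform must all be verified carefully, since any single sign error or misplaced power of $|c|_F$ or $\gamma_V$ will corrupt the final constant. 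Everything else is a formal consequence of Proposition \ref{equiv} and the Godement--Jacquet functional equation.
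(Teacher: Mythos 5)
Your proposal is correct and follows essentially the same route as the paper, whose proof of this proposition is simply the observation that it is "similar to that of \cite[Proposition 8.4]{GI2}": evaluating at $g'=1$ via the equivariance of $\TT_s$, unfolding with the mixed-model formulas of Lemmas \ref{Weil1}--\ref{Weil3} and the Fourier inversion of Lemma \ref{inversion} in the region $0<\re(s)<1$ where all integrals converge for tempered data, reducing the comparison to the Godement--Jacquet local functional equation, and collecting the constants $\omega_\tau(c)$, $|c|_F^{ks}$, $\gamma_V^{-k}$ from the representatives $\cl{w}$, $\cl{w}'$ and the measure normalizations of \S\ref{measure}. This is exactly the intended argument, so no further comparison is needed.
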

\begin{proof}
The proof is similar to that of \cite[Proposition 8.4]{GI2}.
\end{proof}
\par

\begin{cor}\label{compatibility}
For $\varphi\in\omega''$ and $\Phi_s\in\Ind_P^{\orth(V')}(\tau_s\otimes \cl{\sigma}^\vee)$, we have
\[
\RR(w',\tau_s\chi_V\otimes\pi)\TT_s(\varphi,\Phi_s)
=\omega_\tau(c)\cdot\alpha(s)\cdot
\TT_{-s}(\varphi,\RR(w,\tau_s\otimes\cl{\sigma}^\vee)\Phi_s),
\]
where
\begin{align*}
\alpha(s)&=\left|c\right|_F^{ks}\cdot\frac{\ep(-s,\tau^\vee,\psi)}{\ep(s,\tau,\psi)}.
\end{align*}
In particular, if $\tau\cong\tau^\vee$, then $\alpha(0)=1$ and
\[
R(w',\tau\chi_V\otimes\pi)\TT_0(\varphi,\Phi)
=\omega_\tau(c)\cdot
\TT_{0}(\varphi,R(w,\tau\otimes\cl{\sigma}^\vee)\Phi)
\]
for $\Phi\in\Ind_P^{\orth(V')}(\tau\otimes \cl{\sigma}^\vee)$.
\end{cor}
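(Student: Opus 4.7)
The plan is to derive this corollary from Proposition \ref{proportionality} by dividing through by the normalizing factors and simplifying the resulting ratio. Writing $r_1 = r(w,\tau_s\otimes\cl{\sigma}^\vee)$ and $r_2 = r(w',\tau_s\chi_V\otimes\pi)$, substituting $\MM(\cl{w},s)=r_1\RR(w,s)$ and $\MM(\cl{w}',s)=r_2\RR(w',s)$ into Proposition \ref{proportionality} yields
\[
\RR(w',s)\TT_s(\varphi,\Phi_s) = \omega_\tau(c)\cdot |c|_F^{ks}\cdot\gamma_V^{-k}\cdot L(s,\tau)^{-1}L(-s,\tau^\vee)\gamma(-s,\tau^\vee,\psi)\cdot\frac{r_1}{r_2}\cdot\TT_{-s}(\varphi,\RR(w,s)\Phi_s),
\]
so the task reduces to identifying this scalar with $\omega_\tau(c)\cdot\alpha(s)$.

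To compute $r_1/r_2$, I will use that $\cl{\sigma}=\Theta_{\psi,V,W}(\pi)$ is tempered in the almost equal rank case $m=n+1$, so by the tempered part of \cite[Theorem C.5]{GI1} (Proposition \ref{param} above) the parameter of $[\sigma]$ is $\phi_\sigma = (\phi_\pi\otimes\chi_V)\oplus\1$. Consequently
\[
L(s,\phi_\tau\otimes\phi_\sigma) = L(s,\phi_\tau\chi_V\otimes\phi_\pi)\cdot L(s,\phi_\tau),
\qquad
\ep(s,\phi_\tau\otimes\phi_\sigma,\psi) = \ep(s,\phi_\tau\chi_V\otimes\phi_\pi,\psi)\cdot\ep(s,\phi_\tau,\psi),
\]
and similarly for the values at $1+s$. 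Since $\chi_V^{\otimes 2}=\1$, one has $\Lam_2\circ(\phi_\tau\chi_V) = \Lam_2\circ\phi_\tau$, so the $\Lam_2$-factors in $r_1$ and $r_2$ cancel outright. After cancellation,
\[
\frac{r_1}{r_2} = \lam(E/F,\psi)^k\cdot\frac{L(s,\tau)}{\ep(s,\tau,\psi)\,L(1+s,\tau)}.
\]

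Next I will simplify the remaining scalar using the functional equation $L(-s,\tau^\vee)\gamma(-s,\tau^\vee,\psi) = \ep(-s,\tau^\vee,\psi)L(1+s,\tau)$ together with the identity $\lam(E/F,\psi)^k = \gamma_V^k$, which holds because $k$ is even and $\lam(E/F,\psi)^2 = \chi_V(-1) = \gamma_V^2$. Combining these cancels $L(s,\tau)^{-1}L(1+s,\tau)$ against $L(s,\tau)/L(1+s,\tau)$ and $\gamma_V^{-k}$ against $\gamma_V^k$, leaving exactly $\omega_\tau(c)\cdot |c|_F^{ks}\cdot\ep(-s,\tau^\vee,\psi)/\ep(s,\tau,\psi) = \omega_\tau(c)\cdot\alpha(s)$, as required.

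For the second statement, at $s=0$ with $\tau\cong\tau^\vee$ one has $\alpha(0) = \ep(0,\tau^\vee,\psi)/\ep(0,\tau,\psi) = 1$. To pass from the identity for $\RR$ to one for $R$, I need compatibility of the Whittaker normalizations $\AA_w = \AA'_w\otimes\1_{\VV_{\cl{\sigma}^\vee}}$ and $\AA_{w'} = \AA'_{w'}\otimes\1_{\VV_\pi}$. Because $\TT_0$ is built from pairings that touch only the $\VV_\tau$ factor, applying $\AA_w$ on the right and $\AA_{w'}$ on the left translates into the action of $(\AA'_w)^*$ and $(\AA'_{w'})^*$ on the test vector $\ch{v}\in\VV_{\tau^\vee}$; these dual maps agree since $\AA'_w$ and $\AA'_{w'}$ are both characterized as the unique self-intertwiner of $\VV_\tau$ preserving the same Whittaker functional $\Lam$. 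The potential subtlety here is that the two representatives $\cl{w} = w_P\cdot m_P(-c\cdot a)$ and $\cl{w}' = w_Q\cdot m_Q(a)$ act on $\tau$ through Levi conjugations that differ by a central scalar; I expect this discrepancy to be absorbed into the $\omega_\tau(c)$ already produced by the normalization, but verifying this carefully is the main delicate point of the argument, and is where I anticipate spending the most effort.
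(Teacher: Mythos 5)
Your computation of $\alpha(s)$ is correct and is essentially identical to the paper's proof: divide Proposition \ref{proportionality} by the normalizing factors, use $\phi_\sigma=(\phi_\pi\otimes\chi_V)\oplus\1$ to split $\phi_\tau\otimes\phi_\sigma^\vee$, cancel the $\Lam_2$-factors via $\Lam_2^\vee\circ(\phi_\tau\otimes\chi_V)=\Lam_2^\vee\circ\phi_\tau$, and finish with the functional equation $L(-s,\tau^\vee)\gamma(-s,\tau^\vee,\psi)=\ep(-s,\tau^\vee,\psi)L(1+s,\tau)$ together with $\lam(E/F,\psi)^k\gamma_V^{-k}=1$ for $k$ even.

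The one point you left open — the passage from $\RR$ to $R$ — is a non-issue, and your proposed mechanism for handling it is the wrong one. The central scalar by which $\cl{w}=w_P\cdot m_P(-c\cdot a)$ and $\cl{w}'=w_Q\cdot m_Q(a)$ differ acts trivially under conjugation, so the conjugation actions of $\cl{w}$ on $\GL(X)$ and of $\cl{w}'$ on $\GL(Y)$ literally coincide via the identification $i$; hence $w(\tau)$ and $w'(\tau)$ are the same representation of $\GL_k(F)$ on $\VV_\tau$, and the uniqueness of the intertwiner preserving the Whittaker functional $\Lam$ gives $\AA'_w=\AA'_{w'}$ outright (this is exactly how the paper concludes). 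In particular nothing is, or should be, ``absorbed into $\omega_\tau(c)$'': that factor is already fully produced by the unnormalized computation in Proposition \ref{proportionality}, and if the two isomorphisms $\AA'_w$, $\AA'_{w'}$ really had differed by a central character value, the statement of the corollary (and hence Lemma \ref{iota}) would acquire an extra constant rather than remain as claimed.
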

\begin{proof}
Let $\phi_\tau$, $\phi_{\pi}$ and $\phi_{\sigma}$ be the representations of $\WD_F$
corresponding to $\tau$, $\pi$ and $[\sigma]$, respectively.
Here we put $\sigma\coloneqq\cl{\sigma}|\SO(V)$.
Note that $\sigma$ is irreducible.
By Proposition \ref{proportionality}, we have
\begin{align*}
\alpha(s)&=\left|c\right|_F^{ks}\cdot
\frac{r(w,\tau_s\otimes\sigma^\vee)}{r(w',\tau_s\chi_V\otimes\pi)}\cdot
\gamma_V^{-k}\cdot
\frac{L(-s,\tau^\vee)}{L(s,\tau)}
\cdot\gamma(-s,\tau^\vee,\psi).
\end{align*}
Since $\phi_{\sigma}=(\phi_{\pi}\otimes\chi_V)\oplus\1$,
we have
\[
\phi_\tau\otimes\phi_{\sigma}^\vee=
(\phi_\tau\otimes\chi_V)\otimes\phi_{\pi}^\vee
\oplus\phi_\tau.
\]
Moreover, we have
\[
\wedge^2 \circ(\phi_\tau\otimes\chi_V)
=\wedge^2 \circ\phi_\tau.
\]
Hence we have
\begin{align*}
\frac{r(w,\tau_s\otimes\sigma^\vee)}{r(w',\tau_s\chi_V\otimes\pi)}
=\lam(E/F,\psi)^k\cdot
\frac{L(s,\phi_\tau)}{\ep(s,\phi_\tau,\psi)L(1+s,\phi_\tau)}.
\end{align*}
Note that $\lam(E/F,\psi)^k\gamma_V^{-k}=1$
since $k$ is even and $\lam(E/F,\psi)^2=\gamma_V^2=\chi_V(-1)$.
Recall that
\[
\gamma(-s,\tau^\vee,\psi)=\frac{\ep(-s,\tau^\vee,\psi)L(1+s,\tau)}{L(-s,\tau^\vee)}.
\]
Therefore, we have
\begin{align*}
\alpha(s)&=\left|c\right|_F^{ks}\cdot
\frac{\ep(-s,\tau^\vee,\psi)}{\ep(s,\tau,\psi)}.
\end{align*}
\par

Assume that $\tau\cong\tau^\vee$.
By definition of $\cl{w}$ and $\cl{w}'$, the action of $\cl{w}$ on $\GL(X)\subset M_P$ 
coincides with that of $\cl{w}'$ on $\GL(Y)\subset M_Q$
via the identification $i\colon\GL(Y)\rightarrow \GL(X)$.
This implies that $\AA_w'=\AA_{w'}'$ as $\C$-isomorphisms on $\VV_\tau$.
Hence the last equation holds.
\end{proof}

%\subsection{Completion of the proof}
\subsection{Completion of the proof}
We set $W=W_{2n}$ and $V=V_{2n+2}$.
We assume that $V$ is type $(d,c)$ if $\SO(V)$ is quasi-split.
Fix $c_0\in F^\times$.
Let $\phi_{\pi}\in\cl\Phi_\temp(\Sp(W))$. 
For an arbitrary semi-simple element $a \in C_{\phi_\pi}^+$, we put 
\[
\phi_{\pi'} = \phi_{\pi}^a \oplus \phi_{\pi} \oplus \phi_{\pi}^a.
\]
Note that the canonical injection $A_{\phi_\pi}^+ \hookrightarrow A_{\phi_{\pi'}}^+$ is bijective.
Write $\phi_\pi^a = \phi_\tau \otimes \chi_V$.
Then $\phi_\tau$ is a tempered orthogonal representation of $\WD_F$.
The dimension $k$ of $\phi_\tau$ is even since $a \in C_{\phi_\pi}^+$.
We denote by $\tau\in\Irr_\temp(\GL_{k}(F))$ the representation corresponding to $\phi_\tau$.
Let $\pi\in\Pi_{\phi_\pi}$ be an irreducible tempered representation of $\Sp(W)$
with the associated character $\eta_{\pi}=\iota_{\w_{c_0}'}(\pi)\in\widehat{A_{\phi_\pi}^+}$.
Then $\pi' \coloneqq \Ind_Q^{\Sp(W')}(\tau \chi_V \otimes \pi)$ is irreducible
since $A_{\phi_{\pi'}}^+ = A_{\phi_{\pi}}^+$ (see \S \ref{Cn}).
Moreover, $\pi' \in \Pi_{\phi_{\pi'}}$ and 
the associated character $\eta_{\pi'}=\iota_{\w_{c_0}'}(\pi')\in\widehat{A_{\phi_{\pi'}}^+}$
satisfies that 
\[
\eta_{\pi'}|A_{\phi_\pi}^+ = \eta_{\pi}.
\]
\par

Now, we assume that
\[
\cl{\sigma}=\Theta_{\psi,V,W}(\pi)\not=0.
\]
If we put $\sigma \coloneqq \cl\sigma | \SO(V)$, then
$\sigma$ is irreducible and $[\sigma]\in\Pi_{\phi_{\sigma}}$ with 
\[
\phi_{\sigma}=(\phi_{\pi}\otimes\chi_V)\oplus\1\in\cl\Phi_\temp(\SO(V)).
\]
Put
\[
\phi_{\sigma'}=\phi_\tau \oplus \phi_{\sigma} \oplus \phi_\tau
=(\phi_{\pi'}\otimes\chi_V)\oplus\1\in\cl\Phi_\temp(\SO(V')).
\] 
Note that the canonical injection $A_{\phi_{\sigma}}^+ \hookrightarrow A_{\phi_{\sigma'}}^+$ 
is bijective.
Hence 
the induced representation $\sigma' = \Ind_{P^\circ}^{\SO(V')}(\tau \otimes \sigma)$ is irreducible, 
and so is $\cl{\sigma}' = \Ind_P^{\orth(V')}(\tau\otimes\cl{\sigma})$ (see \S \ref{Dn}).
Moreover we have $[\sigma']\in\Pi_{\phi_{\sigma'}}$, 
and the associated characters 
$\eta_{\sigma}=\iota_{\w_{c_0}}([\sigma])\in\widehat{A_{\phi_{\sigma}}^+}$
and $\eta_{\sigma'}=\iota_{\w_{c_0}}([\sigma'])\in\widehat{A_{\phi_{\sigma'}}^+}$ satisfy that
\[
\eta_{\sigma'}|A_{\phi_{\sigma}}^+=\eta_{\sigma}.
\]
We need to show that $\eta_{\sigma}(a')=\eta_{\pi}(a)$, where $a'$  
is the image of $a$ via the canonical map 
$C_{\phi_\pi}^+ \twoheadrightarrow A_{\phi_\pi}^+ \hookrightarrow A_{\phi_\sigma}^+$.
This equation is equivalent to $\eta_{\sigma'}(a')=\eta_{\pi'}(a)$ since 
the diagram
\[
\begin{CD}
A_{\phi_{\pi'}}^+ @>>> A_{\phi_{\sigma'}}^+\\
@A{1:1}AA @AA{1:1}A\\
A_{\phi_{\pi}}^+ @>>> A_{\phi_{\sigma}}^+
\end{CD}
\]
is commutative.
First, we have the following:
\begin{lem}\label{iota}
We have
\[
\iota_{\w_1'}(\pi')(a)=\omega_\tau(c)\cdot \iota_{\w_{c}}([\sigma'])(a').
\]
\end{lem}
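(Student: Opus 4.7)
The plan is to deduce Lemma \ref{iota} by applying the compatibility between the equivariant map $\TT_0$ and normalized intertwining operators (Corollary \ref{compatibility}) to a suitably chosen nonzero element, and then identifying the scalar actions on both sides using Proposition \ref{R v.s. S}.

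First, I would verify the hypotheses needed to invoke Corollary \ref{compatibility} at $s=0$. Since $a \in C_{\phi_\pi}^+$, the $(-1)$-eigenspace $\phi_\pi^a$ is an orthogonal representation of $\WD_F$ of even dimension (as $\det(a)=1$ forces $\dim \phi_\pi^a$ even). Hence $\phi_\tau$, characterized by $\phi_\pi^a = \phi_\tau \otimes \chi_V$, is self-dual orthogonal of even dimension $k$, so $\tau \cong \tau^\vee$ and $\omega_\tau = \det(\phi_\tau)$ is a (possibly trivial) quadratic character. Thus the ``$\tau \cong \tau^\vee$'' case of Corollary \ref{compatibility} applies.

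Next, I would pick $\varphi \in \omega''$ and $\Phi \in \Ind_P^{\orth(V')}(\tau \otimes \cl\sigma^\vee)$ with $\TT_0(\varphi,\Phi) \neq 0$, which exists by Proposition \ref{equiv} (\ref{equiv4}). Since $\pi' = \Ind_Q^{\Sp(W')}(\tau\chi_V \otimes \pi)$ is irreducible, the scalar $R(w',\tau\chi_V\otimes\pi)|\pi'$ equals $\iota_{\w'_1}(\pi')(a)$ by Proposition \ref{R v.s. S} (2), where $a$ is the element of $A_{\phi_{\pi'}}^+$ corresponding to the summand $\phi_{\pi'}^a = \phi_\tau\chi_V$. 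On the other side, $\phi_\sigma$ contains the trivial summand $\1$, so both $\phi_\sigma$ and $\phi_{\sigma'}$ are $\epinv$; in particular $\sigma'$ is $\epinv$, so $\cl\sigma'^\vee \coloneqq \Ind_P^{\orth(V')}(\tau\otimes \cl\sigma^\vee)$ restricts irreducibly to $\sigma'^\vee$ on $\SO(V')$. By Proposition \ref{contragredient} (together with $\sigma' \cong \sigma'^\ep$), one has $\sigma'^\vee \cong \sigma'$, hence $[\sigma'^\vee] = [\sigma']$ as orbits. Applying Proposition \ref{R v.s. S} (1) (using (NQ) when $\SO(V)$ is not quasi-split) to $\sigma^\vee \in \Pi_{\phi_\sigma}$, the scalar $R(w, \tau\otimes\cl\sigma^\vee)|\sigma'^\vee$ equals $\iota_{\w_c}([\sigma'^\vee])(a') = \iota_{\w_c}([\sigma'])(a')$, where $a' \in A_{\phi_{\sigma'}}^+$ is the element corresponding to the summand $\phi_{\sigma'}^{a'}=\phi_\tau$.

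Substituting these scalars into the identity of Corollary \ref{compatibility} at $s=0$ evaluated on $(\varphi,\Phi)$, I obtain
\[
\iota_{\w'_1}(\pi')(a)\cdot\TT_0(\varphi,\Phi) \;=\; \omega_\tau(c)\cdot \iota_{\w_c}([\sigma'])(a') \cdot \TT_0(\varphi,\Phi),
\]
and cancelling the nonzero $\TT_0(\varphi,\Phi)$ yields the lemma. The main technical point will be the bookkeeping of conventions: confirming that the $c$ appearing in Corollary \ref{compatibility} (which enters through the choice of representative $\cl{w}$, the Haar measure $du_P'$, and the normalizing factors) coincides with the $c$ indexing the Whittaker datum $\w_c$ of $\SO(V')$ coming from the type $(d,c)$ of $V'$, and that the elements of the component groups assigned to $w,w'$ by the recipes in \S\ref{sec.intertwining} are precisely the $a'$ and $a$ we started with.
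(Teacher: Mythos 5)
Your proposal is correct and follows essentially the same route as the paper: choose $\varphi,\Phi$ with $\TT_0(\varphi,\Phi)\neq 0$ via Proposition \ref{equiv} (\ref{equiv4}), apply the $s=0$, $\tau\cong\tau^\vee$ case of Corollary \ref{compatibility}, identify both scalar actions via Proposition \ref{R v.s. S} (with (NQ) in the non-quasi-split case), and cancel the nonzero vector. The bookkeeping you flag at the end (the role of $c$ and the identification of the Weyl elements with $a$, $a'$) is already built into the normalizations of \S\ref{sec.intertwining} and the statement of Proposition \ref{R v.s. S}, so no extra argument is needed there.
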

\begin{proof}
Choose $\Phi\in\cl\sigma'^\vee=\Ind_P^{\orth(V')}(\tau\otimes\cl{\sigma}^\vee)$ with
$\Phi\not=0$.
By Proposition \ref{equiv} (\ref{equiv4}), there is $\varphi\in\omega''$ such that
$\TT_0(\varphi,\Phi)\not=0$.
Note that $\TT_0(\varphi,\Phi) \in \Ind_Q^{\Sp(W')}(\tau \chi_V \otimes \pi) = \pi'$.
By Corollary \ref{compatibility}, we have
\[
R(w',\tau\chi_V\otimes\pi)\TT_0(\varphi,\Phi)
=\omega_\tau(c)\cdot\TT_{0}(\varphi,R(w,\tau\otimes\cl{\sigma}^\vee)\Phi).
\]
Note that $\cl\sigma^\vee\cong\cl\sigma$ and $\cl\sigma'^\vee\cong\cl\sigma'$, and 
we may regard $\Phi$ as an element in $\sigma'=\cl\sigma'|\SO(V)$.
Since $R(w',\tau\chi_V\otimes\pi)|\pi'=\iota_{\w_1'}(\pi')(a)$
and $R(w,\tau\otimes\sigma)|\sigma'=\iota_{\w_c}([\sigma'])(a')$
by Proposition \ref{R v.s. S}, 
we have
\[
\iota_{\w_1'}(\pi')(a)\TT_0(\varphi,\Phi)
=\omega_\tau(c)\cdot\TT_{0}(\varphi,\iota_{\w_c}([\sigma'])(a')\Phi).
\]
This shows the desired equation.
\end{proof}
\par

However, by Propositions \ref{whittaker1} and \ref{whittaker2}, we know that
\[
\frac{\iota_{\w'_1}(\pi')(a)}{\iota_{\w'_{c_0}}(\pi')(a)}
=\eta_{c_0^{-1}}(a)=\det(\phi_\tau\otimes\chi_V)(c_0^{-1})
=\omega_\tau(c_0^{-1})
\]
and
\[
\frac{\iota_{\w_c}([\sigma'])(a')}{\iota_{\w_{c_0}}([\sigma'])(a')}
=\eta_{c/c_0}(a')=\det(\phi_\tau)(c/c_0)
=\omega_\tau(c/c_0).
\]
Since $\omega_\tau^2=\1$,
we have
\[
\eta_{\pi'}(a)=\omega_{\tau}(c_0)\cdot\iota_{\w'_{1}}(\pi')(a)
=\omega_{\tau}(c_0c)\cdot \iota_{\w_{c}}([\sigma'])(a')=\eta_{\sigma'}(a'),
\]
as desired.
Since $a \in C_{\phi_\pi}^+$ is arbitrary, we have $\eta_\sigma | A_{\phi_\pi}^+ = \eta_\pi$.
This completes the proof of (P1) for tempered $L$-parameters.

%\appendix
%\section{Transfer factors}\label{app}
\appendix
\section{Transfer factors}\label{app}
In this appendix, we recall the transfer factors and
explain the expectations in the local Langlands correspondence.
More precisely, see \cite{Ka2}, \cite{KS} and \cite{LS}.
Comparing the transfer factors of pure inner twists, 
we prove Propositions \ref{whittaker1} and \ref{whittaker2}.
\par

For simplicity, we let
$F$ be a non-archimedean local field of characteristic zero.

%\subsection{Endoscopic data}
\subsection{Endoscopic data}
Let $G$ be a quasi-split connected reductive group over $F$.
An endoscopic datum for $G$ is a tuple $(H, \H, s, \eta)$, 
where 
\begin{itemize}
\item
$H$ is a quasi-split connected reductive group defined over $F$;
\item
$\H$ is a split extension of $W_F$ by $\widehat{H}$;
\item
$s\in \widehat{G}$ is a semi-simple element;
\item
$\eta\colon \H\hookrightarrow \Lgp{G}$ is an $L$-embedding
\end{itemize}
such that
\begin{enumerate}
\item
the homomorphism $W_F\rightarrow \text{Out}(\widehat{H})$ given by $\H$
is identified with the homomorphism $W_F\rightarrow \text{Out}(H)$
provided by the rational structure of $H$ via the natural isomorphism 
$\text{Out}(\widehat{H})\cong\text{Out(H)}$;
\item
$\eta(\widehat{H})=\widehat{G}_s^\circ=\cent(s,\widehat{G})^\circ$;
\item
a certain condition of $s$ and $\eta$.
\end{enumerate}
An isomorphism from $(H,\H,s,\eta)$ to another such tuple $(H',\H',s',\eta')$
is an element $g\in \widehat{G}$ such that
\begin{enumerate}
\item
$g\eta(\H)g^{-1} = \eta'(\H')$;
\item
$gsg^{-1}\equiv s' \bmod Z(\widehat{G})$.
\end{enumerate}
Given an endoscopic datum $(H, \H, s, \eta)$ of $G$, 
we may replace it by an isomorphic one and assume that $s\in \eta(Z(\widehat{H})^\Gamma)$. 
\par

Let $\phi\colon \WD_F\rightarrow \Lgp{G}$ be an $L$-parameter of $G$.
We put $S_\phi=\text{Cent}(\text{Im}(\phi),\widehat{G})$.
For a semi-simple element $s\in S_\phi$, 
there exists an endoscopic datum $(H,\H,s,\eta)$ for $G$
such that
\begin{itemize}
\item
$\widehat{H}=\widehat{G}_s^\circ$;
\item
$\H=\widehat{H}\cdot\phi(W_F)$;
\item
$\eta\colon \H\hookrightarrow \Lgp{G}$ is the natural embedding.
\end{itemize}
Then we call $(H,\H,s,\eta)$
the endoscopic datum associated to $s\in S_\phi$.
Note that $s\in Z(\widehat{H})^\Gamma$ and
$\im(\phi)\subset \H$.
\par

It is not always true that $\H=\Lgp{H}$.
So we take a z-pair $\z=(H_\z,\eta_{\z})$ for $\e=(H,\H,s,\eta)$.
We recall that
$H_\z$ is an extension of $H$ by an induced torus and
$\eta_\z\colon \H\hookrightarrow \Lgp{H}_\z$
is an $L$-embedding such that the diagram
\[
\begin{CD}
\H@>\eta_\z>>\Lgp{H}_\z\\
@AAA@AAA\\
\widehat{H}@>>>\widehat{H}_\z
\end{CD}
\]
is commutative,
where the bottom arrow is the embedding $\widehat{H}\hookrightarrow \widehat{H}_\z$, 
which is the dual to the surjection $H_\z\rightarrow H$.
If $\e$ is the endoscopic datum associated to $s\in S_\phi$
for some tempered $L$-parameter $\phi$ of $G$,
then we obtain a tempered $L$-parameter 
$\phi_\z=\eta_\z\circ \phi$ of $H_\z$.

%\subsection{Transfer factors of pure inner twists}\label{A.2}
\subsection{Transfer factors of pure inner twists}\label{A.2}
Recall that a pure inner twist of $G$ is a triple
$(G',\psi,z)$, where
\begin{itemize}
\item
$G'$ is a connected reductive algebraic group over $F$;
\item
$\psi\colon G\rightarrow G'$ is an isomorphism over $\overline{F}$;
\item
$z\in Z^1(F,G)$ is a $1$-cocycle
\end{itemize}
such that $\psi^{-1}\circ \sigma(\psi)=\Int(z_\sigma)$ for $\sigma\in\Gamma=\Gal(\overline{F}/F)$.
Then we call $G'$ a pure inner form of $G$.
\par

Let $(G_1,\psi_1,z_1)$ and $(G_2,\psi_2,z_2)$
be two pure inner twists of $G$.
An isomorphism from $(G_1,\psi_1,z_1)$ to $(G_2,\psi_2,z_2)$ is 
a pair $(f,g)$, where
$f\colon G_1\rightarrow G_2$ is an isomorphism over $F$ and
$g\in G(\overline{F})$ such that
$z_{2,\sigma}=g z_{1,\sigma} \sigma(g^{-1})$ and the diagram
\[
\begin{CD}
 G@>\psi_1>>G_1\\
 @V\Int(g)VV   @VVfV\\
 G@>\psi_2>> G_2
 \end{CD} 
 \]
is commutative.
It is known that there exists a canonical bijection
\begin{align*}
\{\text{isomorphism classes of pure inner twists of $G$}\}
&\rightarrow H^1(F,G),\\ 
(G',\psi,z)&\mapsto [z].
\end{align*}
\par

Let $\e=(H,\H,s,\eta)$ be an endoscopic datum for $G$
and $\z=(H_\z,\eta_\z)$ be a z-pair for $\e$.
We may assume that $\eta^{-1}(s)\in Z(\widehat{H})^\Gamma$, and identify $s$ with $\eta^{-1}(s)$.
Then for an inner twist $(G',\psi)$ of $G$, 
these data give a relative transfer factor of $G'$,
which is a function
\[
\Delta[\e,\z,\psi]\colon 
H_{\z,G\text{-}\sr}(F)\times G'_\sr(F)\times H_{\z,G\text{-}\sr}(F)\times G'_\sr(F)
\rightarrow \C.
\]
See \cite[\S 3.7]{LS}.
We also write $\Delta[\e,\z]=\Delta[\e,\z,\id]$.
Here, $G'_\sr$ is the set of strongly regular semi-simple elements of $G'$.
We explain $H_{\z,G\text{-}\sr}$.
Fix pairs $(B,T)$ of $G$, $(\BB,\TT)$ of $\widehat{G}$,
$(B_H,T_H)$ of $H$ and $(\BB_H,\TT_H)$ of $\widehat{H}$,
where by a pair we mean a tuple of a Borel subgroup and a maximal torus contained in it.
Assume that $\eta(\BB_H)\subset \BB$ and $\eta(\TT_H)=\TT$.
Then we have an isomorphism
\[
\widehat{T}_H\cong \TT_H\xrightarrow{\eta}\TT\cong\widehat{T},
\]
and so that we get an isomorphism 
$\eta^*\colon T_H\rightarrow T$ over $\overline{F}$.
Now we assume that $T_H$ is $F$-rational.
Since $\sigma(\eta^*)\circ(\eta^*)^{-1}$ is given by an element in the Weyl group of $T$ in $G$,
by Steinberg's theorem, 
there exists $g\in G_{\rm sc}(\overline{F})$ such that
$\sigma(g)g^{-1}$ normalizes $T$ and induces $\sigma(\eta^*)\circ(\eta^*)^{-1}$.
Here, $G_{\rm sc}$ is the simply connected cover of the derived group of $G$.
Then 
\[
\xi\coloneqq \Int(g^{-1})\circ\eta^* \colon T_H\rightarrow g^{-1}Tg
\]
is an isomorphism over $F$.
Such an $F$-isomorphism $\xi\colon T_H\rightarrow T'\coloneqq g^{-1}Tg$ 
is called an admissible embedding of $T_H$ in $G$.
For $h\in T_H$, the element $\xi(h)$ is called an image of $h$,
and the elements $h$ and $\xi(h)$ are said to be related.
The set $H_{\z,G\text{-}\sr}$ consists of the preimages in $H_\z$ of those elements of $H$
that are related to elements of $G_\sr$.
\par

An absolute transfer factor of $G$ is a function
$\Delta[\e,\z]_{\rm abs}\colon H_{\z,G\text{-}\sr}\times G_\sr\rightarrow \C$ 
such that
\begin{itemize}
\item
$\Delta[\e,\z]_{\rm abs}(\gamma_\z,\delta) \not=0$ if and only if
$\gamma_\z$ and $\delta$ are related;
\item
for any two pairs $(\gamma_\z,\delta)$ and $(\gamma'_\z,\delta')$ 
of related elements, 
\[
\Delta[\e,\z](\gamma_\z,\delta,\gamma'_\z,\delta')=
\frac{\Delta[\e,\z]_{\rm abs}(\gamma_\z,\delta)}
{\Delta[\e,\z]_{\rm abs}(\gamma'_\z,\delta')}.
\]
\end{itemize}
This is not unique.
By choosing a Whittaker datum $\w$ for $G$,
we obtain a normalization
$\Delta[\e,\z,\w]$
of the absolute transfer factor of $G$.
See \cite[\S 5.3]{KS}.
\par

Using $\Delta[\e,\z,\w]$, 
we define a transfer factor $\Delta[\e,\z,\psi,z,\w]$
of a pure inner twist $(G',\psi,z)$ of $G$ as follows.
Let $\delta'\in G'_\sr(F)$ and $\gamma_\z\in H_{\z}(F)$ be related elements.
We denote by $\gamma\in H(F)$ the image of $\gamma_\z$ under the map 
$H_\z\rightarrow H$.
By \cite[Corollary 2.2]{Kot1},
there exists $\delta\in G_\sr(F)$ such that
$\psi^{-1}(\delta')$ and $\delta$ are $G(\overline{F})$-conjugate.
We put
\[
T=G_\delta=\cent(\delta,G)
\quad\text{and}\quad
S=H_\gamma=\cent(\gamma,H).
\]
We take $g\in G(\overline{F})$ such that
\[
\psi^{-1}(\delta')=g\delta g^{-1}.
\]
It is easily seen that
\[
[\sigma\mapsto g^{-1}z_\sigma \sigma(g)]\in Z^1(F,T)
\]
and the class of this element in $H^1(F,T)$
is independent of the choice of $g$.
We denote this class by $\text{inv}(\delta,\delta')$.
Since $\Int(g^{-1}z_\sigma \sigma(g))$ is identity on $T$, 
we see that $\psi\circ\Int(g)$ gives an $F$-isomorphism 
\[
T=\cent(\delta,G)\xrightarrow{\cong} T'=\cent(\delta',G').
\]
This implies that $\gamma_\z$ and $\delta$ are related.
Hence, there exists a unique admissible embedding
\[
\phi_{\gamma,\delta}\colon
S\xrightarrow{\cong} T\subset G
\]
such that $\phi_{\gamma,\delta}(\gamma)=\delta$.
We denote by $s_{\gamma,\delta}\in \widehat{T}^\Gamma$ the image of $s$
under the map
\[
Z(\widehat{H})^\Gamma\hookrightarrow {\widehat{S}}^\Gamma
\rightarrow \widehat{T}^\Gamma,
\]
where the last map is induced by $\phi_{\gamma,\delta}^{-1}$.
Let
\[
\pair{\cdot ,\cdot }\colon H^1(F,T)\times\pi_0(\widehat{T}^\Gamma)\rightarrow \C^\times
\]
be the Tate--Nakayama pairing.
Then we define $\Delta[\e,\z,\psi,z,\w]$ by
\[
\Delta[\e,\z,\psi,z,\w](\gamma_\z,\delta')
=\Delta[\e,\z,\w](\gamma_\z,\delta)\cdot 
\pair{\inv(\delta,\delta'),s_{\gamma,\delta}}^{-1}.
\]
By \cite[Proposition 5.6]{Ka2}, this value does not depend on the choice of $\delta$,
and the function $\Delta[\e,\z,\psi,z,\w]$ is an absolute transfer factor of $G'$.

%\subsection{Expectations in the local Langlands correspondence}\label{expectation}
\subsection{Expectations in the local Langlands correspondence}\label{expectation}
Let $G$ be a quasi-split connected reductive algebraic group over $F$.
The Vogan $L$-packets treat representations of all pure inner forms of $G$ simultaneously.
A representation of a pure inner twist of $G$ is
a tuple $(G',\psi,z,\pi')$,
where $(G',\psi,z)$ is a pure inner twist of $G$ and
$\pi'$ is an admissible representation of $G'(F)$.
Two representations $(G_1,\psi_1,z_1,\pi_1)$ and
$(G_2,\psi_2,z_2,\pi_2)$ are isomorphic if
there exists an isomorphism 
$(f,g)\colon (G_1,\psi_1,z_1)\rightarrow (G_2,\psi_2,z_2)$
of pure inner twists
such that $\pi_2\circ f$ and $\pi_1$ are isomorphic.
By \cite[\S 5.1]{Ka2}, 
two representations $(G_1,\psi_1,z_1,\pi_1)$ and $(G_1,\psi_1,z_1,\pi_1')$ 
of the same pure inner twist are isomorphic if and only if 
$\pi_1$ and $\pi_1'$ are isomorphic in the usual sense as representations of $G_1(F)$.
We denote by $\Pi_\temp(G)$ 
the set of isomorphism classes of irreducible admissible tempered representations
of pure inner twists of $G$.
Let $(f,g)\colon (G_1,\psi_1,z_1,\pi_1)\rightarrow (G_2,\psi_2,z_2,\pi_2)$
be an isomorphism and
$\Theta_{\pi_i}$ be the Harish-Chandra character of $\pi_i$ for $i=1,2$.
Then $f$ transports $\Theta_{\pi_2}$ to $\Theta_{\pi_1}$.
Hence, for an isomorphism class $\dot{\pi}_1=(G_1,\psi_1,z_1,\pi_1)$, 
we obtain a distribution $\Theta_{\dot{\pi}}$.
\par

Fix a Whittaker datum $\w$ for $G$.
Given a tempered $L$-parameter $\phi\colon \WD_F\rightarrow \Lgp{G}$,
we put $S_\phi=\cent(\im(\phi),\widehat{G})$.
We expect that there exist a finite subset $\Pi_\phi\subset \Pi_\temp(G)$
and a bijection $\iota_\w\colon\Pi_\phi\rightarrow \Irr(\pi_0(S_\phi))$
such that the diagram
\[
\begin{CD}
\Pi_\phi@>\iota_\w>>\Irr(\pi_0(S_\phi))\\
@VVV@VVV\\
H^1(F,G)@>>>\pi_0(Z(\widehat{G})^\Gamma)^D
\end{CD}
\]
is commutative,
where the bottom arrow is the Kottwitz map, 
the right arrow sends each irreducible representation to (the restriction of) its central character, 
and the left arrow is given by $(G',\psi,z,\pi')\mapsto [z]$.
Also, we expect that $\Pi_\phi$ contains a unique element
$\dot{\pi}=(G,\id,1,\pi)$ such that $\pi$ is $\w$-generic
and that $\iota_\w(\dot{\pi})$ is the trivial character of $\pi_0(S_\phi)$.
\par

Given $\dot{\pi}=(G',\psi,z,\pi')\in \Pi_\phi$, 
we write $\pair{s,\dot{\pi}}_\w=\tr(\iota_\w(\dot{\pi})(s))$ for $s\in \pi_0(S_\phi)$.
We expect that for a fixed pure inner twist 
$(\psi,z)\colon G\rightarrow G'$, the virtual character
\[
S\Theta_{\phi,\psi,z}=e(G')\sum_{\substack{\dot{\pi}\in\Pi_\phi \\ \dot{\pi}\mapsto [z]}}
\pair{1,\dot{\pi}}_\w\cdot\Theta_{\dot{\pi}}
\]
is a stable function on $G'(F)$ and is independent of $\w$, 
where $e(G')\in\{\pm1\}$ is the sign defined in \cite{Kot2}.
It is known that $e(G')=1$ if $G'$ is quasi-split.
For any semi-simple element $s\in S_\phi$, we put
\[
\Theta^s_{\phi,\w,\psi,z}=e(G')\sum_{\substack{\dot{\pi}\in\Pi_\phi \\ \dot{\pi}\mapsto [z]}}
\pair{s,\dot{\pi}}_\w\cdot\Theta_{\dot{\pi}}.
\]
\par

Let $\e=(H,\H,s,\eta)$ be the endoscopic datum associated to $s\in S_\phi$.
For simplicity, we assume that $\H=\Lgp{H}$.
Then we may take the z-pair $\z=(H_\z,\eta_\z)=(H,\id)$ for $\e$.
Let $\phi_\z$ be the $L$-parameter for $H_\z=H$ given by $\phi$.
Let $f^\e$ and $f$ be
smooth compactly supported functions  on $H(F)$ and $G'(F)$,
respectively.
For $\gamma\in H_\sr(F)$ and $\delta'\in G'_\sr(F)$, we put
\begin{align*}
O_{\gamma}(f^\e)&=\int_{H_\gamma(F)\bs H(F)}f^\e(h^{-1}\gamma h)dh,\\
O_{\delta'}(f)&=\int_{G'_{\delta'}(F)\bs G'(F)}f(g^{-1}\delta' g)dg,
\end{align*}
where we put $H_{\gamma}=\cent(\gamma,H)$ and $G'_{\delta'}=\cent(\delta',G')$, respectively.
We also define
\[
SO_\gamma(f^\e)=\sum_{\gamma'}O_{\gamma'}(f^\e),
\]
where $\gamma'$ runs over a set of representatives for
the $H(F)$-conjugacy classes in the $H(\overline{F})$-conjugacy class of $\gamma$.
More precisely, see \cite[\S 5.5]{KS}.
We say that $f$ and $f^\e$ have $\Delta[\e,\z,\psi,z,\w]$-matching
orbital integrals if
\[
\SO_{\gamma}(f^\e)=\sum_{\delta'} \Delta[\e,\z,\psi,z,\w](\gamma,\delta')\cdot O_{\delta'}(f)
\]
for any $\gamma\in H_{G\text{-}\sr}$.
Here, the sum is taken over a set of representatives for
the $G'(F)$-conjugacy classes of $\delta'\in G'(F)$ such that
$\delta'$ and $\gamma$ are related.
Finally,
if $f^\e$ and $f$ have $\Delta[\e,\z,\psi,z,\w]$-matching orbital integrals, 
then we expect that 
\[
S\Theta_{\phi_\z,\id, 1}(f^\e)=\Theta_{\phi,\w,\psi,z}^s(f).
\]

%\subsection{Changing base points}\label{A.4}
\subsection{Changing base points}\label{A.4}
Let $(\psi,z)\colon G\rightarrow G'$ be a pure inner twist.
In contrast to inner twists,
it may occur that $G'$ is also quasi-split over $F$
even if $(\psi,z)$ is non-trivial.
Then we have the following:
\begin{lem}\label{J}
There exists a bijection
\begin{align*}
J_{\psi,z}\colon\Pi_\temp(G)&\rightarrow \Pi_\temp(G'),\\
(G_1,\psi_1,z_1,\pi_1)&\mapsto (G_1,\psi_1\circ \psi^{-1}, \psi(z_1z^{-1}),\pi_1).
\end{align*}
\end{lem}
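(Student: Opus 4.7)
The plan is to verify, in order, three claims: (i) the tuple $(G_1,\psi_1\circ\psi^{-1},\psi(z_1z^{-1}),\pi_1)$ really is a representation of a pure inner twist of $G'$; (ii) the assignment descends to isomorphism classes; (iii) there is an explicit inverse, namely $J_{\psi^{-1},\psi(z^{-1})}$. Everything is a direct bookkeeping with Galois actions, so I would not grind through all the manipulations in the final write-up; the core is a single cocycle identity.

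For (i), the map $\psi_1\circ\psi^{-1}\colon G'\to G_1$ is clearly an isomorphism over $\overline F$, so the point is the cocycle condition. Since $\psi^{-1}\circ\sigma(\psi)=\Ad(z_\sigma)$, applying $\sigma$ to $\psi^{-1}\circ\psi=\id$ yields $\sigma(\psi^{-1})=\Ad(z_\sigma^{-1})\circ\psi^{-1}$, whence $\psi\circ\sigma(\psi^{-1})=\Ad(\psi(z_\sigma^{-1}))$. Combining with $\psi_1^{-1}\circ\sigma(\psi_1)=\Ad(z_{1,\sigma})$, one computes
\[
(\psi_1\circ\psi^{-1})^{-1}\circ\sigma(\psi_1\circ\psi^{-1})
=\psi\circ\Ad(z_{1,\sigma})\circ\sigma(\psi^{-1})
=\Ad\bigl(\psi(z_{1,\sigma}z_\sigma^{-1})\bigr),
\]
so the desired cocycle is $\psi(z_1z^{-1})\in Z^1(F,G')$; that it is a cocycle follows from the identity, but can also be checked directly using $\sigma(\psi(x))=\psi(z_\sigma\sigma(x)z_\sigma^{-1})$ for $x\in G(\overline F)$.

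For (ii), suppose $(f,g)\colon(G_1,\psi_1,z_1)\to(G_2,\psi_2,z_2)$ is an isomorphism of pure inner twists of $G$, i.e.\ $f\colon G_1\to G_2$ is $F$-rational with $f\circ\psi_1=\psi_2\circ\Ad(g)$ and $z_{2,\sigma}=gz_{1,\sigma}\sigma(g^{-1})$. I claim that $(f,\psi(g))$ is an isomorphism between the images under $J_{\psi,z}$. The commutativity of the underlying diagram is immediate from $\Ad(\psi(g))\circ\psi=\psi\circ\Ad(g)$ and the original commutativity. The cocycle identity to check is
\[
\psi(z_{2,\sigma}z_\sigma^{-1})=\psi(g)\cdot\psi(z_{1,\sigma}z_\sigma^{-1})\cdot\sigma(\psi(g))^{-1},
\]
and using $\sigma(\psi(g))=\psi(z_\sigma\sigma(g)z_\sigma^{-1})$ the right-hand side collapses to $\psi(gz_{1,\sigma}\sigma(g^{-1})z_\sigma^{-1})=\psi(z_{2,\sigma}z_\sigma^{-1})$, as required. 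The representation $\pi_1$ is untouched by the construction, so compatibility with the relation used in defining $\Pi_\temp(G')$ is automatic.

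For (iii), I would define the inverse candidate $J_{\psi^{-1},\psi(z^{-1})}\colon\Pi_\temp(G')\to\Pi_\temp(G)$ analogously (the symmetric cocycle computation shows $(\psi^{-1},\psi(z^{-1}))$ is a pure inner twist of $G'$ landing in $G$). Plugging the definitions into the composition $J_{\psi^{-1},\psi(z^{-1})}\circ J_{\psi,z}$ recovers $(G_1,\psi_1,\psi^{-1}(\psi(z_1z^{-1})\cdot\psi(z)),\pi_1)=(G_1,\psi_1,z_1,\pi_1)$, and the opposite composition is analogous. The only mild subtlety throughout is correctly transporting Galois actions between $G$ and $G'$ via the identity $\sigma(\psi)=\psi\circ\Ad(z_\sigma)$; once this is in hand, the proof reduces to formal manipulation. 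There is no essential difficulty, which is unsurprising: the lemma is a change-of-basepoint statement for $H^1$-torsors, dressed up with representations.
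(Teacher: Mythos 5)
Your proof is correct: the cocycle identities you check in (i)--(iii), together with the explicit inverse $J_{\psi^{-1},\psi(z^{-1})}$, are precisely the routine bookkeeping that the paper dismisses with the one-word proof ``Easy,'' so you are taking the same (and indeed the only natural) approach, just written out in full.
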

\begin{proof}
Easy.
\end{proof}
\par

We denote the center of $G$ by $Z$.
Let $(\psi',z')\colon G\rightarrow G'$ be a pure inner twist
such that both $G$ and $G'$ are quasi-split.
Then the cohomology class $[z']\in H^1(F,G)$ is killed under
the map
\[
H^1(F,G)\rightarrow H^1(F,G_\ad).
\]
Hence there exists $z\in Z^1(F,Z)$ such that $[z]=[z']$ in $H^1(F,G)$.
Then $(\psi',z')\colon G\rightarrow G'$ is isomorphic to 
the pure inner twist
\[
(\id,z)\colon G\rightarrow G.
\]
In particular, we see that
$G$ and $G'$ are isomorphic over $F$,
and so that we may identify $\Lgp{G}$ with $\Lgp{G}'$.
Let $\phi\colon \WD_F\rightarrow \Lgp{G}$ be a tempered $L$-parameter of $G$.
We may regard $\phi$ as an $L$-parameter of $G'$.
We denote this $L$-parameter of $G'$ by $\phi'$, i.e.,
\[
\phi'\colon\WD_F\xrightarrow{\phi}\Lgp{G}=\Lgp{G}'.
\]
Then we should obtain two finite subsets $\Pi_\phi\subset \Pi_\temp(G)$
and  $\Pi_{\phi'}\subset \Pi_\temp(G')$.
\begin{question}
\begin{enumerate}
\item
Does $J_{\psi,z}(\Pi_{\phi})$ coincide with $\Pi_{\phi'}$?
\item
If so, what is the map
\[
\Irr(\pi_0(S_\phi))\xrightarrow{\iota_\w^{-1}}\Pi_\phi
\xrightarrow{J_{\psi,z}}\Pi_{\phi'}
\xrightarrow{\iota_{\w'}}\Irr(\pi_0(S_{\phi'}))
=\Irr(\pi_0(S_\phi))
\] 
for Whittaker data $\w$ and $\w'$ of $G$ and $G'$, respectively?
\end{enumerate}
\end{question}
\par

In this subsection, 
we give an answer of this question when
$z\in Z^1(F,Z)$ and $\psi=\id$.
\begin{prop}\label{Delta}
Fix $z\in Z^1(F,Z)$.
Let $\e=(H,\H,s,\eta)$ be an endoscopic datum,
$\z=(H_\z,\eta_\z)$ be a z-pair for $\e$ and
$\w$ be a Whittaker datum of $G$.
For a pure inner twist $(\psi_1,z_1)\colon G\rightarrow G_1$, 
we put $\Delta_1=\Delta[\e,\z,\psi_1,z_1,\w]$ and
$\Delta_1'=\Delta[\e,\z,\psi_1,z_1z^{-1},\w]$.
Then there exists $\alpha=\alpha[\e,\z,\psi_1,z_1,z,\w]\in\C^\times$ such that
\[
\Delta_1'(\gamma_\z,\delta_1)=\alpha\cdot\Delta_1(\gamma_\z,\delta_1).
\]
for $\gamma_\z\in H_{\z,G\text{-}\sr}$ and $\delta_1\in G_{1,\sr}$.
\end{prop}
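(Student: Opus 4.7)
My plan is to expand both $\Delta_1$ and $\Delta_1'$ directly from their definitions and exhibit the ratio as a single Tate--Nakayama pairing that will turn out not to depend on the related pair. Fix $(\gamma_\z,\delta_1)$, choose $\delta\in G_\sr(F)$ and $g\in G(\overline F)$ with $\psi_1^{-1}(\delta_1)=g\delta g^{-1}$. Since the stable part $\Delta[\e,\z,\w](\gamma_\z,\delta)$ appears identically in the two formulas, it cancels and I obtain
\[
\frac{\Delta_1'(\gamma_\z,\delta_1)}{\Delta_1(\gamma_\z,\delta_1)}
 = \pair{\inv(\delta,\delta_1)\cdot \inv'(\delta,\delta_1)^{-1},\, s_{\gamma,\delta}},
\]
where $\inv$ and $\inv'$ are the invariants built from $z_1$ and $z_1z^{-1}$, respectively.

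The next step is a short cocycle calculation: because $z_\sigma\in Z(\overline F)$ is central in $G$, one has $\sigma(g)^{-1}z_\sigma^{-1}\sigma(g)=z_\sigma^{-1}$, and hence
\[
g^{-1}(z_{1,\sigma}z_\sigma^{-1})\sigma(g) \;=\; (g^{-1}z_{1,\sigma}\sigma(g))\cdot z_\sigma^{-1}.
\]
Thus $\inv'(\delta,\delta_1) = \inv(\delta,\delta_1)\cdot [z^{-1}]_T$ in $H^1(F,T)$, where $[z^{-1}]_T$ is the image of $[z^{-1}]\in H^1(F,Z)$ via $Z\hookrightarrow T$. The ratio therefore equals $\pair{[z]_T,\, s_{\gamma,\delta}}$. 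Applying the functoriality of the Tate--Nakayama pairing to the central embedding $Z\hookrightarrow T$, this becomes $\pair{[z],\, \bar s_{\gamma,\delta}}_Z$, where $\bar s_{\gamma,\delta}\in\widehat Z^\Gamma$ is the image of $s_{\gamma,\delta}$ under the canonical surjection $\widehat T^\Gamma\twoheadrightarrow \widehat Z^\Gamma$ dual to $Z\hookrightarrow T$.

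All that remains is to show that $\bar s_{\gamma,\delta}$ does not depend on $(\gamma_\z,\delta_1)$. My argument will be that although the admissible embedding $\phi_{\gamma,\delta}\colon S\to T$ varies with the chosen pair, its restriction modulo $G$-conjugation to the central subgroup $Z\subset T$ is canonical: any two such embeddings differ by $G(\overline F)$-conjugation, which acts trivially on $Z$. Dually, the composite
\[
Z(\widehat H)^\Gamma\hookrightarrow \widehat S^\Gamma\xrightarrow{\widehat{\phi_{\gamma,\delta}^{-1}}}\widehat T^\Gamma\twoheadrightarrow \widehat Z^\Gamma
\]
is independent of the pair, and its value on $s$ is the desired $\bar s$. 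Setting $\alpha=\pair{[z],\bar s}_Z$ then finishes the proof, with $\alpha$ manifestly depending only on $(\e,\z,\psi_1,z_1,z,\w)$.

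The hard part will be making the canonicity claim in the third paragraph fully rigorous. I need to track carefully how the reference identifications $\TT_H\cong \widehat S$ and the admissible isomorphism $S\xrightarrow{\sim}T$ behave as $(\gamma,\delta)$ varies over stably conjugate pairs, and verify that every ambiguity (Weyl-group action on $T$, choice of Borel used to identify $\widehat S$ with $\TT_H$, choice of the auxiliary $g\in G_\sc(\overline F)$ in the construction of $\phi_{\gamma,\delta}$) acts trivially after projection to $\widehat Z^\Gamma$. Once this bookkeeping is done, the conclusion is immediate.
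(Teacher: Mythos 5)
Your explicit computation at a fixed related pair is correct: since $z_\sigma$ is central, $\inv'(\delta,\delta_1)=\inv(\delta,\delta_1)\cdot[z]^{-1}$ in $H^1(F,T)$, so $\Delta_1'(\gamma_\z,\delta_1)/\Delta_1(\gamma_\z,\delta_1)=\pair{[z],s_{\gamma,\delta}}$; this is exactly the computation the paper performs afterwards, in the proof of Theorem \ref{base}. But the content of the proposition is precisely that this quantity is the \emph{same} for every related pair, and that is the step you do not prove: you reduce it to the claim that the image $\bar s_{\gamma,\delta}$ of $s_{\gamma,\delta}$ in $\pi_0(\widehat{Z}^\Gamma)$ is canonical, and then explicitly defer the verification (``the hard part will be making the canonicity claim fully rigorous''). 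As written, the heuristic ``any two admissible embeddings differ by $G(\overline{F})$-conjugation, which acts trivially on $Z$'' is not yet an argument, because for different pairs $(\gamma,\delta)$ the embeddings have different source and target tori $S=\cent(\gamma,H)$, $T=\cent(\delta,G)$; one must compare them through the fixed pairs $(\BB_H,\TT_H)$, $(\BB,\TT)$ and $\eta$, and check that every ambiguity (Weyl-group twist, choice of $g$) dies after composing with the projection $\widehat{T}^\Gamma\rightarrow\widehat{Z}^\Gamma$. So the proposal, in its current form, assumes the very independence statement it is meant to establish.

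Note also that this independence can be had for free, which is how the paper argues: $\Delta_1$ and $\Delta_1'$ are both absolute transfer factors for the same inner twist $(G_1,\psi_1)$, i.e.\ both satisfy $\Delta_\bullet(\gamma_\z,\delta_1)/\Delta_\bullet(\gamma_\z',\delta_1')=\Delta[\e,\z,\psi_1](\gamma_\z,\delta_1,\gamma_\z',\delta_1')$, and the relative factor depends only on $\psi_1$, not on the cocycle $z_1$ or $z_1z^{-1}$; hence the quotient $\Delta_1'/\Delta_1$ is literally the same at all related pairs, with no Galois-cohomological computation at all. The paper then runs your logic in the opposite direction: in Theorem \ref{base} the constancy of $\alpha$ is used to deduce that $\pair{[z],s_{\gamma,\delta}}$ is independent of the pair, which is the canonicity you are trying to prove directly. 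If you wish to keep your route, you must actually carry out that bookkeeping (for instance by observing that the map $\TT\rightarrow\widehat{Z}$ is Weyl-invariant, so the composite $Z(\widehat{H})^\Gamma\subset\widehat{S}^\Gamma\rightarrow\widehat{T}^\Gamma\rightarrow\widehat{Z}^\Gamma$ does not depend on the admissible embedding); until then the proof is incomplete.
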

\begin{proof}
Since the relative transfer factor $\Delta[\e,\z,\psi_1]$ of $G_1$
is independent of both $z_1$ and $z_1z^{-1}$, 
we have
\[
\frac{\Delta_1'(\gamma_\z,\delta_1)}{\Delta_1'(\gamma_\z',\delta_1')}
=\Delta[\e,\z,\psi_1](\gamma_\z,\delta_1,\gamma'_\z,\delta'_1)
=\frac{\Delta_1(\gamma_\z,\delta_1)}{\Delta_1(\gamma_\z',\delta_1')}
\]
for any two pairs $(\gamma_\z,\delta_1)$ and $(\gamma_\z',\delta_1')$
of related elements.
Hence the quotient
\[
\alpha=\frac{\Delta_1'(\gamma_\z,\delta_1)}{\Delta_1(\gamma_\z,\delta_1)}
\]
does not depend on the choice of a pair $(\gamma_\z,\delta_1)$
of related elements.
This satisfies the desired equations.
\end{proof}
\par

We put $J_z=J_{\id,z}\colon \Pi_\temp(G)\rightarrow\Pi_\temp(G)$.
\begin{cor}
Let $z\in Z^1(F,Z)$. Then
for a tempered $L$-parameter $\phi$ of $G$, we have
\[
J_z(\Pi_\phi)=\Pi_\phi.
\]
\end{cor}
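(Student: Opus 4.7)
My strategy is to characterize $\Pi_\phi$ via the expected endoscopic character identities of \S\ref{expectation} and to show these identities cut out the same subset of $\Pi_\temp(G)$ when the base point is $(\id,1)$ or $(\id,z)$, with Proposition \ref{Delta} controlling the change in transfer factor.

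First I would unfold the definitions: $\dot{\pi}_1=(G_1,\psi_1,z_1,\pi_1)\in\Pi_\temp(G)$ lies in $\Pi_\phi$ precisely when, for every semi-simple $s\in S_\phi$ with associated endoscopic datum $\e=(H,\H,s,\eta)$ and $z$-pair $\z$, the identity
\[
S\Theta_{\phi_\z,\id,1}(f^\e)=\Theta^s_{\phi,\w,\psi_1,z_1}(f)
\]
holds for all $(f,f^\e)$ having $\Delta[\e,\z,\psi_1,z_1,\w]$-matching orbital integrals; likewise $J_z(\dot{\pi}_1)=(G_1,\psi_1,z_1z^{-1},\pi_1)$ lies in $\Pi_\phi$ iff the same identity holds with $z_1$ replaced by $z_1z^{-1}$ throughout. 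By Proposition \ref{Delta} the two transfer factors differ by a global scalar $\alpha=\alpha[\e,\z,\psi_1,z_1,z,\w]$, so matching with the new factor amounts to rescaling $f^\e$ by $\alpha$, and the equivalence of the two character identities reduces to the single relation
\[
\Theta^s_{\phi,\w,\psi_1,z_1z^{-1}}(f)=\alpha\cdot\Theta^s_{\phi,\w,\psi_1,z_1}(f).
\]

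To compute $\alpha$ explicitly I would use
\[
\Delta[\e,\z,\psi_1,z_1,\w](\gamma_\z,\delta_1)=\Delta[\e,\z,\w](\gamma_\z,\delta)\cdot\langle\inv(\delta,\delta_1),s_{\gamma,\delta}\rangle^{-1}.
\]
Since $z_\sigma\in Z(\overline F)$ is central, replacing $z_1$ by $z_1z^{-1}$ shifts $\inv(\delta,\delta_1)\in H^1(F,T)$ by the image of $[z^{-1}]\in H^1(F,Z)$ under the natural map $H^1(F,Z)\to H^1(F,T)$, yielding $\alpha=\langle[z],s_{\gamma,\delta}\rangle$, a quantity independent of $(\gamma,\delta)$ as forced by Proposition \ref{Delta}. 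By the compatibility of the Tate--Nakayama pairing on $H^1(F,T)\times\widehat{T}^\Gamma$ with the Kottwitz pairing $H^1(F,G)\cong\pi_0(Z(\widehat{G})^\Gamma)^D$, this coincides with the value at $[z]$ of the restriction to $\pi_0(Z(\widehat{G})^\Gamma)$ of the character of $\pi_0(S_\phi)$ determined by $s$.

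Finally, expanding $\Theta^s_{\phi,\w,\psi_1,z_1z^{-1}}=e(G_1)\sum_{\dot\pi\in\Pi_\phi,\ \dot\pi\mapsto[z_1z^{-1}]}\langle s,\dot\pi\rangle_\w\Theta_{\dot\pi}$ and reindexing the sum by $\dot\pi=J_z(\dot\pi')$ with $\dot\pi'\in\Pi_\phi$ mapping to $[z_1]$, the remaining task is to show $\langle s,J_z(\dot\pi')\rangle_\w/\langle s,\dot\pi'\rangle_\w=\alpha$; this follows from the Kottwitz isomorphism and the description of the central character of $\iota_\w^{-1}(\eta)$ as the restriction of $\eta\in\Irr(\pi_0(S_\phi))$ to $\pi_0(Z(\widehat{G})^\Gamma)$, matching the explicit form of $\alpha$ above. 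This gives $J_z(\Pi_\phi)\subseteq\Pi_\phi$; the reverse inclusion follows by applying the argument with $z^{-1}$ in place of $z$. The main obstacle is the two parallel compatibility checks---identifying $\alpha=\langle[z],s_{\gamma,\delta}\rangle$ and matching the twist of $\langle s,\cdot\rangle_\w$ under $J_z$---which both require careful bookkeeping of the Tate--Nakayama and Kottwitz pairings under the natural maps $H^1(F,Z)\hookrightarrow H^1(F,T)$ and $\widehat{T}^\Gamma\twoheadrightarrow Z(\widehat{G})^\Gamma$.
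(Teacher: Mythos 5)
Your first two paragraphs contain the right mechanism, and it is the paper's mechanism: matching with respect to $\Delta[\e,\z,\psi_1,z_1z^{-1},\w]$ is the same as matching of $\alpha\cdot f$ with respect to $\Delta[\e,\z,\psi_1,z_1,\w]$, so the assumed identities force $\Theta^s_{\phi,\w,\psi_1,z_1z^{-1}}(f)=\alpha\cdot\Theta^s_{\phi,\w,\psi_1,z_1}(f)$. But at this point the corollary is already within reach and needs only the case $s=1$ (the trivial endoscopic datum $\e=(G,\Lgp{G},1,\id)$): there the coefficients $\pair{1,\dot{\pi}}_\w$ are dimensions, hence nonzero, and since $\{\Theta_{\pi_1}\mid\pi_1\in\Irr(G_1(F))\}$ is linearly independent, proportionality of the two stable characters forces the two index sets to coincide; this gives $(G_1,\psi_1,z_1,\pi_1)\in\Pi_\phi\Leftrightarrow(G_1,\psi_1,z_1z^{-1},\pi_1)\in\Pi_\phi$, i.e. $J_z(\Pi_\phi)=\Pi_\phi$. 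You never invoke linear independence, which is the step that converts proportionality of virtual characters into equality of packets.

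Your third paragraph, by contrast, has two genuine defects. First, it is circular: rewriting the sum defining $\Theta^s_{\phi,\w,\psi_1,z_1z^{-1}}$ as a sum over $J_z(\dot{\pi}')$ with $\dot{\pi}'\in\Pi_\phi$ presupposes exactly the statement $J_z(\Pi_\phi)=\Pi_\phi$ that you are trying to prove; moreover the relation $\Theta^s_{z_1z^{-1}}=\alpha\,\Theta^s_{z_1}$ is a consequence of the assumed transfer identities (both sides are transfers of the same stable distribution on $H_\z$ under proportional factors), not something to be verified by expanding the packets. Second, the claimed coefficient identity $\pair{s,J_z(\dot{\pi}')}_\w=\alpha\cdot\pair{s,\dot{\pi}'}_\w$ does not follow from the Kottwitz isomorphism and the central-character desideratum: that desideratum only constrains the restriction of $\iota_\w(\dot{\pi})$ to $\pi_0(Z(\widehat{G})^\Gamma)$, whereas the twist involved here is the character $\chi_z$ of all of $\pi_0(S_\phi)$ (Theorem \ref{base}), which is in general not pulled back from the center; likewise $\alpha=\pair{[z],s_{\gamma,\delta}}$ is a Tate--Nakayama pairing on a maximal torus, depending on $[z]\in H^1(F,Z)$ and on $s$ itself, not merely on their images in $H^1(F,G)$ and $\pi_0(Z(\widehat{G})^\Gamma)$. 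A concrete counterexample to your deduction: for $G=\Sp(W_{2n})$ both $H^1(F,G)$ and $\pi_0(Z(\widehat{G})^\Gamma)$ are trivial, so central-character bookkeeping gives nothing, yet $\chi_{z_{c_0}}=\eta_{c_0}$ is generally a nontrivial character of $A_\phi^+$ (Proposition \ref{SO&Sp}); your argument would instead yield $\iota_\w\circ J_z=\iota_\w$, contradicting Theorem \ref{base} and Proposition \ref{whittaker1}. In short: keep the transfer-factor comparison, specialize to $s=1$, and finish with linear independence of characters; the identity for general $s$ belongs to the proof of Theorem \ref{base}, where $\alpha=\chi_z(s)$ is computed correctly via the torus, not via central characters.
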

\begin{proof}
We take $\e=(G,\Lgp{G},1,\id)$ and $\z=(G,\id)$.
Let $(G_1,\psi_1,z_1)$ be a pure inner twist of $G$, and
 $\Delta_1$, $\Delta_1'$ and $\alpha$
as in (the proof of) the above proposition.
Then we see that
$f^\e$ and $f$ have $\Delta_1'$-matching orbital integrals
if and only if $f^\e$ and $\alpha\cdot f$ have $\Delta_1$-matching orbital integrals.
Hence we should have
\[
S\Theta_{\phi,\psi_1,z_1z^{-1}}(f)=S\Theta_{\phi,\psi_1,z_1}(\alpha\cdot f).
\]
Since $\{\Theta_{\pi_1}\ |\ \pi_1\in\Irr(G_1(F))\}$ is linearly independent, 
we see that
$(G_1,\psi_1,z_1,\pi_1)\in\Pi_\phi$ if and only if
$(G_1,\psi_1,z_1z^{-1},\pi_1)\in\Pi_\phi$.
\end{proof}
\par

Let $s$ be a semi-simple element in $S_\phi$.
We take the endoscopic datum $\e=(H,\H,s,\eta)$ 
associated to $s$.
Note that $s\in Z(\widehat{H})^\Gamma$.
Let $\z=(H_\z,\eta_\z)$ be a z-pair for $\e$.
For a pair $(\gamma_\z,\delta)\in H_{\z,G\text{-}\sr}\times G_\sr$
of related elements, 
we denote the image of $\gamma_\z$ under the map $H_\z\rightarrow H$
by $\gamma$, and
we put $S=\cent(\gamma,H)$ and $T=\cent(\delta,G)$.
Then there exists a unique admissible embedding 
$\xi\colon S\xrightarrow{\cong} T\subset G$ such that
$\xi(\gamma)=\delta$.
The isomorphism $\xi^{-1}$ gives a map
\[
Z(\widehat{H})^\Gamma\hookrightarrow 
\widehat{S}^\Gamma\rightarrow \widehat{T}^\Gamma.
\]
We denote the image of $s$ under this map by $s_{\gamma,\delta}$.
For $z\in Z^1(F,Z)$, we consider
\[
\pair{[z],s_{\gamma,\delta}},
\]
where $\pair{\cdot,\cdot}\colon H^1(F,T)\times\pi_0(\widehat{T}^\Gamma)\rightarrow \C^\times$
is the Tata--Nakayama pairing, and $[z]$ is the image of $z$ under the map
$Z^1(F,Z)\rightarrow H^1(F,Z)\rightarrow H^1(F,T)$.
\begin{thm}\label{base}
The value $\pair{[z],s_{\gamma,\delta}}$ does not depend on the choice of 
a pair $(\gamma_\z,\delta)\in H_{\z,G\text{-}\sr}\times G_\sr$
of related elements.
The map $s\mapsto \pair{[z],s_{\gamma,\delta}}$ gives a character $\chi_z$ of $\pi_0(S_\phi)$,
and
the diagram
\[
\begin{CD}
\Pi_\phi @>\iota_\w>>\Irr(\pi_0(S_\phi))\\
@VJ_zVV @VV\cdot\otimes\chi_zV\\
\Pi_\phi @>\iota_\w>>\Irr(\pi_0(S_\phi))
\end{CD}
\]
is commutative.
\end{thm}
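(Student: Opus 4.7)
The plan is to identify the constant $\alpha[\e,\z,\psi_1,z_1,z,\w]$ of Proposition \ref{Delta} explicitly with the Tate--Nakayama pairing $\pair{[z],s_{\gamma,\delta}}$, and then to read off the commutativity of the diagram from the expected endoscopic character identities combined with linear independence of Harish-Chandra characters.

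First I would compute $\alpha$ directly. Given a pair $(\gamma_\z,\delta_1)$ of related strongly regular elements with $\delta_1\in G_1(F)$, choose $\delta\in G_\sr(F)$ and $g\in G(\overline F)$ with $\psi_1^{-1}(\delta_1)=g\delta g^{-1}$. The defining formula of $\Delta[\e,\z,\psi_1,z_1,\w]$ in \S\ref{A.2} gives
\[
\Delta_1(\gamma_\z,\delta_1)=\Delta[\e,\z,\w](\gamma_\z,\delta)\cdot\pair{\inv(\delta,\delta_1),s_{\gamma,\delta}}^{-1},
\]
where $\inv(\delta,\delta_1)$ is the class of $\sigma\mapsto g^{-1}z_{1,\sigma}\sigma(g)$ in $H^1(F,T)$. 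Replacing $z_1$ by $z_1 z^{-1}$ shifts this class by $[z]^{-1}\in H^1(F,T)$: since $z\in Z^1(F,Z)$ is valued in the center of $G$, we have $g^{-1}z_\sigma g=z_\sigma$, and bilinearity of the Tate--Nakayama pairing yields
\[
\alpha=\frac{\Delta_1'(\gamma_\z,\delta_1)}{\Delta_1(\gamma_\z,\delta_1)}=\pair{[z],s_{\gamma,\delta}}.
\]
The independence of $\alpha$ from the pair, already furnished by Proposition \ref{Delta}, gives the first assertion of the theorem.

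Next I would verify that $\chi_z(s)\coloneqq\pair{[z],s_{\gamma,\delta}}$ is a character of $\pi_0(S_\phi)$. Functoriality of the Tate--Nakayama pairing under the central inclusion $Z\hookrightarrow T$ implies that $\pair{[z],t}$ depends on $t\in\widehat T^\Gamma$ only through its image in $\widehat Z^\Gamma$ under the dual surjection $\widehat T\twoheadrightarrow\widehat Z$. The composite $s\mapsto s_{\gamma,\delta}\mapsto \bar s\in\widehat Z^\Gamma$ is then an intrinsic homomorphism $\pi_0(S_\phi)\to\widehat Z^\Gamma$, independent of the choice of endoscopic representative and admissible embedding; both the multiplicativity of $\chi_z$ and the verification that $\chi_z$ is well-defined on $\pi_0(S_\phi)$ follow immediately from this factorization.

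Finally I would derive the commutativity of the diagram. Fix a semi-simple $s\in S_\phi$ and an endoscopic datum $\e=(H,\H,s,\eta)$ associated to $s$. If $f^\e$ on $H(F)$ and $f$ on $G_1(F)$ have $\Delta[\e,\z,\psi_1,z_1,\w]$-matching orbital integrals, then $f^\e$ and $\chi_z(s)^{-1}f$ have $\Delta[\e,\z,\psi_1,z_1 z^{-1},\w]$-matching orbital integrals. Applying the expected endoscopic character identity for both transfer factors and cancelling the common term $S\Theta_{\phi_\z,\id,1}(f^\e)$ yields
\[
\Theta^s_{\phi,\w,\psi_1,z_1 z^{-1}}(f)=\chi_z(s)\cdot\Theta^s_{\phi,\w,\psi_1,z_1}(f).
\]
By Lemma \ref{J} and the corollary preceding Theorem \ref{base}, $J_z$ restricts to a bijection between $\{\dot\pi\in\Pi_\phi:\dot\pi\mapsto[z_1]\}$ and $\{\dot\pi\in\Pi_\phi:\dot\pi\mapsto[z_1 z^{-1}]\}$, and by construction $\Theta_{J_z(\dot\pi)}=\Theta_{\dot\pi}$ as distributions on $G_1(F)$. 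Reindexing the left-hand sum via $J_z$ and invoking linear independence of Harish-Chandra characters of pairwise non-isomorphic irreducible representations of $G_1(F)$ forces
\[
\pair{s,J_z(\dot\pi)}_\w=\chi_z(s)\cdot\pair{s,\dot\pi}_\w
\]
for every $\dot\pi\in\Pi_\phi$ and every semi-simple $s\in S_\phi$. Since semi-simple elements topologically generate $\pi_0(S_\phi)$, this yields $\iota_\w(J_z(\dot\pi))=\iota_\w(\dot\pi)\otimes\chi_z$, as required.

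The main technical subtlety is the multiplicativity and intrinsicality of $\chi_z$: the formula $\pair{[z],s_{\gamma,\delta}}$ a priori depends on an endoscopic datum for $s$ and on a choice of admissible embedding, and distinct semi-simple elements $s_1,s_2\in S_\phi$ give rise to incomparable data. Routing the pairing through $\widehat Z^\Gamma$ via the functoriality of Tate--Nakayama under $Z\hookrightarrow T$ is the clean way to circumvent this and to obtain the character property of $\chi_z$ uniformly across $S_\phi$.
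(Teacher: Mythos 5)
Your computation of $\alpha$ and your derivation of the diagram are exactly the paper's argument: you compute $\inv'(\delta,\delta_1)=[z]^{-1}\cdot\inv(\delta,\delta_1)$ in $H^1(F,T)$ using centrality of $z$, conclude $\alpha=\pair{[z],s_{\gamma,\delta}}$, get independence of the pair from Proposition \ref{Delta}, and then combine $\Delta$-matching, the expected character identities and linear independence of Harish--Chandra characters with the reindexing by $J_z$ to obtain $\pair{s,J_z(\dot\pi)}_\w=\chi_z(s)\pair{s,\dot\pi}_\w$. All of that is fine and coincides with the paper.

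The gap is in your separate argument that $s\mapsto\pair{[z],s_{\gamma,\delta}}$ is a character of $\pi_0(S_\phi)$. You invoke "functoriality of the Tate--Nakayama pairing under $Z\hookrightarrow T$" and an "image of $s_{\gamma,\delta}$ in $\widehat Z^\Gamma$ under the dual surjection $\widehat T\twoheadrightarrow\widehat Z$". For the groups relevant here ($\Sp_{2n}$, $\SO_m$) the center $Z$ is a \emph{finite} group scheme, so the naive dual $X^*(Z)\otimes\C^\times$ is trivial and no such surjection of groups $\widehat T\twoheadrightarrow\widehat Z$ exists; making your factorization precise requires Tate--Nakayama/Kottwitz duality for diagonalizable groups (or complexes of tori), where the "dual of $Z$" appears only after a shift. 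Even granting that formalism, the assertions that the resulting map $s\mapsto\bar s$ is independent of the endoscopic datum $\e$ and of the admissible embedding, descends to $\pi_0(S_\phi)$, and is multiplicative are exactly the content you need, and you declare them "immediate" without argument; since distinct semisimple $s_1,s_2$ live in different endoscopic data, multiplicativity is not a formality. The paper avoids all of this: having proved $\pair{s,J_z(\dot\pi)}_\w=\chi_z(s)\pair{s,\dot\pi}_\w$ for all semisimple $s$ (every class of $\pi_0(S_\phi)$ has a semisimple representative, which is the correct form of your "topologically generate" remark), it specializes to the unique $\w$-generic member $\dot\pi_{\mathrm{gen}}$, for which $\iota_\w(\dot\pi_{\mathrm{gen}})=\1$; this exhibits $\chi_z(s)=\tr\bigl(\iota_\w(J_z(\dot\pi_{\mathrm{gen}}))(s)\bigr)$, so $\chi_z$ is the character of an irreducible representation of $\pi_0(S_\phi)$, and $\chi_z(1)=1$ forces it to be one-dimensional. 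You should either supply the diagonalizable-group duality and the well-definedness/multiplicativity verifications, or simply reorder your argument as the paper does, since your third step already contains everything needed.
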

\begin{proof}
As in Proposition \ref{Delta}, 
for a pure inner twist $(\psi_1,z_1)\colon G\rightarrow G_1$,
we put 
$\Delta_1=\Delta[\e,\z,\psi_1,z_1,\w]$ and
$\Delta_1'=\Delta[\e,\z,\psi_1,z_1z^{-1},\w]$.
Let $(\gamma_\z,\delta_1)\in H_{\z,G\text{-}\sr}\times G_{1,\sr}$
be a pair of related elements.
We take $\delta\in G(F)$ and $g\in G(\overline{F})$ such that
$\psi_1^{-1}(\delta_1)=g\delta g^{-1}$.
We define $\inv(\delta,\delta_1)$ and $\inv'(\delta,\delta_1)$ in $H^1(F,T)$ by
\begin{align*}
\inv(\delta,\delta_1)&=[\sigma\mapsto g^{-1} z_{1,\sigma}\sigma(g)],\\
\inv'(\delta,\delta_1)&=[\sigma\mapsto g^{-1} z_{1,\sigma}z_\sigma^{-1}\sigma(g)].
\end{align*}
Then we have
\begin{align*}
\Delta_1(\gamma_\z,\delta_1)&=\Delta(\gamma_\z,\delta)
\cdot \pair{\inv(\delta,\delta_1),s_{\gamma,\delta}}^{-1},\\
\Delta_1'(\gamma_\z,\delta_1)&=\Delta(\gamma_\z,\delta)
\cdot \pair{\inv'(\delta,\delta_1),s_{\gamma,\delta}}^{-1}.
\end{align*}
Hence we have
\[
\alpha=\frac{\Delta_1'(\gamma_\z,\delta_1)}{\Delta_1(\gamma_\z,\delta_1)}
=\frac{\pair{\inv(\delta,\delta_1),s_{\gamma,\delta}}}
{\pair{\inv'(\delta,\delta_1),s_{\gamma,\delta}}}
=\pair{[z],s_{\gamma,\delta}}.
\]
Since $\alpha$ does not depend on the choice of $(\gamma_\z,\delta_1)$, 
we see that $\pair{[z],s_{\gamma,\delta}}$ 
is independent of the choice of $(\gamma_\z,\delta)$.
We put $\chi_z(s)=\pair{[z],s_{\gamma,\delta}}$.
\par

Since we should have
\[
\Theta_{\phi,\w,\psi,z_1z^{-1}}^s(f)=\Theta_{\phi,\w,\psi,z_1}^s(\chi_z(s)\cdot f)
\]
if $f^\e$ and $f$ have $\Delta_1'$-matching orbital integrals,
we see that
\[
\pair{s,J_z(\dot{\pi}_1)}_\w=\chi_z(s)\cdot\pair{s,\dot{\pi}_1}_\w
\]
for $\dot{\pi}_1=(G_1,\psi_1,z_1,\pi_1)\in \Pi_\phi$.
In particular, taking the unique element $\dot{\pi}_1=(G,\id, 1, \pi)$ such that
$\pi$ is $\w$-generic,
we see that $\chi_z$ is the character of an irreducible representation of $\pi_0(S_\phi)$.
Since $\chi_z(1)=1$, this representation has dimension $1$, i.e.,
$\chi_z\colon \pi_0(S_\phi)\rightarrow \C^\times$ is a ($1$-dimensional) character.
\end{proof}
\par

The character $\chi_z$ is also described as follows.
Let $s\in S_\phi$ be a semi-simple element.
We take the endoscopic datum $(H,\H,s,\eta)$ associated to $s$.
Let $S\subset H$ and $T\subset G$ be maximal $F$-tori
and $\eta^*\colon S\rightarrow T$ be the isomorphism
given by fixed pairs of $G$, $\widehat{G}$, $H$ and $\widehat{H}$.
Note that
$s\in Z(\widehat{H}^\Gamma)\subset \widehat{S}^\Gamma$.
We take $g\in G_{\rm sc}(\overline{F})$ such that
$\xi=\Int(g^{-1})\circ\eta^*\colon S\rightarrow g^{-1}Tg$
is an admissible embedding.
Then 
$Z\subset T$, and $\Int(g^{-1})$ fixes all elements in $Z$.
For $z\in Z^1(F,Z)$, we see that
$\xi^{-1}(z)=(\eta^*)^{-1}(z)\in Z^1(F,\xi^{-1}(Z))\subset Z^1(F,S)$.
Then we have
\[
\chi_z(s)=\pair{[(\eta^*)^{-1}(z)],s},
\]
where $\pair{\cdot,\cdot}\colon H^1(F,S)\times \pi_0(\widehat{S}^\Gamma)\rightarrow \C^\times$
is the Tate--Nakayama pairing.
\par

Next, we consider the case when
$z\in Z^1(F,Z)$ is trivial in $H^1(F,G)$.
Then $[z]\in\im[G_\ad(F)\rightarrow H^1(F,Z)]$.
\begin{prop}\label{Whittaker}
Let $z\in Z^1(F,Z)$ such that
$[z]\in H^1(F,Z)$ is the image of $g\in G_\ad(F)$.
Let $\w=(B,\lam)$ and $\w'=(gBg^{-1},\lam\circ\Int(g^{-1}))$ be two Whittaker data of $G$.
Then we have
\[
\iota_\w\circ J_z=\iota_{\w'}.
\]
In particular, we have $\iota_{\w'}(\dot{\pi})=\iota_\w(\dot{\pi})\otimes\chi_z$
for $\dot{\pi}\in \Pi_\phi$.
\end{prop}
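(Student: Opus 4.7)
The plan is to reduce the proposition to Theorem \ref{base} by proving that passing from the Whittaker datum $\w$ to $\w'=(gBg^{-1},\lam\circ\Ad(g^{-1}))$ multiplies the Whittaker normalization of the transfer factor by $\chi_z$. Once this is established, the identity $\iota_{\w'}(\dot\pi) = \iota_\w(\dot\pi)\otimes\chi_z$ will follow from the expected character identity in the same way that Theorem \ref{base} was deduced from the change of base point $z_1 \mapsto z_1 z^{-1}$. The two statements $\iota_\w\circ J_z = \iota_{\w'}$ and $\iota_{\w'}(\dot\pi) = \iota_\w(\dot\pi)\otimes\chi_z$ are then formally equivalent by Theorem \ref{base}.

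Concretely, I would first fix an endoscopic datum $\e=(H,\H,s,\eta)$ associated to a semi-simple $s\in S_\phi$, a z-pair $\z$, and a pure inner twist $(\psi_1,z_1)\colon G\to G_1$. The goal of this first step is to prove the constancy
\[
\Delta[\e,\z,\psi_1,z_1,\w'](\gamma_\z,\delta_1) = \chi_z(s)\cdot\Delta[\e,\z,\psi_1,z_1,\w](\gamma_\z,\delta_1)
\]
for every pair $(\gamma_\z,\delta_1)$ of related elements. Since the Tate--Nakayama pairing $\pair{\inv(\delta,\delta_1),s_{\gamma,\delta}}$ in the definition of the transfer factor for a pure inner twist does not involve the Whittaker datum, it suffices to prove the corresponding identity $\Delta[\e,\z,\w']=\chi_z(s)\cdot\Delta[\e,\z,\w]$ for the split case $(\id,1)\colon G\to G$ and then transport via the formula in \S\ref{A.2}.

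For the split case, I would unwind the Kottwitz--Shelstad definition of the Whittaker normalization, which differs from any fixed absolute transfer factor by an $\ep$-factor of the Galois representation attached to the difference of $\rho$'s for $G$ and $H$ coming from the pair $\w$. Changing $\w$ to $\Ad(g)\cdot\w$ with $g\in G_\ad(F)$ alters the chosen splitting of $G$ by an inner automorphism lifted to $G_\ad(F)$, and a direct calculation (in the spirit of Kaletha's \cite[Theorem 3.3]{Ka1}, which handles the case $g\in G(F)$) shows that the discrepancy between the two normalizations is $\pair{(\eta^*)^{-1}(z), s}$, i.e., precisely $\chi_z(s)$ as defined in \S\ref{A.4}. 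Here one uses that $[z]$ is the image of $g$ under the connecting map $G_\ad(F)\to H^1(F,Z)$, so the difference of splittings is exactly governed by the same Tate--Nakayama pairing.

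Given this constant of proportionality, the character identity yields $\Theta^s_{\phi,\w',\psi_1,z_1}(f) = \chi_z(s)\cdot\Theta^s_{\phi,\w,\psi_1,z_1}(f)$ whenever $f$ and $f^\e$ have $\Delta[\e,\z,\psi_1,z_1,\w']$-matching orbital integrals, and the linear independence of the characters $\Theta_{\dot\pi_1}$ gives $\pair{s,\dot\pi}_{\w'} = \chi_z(s)\cdot\pair{s,\dot\pi}_{\w}$ for every $\dot\pi\in\Pi_\phi$ and every semi-simple $s\in S_\phi$. This forces $\iota_{\w'}(\dot\pi) = \iota_\w(\dot\pi)\otimes\chi_z$, and combined with Theorem \ref{base} gives $\iota_\w\circ J_z = \iota_{\w'}$. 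The main obstacle is the explicit $\ep$-factor/splitting computation in step three; the rest of the argument is formal, parallel to the proof of Theorem \ref{base}.
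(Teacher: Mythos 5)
Your overall strategy is coherent and genuinely different from the paper's, but its central step is exactly the hard part and you leave it unproved. You propose to show directly that replacing $\w$ by $\w'=(gBg^{-1},\lam\circ\Ad(g^{-1}))$ rescales the Whittaker-normalized transfer factor by $\chi_z(s)$, to transport this to every pure inner twist (correctly observing that the Tate--Nakayama factor $\pair{\inv(\delta,\delta_1),s_{\gamma,\delta}}$ does not involve the Whittaker datum), and then to run the character-identity/linear-independence argument as in Theorem \ref{base}; that formal part is fine and would indeed give $\iota_{\w'}=\iota_\w\otimes\chi_z$ on all of $\Pi_\phi$ and hence, with Theorem \ref{base}, the proposition. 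But the asserted identity $\Delta[\e,\z,\w']=\chi_z(s)\cdot\Delta[\e,\z,\w]$ is precisely the substance of Kaletha's Lemma 3.2/Theorem 3.3 --- which, contrary to your parenthesis, treats $g\in G_\ad(F)$ (for $g\in G(F)$ the two Whittaker data are conjugate and there is nothing to compare); what Kaletha does \emph{not} cover is the pure inner forms. Redoing that computation means tracking how the splitting-dependent term of the Langlands--Shelstad factor and the Kottwitz--Shelstad $\ep$-factor normalization change under $\Ad(g)$, and then matching the resulting pairing with $\chi_z$ as defined by $\pair{[(\eta^*)^{-1}(z)],s}$; you flag this as ``the main obstacle'' and do not carry it out, so as written the decisive step is only asserted.

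The paper's proof avoids this computation entirely. It quotes Kaletha only for the \emph{existence} of a character $(\w,\w')$ with $\iota_\w=\iota_{\w'}\otimes(\w,\w')$, and then computes $J_z$ explicitly on the trivial pure inner twist: choosing $h\in G(\overline{F})$ with $h^{-1}\sigma(h)=z_\sigma$, the pair $(\Ad(g),h)$ is an isomorphism of pure inner twists, so $J_z$ sends $(G,\id,1,\pi)$ to $(G,\id,1,\pi\circ\Ad(g))$; since $\pi$ is $\w'$-generic if and only if $\pi\circ\Ad(g)$ is $\w$-generic, evaluating Theorem \ref{base} together with Kaletha's relation at the $\w'$-generic base point yields $\1=(\w,\w')\otimes\chi_z$, i.e. $(\w,\w')=\chi_z^{-1}$, and the proposition follows. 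In short, the paper trades your transfer-factor computation for the generic-base-point trick; your route, if completed (or if you simply cite Kaletha's explicit character and verify its compatibility with $\chi_z$), has the merit of producing the relation on the whole Vogan packet in one stroke via the $\w$-independence of the Tate--Nakayama factor, rather than extending Kaletha's quasi-split statement, but at present the key identity of normalizations is a gap rather than a proof.
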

\begin{proof}
There is a similar result in \cite{Ka1}.
According to Theorem 3.3 (or Lemma 3.2) in \cite{Ka1}, 
there exists a character $(\w,\w')$ on $\pi_0(S_\phi/Z(\widehat{G})^\Gamma)$
such that $\iota_{\w}(\dot{\pi})=\iota_{\w'}(\dot{\pi})\otimes(\w,\w')$
for any $\dot{\pi}=(G,\id,1,\pi)\in\Pi_\phi$.
\par

By the definition, 
we can find a preimage $h\in G(\overline{F})$ of $g$ such that
\[
h^{-1}\sigma(h)=z_\sigma.
\]
This implies that $(\Int(g),h)$ is an isomorphism 
from $(\id,1)$ to $(\id,z^{-1})$, 
and so that this is an isomorphism
from $(G,\id,1,\pi\circ\Int(g))$ to $(G,\id,z^{-1},\pi)$.
Therefore, $J_z$ is given by
\[
(G,\id,1,\pi)\mapsto (G,\id,1,\pi\circ\Int(g)).
\]
Note that $\pi$ is $\w'$-generic if and only if
$\pi\circ\Int(g)$ is $\w$-generic.
In particular, if we take the unique element $\dot{\pi}=(G,\id,1,\pi)\in\Pi_\phi$
such that $\pi$ is $\w'$-generic representation,
then we have
\[
\1=\iota_{\w}(J_z(\dot{\pi}))
=\iota_\w(\dot{\pi})\otimes\chi_z
=\iota_{\w'}(\dot{\pi})\otimes(\w,\w')\otimes\chi_z
=(\w,\w')\otimes\chi_z.
\]
Hence we have $\chi_z=(\w,\w')^{-1}$, and so that
\[
\iota_\w\circ J_z(\dot{\pi})=\iota_\w(\dot{\pi})\otimes\chi_z
=\iota_\w(\dot{\pi})\otimes(\w,\w')^{-1}=\iota_{\w'}(\dot{\pi})
\]
for any $\dot{\pi}\in\Pi_\phi$,
as desired.
\end{proof}

%\subsection{Examples}\label{A.5}
\subsection{Examples}\label{A.5}
We calculate $\chi_z$ for $G=\SO(V)$ with $\dim(V)\in 2\Z$, and for $G'=\Sp(W)$.
As an application, we prove Propositions \ref{whittaker1} and \ref{whittaker2}.
We fix a non-trivial additive character $\psi_F\colon F\rightarrow \C^\times$.
\par

First, we prepare a certain property of the Galois cohomology.
For $d\in F^\times$, we put $E_d=F(\sqrt{d})$.
Let 
\[
T=E_d^1\coloneqq\{(a,b)\in\mathbb{G}_a^2\ |\ a^2-b^2d=1\}
\]
be a torus over $F$.
Note that $T\cong \GL_1$ over $E_d$.
For $\sigma\in \Gamma$, 
we denote the usual action on $\GL_1(\overline{F})$ by $x\mapsto\sigma(x)$,
and the action on $T(\overline{F})$ by $x\mapsto \sigma_d(x)$.
If $d\not\in F^{\times2}$, then it is known that $H^1(F,T)\cong\{\pm1\}$ 
(which also follows from Kottwitz's isomorphism).
\begin{lem}\label{coh}
Let $c\in F^\times$.
We define $z_{c,\sigma}\in \overline{F}^\times=T(\overline{F})$ by
\[
z_{c,\sigma}=\frac{\sigma(\sqrt{c})}{\sqrt{c}}=
\left\{
\begin{aligned}
&1	\iif \sigma|E_c=\id_{E_c},\\
&-1	\other.
\end{aligned}
\right.
\]
Then the map $z_c\colon \Gamma\ni\sigma\mapsto z_{c,\sigma}\in T(\overline{F})$ 
belongs to $Z^1(F,T)$.
Moreover, the map $H^1(F,T)\hookrightarrow\{\pm1\}$ is given by
\[
[z_c]\mapsto(c,d),
\]
where $(\cdot,\cdot)$ is the quadratic Hilbert symbol of $F$.
\end{lem}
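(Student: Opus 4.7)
My plan is to first verify the cocycle condition directly, then compute $[z_c]$ via the long exact cohomology sequence attached to the norm map of $E_d/F$.

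For the cocycle condition, note that each value $z_{c,\sigma}=\pm 1$ lies in $F^\times$, which embeds in $T(F)$ via $(a,b)\mapsto a+b\sqrt d$, so $z_{c,\tau}$ is fixed by the twisted Galois action $\sigma_d$ on $T(\overline F)$. The cocycle identity $z_{c,\sigma\tau}=z_{c,\sigma}\cdot\sigma_d(z_{c,\tau})$ then reduces to $\sigma\tau(\sqrt c)/\sqrt c=(\sigma(\sqrt c)/\sqrt c)(\tau(\sqrt c)/\sqrt c)$, which follows immediately from $\tau(\sqrt c)\in\{\pm\sqrt c\}\subset F\cdot\sqrt c$.

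For the second assertion, I would assume $d\notin F^{\times 2}$ (the case $d\in F^{\times 2}$ being trivial since then $T$ is split, $H^1(F,T)=1$, and $(c,d)=1$). The plan is to exploit the short exact sequence of $F$-tori
\[
1\to T\to \Res_{E_d/F}\GL_1 \xrightarrow{\Norm} \GL_1 \to 1.
\]
Shapiro's lemma combined with Hilbert 90 gives $H^1(F,\Res_{E_d/F}\GL_1)\cong H^1(E_d,\GL_1)=1$, so the long exact sequence supplies an isomorphism $\delta\colon F^\times/\Norm_{E_d/F}(E_d^\times)\xrightarrow{\sim}H^1(F,T)$. By local class field theory the source is identified with $\Gal(E_d/F)\cong\{\pm 1\}$ via $c\mapsto (c,d)$. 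To finish, I would identify $\delta(c)$ explicitly: using $(\Res_{E_d/F}\GL_1)(\overline F)\cong\overline F^\times\times\overline F^\times$ (in which the norm becomes multiplication and non-trivial Galois elements swap the two factors), a preimage of $c$ under $\Norm$ is $\tilde c=(\sqrt c,\sqrt c)$. Transporting the coboundary $\sigma\mapsto\sigma(\tilde c)/\tilde c$ along the (over-$E_d$) isomorphism $T\cong\GL_1$, $(y_1,y_2)\mapsto y_1$, gives exactly $\sigma\mapsto \sigma(\sqrt c)/\sqrt c=z_{c,\sigma}$, so $\delta(c)=[z_c]$.

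The computation itself is light; the main thing to watch is the bookkeeping. One must carefully track the twisted action $\sigma_d$ on $T(\overline F)\cong\overline F^\times$ coming from the non-$F$-rational isomorphism $T\cong\GL_1$ over $E_d$, and verify that the identification $F^\times/\Norm_{E_d/F}(E_d^\times)\cong\{\pm 1\}$ via the Hilbert symbol matches the Kottwitz-style identification $H^1(F,T)\cong\pi_0(\widehat T^{\Gamma})^D\cong\{\pm 1\}$ used elsewhere in the appendix. Since both ultimately come from the canonical local reciprocity isomorphism $F^\times/\Norm_{E_d/F}(E_d^\times)\cong\Gal(E_d/F)$, the consistency should follow without difficulty.
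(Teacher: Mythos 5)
Your argument is correct, and it reaches the conclusion by a different mechanism than the paper. The paper's proof replaces $z_c$ by the cohomologous cocycle $z'_c$ (trivial on $\Gal(\overline{F}/E_d)$, value $c$ on the non-trivial coset), invokes inflation--restriction together with Hilbert 90 to reduce to $H^1(\Gal(E_d/F),T(E_d))$, and then computes $Z^1$ and $B^1$ by hand to get $H^1(F,T)\cong F^\times/N_{E_d/F}(E_d^\times)$ with $[z_c]=[z'_c]\leftrightarrow c$, after which the norm criterion for the Hilbert symbol finishes the proof. You instead use the norm-one exact sequence $1\to T\to \Res_{E_d/F}\GL_1\to\GL_1\to 1$: Shapiro plus Hilbert 90 gives the connecting isomorphism $\delta\colon F^\times/N_{E_d/F}(E_d^\times)\to H^1(F,T)$, and the explicit lift $(\sqrt{c},\sqrt{c})$ shows $\delta(c)=[z_c]$ directly. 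Your route is slightly slicker, avoiding both the auxiliary cocycle and the explicit $Z^1/B^1$ computation, at the cost of the bookkeeping with the twisted Galois action on $(E_d\otimes_F\overline{F})^\times\cong\overline{F}^\times\times\overline{F}^\times$, which you carry out correctly; the paper's route stays entirely at the level of cocycles for the finite quotient $\Gal(E_d/F)$ and needs no base change of the torus.

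Two minor remarks. The phrase ``$F^\times$ embeds in $T(F)$'' is not literally true; all you need (and all that is true) is that $\pm1\in T(F)$, i.e. the values of $z_c$ are $F$-rational and hence fixed by the twisted action, so the cocycle identity reduces to multiplicativity of $\sigma\mapsto\sigma(\sqrt{c})/\sqrt{c}$, as you say. Also, your concern about matching the Hilbert-symbol identification of $F^\times/N_{E_d/F}(E_d^\times)$ with the Kottwitz identification $H^1(F,T)\hookrightarrow\{\pm1\}$ is moot: for $d\notin F^{\times2}$ both groups have order two, so there is a unique injective homomorphism into $\{\pm1\}$, and the statement only requires knowing that $[z_c]$ is trivial if and only if $c\in N_{E_d/F}(E_d^\times)$ --- which both you and the paper establish; the paper's own proof does not address this compatibility either.
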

\begin{proof}
The first assertion is clear.
To prove the last assertion, 
we may assume that $d\not\in F^{\times2}$.
We define $z'_{c,\sigma}\in\overline{F}^\times=T(\overline{F})$ by
\[
z_{c,\sigma}'=
\left\{
\begin{aligned}
&1	\iif \sigma|E_d=\id_{E_d},\\
&c	\other.
\end{aligned}
\right.
\]
It is easily seen that $z_c'\in Z^1(F,T)$.
Moreover we have
\[
z_{c,\sigma} z_{c,\sigma}'^{-1}=\sigma_d(\sqrt{c})\cdot\sqrt{c}^{-1}.
\]
Hence we have $[z_c]=[z_c']$ in $H^1(F,T)$.
\par

There exists an exact sequence
\[
\begin{CD}
1@>>> H^1(\Gal(E_d/F),T(E_d))@>>> H^1(F,T)@>>> H^1(E_d,T)^{\Gal(E_d/F)}.
\end{CD}
\]
Since $T$ is isomorphic to $\GL_1$ over $E_d$, 
by Hilbert $90$, we have $H^1(E_d,T)=1$.
Therefore we have
\[
H^1(\Gal(E_d/F),T(E_d))\cong H^1(F,T).
\]
\par

Let $\sigma$ be the generator of $\Gal(E_d/F)$. 
Then we have
\[
\sigma_d(x)\cdot x^{-1}=\sigma(x)^{-1}x^{-1}=N_{E_d/F}(x)^{-1}
\]
for $x\in T(E_d)\cong E_d^\times$.
Hence we have $B^1(\Gal(E_d/F),T(E_d))\cong N_{E_d/F}(E_d^\times)$.
For given $x\in T(E_d)=E_d^\times$, we put $z_1=1$ and $z_\sigma=x$.
Then 
$z\in Z^1(\Gal(E_d/F),T(E_d))$
if and only if
\[
1=z_1=z_{\sigma\sigma}=z_\sigma\cdot \sigma_d(z_{\sigma})
=x\cdot \sigma_d(x)=x\cdot\sigma(x)^{-1},
\]
i.e., $x\in F^\times$.
Therefore we have
\[
H^1(\Gal(E_d/F),T(E_d))\cong F^\times/N_{E_d/F}(E_d^\times).
\]
Moreover we see that
\begin{align*}
[z_c']\not=1\quad\text{in $H^1(\Gal(E_d/F),T(E_d))$}
\iff
c\not\in N_{E_d/F}(E_d^\times)
\iff
(c,d)=-1,
\end{align*}
as desired.
\end{proof}
\par

For a positive integer $n$ and for $d,c\in F^\times$, 
we let $V=V_{2n,d,c}$ be an orthogonal space
with dimension $2n$ and type $(d,c)$,
and
$W=W_{2n}$ be a symplectic space with dimension $2n$.
We put $G_{n,d,c}=\SO(V_{2n,d,c})$ and $G'_n=\Sp(W_{2n})$.
As in \S \ref{Quadratic}, we decompose 
\[
V_{2n,d,c}=X_{n-1,d,c}\oplus V_{2,d,c}\oplus X_{n-1,d,c}^*
\]
with
\[
X_{n-1,d,c}=Fv_{1,d,c}\oplus \dots\oplus Fv_{n-1,d,c},
\quad
X_{n-1,d,c}^*=Fv_{1,d,c}^*\oplus \dots\oplus Fv_{n-1,d,c}^*,
\quad
V_{2,d,c}=Fe_{d,c}\oplus Fe'_{d,c},
\]
such that
\[
\pair{e_{d,c},e_{d,c}}_{V_{2n,d,c}}=2c,\quad
\pair{e'_{d,c},e'_{d,c}}_{V_{2n,d,c}}=-2cd,\quad
\pair{e_{d,c},e'_{d,c}}_{V_{2n,d,c}}=0.
\]
For $c,c'\in F^\times$, 
we define an isomorphism of vector spaces
\[
f\colon V_{2n,d,c}\rightarrow V_{2n,d,c'}
\]
by
\begin{align*}
V_{2n,d,c}\supset X_{n-1,d,c}\ni v_{i,d,c}&\mapsto v_{i,d,c'}\in X_{n-1,d,c'}\subset V_{2n,d,c'},\\
V_{2n,d,c}\supset V_{2,d,c}\ni e_{d,c}&\mapsto e_{d,c'}\in V_{2,d,c'}\subset V_{2n,d,c'},\\
V_{2n,d,c}\supset V_{2,d,c}\ni e'_{d,c}&\mapsto e'_{d,c'}\in V_{2,d,c'}\subset V_{2n,d,c'},\\
V_{2n,d,c}\supset X_{n-1,d,c}^*\ni v_{i,d,c}^*&\mapsto \frac{c'}{c}\cdot v_{i,d,c'}^*
\in X_{n-1,d,c'}^*\subset V_{2n,d,c'}.
\end{align*}
This satisfies that 
\[
\pair{f(x),f(y)}_{V_{2n,d,c'}}=\frac{c'}{c}\pair{x,y}_{V_{2n,d,c}}
\]
for any $x,y\in V_{2n,d,c}$.
Hence $f$ gives an isomorphism $\psi_{c'/c}\colon G_{2n,d,c}\rightarrow G_{2n,d,c'}$ over $F$.
Let $Z_{2n,d,c}$ be the center of $G_{2n,d,c}$ and
define $z_{c'/c}\in Z^1(F,Z_{2n,d,c})$ by
\[
z_{c'/c,\sigma}=\frac{\sigma(\sqrt{c'/c})}{\sqrt{c'/c}}\cdot \1
\]
for $\sigma \in \Gamma$.
Then $(\psi,z)=(\psi_{c'/c},z_{c'/c})\colon G_{2n,d,c}\rightarrow G_{2n,d,c'}$ 
is a pure inner twist of $G_{2n,d,c}$.
This is trivial if and only if $c'/c\in N_{E_d/F}(E_d^\times)$.
Via this pure inner twist, we may regard $G_{2n,d,c'}$ as a pure inner form of $G_{2n,d,c}$.
\par

Now we assume that $c'/c \in N_{E_d/F}(E_d^\times)$.
Then there exists $\alpha \in E_d^\times$ such that $N_{E_d/F}(\alpha)=c'/c$.
We write $\alpha^{-1} = a + b \sqrt{d}$ with $a, b \in F$.
We define an isomorphism of vector spaces
\[
f_{\alpha^{-1}}\colon V_{2n,d,c}\rightarrow V_{2n,d,c'}
\]
by
\begin{align*}
V_{2n,d,c}\supset X_{n-1,d,c}\ni v_{i,d,c}&\mapsto v_{i,d,c'}\in X_{n-1,d,c'}\subset V_{2n,d,c'},\\
V_{2n,d,c}\supset V_{2,d,c}\ni xe_{d,c}+ye'_{d,c}&\mapsto 
(ax+byd)e_{d,c'}+(ay+bx)e'_{d,c'}
\in V_{2,d,c'}\subset V_{2n,d,c'},\\
V_{2n,d,c}\supset X_{n-1,d,c}^*\ni v_{i,d,c}^*&\mapsto v_{i,d,c'}^*
\in X_{n-1,d,c'}^*\subset V_{2n,d,c'}.
\end{align*}
Note that
\[
(a+b\sqrt{d})(x+y\sqrt{d}) = (ax+byd) + (ay+bx)\sqrt{d}.
\]
Then $f_{\alpha^{-1}}$ 
gives an isomorphism $\psi_{\alpha^{-1}}\colon G_{2n,d,c}\rightarrow G_{2n,d,c'}$ over $F$.
Via this isomorphism $\psi_{\alpha^{-1}}$, we identify 
$\Pi_\temp(G_{2n,d,c})$ with $\Pi_\temp(G_{2n,d,c'})$.
\begin{lem}
Suppose that $c'/c \in N_{E_d/F}(E_d^\times)$.
Fix $\alpha \in E_d^\times$ such that $N_{E_d/F}(\alpha)=c'/c$.
Let $(\psi,z)=(\psi_{c'/c},z_{c'/c})\colon G_{2n,d,c}\rightarrow G_{2n,d,c'}$ 
be the pure inner twist of $G_{2n,d,c}$ defined as above.
Then the bijection $J_{\psi,z} \colon \Pi_\temp(G_{2n,d,c}) \rightarrow \Pi_\temp(G_{2n,d,c'})$
in Lemma \ref{J}
is the identity map via the above identification $\Pi_\temp(G_{2n,d,c}) = \Pi_\temp(G_{2n,d,c'})$.
\end{lem}
\begin{proof}
We put $\psi_{\alpha} = (\psi_{\alpha^{-1}})^{-1}$ and 
$\psi' = \psi_{\alpha} \circ \psi_{c'/c} \colon G_{2n,d,c} \rightarrow G_{2n,d,c}$.
It suffices to show that
\[
(G_1, \psi_1, z_1, \pi_1) \cong (G_1, \psi_1 \circ (\psi')^{-1}, \psi'(z_1z^{-1}), \pi_1)
\]
for any $(G_1, \psi_1, z_1, \pi_1) \in \Pi_\temp(G_{2n,d,c})$.
There exists $h_\alpha \in \mathrm{GSO}(V_{2n,d,c})$ with similitude factor
$N_{E_d/F}(\alpha)=c'/c$ such that
$(f_{\alpha^{-1}})^{-1} \circ f = \Int(h_\alpha)$.
Fix a square root $\sqrt{N_{E_d/F}(\alpha)}$ of $N_{E_d/F}(\alpha)$ in $\overline{F}$.
Put $g=\sqrt{N_{E_d/F}(\alpha)}^{-1}h_\alpha$, which is an element in $G_{2n,d,c}(\overline{F})$.
Then we see that $(\id_{G_1}, g)$ gives a desired isomorphism 
$(G_1, \psi_1, z_1, \pi_1) \rightarrow (G_1, \psi_1 \circ (\psi')^{-1}, \psi'(z_1z^{-1}), \pi_1)$.
\end{proof}

If $c'/c\not\in N_{E_d/F}(E_d^\times)$, then
we cannot define $\psi_{\alpha^{-1}}$, but can define $(\psi,z)=(\psi_{c'/c},z_{c'/c})$.
So we identify $\Pi_\temp(G_{2n,d,c})$ with $\Pi_\temp(G_{2n,d,c'})$ via $J_{\psi,z}$.
\par

Let $B=TU$ (\resp $B'=T'U'$)
be the Borel subgroup of $G_{2n,d,c}$ (\resp $G'_{2n}$)
and $\w_c=(B,\mu_c)$ (\resp $\w'_c=(B',\mu'_c)$)
be the Whittaker datum of $G_{2n,d,c}$ (\resp $G'_{2n}$) defined in \S \ref{section spaces}.
Via the isomorphism $\psi_{c'/c}\colon G_{2n,d,c}\rightarrow G_{2n,d,c'}$,
the datum $\w_c$ is transfered to $\w_{c'}$.
\par

Recall that
an $L$-parameter $\phi$ of $G_{2n,d,c}$
(\resp $G'_{2n}$) gives an orthogonal representation
$\phi\colon \WD_F\rightarrow \orth(2n,\C)$
(\resp 
$\phi\colon \WD_F\rightarrow \SO(2n+1,\C)$
)
with $\det(\phi)=\chi_{V_{2n,d,c}}$ (\resp $\det(\phi)=\1$).
For a semi-simple element $s\in S_\phi$,
we denote the $(-1)$-eigenspace of $s$ in $\phi$ by $\phi^{s}$.
This is a subrepresentation of $\phi$.
We put $\eta_{c_0}(s)=\det(\phi^{s})(c_0)$ for $c_0\in F^\times$.
This gives a character $\eta_{c_0}$ of $\pi_0(S_\phi)$.
Now we prove Propositions \ref{whittaker1} and \ref{whittaker2}.
More precisely, we prove the following proposition.
\begin{prop}\label{SO&Sp}
Let $G=G_{2n,d,c}$ with $n \geq 2$ or $G=G'_{2n}$ with $n \geq 1$, and $Z\cong\{\pm1\}$ be the center of $G$.
For $c_0\in F^\times$, we define $z_{c_0}\in Z^1(F,Z)$ by
$z_{c_0,\sigma}=\sigma(\sqrt{c_0})/\sqrt{c_0}$.
Then we have $\chi_{z_{c_0}}=\eta_{c_0}$.
Hence
for a tempered $L$-parameter $\phi$ of $G$ and $c,c' \in F^\times$,
the diagram
\[
\begin{CD}
\Pi_\phi @>\iota_{\w_c}>> \Irr(\pi_0(S_\phi))\\
@| @VV{\cdot\otimes\eta_{c'/c}}V\\
\Pi_\phi @>\iota_{\w_{c'}}>> \Irr(\pi_0(S_\phi))
\end{CD}
\]
is commutative.
Here, we identify $\Pi_\temp(G_{2n,d,c})$ with $\Pi_\temp(G_{2n,d,c'})$ via $J_{\psi,z}$
with $(\psi,z)=(\psi_{c'/c},z_{c'/c})$.
\end{prop}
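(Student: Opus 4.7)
The plan is to verify the formula $\chi_{z_{c_0}} = \eta_{c_0}$ on generators of $\pi_0(S_\phi)$ by applying the explicit description of $\chi_z$ given at the end of subsection~\ref{A.4}: for any semi-simple $s \in S_\phi$ with associated endoscopic datum $\e = (H, \H, s, \eta)$ and admissible embedding $\xi = \Ad(g^{-1}) \circ \eta^*\colon S \to T \subset G$ of a maximal $F$-torus $S$ of $H$, one has
\[
\chi_{z_{c_0}}(s) = \pair{[(\eta^*)^{-1}(z_{c_0})], s},
\]
where the pairing is Tate--Nakayama on $H^1(F, S) \times \pi_0(\widehat{S}^\Gamma)$. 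Once this is identified with $\eta_{c_0}(s) = \det(\phi^s)(c_0)$, the commutativity of the diagram in the statement follows from Theorem~\ref{base} applied to the pure inner twist $(\id, z_{c'/c})\colon G_{2n,d,c} \to G_{2n,d,c}$, composed with the canonical $F$-isomorphism $\psi_{c'/c}\colon G_{2n,d,c} \to G_{2n,d,c'}$ which transports $\w_c$ to $\w_{c'}$.

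First I would fix $s \in S_\phi$ and decompose the orthogonal representation $\phi$ into its $\pm 1$-eigenspaces under $s$, writing $\phi^s = \bigoplus_j N_{i_j}$ as a direct sum of the irreducible self-dual components of orthogonal type on which $s$ acts by $-1$. An endoscopic datum associated to $s$ then has $H$ a product $H^+ \times H^-$, where $H^-$ is a quasi-split classical group whose $L$-parameter has standard representation $\phi^s$. I would build a maximal $F$-torus $S^- = \prod_j S^-_j$ of $H^-$ adapted to this decomposition, where each $S^-_j$ contains an anisotropic factor of the form $E_{d_{i_j}}^1 = \{(a,b)\ |\ a^2 - d_{i_j}\, b^2 = 1\}$ with $d_{i_j}$ the discriminant of $N_{i_j}$. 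After matching $S = S^+\times S^-$ with a maximal $F$-torus $T$ of $G$ through an explicit admissible embedding, I would verify that the center $Z$ of $G$ is carried by $\xi^{-1}$ diagonally into the anisotropic product $\prod_j E_{d_{i_j}}^1 \subset S^-$.

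Second, with this normalization, the cocycle $z_{c_0}$ pulls back along $\xi^{-1}$ to a product of cocycles $(z_{c_0})_j \in Z^1(F, E_{d_{i_j}}^1)$, each of the form considered in Lemma~\ref{coh}. Under the Tate--Nakayama pairing on $H^1(F, E_{d_{i_j}}^1) \times \pi_0(\widehat{E_{d_{i_j}}^1}^\Gamma) \cong \{\pm 1\} \times \{\pm 1\}$, the image of $s$ is the non-trivial element on the $j$-th factor, and Lemma~\ref{coh} evaluates the pairing to $(c_0, d_{i_j}) = \chi_{d_{i_j}}(c_0)$. Multiplying over $j$,
\[
\chi_{z_{c_0}}(s) = \prod_j \chi_{d_{i_j}}(c_0) = \det(\phi^s)(c_0) = \eta_{c_0}(s),
\]
using that $\det N_{i_j} = \chi_{d_{i_j}}$ for each irreducible self-dual orthogonal $N_{i_j}$.

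The main obstacle is the careful bookkeeping of admissible embeddings and sign conventions, in particular the verification that the center $Z \subset T$ really does pull back to the anticipated diagonal in the anisotropic part of $S$, rather than to a twist thereof. When $c'/c \in N_{E_d/F}(E_d^\times)$ the pure inner twist $(\psi_{c'/c}, z_{c'/c})$ is trivial and the statement reduces to Kaletha's theorem \cite[Theorem 3.3]{Ka1}; the genuinely new content concerns $c'/c \notin N_{E_d/F}(E_d^\times)$, where the above Hilbert-symbol computation via Lemma~\ref{coh} is what supplies the character $\eta_{c_0}$ and completes the proof.
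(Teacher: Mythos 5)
Your overall route is the same as the paper's: evaluate $\chi_{z_{c_0}}(s)$ through the formula $\chi_z(s)=\pair{[(\eta^*)^{-1}(z)],s}$ from the end of \S\ref{A.4}, reduce the Tate--Nakayama pairing to Lemma \ref{coh}, and then obtain the diagram from Theorem \ref{base}. However, the torus you propose need not exist. Take $s=a_{1}a_{2}\in A_\phi^+$ with $\phi^s=\chi_{d_1}\oplus\chi_{d_2}$, $d_1,d_2$ distinct non-squares: the factor $H^-$ of the endoscopic group attached to $s$ has $\widehat{H^-}=\SO_2(\C)$ with $W_F$ acting by inversion through $\Gal(E_{d_1d_2}/F)$, so $H^-\cong E_{d_1d_2}^1$ is a one-dimensional torus and cannot contain $E_{d_1}^1\times E_{d_2}^1$; more generally, odd-dimensional irreducible constituents of $\phi^s$ cannot each be given their own torus factor. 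The paper sidesteps this by using only the discriminant $d_1$ of the whole $(-1)$-eigenspace: it takes $H=G_{2k,d_1,1}\times(\cdots)$ and the standard quasi-split maximal torus $\GL_1^{k-1}\times E_{d_1}^1$ (times the analogous factor for the $(+1)$-part), so that Lemma \ref{coh} gives $\chi_{z_{c_0}}(s)=(c_0,d_1)=\det(\phi^s)(c_0)$ directly. Your product formula $\prod_j(c_0,d_{i_j})$ agrees with this, so the computation is repairable, but the per-constituent construction as you describe it fails.

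The second gap is your treatment of the case $[z_{c'/c}]=1$ in $H^1(F,G)$, which you dismiss as "reduces to Kaletha's theorem." Kaletha's theorem only says that $\iota_{\w_c}$ and $\iota_{\w_{c'}}$ differ by the character attached to the pair of Whittaker data; to prove the proposition one must still (i) identify the class of this pair in $H^1(F,Z)$ with $[z_{c'/c}]$, i.e. exhibit $g_{c_0}\in G_\ad(F)$ whose image in $H^1(F,Z)$ is $[z_{c_0}]$ and which conjugates $\mu_c$ to $\mu_{cc_0}$, and (ii) evaluate the associated character, which is again the Tate--Nakayama/Hilbert-symbol computation. The paper does (i) explicitly via Proposition \ref{Whittaker}: for $\Sp$ with the diagonal element built from $\sqrt{c_0}^{-1}$, and for $\SO$ with a torus element mixing $e$ and $e'$ constructed from a solution of $a_0^2-b_0^2d=c_0$. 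Note moreover that $H^1(F,\Sp(W_{2n}))=1$, so for $G=G'_{2n}$ the cocycle is always trivial in $H^1(F,G)$ and this "trivial" case is the entire content of Proposition \ref{whittaker1}; likewise the orthogonal case $c'/c\in N_{E_d/F}(E_d^\times)$, $c'/c\notin F^{\times2}$ still concerns two distinct Whittaker data. So your claim that the only genuinely new content lies in $c'/c\notin N_{E_d/F}(E_d^\times)$ leaves these cases unproved as written.
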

\begin{proof}
We may assume that $\phi=\phi_1\oplus\phi_2$,
where $\phi_1$ and $\phi_2$ are orthogonal representations with 
$\dim(\phi_1)=2k$ and $\dim(\phi_2)=\dim(\phi)-2k$,
and that $s\in S_\phi$ satisfies
$s|\phi_1=-\1$ and $s|\phi_2=\1$.
Then $\phi^{s}=\phi_1$.
Let $(H,\H,s,\eta)$ be the endoscopic datum associated to $s$.
Then we have
\[
H=\left\{
\begin{aligned}
&G_{2k,d_1,1}\times G_{2(n-k),d/d_1,1}\iif G=G_{2n,d,c},\\
&G_{2k,d_1,1}\times G'_{2(n-k)}\iif G=G_{2n}',
\end{aligned}
\right.
\]
where $d_1\in F^\times$ satisfies that $\det(\phi_1)=(\cdot,d_1)$.
We can take a maximal $F$-torus $S$ of $H$ which is isomorphic to
\[
\left\{
\begin{aligned}
&(\GL_1^{k-1}\times E_{d_1}^1)\times(\GL_1^{n-k-1}\times E_{d/d_1}^1) \iif G=G_{2n,d,c},\\
&(\GL_1^{k-1}\times E_{d_1}^1)\times\GL_1^{n-k} \iif G=G_{2n}'
\end{aligned}
\right.
\]
over $F$.
This implies that
\[
H^1(F,S)\cong \left\{
\begin{aligned}
&H^1(F,E_{d_1}^1)\times H^1(F,E_{d/d_1}^1)\hookrightarrow\{\pm1\}\times \{\pm1\}
\iif G=G_{2n,d,c},\\
&H^1(F,E_{d_1}^1)\hookrightarrow\{\pm1\}\iif G=G_{2n}'.
\end{aligned}
\right.
\]
Via this isomorphism, (the image of) $[z_{c_0}]$ corresponds to
$((c_0,d_1),(c_0,d/d_1))$ if $G=G_{2n,d,c}$, and to $(c_0,d_1)$ if $G=G'_{2n}$.
\par

On the other hand, via the above isomorphism of $S$, we have
\[
\pi_0(\widehat{S}^\Gamma)
\cong \left\{
\begin{aligned}
&\pi_0(\widehat{E_{d_1}^1}^\Gamma)\times \pi_0(\widehat{E_{d/d_1}^1}^\Gamma)
\iif G=G_{2n,d,c},\\
&\pi_0(\widehat{E_{d_1}^1}^\Gamma)
\iif G=G_{2n}'.
\end{aligned}
\right.
\]
Via this isomorphism, $s$ corresponds to 
the generator of $\pi_0(\widehat{E_{d_1}^1}^\Gamma)\subset \{\pm1\}$.
Therefore we have
\[
\chi_{z_{c_0}}(s)=\pair{[z_{c_0}],s}=(c_0,d_1)=\det(\phi_1)(c_0)=\eta_{c_0}(s).
\]
\par

First, we assume that $[z_{c'/c}]\not=1$ in $H^1(F,G)$.
In particular, $G=G_{2n,d,c}$ and $c'/c\not\in N_{E_d/F}(E_d^\times)$.
By Theorem \ref{base}, the diagram
\[
\begin{CD}
\Pi_\phi @>\iota_{\w_{c}}>>\Irr(\pi_0(S_\phi))\\
@VJ_{\psi,z}VV @VV\cdot\otimes\eta_{c'/c}V\\
\Pi_\phi @>\iota_{\w_{c'}}>>\Irr(\pi_0(S_\phi))\\
\end{CD}
\]
is commutative, where we put $(\psi,z)=(\psi_{c'/c},z_{c'/c})$.
Note that we have identified $\Pi_\phi\subset \Pi_\temp(G_{2n,d,c})$
with $\Pi_\phi\subset \Pi_\temp(G_{2n,d,c'})$ via $J_{\psi,z}$.
\par

Next, we assume that $[z_{c_0}]=1$ in $H^1(F,G)$, where we put $c_0=c'/c$.
Let $T$ be the maximal $F$-torus of $G$ stabilizing the lines $Fv_i$ or $Fw_i$ for any $i$.
If $G=G'_{2n}$, then $T\cong \GL_1^n$.
Putting $t_{c_0}=(\sqrt{c_0}^{-1},\dots,\sqrt{c_0}^{-1})\in T(\overline{F})\subset G(\overline{F})$ and
let $g_{c_0}\in G_\ad(F)$ be the image of $t_{c_0}$.
Then we have $z_{c_0}=t_{c_0}^{-1}\cdot \sigma(t_{c_0})$,
i.e., $[z_{c_0}]\in H^1(F,Z)$ is the image of $g_{c_0}$.
We have $g_{c_0}B'g_{c_0}^{-1}=B'$ and $\mu'_{c}\circ\Int(g_{c_0}^{-1})=\mu'_{cc_0}$.
Hence we have $\iota_{\w'_c}\circ J_{z_{c_0}}=\iota_{\w'_{cc_0}}$
by Proposition \ref{Whittaker}.
\par

If $G=G_{2n,d,c}$, then $T\cong \GL_1^{n-1}\times E_d^1$.
We see that $[z_{c_0}]=1$ in $H^1(F,G)$ if and only if $c_0\in N_{E_d/F}(E_d^\times)$.
Take $a_0,b_0\in F$ such that $N_{E_d/F}(a_0+b_0\sqrt{d})=a_0^2-b_0^2d=c_0$.
We define $t_{c_0}\in T(\overline{F})$ by $t_{c_0}v_i=\sqrt{c_0}^{-1}v_i$ and
\begin{align*}
t_{c_0}e&=\frac{a_0}{\sqrt{c_0}}e
	+\frac{b_0}{\sqrt{c_0}}e',\\
t_{c_0}e'&=\frac{b_0}{\sqrt{c_0}}de
	+\frac{a_0}{\sqrt{c_0}}e'.
\end{align*}
Note that $t_0=\sqrt{c_0}^{-1}f$, where $f\colon V_{2n,d,c}\rightarrow V_{2n,d,c'}=V_{2n,d,c}$
is the isomorphism defined as above.
Let $g_{c_0}\in G_\ad(F)$ be the image of $t_{c_0}$.
Then we have $z_{c_0}=t_{c_0}^{-1}\cdot \sigma(t_{c_0})$,
i.e., $[z_{c_0}]\in H^1(F,Z)$ is the image of $g_{c_0}$.
We have $g_{c_0}Bg_{c_0}^{-1}=B$ and $\mu_{c}\circ\Int(g_{c_0}^{-1})=\mu_{cc_0}$.
Hence we have $\iota_{\w_c}\circ J_{z_{c_0}}=\iota_{\w_{cc_0}}$
by Proposition \ref{Whittaker}.
\end{proof}

%\section{Some results on representations of metaplectic groups}\label{appB}
\section{Some results on representations of metaplectic groups}\label{appB}
In this appendix, we give proofs of Theorems \ref{GPRforMp} and \ref{IS}.

%\subsection{Proof of Theorem \ref{GPRforMp}}
\subsection{Proof of Theorem \ref{GPRforMp}}
In this subsection, we prove Theorem \ref{GPRforMp}.
For the proof, we use the local coefficients for metaplectic groups defined in \cite{Sz2}.
For linear groups, see \cite{Sh, Sh2}.
\par

Let
\[
\Ind_{\cl{Q}}^{\Mp(W)}(\tau_1|\det|_F^{s_1}\otimes\cdots\otimes\tau_r|\det|_F^{s_r}\otimes \cl{\pi}_0)
\]
be a standard module, where
\begin{itemize}
\item
$W=W_{2n}$ and $W_0 = W_{2n_0}$ are symplectic spaces of dimension $2n$ and $2n_0$, 
respectively;
\item
$\cl{Q}$ is the parabolic subgroup of $\Mp(W)$ with the Levi subgroup isomorphic to
$\cl{\GL}_{n_1}(F)\times_{\{\pm1\}}\cdots\times_{\{\pm1\}}\cl{\GL}_{n_r}(F)
\times_{\{\pm1\}}\Mp(W_0)$; 
\item
$\tau_i$ is an irreducible discrete series representation of $\cl{\GL}_{n_i}(F)$;
\item
$s_i$ is a real number such that $s_1 \geq \dots \geq s_r > 0$; 
\item
$\cl{\pi}_0$ is an irreducible tempered representation of $\Mp(W_0)$.
\end{itemize}
We denote the unique irreducible quotient of this standard module by $\cl\pi$. 
We set $\phi = \LL_\psi(\cl\pi)$.
In \cite[\S 2.3]{Sz2}, Szpruch defined an intertwining operator 
\[
A_w \colon 
\Ind_{\cl{Q}}^{\Mp(W)}(\tau_1|\det|_F^{s_1}\otimes\cdots\otimes\tau_r|\det|_F^{s_r}\otimes \cl{\pi}_0)
\rightarrow
\Ind_{\cl{Q}}^{\Mp(W)}
(\tau_1^\vee|\det|_F^{-s_1}\otimes\cdots\otimes\tau_r^\vee|\det|_F^{-s_r}\otimes \cl{\pi}_0)
\]
for some element $w \in \Sp(W_{2n})$.
\par

Now suppose that $\cl\pi_0$ is $\w_1'$-generic.
Szpruch \cite[\S 3.1]{Sz2} defined a $\w_1'$-generic Whittaker functional
\[
\lam((s_1, \dots, s_r), \tau_1 \times \dots \times \tau_r \rtimes \cl\pi_0, \mu'_1) 
\in \Hom_{U'} \left(\Ind_{\cl{Q}}^{\Mp(W)}(\tau_1|\det|_F^{s_1}\otimes\cdots\otimes\tau_r|\det|_F^{s_r}\otimes \cl{\pi}_0), \mu_1' \right).
\]
By the uniqueness of Whittaker model (\cite{Sz1}), 
there exists a meromorphic function 
\[
C_\psi^{\Mp(W_{2n})}( (s_1, \dots, s_r), \tau_1 \times \dots \times \tau_r \rtimes \cl\pi_0, w)
\]
on $\C^r$ such that
\begin{align*}
&\lam((s_1, \dots, s_r), \tau_1 \times \dots \times \tau_r \rtimes \cl\pi_0, \mu'_1) 
\\&=
C_\psi^{\Mp(W_{2n})}( (s_1, \dots, s_r), \tau_1 \times \dots \times \tau_r \rtimes \cl\pi_0, w)
\cdot
\lam((-s_1, \dots, -s_r), \tau_1^\vee \times \dots \times \tau_r^\vee \rtimes \cl\pi_0, \mu'_1) \circ A_w.
\end{align*}
We call $C_\psi^{\Mp(W_{2n})}( (s_1, \dots, s_r), \tau_1 \times \dots \times \tau_r \rtimes \cl\pi_0, w)$
the local coefficient.
By the the same argument as the proof of \cite[Lemma B.2]{GI2}, 
we see that the Langlands quotient $\cl\pi$ is $\mu'_1$-generic
if and only if the absolute value of 
$C_\psi^{\Mp(W_{2n})}( (s_1, \dots, s_r), \tau_1 \times \dots \times \tau_r \rtimes \cl\pi_0, w)$
is finite.
\par

When $r=1$ and $\cl\pi_0$ is $\w_1'$-generic, 
Szpruch \cite{Sz2} defined a gamma factor $\gamma^{\Sh}(s_1, \tau_1 \times \cl\pi_0, \psi)$
by
\[
\gamma^{\Sh}(s_1, \tau_1 \times \cl\pi_0, \psi)=
\frac{C_\psi^{\Mp(W_{2n})}( s_1, \tau_1 \rtimes \cl\pi_0, w_n)}
{C_\psi^{\Mp(W_{2n_1})}( s_1, \tau_1, w_{n_1})}
\]
for some element $w_n \in \Sp(W_{2n})$ and $w_{n_1} \in \Sp(W_{2n_1})$.
By \cite[Theorem A]{Sz2}, it satisfies fundamental properties which include multiplicativity.
\par

We start to prove Theorem \ref{GPRforMp}.
First, suppose that $\cl\pi$ is $\mu'_1$-generic. 
%In \cite{Sz2}, Szpruch defined a gamma factor $\gamma^{\Sh}(s, \tau \times \cl\pi_0, \psi)$
%by the Langlands--Shahidi method.
We need a following formula for the local coefficient in terms of gamma factors.
\begin{lem}\label{lem1}
Suppose that the Langlands quotient $\cl\pi$ is $\mu'_1$-generic.
Then there exists an invertible function $c(s_1, \dots, s_r)$ such that
\begin{align*}
&C_\psi^{\Mp(W_{2n})}( (s_1, \dots, s_r), \tau_1 \times \dots \times \tau_r \rtimes \cl\pi_0, w)
\\&= c(s_1, \dots, s_r) \prod_{i=1}^r\gamma^{\Sh}(s_i, \tau_i \times \cl\pi_0, \psi) 
\frac{\gamma(2s_i, \tau_i, \Sym^2, \psi)}{\gamma(s_i + \half{1}, \tau_i, \psi)}
\prod_{i < j}\gamma(s_i+s_j, \tau_i \times \tau_j, \psi)\gamma(s_i-s_j, \tau_i \times \tau_j^\vee, \psi).
\end{align*}
\end{lem}
\begin{proof}
Note that if $\cl\pi$ is $\mu_1'$-generic, then so is $\cl\pi_0$.
This is Rodier's result for linear groups (see \cite[Corollary 1.7]{CS}), 
which is extended by Banks \cite{B} to a non-algebraic setting.
When $r=1$, Lemma follows from the definition of $\gamma^{\Sh}(s, \tau \times \cl\pi_0, \psi)$
and \cite[Theorem B]{Sz2}.
In general, Lemma follows from a standard argument similar to \cite[Theorem 2.1.1, Proposition 3.2.1]{Sh}.
\end{proof}

Moreover, if we set $\phi_0 = \LL_\psi(\cl\pi_0)$ and 
we denote the representation of $\WD_F$ corresponding to $\tau_i$ by $\phi_i$, 
we have
\[
\gamma^{\Sh}(s_i, \tau_i \times \cl\pi_0, \psi)  
= \gamma(s_i, \phi_i \times \phi_0, \psi) 
\]
up to an invertible function.
This follows from \cite[Proposition 4.5]{ILM} and \cite{A}.
Therefore, the absolute value of 
$C_\psi^{\Mp(W_{2n})}( (s_1, \dots, s_r), \tau_1 \times \dots \times \tau_r \rtimes \cl\pi_0, w)$
is finite if and only if the absolute value of
\[
\prod_{i=1}^r\gamma(s_i, \phi_i \times \phi_0, \psi) 
\frac{\gamma(2s_i, \phi_i, \Sym^2, \psi)}{\gamma(s_i + \half{1}, \phi_i, \psi)}
\prod_{i < j}\gamma(s_i+s_j, \phi_i \times \phi_j, \psi)\gamma(s_i-s_j, \phi_i \times \phi_j^\vee, \psi).
\]
This implies that $L(s, \phi, \Ad)/L(s-1/2, \phi)$ is regular at $s=1$.
\par

Next, we show the converse.
\begin{lem}\label{lem2}
Set $\phi' = \phi - (\phi_1|\cdot|_F^{s_1} \oplus \phi_1^\vee|\cdot|_F^{-s_1})$.
If $L(s, \phi, \Ad)/L(s-1/2, \phi)$ is regular at $s=1$, 
then so is $L(s, \phi', \Ad)/L(s-1/2, \phi')$.
\end{lem}
\begin{proof}
Let $S_d$ be the irreducible representation of $\SL_2(\C)$ of dimension $d$.
The lemma follows from the definition of the $L$-function together with
the equation
\[
L(s, S_d, \Sym^2) = \prod_{\substack{0 \leq k \leq d-1 \\ \text{$k$ is even}}}\zeta_F(s+d-1-k), 
\]
where $\zeta_F(s)$ is the zeta function associated to $F$.
\end{proof}

Suppose that $\cl\pi_0$ is $\mu_1'$-generic.
By induction on $r$, 
we shall show that if $L(s, \phi, \Ad)/L(s-1/2, \phi)$ is regular at $s=1$,
then the Langlands quotient $\cl\pi$ is $\mu_1'$-generic.
We may assume that $r>0$.
Let $\cl\pi'$ be the Langlands quotient of the standard module
\[
\Ind_{\cl{Q'}}^{\Mp(W')}(\tau_2|\det|_F^{s_2}\otimes\cdots\otimes\tau_r|\det|_F^{s_r}\otimes \cl{\pi}_0), 
\]
where $W' = W_{2(n-n_1)}$.
Then $\cl\pi$ is the unique irreducible quotient of
\[
\Ind_{\cl{Q}_1}^{\Mp(W)}(\tau_1|\det|_F^{s_1}\otimes\cl\pi'),
\]
where $Q_1$ is a suitable maximal parabolic subgroup of $\Sp(W)$.
By Lemma \ref{lem2} and the induction hypothesis, 
we see that $\cl\pi'$ is $\mu_1'$-generic.
Hence we can consider the local coefficient $C_\psi^{\Mp(W_{2n})}( s, \tau_1 \rtimes \cl\pi', w)$.
Since $Q_1$ is a maximal parabolic subgroup,
by the definition of gamma factors and multiplicativity, 
$C_\psi^{\Mp(W_{2n})}(s, \tau_1 \rtimes \cl\pi', w)$ can be expressed as in Lemma \ref{lem1}.
Since $L(s, \phi, \Ad)/L(s-1/2, \phi)$ is regular at $s=1$,
the absolute value of $C_\psi^{\Mp(W_{2n})}(s, \tau_1 \rtimes \cl\pi', w)$ must be finite.
Therefore, we conclude that $\cl\pi$ is $\mu_1'$-generic.

%\subsection{Proof of Theorem \ref{IS}}
\subsection{Proof of Theorem \ref{IS}}
In this subsection, we prove Theorem \ref{IS}.
Let
\[
\cl{\pi} = \Ind_{\cl{Q}}^{\Mp(W)}
(\tau_1|\det|_F^{s_1}\otimes\cdots\otimes\tau_r|\det|_F^{s_r}\otimes \cl{\pi}_0)
\]
be a standard module, where
\begin{itemize}
\item
$W=W_{2n}$ and $W_0 = W_{2n_0}$ are symplectic spaces of dimension $2n$ and $2n_0$, 
respectively;
\item
$\cl{Q}$ is the parabolic subgroup of $\Mp(W)$ with the Levi subgroup isomorphic to
$\cl{\GL}_{n_1}(F)\times_{\{\pm1\}}\cdots\times_{\{\pm1\}}\cl{\GL}_{n_r}(F)
\times_{\{\pm1\}}\Mp(W_0)$; 
\item
$\tau_i$ is an irreducible discrete series representation of $\cl{\GL}_{n_i}(F)$;
\item
$s_i$ is a real number such that $s_1 \geq \dots \geq s_r > 0$; 
\item
$\cl{\pi}_0$ is an irreducible tempered representation of $\Mp(W_0)$.
\end{itemize}
We denote the image of the unique irreducible quotient of $\cl{\pi}$ under $\LL_\psi$ by $\phi$.
Suppose that $\phi$ is generic, i.e., the adjoint $L$-function $L(s, \phi, \Ad)$ is regular at $s=1$.
\par

By a similar argument to the proof of \cite[Proposition 5.2, Corollary 5.3]{AG}, 
the following key property can be proven 
by using Kudla's filtration of the Jacquet module of the Weil representation.

\begin{lem}\label{lem3}
Assume that for any positive integer $d$, 
if $\tau_i \cong \mathrm{St}_d$ is the Steinberg representation of $\GL_d(F)$, then $s_i \not = d/2$.
%Fix
%\[
%\pi = \tau_{1,\psi} \boxtimes \dots \boxtimes \tau_{r,\psi} \boxtimes \pi_0 \in \Pi_\phi.
%\]
For $i = r, r-1, \dots, 1$, 
we inductively choose $\cl\pi_i \in \Irr(\Mp(W_{2(n_0+k_r+\dots+k_i)}))$ so that
\begin{itemize}
\item
$\cl\pi_r$ is an irreducible subrepresentation of 
$\Ind_{\cl{Q_r}}^{\Mp(W_{2(n_0+k_r)})}(\tau_{r}|\det|_F^{s_r} \boxtimes \cl\pi_0)$;
\item
$\cl\pi_i$ is an irreducible subrepresentation of 
$\Ind_{\cl{Q_i}}^{\Mp(W_{2(n_0+k_r+\dots+k_i)})}(\tau_{i}|\det|_F^{s_i} \boxtimes \cl\pi_{i+1})$
for $i = 1, \dots, r-1$, 
\end{itemize}
where $\cl{Q_i}$ is a maximal parabolic subgroup of $\Sp(W_{2(n_0+k_r+\dots+k_i)})$
with the Levi subgroup isomorphic to
$\cl{\GL}_{n_i}(F)\times_{\{\pm1\}}\Mp(W_{2(n_0+n_r+\dots+n_{i+1})})$. 
Then $\cl\pi_1$ is an irreducible subrepresentation of the standard module $\cl\pi$.
Moreover, if the theta lift $\theta_\psi(\cl\pi_1)$ to $\SO(V_{2n_0+1}^\bullet)$ is nonzero, then
\[
\Ind_{P^\circ}^{\SO(V_{2n+1}^\bullet)}
(\tau_1^\vee|\det|_F^{-s_1} \boxtimes \dots \boxtimes \tau_r^\vee|\det|_F^{-s_r} 
\boxtimes \theta_\psi(\cl\pi_0))
\twoheadrightarrow \theta_\psi(\cl\pi_1).
\]
where $P^\circ$ is a parabolic subgroup of $\SO(V_{2n+1}^\bullet)$
with the Levi subgroup isomorphic to
$\GL_{n_1}(F)\times\cdots\times\GL_{n_r}(F)\times\SO(V_{2n_0+1}^\bullet)$.

\end{lem}
\begin{proof}
By the exactness of the induced functor, 
we see that
$\pi_1$ is an irreducible subrepresentation of the standard module $\cl\pi$.
%$\Ind_{\cl{P}}^{\Mp(W_{2n})}(\pi_{\vec{s}_0})$.
\par

By a similar argument to \cite[Proposition 5.2, Corollary 5.3]{AG} 
using the assumption on $s_i$ and $\tau_i$, 
we see that 
\[
\Ind_{P_1^\circ}^{\SO(V_{2n+1}^\bullet)}
(\tau_1^\vee|\det|_F^{-s_1} \boxtimes \Theta_\psi(\cl\pi_2))
\twoheadrightarrow \Theta_\psi(\cl\pi_1),
\]
where $P_1^\circ$ is a suitable maximal parabolic subgroup of $\SO(V_{2n+1}^\bullet)$.
By induction on $r$ and the exactness of the induced functor, 
we conclude that
\[
\Ind_{P^\circ}^{\SO(V_{2n+1}^\bullet)}
(\tau_1^\vee|\det|_F^{-s_1} \boxtimes \dots \boxtimes \tau_r^\vee|\det|_F^{-s_r} 
\boxtimes \Theta_\psi(\cl\pi_0))
\twoheadrightarrow \Theta_\psi(\cl\pi_1) \twoheadrightarrow \theta_\psi(\cl\pi_1).
\]
Since $\cl\pi_0$ is tempered, by \cite[Theorem 8.1]{GS1}, we have 
$\Theta_\psi(\cl\pi_0) = \theta_\psi(\cl\pi_0)$.
We obtain the lemma.
\end{proof}

\begin{proof}[Proof of Theorem \ref{IS}]
We continue the same notation.
For a positive integer $d$, the symmetric square $L$-function
\[
L(s, S_d^{\vee}|\cdot|_F^{-2d/2}, \Sym^2) 
= \prod_{\substack{0 \leq k \leq d-1 \\ \text{$k$ is even}}}\zeta_F(s-1-k)
\]
has a pole at $s=1$.
Hence if $\phi$ is generic, then it satisfies the assumption in Lemma \ref{lem3}, and we have
\[
\Ind_{P^\circ}^{\SO(V_{2n+1}^\bullet)}
(\tau_1^\vee|\det|_F^{-s_1} \boxtimes \dots \boxtimes \tau_r^\vee|\det|_F^{-s_r} 
\boxtimes \theta_\psi(\cl\pi_0))
\twoheadrightarrow \theta_\psi(\cl\pi_1).
\]
However, by \cite[p. 40 Th\'eor\`eme $(\mathrm{i})$]{MW}, 
the standard module in the left hand side is irreducible.
Moreover, by by \cite[Theorem 8.1]{GS1}, it is isomorphic to 
the theta lift of the Langlands quotient of $\cl\pi$.
Since theta lifts are injective, we conclude that 
$\pi_1$ is isomorphic to the Langlands quotient of $\cl\pi$.
However, since the Langlands quotient appears in the standard module with multiplicity one 
(\cite[Corollary 4.4, Remark 4.5]{BJ2}), 
the standard module $\cl\pi$ must be irreducible.
\end{proof}

%\begin{thebibliography}{55}

\end{document}